\DeclareSymbolFontAlphabet{\mathbb}{AMSb}
\DeclareSymbolFontAlphabet{\mathbbold}{bbold}
\tikzset{treeedge/.style = {above, sloped}}
\tikzset{treenode/.style = {circle, minimum size = 3pt, inner sep = 0, draw, fill}}
\newcategory{\Comb}{}{C}{omb}
\newcategory{\Cart}{}{C}{art}
\renewcommand{\bbDelta}{\mathbbold{\Delta}}
\renewcommand{\bbLambda}{\mathbbold{\Lambda}}
\renewcommand{\bbOmega}{\mathbbold{\Omega}}
\newcommand{\presheaf}{\mathrm{\catPP sh}}
\newcommand{\Psh}[1]{\presheaf (#1)}
\renewcommand{\PshO}{{\Psh\bbOO}}
\newcommand{\PshLambda}{\Psh\bbLambda}
\newcommand{\Oalg}[2]{\Alg_{#1, #2}}
\newcategory{\elTree}{el}{T}{ree}
\DeclareMathOperator{\elt}{elt}
\newcommand{\adjunction}{
    \mathrel{\vcenter{
        \offinterlineskip\m@th
        \ialign{
            \hfil$##$\hfil\cr
            \longrightarrow\cr
            \noalign{\kern-.2ex}
            \begingroup\setlength\unitlength{.15em}
            \begin{picture} (1, 1)
                \roundcap
                \polyline (0, 0) (1, 0)
                \polyline (0.5, 0) (0.5, 1)
            \end{picture}
            \endgroup\cr
            \longleftarrow\cr
        }
    }}
}
\newcommand{\longadjunction}{
    \mathrel{\vcenter{
        \offinterlineskip\m@th
        \ialign{
            \hfil$##$\hfil\cr
            \relbar\joinrel\relbar\joinrel\relbar\joinrel\relbar\joinrel\relbar\joinrel\rightarrow\cr
            \noalign{\kern-.2ex}
            \begingroup\setlength\unitlength{.15em}
            \begin{picture} (1, 1)
                \roundcap
                \polyline (0, 0) (1, 0)
                \polyline (0.5, 0) (0.5, 1)
            \end{picture}
            \endgroup\cr
            \leftarrow\joinrel\relbar\joinrel\relbar\joinrel\relbar\joinrel\relbar\joinrel\relbar\cr
        }
    }}
}
\newcommand{\polynomialinline}[4]{#1 \longleftarrow #2 \longrightarrow #3 \longrightarrow #4}
\newcommand{\longleadsto}{
    \begin{tikzcd} [ampersand replacement = \&]
        ~
            \ar[r, rightsquigarrow] \&
        ~
    \end{tikzcd}
}
\newcommand{\longhookrightarrow}{
    \mathrel{
        \hookrightarrow
        \!\!\!\!
        \tikz \fill[white] (0, 0) rectangle (.1,.4);
        \!\!\!\!\!\!
        \relbar
        \!\!
        \rightarrow
    }
}
\newcommand{\inv}{^{-1}}
\newcommand{\qqquad}{\qquad\qquad}
\newcommand{\noop}[1]{}
\title{Opetopic algebras I: Algebraic structures on opetopic sets}
\date{October 2019}
\author[C. Ho Thanh]{C\'edric Ho Thanh}
\address{
    Institut de Recherche en Informatique Fondamentale (IRIF),
    Universit\'e Paris Diderot,
    France
}
\email{cedric.hothanh@irif.fr}
\urladdr{hothanh.fr/cedric}
\author[C. Leena Subramaniam]{Chaitanya Leena Subramaniam}
\address{
    Institut de Recherche en Informatique Fondamentale (IRIF),
    Universit\'e Paris Diderot,
    France
}
\email{chaitanya@irif.fr}
\urladdr{www.irif.fr/~chaitanya}
\keywords{Opetope, Opetopic set, Operad, Polynomial monad, Projective sketch}
\subjclass[2010]{Primary 18C20; Secondary 18C30}
\begin{document}

\begin{abstract}
    We define a family of structures called ``opetopic algebras'', which are
    algebraic structures with an underlying opetopic set. Examples of such are
    categories, planar operads, and Loday's combinads over planar trees.
    Opetopic algebras can be defined in two ways, either as the algebras of a
    ``free pasting diagram'' parametric right adjoint monad, or as models of a
    small projective sketch over the category of opetopes. We define an
    opetopic nerve functor that fully embeds each category of opetopic algebras
    into the category of opetopic sets. In particular, we obtain fully faithful
    opetopic nerve functors for categories and for planar coloured
    $\Set$-operads.

    This paper is the first in a series aimed at using opetopic spaces as models
    for higher algebraic structures.
\end{abstract}

\maketitle

\begin{small}
    \tableofcontents
\end{small}

\section{Introduction}

This paper deals with algebraic structures whose operations have \emph{higher
dimensional ``tree-like'' arities}. As an example in lieu of a definition, a
category $\catCC$ is an algebraic structure whose operation of
\emph{composition} has as its inputs, or ``arities'', sequences of composable
morphisms of the category. These sequences can can be seen as \emph{filiform}
or \emph{linear} trees. Moreover, each morphism of $\catCC$ can itself be seen
as an operation whose arity is a single point (i.e. an object of $\catCC$). A
second example, one dimension above, is that of a planar coloured $\Set$-operad
$\calPP$ (a.k.a. a nonsymmetric multicategory), whose operation of composition
has planar trees of operations (a.k.a. multimorphisms) of $\calPP$ as arities.
Moreover, the arity of an operation of $\calPP$ is an ordered list (a filiform
tree) of colours (a.k.a. objects) of $\calPP$. Heuristically extending this
pattern leads one to presume that such an algebraic structure one dimension
above planar operads should have an operation of composition whose arities are
trees of things that can themselves be seen as operations whose arities are
planar trees. Indeed, such algebraic structures are precisely the \emph{$\bbPP
\bbTT$-combinads in $\Set$} (combinads over the combinatorial pattern of planar
trees) of Loday \cite{Loday2012a}.

\vspace{.5em}
\begin{center}
    \begin{tabular}{c|ccccc}
      Structure & $\substack{\text{Sets} \\ = 0\text{-algebras}}$ &
      $\substack{\text{Categories} \\ = 1\text{-algebras}}$ &
      $\substack{\text{Operads} \\ = 2\text{-algebras}}$ &
      $\substack{\bbPP\bbTT-\text{combinads} \\ = 3\text{-algebras}}$ &
      $\cdots$ \\
    \hline
    Arity & $\substack{\{ * \} \\ = 1\text{-opetopes}}$ &
        $\substack{\text{Lists} \\ = 2\text{-opetopes}}$ &
        $\substack{\text{Trees} \\ = 3\text{-opetopes}}$ & $4$-opetopes &
        $\cdots$
    \end{tabular} \end{center}
\vspace{.5em}

The goal of this article is to give a precise definition of the previous
sequence of algebraic structures.

\subsection{Context}

It is well-known that higher dimensional tree-like arities are encoded by the
opetopes (\emph{ope}ration poly\emph{topes}) of Baez and Dolan \cite{Baez1998},
which were originally introduced in order to give a definition of weak
$n$-categories and a precise formulation of the ``microcosm'' principle.

The fundamental definitions of \cite{Baez1998} are those of
\emph{$\calPP$-opetopic sets} and \emph{$n$-coherent $\calPP$-algebras} (for a
coloured symmetric $\Set$-operad $\calPP$), the latter being $\calPP$-opetopic
sets along with certain ``horn-filling'' operations that are ``universal'' in a
suitable sense. When $\calPP$ is the identity monad on $\Set$ (i.e. the
unicolour symmetric $\Set$-operad with a single unary operation),
$\calPP$-opetopic sets are simply called \emph{opetopic sets}, and $n$-coherent
$\calPP$-algebras are the authors' proposed definition of \emph{weak
  $n$-categories}.

While the coinductive definitions in \cite{Baez1998} of $\calPP$-opetopic sets and $n$-coherent
$\calPP$-algebras are straightforward and general, they have the
disadvantage of not defining a \emph{category} of $\calPP$-opetopes such that
presheaves over it are precisely $\calPP$-opetopic sets, even though this is
ostensibly the case. Directly defining the category of $\calPP$-opetopes turns
out to be a tedious and non-trivial task, and was worked out explicitly by
Cheng in \cite{Cheng2003,Cheng2004} for the particular case of the identity
monad on $\Set$, giving the category $\bbOO$ of opetopes.

The complexity in the definition of a category $\bbOO_\calPP$ of
$\calPP$-opetopes has its origin in the difficulty of working with a suitable
notion of \emph{symmetric tree}. Indeed, the objects of $\bbOO_\calPP$ are
trees of trees of ... of trees of operations of the symmetric operad $\calPP$,
and their automorphism groups are determined by the action of the
(``coloured'') symmetric groups on the sets of operations of $\calPP$.

However, when the action of the symmetric groups on the sets of operations of
$\calPP$ is free, it turns out that the objects of the category $\bbOO_\calPP$
are rigid, i.e. have no non-trivial automorphisms (this follows from
\cite[proposition 3.2]{Cheng2004}). The identity monad on $\Set$ is of course
such an operad, and this vastly simplifies the definition of $\bbOO$. Indeed,
such operads are precisely the (finitary) \emph{polynomial monads} in $\Set$,
and the machinery of polynomial endofunctors and polynomial monads developed in
\cite{Kock2011,Kock2010,Gambino2013} gives a very satisfactory definition of
$\bbOO$ \cite{HoThanh2018,Curien2018} which we review in
\cref{sec:opetopes,sec:polynomial-functors-and-monads}.

\subsection{Contributions}

The main contribution of the present article is to show how the polynomial
definition of $\bbOO$ allows, for all $k, n \in \bbNN$ with $k \leq n$, a
definition of \emph{$(k, n)$-opetopic algebras}, which constitute a full
subcategory of the category $\PshO$ of opetopic sets. More precisely, we show
that the polynomial monad whose set of operations is the set $\bbOO_{n+1}$ of
$(n+1)$-dimensional opetopes can be extended to a parametric right adjoint
monad whose algebras are the $(k, n)$-opetopic algebras. Important particular
cases are the categories of $(1, 1)$- and $(1, 2)$-opetopic algebras, which are
the categories $\Cat$ and $\Op_{\mathrm{col}}$ of small categories and coloured
planar $\Set$-operads respectively. Loday's combinads over the combinatorial
pattern of planar trees \cite{Loday2012a} are also recovered as $(1,
3)$-opetopic algebras.

We further show that each category of $(k, n)$-opetopic algebras admits a fully
faithful \emph{opetopic nerve functor} to $\PshO$. As a direct consequence of
this framework, we obtain commutative triangles of adjunctions
\begin{equation}
    \label{eq:triangle-adjunctions}
    \begin{tikzcd} [column sep = small, row sep = large]
        &
        \Cat
            \ar[dl, shift left = .4em, "N" below right, "\scriptstyle{\perp}"
            {sloped, above}, hook']
            \ar[dr, shift right = .4em, "N_u" below left,
            "\scriptstyle{\perp}" {sloped, above}, hook] &
        \\
        \PshO
            \ar[ur, shift left = .4em, "h" above left]
            \ar[rr, shift left = .4em, "h_!" above] &
        &
        \Psh\bbDelta
            \ar[ul, shift right = .4em, "u" above right]
            \ar[ll, shift left = .4em, "h^*" below, "\scriptstyle{\perp}" {sloped, above}]
        \end{tikzcd}
        \qquad
\begin{tikzcd} [column sep = small, row sep = large]
        &
        \Op_{\mathrm{col}}
            \ar[dl, shift left = .4em, "N" below right, "\scriptstyle{\perp}"
            {sloped, above}, hook']
            \ar[dr, shift right = .4em, "N_u" below left,
            "\scriptstyle{\perp}" {sloped, above}, hook] &
        \\
        \Psh{\bbOO_{\geq 1}}
            \ar[ur, shift left = .4em, "h" above left]
            \ar[rr, shift left = .4em, "h_!" above] &
        &
        \Psh\bbOmega ,
            \ar[ul, shift right = .4em, "u" above right]
            \ar[ll, shift left = .4em, "h^*" below, "\scriptstyle{\perp}" {sloped, above}]
        \end{tikzcd}
\end{equation}
where $\bbDelta$ is the category of simplices, $\bbOmega$ is the planar version
of Moerdijk and Weiss's category of \emph{dendrices} and $\bbOO_{\geq 1}$ is
the full subcategory of $\bbOO$ on opetopes of dimension $>0$. This gives a
direct comparison between the opetopic nerve of a category (resp. a planar
operad) and its corresponding well-known simplicial (resp. dendroidal) nerve.

This formalism seems to provide infinitely many types of $(k, n)$-opetopic
algebras. However this is not really the case, as the notion stabilises at the
level of combinads. Specifically, we show a phenomenon we call \emph{algebraic
trompe-l'\oe{}il}, where an $(k, n)$-opetopic algebra is entirely specified by
its underlying opetopic set and by a $(1, 3)$-opetopic algebra. In other words,
its algebraic data can be ``compressed'' into a $(1, 3)$-algebra (a combinad).
The intuition behind this is that  fundamentally, opetopes are just trees whose
nodes are themselves trees, and that once this is obtained at the level of
combinads, opetopic algebras can encode no further useful information.

\subsection{Outline}

We begin by recalling elements of the theory of polynomial functors and
polynomial monads in \cref{sec:polynomial-functors-and-monads}. This formalism
is the basis for the modern definition of opetopes and of the category of
opetopes \cite{Kock2010, Curien2018} that we survey in \cref{sec:opetopes}.
Section \ref{sec:algebraic-realization} contains the central constructions of
this article, namely those of opetopic algebras and coloured opetopic algebras,
as well as the definition of the opetopic nerve functor, which is a full
embedding of (coloured) opetopic algebras into opetopic sets. Section
\ref{sec:trome-loeil} is devoted to showing how the algebraic information
carried by opetopes turns out to be limited.

\subsection{Related work}

The $(k, n)$-opetopic algebras that we obtain are related to the $n$-coherent
$\calPP$-algebras of \cite{Baez1998} as follows: for $n \geq 1$, $(1,
n)$-opetopic algebras are precisely $1$-coherent $\calPP$-algebras for $\calPP$
the polynomial monad
$\polynomialinline{\bbOO_{n-1}}{E_n}{\bbOO_n}{\bbOO_{n-1}}$. We therefore do
not obtain all $n$-coherent $\calPP$-algebras with our framework, and this
means in particular that we cannot capture all the weak $n$-categories of
\cite{Baez1998} (except for $n=1$, which are just usual $1$-categories). This
is not unexpected, as weak $n$-categories are not defined just by
\emph{equations} on the opetopes of an opetopic set, but by its more subtle
\emph{universal} opetopes.

However, we are able to promote the triangles of
\eqref{eq:triangle-adjunctions} to Quillen equivalences of simplicial model
categories. This, along with a proof that opetopic spaces (i.e. simplicial
presheaves on $\bbOO$) model $(\infty,1)$-categories and planar
$\infty$-operads, will be the subject of the second paper of this series.

\subsection{Acknowledgments}

We are grateful to Pierre-Louis Curien for his patient guidance and reviews.
The second named author would like to thank Paul-Andr\'{e} Melli\`{e}s for
introducing him to nerve functors and monads with arities, Mathieu Anel for
discussions on Gabriel--Ulmer localisations, and Eric Finster for a discussion
related to \cref{prop:algebraic-realization:equivalence-P+-algebras}. The first
named author has received funding from the European Union's Horizon 2020
research and innovation program under the Marie Sklodowska--Curie grant
agreement No. 665850.

\subsection{Preliminary category theory}
\label{sec:preliminaries-category-theory}

We review relevant notions and basic results from the theory of locally
presentable categories. Original references are \cite{Gabriel1971,SGA4}, and
most of this material can be found in \cite{Adamek1994a}.




\subsubsection{Presheaves and nerve functors}
\label{sec:preliminaries-category-theory:prsh-nerve-functors}

For $\catCC \in \Cat$, we write $\Psh \catCC$\index{$\Psh\catCC$} for the
category of $\Set$-valued presheaves over $\catCC$, i.e. the category of
functors $\catCC^\op \longrightarrow \Set$ and natural transformations between
them. If $X \in \Psh\catCC$ is a presheaf, then its \emph{category of
elements}\index{category of elements} $\catCC / X$\index{$\catCC / X$|see
{category of elements}} is the comma category $\sfY / X$, where $\sfY : \catCC
\longhookrightarrow \Psh\catCC$ is the Yoneda embedding.

Let $\catCC \in \Cat$ and $F : \catCC \longrightarrow \catDD$ be a functor to a (not
necessarily small) category $\catDD$. Then the \emph{nerve functor associated
to $F$}\index{nerve} (also called the \emph{nerve of $F$}) is the functor $N_F
: \catDD \longrightarrow \Psh \catCC$\index{$N_F$|see {nerve}} mapping $d \in \catDD$ to
$\catDD(F -, d)$.
The functor $F$ is said to be \emph{dense}\index{dense functor} if for all $d
\in \catDD$, the colimit of $F / d \longrightarrow \catCC \xrightarrow{F}
\catDD$ exists in $\catDD$, and is canonically isomorphic to $d$. Equivalently,
$F$ is dense if and only if $N_F$ is fully faithful.

Let $i : \catCC \longrightarrow \catDD$ be a functor between small categories.
Then the precomposition functor $i^* : \Psh\catDD \longrightarrow \Psh\catCC$ has a left
adjoint $i_!$ and a right adjoint $i_*$, given by left and right Kan extension
along $i^\op$ respectively. If $i$ has a right adjoint $j$, then $i^* \dashv
j^*$, or equivalently, $i^* \cong j_!$. Note that the nerve of $i$ is the
functor $N_i = i^* \sfY_\catDD$, where $\sfY_\catDD : \catDD \longhookrightarrow \Psh\catDD$
is the Yoneda embedding. Recall that $i^*$ is the nerve of the functor
$\sfY_\catDD i$ and that $i_*$ is the nerve of the functor $N_i = i^*
\sfY_\catDD$, i.e. it is the nerve of the nerve of $i$.

\subsubsection{Orthogonality}
\label{sec:preliminaries-category-theory:orthogonality}

Let $\catCC$ be a category, and $l,r \in \catCC^\rightarrow$. We say that $l$
is \emph{left orthogonal to}\index{orthogonal} $r$ (equivalently, $r$ is
\emph{right orthogonal} to $l$), written $l \perp r$\index{$\perp$|see
{orthogonal}}, if for any solid commutative square as follows, there exists a
\emph{unique} dashed arrow making the two triangles commute (the relation
$\perp$ is also known as the \emph{unique lifting property}\index{unique
lifting property|see {orthogonal}}):
\[
    \begin{tikzcd}
        \cdot \ar[d, "l" left] \ar[r] & \cdot \ar[d, "r"] \\
        \cdot \ar[r] \ar[ur, dashed] & \cdot
    \end{tikzcd}
\]

If $\catCC$ has a terminal object $1$, then for all $X \in \catCC$, we write $l
\perp X$ if $l$ is left orthogonal to the unique map $X \longrightarrow 1$. Let
$\sfLL$ and $\sfRR$ be two classes of morphisms of $\catCC$. We write $\sfLL
\perp \sfRR$ if for all $l \in \sfLL$ and $r \in \sfRR$ we have $l \perp r$.
The class of all morphisms $f$ such that $\sfLL \perp f$ (resp. $f \perp
\sfRR$) is denoted $\sfLL^\perp$ (resp ${}^\perp \sfRR$).



\subsubsection{Localisations}

Let $\sfJJ$ be a class of morphisms of a category $\catCC$. Recall that the
\emph{localisation of $\catCC$ at $\sfJJ$}\index{localization} is a functor
$\gamma_\sfJJ : \catCC \longrightarrow \sfJJ\inv \catCC$ such that $\gamma_\sfJJ f$ is an
isomorphism for every $f \in \sfJJ$, and such that $\gamma_\sfJJ$ is universal
for this property. We say that $\sfJJ$ has the \emph{3-for-2}\index{3-for-2}
property when for every composable pair $\cdot \xrightarrow{f} \cdot \xrightarrow{g} \cdot$ of
morphisms in $\catCC$, if any two of $f$, $g$ and $gf$ are in $\sfJJ$, then so
is the third.


Assume now that $\catCC$ is a small category, and that $\sfJJ$ is a \emph{set}
(rather than a proper class) of morphisms of $\Psh\catCC$, and consider the
full subcategory $\catCC_\sfJJ \longhookrightarrow \Psh\catCC$ of all those $X \in
\Psh\catCC$ such that $\sfJJ \perp X$. A category is \emph{locally
presentable}\index{locally presentable category} if and only if it is
equivalent to one of the form $\catCC_\sfJJ$. The pair $({}^\perp
(\sfJJ^\perp), \sfJJ^\perp)$ forms an \emph{orthogonal factorisation
system}\index{orthogonal factorization system}, meaning that any morphism $f$
in $\Psh\catCC$ can be factored as $f = p i$, where $p \in \sfJJ^\perp$ and $i
\in {}^\perp (\sfJJ^\perp)$. Applied to the unique arrow $X \longrightarrow 1$, this
factorisation provides a left adjoint (i.e. a reflection) $a_\sfJJ : \Psh\catCC
\longrightarrow \catCC_\sfJJ$ to the inclusion $\catCC_\sfJJ \longhookrightarrow \Psh\catCC$.
Furthermore, $a_\sfJJ$ is the localisation of $\Psh\catCC$ at $\sfJJ$.
\footnote{The results of this paragraph still hold when $\Psh\catCC$ is
replaced by a locally $\kappa$-presentable category $\catEE$ and a set $\sfKK$
of $\kappa$-small morphisms of $\catEE$. We call a localisation of the form
$a_\sfKK : \catEE \longrightarrow \catEE_\sfKK$ the \emph{Gabriel--Ulmer
localisation}\index{Gabriel--Ulmer localisation} of $\catEE$ at $\sfKK$.}

With $\catCC$ and $\sfJJ$ as in the previous paragraph, the class of
\emph{$\sfJJ$-local isomorphisms}\index{local isomorphism} $\sfWW_\sfJJ$ is the
class of all morphisms $f \in \Psh\catCC^\rightarrow$ such that for all $X \in
\catCC_\sfJJ$, $f \perp X$ (that is, $\sfWW_\sfJJ = {}^\perp \catCC_\sfJJ$). It
is the smallest class of morphisms that contains $\sfJJ$, that satisfies the
3-for-2 property, and that is closed under colimits in $\Psh\catCC^\rightarrow$
\cite[theorem 8.5]{Gabriel1971}. Thus the localisation $a_\sfJJ$ is also the
localisation of $\Psh\catCC$ at $\sfWW_\sfJJ$. Furthermore, $\sfWW_\sfJJ$ is
closed under pushouts.




\subsubsection{Projective sketches}

A \emph{projective sketch}\index{projective sketch} is the data of a $\catCC
\in \Cat$ and a set $K$ of cones in $\catCC$. For $\catBB$ a category with all
limits, the category of \emph{models in $\catBB$ of $(\catCC, K)$} is the
category of functors $\catCC \longrightarrow \catBB$ that take each cone in $K$ to a limit
cone, and natural transformations between them. If $(\catCC, K)$ is a
projective sketch, then equivalently, $K$ can be seen as a set of cocones in
$\catCC^\op$, or as a set of sub-representables (subobjects of representables)
in $\Psh{\catCC^\op}$. Let $\catCC^\op _K \longhookrightarrow \Psh{\catCC^\op}$ be the full
subcategory of all $X \in \Psh{\catCC^\op}$ such that $K \perp X$. Then
$\catCC^\op_K$ is precisely the category of models in $\Set$ of the projective
sketch $(\catCC, K)$. Let $\kappa$ be a regular cardinal, and let $(\catCC, K)$
be a projective sketch in which each cone in $K$ is a $\kappa$-small diagram.
Then $\catCC^\op_K$ is a locally $\kappa$-presentable category, and the
inclusion $\catCC^\op_K \longhookrightarrow \Psh{\catCC^\op}$ preserves $\kappa$-filtered
colimits.


\section{Polynomial functors and polynomial monads}
\label{sec:polynomial-functors-and-monads}

We survey elements of the theory of polynomial functors, trees, and monads. For
more comprehensive references, see \cite{Kock2011, Gambino2013}.

\subsection{Polynomial functors}

\begin{definition}
    [Polynomial functor]
    \label{def:polynomial-functor}
    A \emph{polynomial (endo)functor $P$ over $I$}\index{polynomial
    endofunctor} is a diagram in $\Set$ of the form
    \begin{equation}
        \label{eq:polynomial-functor}
        \polynomialfunctor{I}{E}{B}{I.}{s}{p}{t}
    \end{equation}
    $P$ is said to be \emph{finitary}\index{finitary polynomial functor} if the
    fibres of $p:E \longrightarrow B$ are finite sets. We will always assume
    polynomial functors to be finitary.

    We use the following terminology for a polynomial functor $P$ as in
    \cref{eq:polynomial-functor}, which is motivated by the intuition that a
    polynomial functor encodes a multi-sorted signature of function symbols.
    The elements of $B$ are called the \emph{nodes}\index{nodes} or
    \emph{operations}\index{operations} of $P$, and for every node $b$, the
    elements of the fibre $E (b) \eqdef p^{-1} (b)$ are called the
    \emph{inputs}\index{inputs} of $b$. The elements of $I$ are called the
    \emph{colours}\index{colours} or \emph{sorts}\index{sorts} of $P$. For
    every input $e$ of a node $b$, we denote its colour by $s_e (b) \eqdef s
    (e)$.
    \[
        \tikzinput{polynomial-functors}{operation}
    \]
\end{definition}

\begin{definition}
    [Morphism of polynomial functor]
    \label{def:polynomial-endofunctor-morphism}
    A morphism from a polynomial functor $P$ over $I$ (as in
    \cref{eq:polynomial-functor}) to a polynomial functor $P'$ over $I'$ (on
    the second row) is a commutative diagram of the form
    \[
        \begin{tikzcd}
            I \ar[d, "f_0"'] &
            E \pullbackcorner \ar[r, "p"] \ar[d, "f_2" left] \ar[l, "s" above] &
            B \ar[r, "t"] \ar[d, "f_1" left] &
            I \ar[d, "f_0" left] \\
            I' &
            E' \ar[r, "p'"] \ar[l, "s'" above] &
            B' \ar[r, "t"] &
            I'
        \end{tikzcd}
    \]
    where the middle square is cartesian (i.e. is a pullback square). If $P$
    and $P'$ are both polynomial functors over $I$, then a morphism from $P$ to
    $P'$ \emph{over $I$} is a commutative diagram as above, but where $f_0$ is
    required to be the identity. Let $\PolyEnd$\index{$\PolyEnd$} denote the
    category of polynomial functors and morphisms of polynomial functors, and
    $\PolyEnd (I)$\index{$\PolyEnd (I)$} the category of polynomial functors
    over $I$ and morphisms of polynomial functors over $I$.
\end{definition}

\begin{remark}
    [Polynomial functors really are functors!]
    \label{rem:polynomial-functor-functor}
    The term ``polynomial (endo)functor'' is due to the association of $P$ to
    the composite endofunctor
    \[
        \Set / I
        \stackrel{s^*}{\longhookrightarrow} \Set / E
        \stackrel{p_*}{\longhookrightarrow} \Set / B
        \stackrel{t_!}{\longhookrightarrow} \Set / I
    \]
    where we have denoted $a_!$ and $a_*$ the left and right adjoints to the
    pullback functor $a^*$ along a map of sets $a$. Explicitly, for $(X_i \mid
    i \in I) \in \Set / I$, $P(X)$ is given by the ``polynomial''
    \begin{equation}
        \label{eq:polynomial-functor:eval}
        P (X) = \left( \sum_{b \in B(j)} \prod_{e \in E (b)} X_{s(e)} \mid j \in I \right) ,
    \end{equation}
    where $B (i) \eqdef t^{-1} (i)$ and $E (b) = p^{-1} (b)$. Visually,
    elements of $P (X)_j$ are nodes $b \in B$ such that $t b = j$, and whose
    inputs are decorated by elements of $(X_i \mid i \in I)$ in a manner
    compatible with their colours. Graphically, an element of $PX_i$ can be
    represented as
    \[
        \tikzinput{polynomial-functors}{evaluation}
    \]
    with $b \in B$ such that $t(b) = i$, and $x_j \in X_{s_{e_j} b}$ for $1
    \leq j \leq k$. Moreover, the endofunctor $P: \Set / I \longrightarrow \Set / I$
    preserves connected limits: $s^*$ and $p_*$ preserve all limits (as right
    adjoints), and $t_!$ preserves and reflects connected limits.

    This construction extends to a fully faithful functor $\PolyEnd(I) \longrightarrow
    \Cart (\Set / I)$, the latter being the category of endofunctors of $\Set /
    I$ and cartesian natural transformations\footnote{We recall that a natural
    transformation is \emph{cartesian}\index{cartesian natural transformation}
    if all its naturality squares are cartesian.}. In fact, the image of this
    full embedding consists precisely of those endofunctors that preserve
    connected limits \cite[section 1.18]{Gambino2013}. The composition of
    endofunctors gives $\Cart (\Set / I)$ the structure of a monoidal category,
    and $\PolyEnd(I)$ is stable under this monoidal product \cite[proposition
    1.12]{Gambino2013}. The identity polynomial functor
    $\polynomialinline{I}{I}{I}{I}$ is associated to the identity endofunctor;
    thus $\PolyEnd(I)$ is a monoidal subcategory of $\Cart (\Set / I)$.
\end{remark}

\subsection{Trees}
\label{sec:trees}

\begin{definition}
    [Polynomial tree]
    \label{def:polynomial-tree}
    A polynomial functor $T$ given by
    \[
        \polynomialfunctor{T_0}{T_2}{T_1}{T_0}{s}{p}{t}
    \]
    is a \emph{(polynomial) tree}\index{polynomial tree} \cite[section
    1.0.3]{Kock2011} if
    \begin{enumerate}
        \item the sets $T_0$, $T_1$ and $T_2$ are finite (in particular, each
        node has finitely many inputs);
        \item the map $t$ is injective;
        \item the map $s$ is injective, and the complement of its image $T_0 -
        \im s$ has a single element, called the \emph{root}\index{root};
        \item let $T_0 = T_2 + \{ r \}$, with $r$ the root, and define the
        \emph{walk-to-root}\index{walk-to-root function} function $\sigma$ by
        $\sigma (r) = r$, and otherwise $\sigma (e) = t p (e)$; then we ask
        that for all $x \in T_0$, there exists $k \in \bbNN$ such that
        $\sigma^k (x) = r$.
    \end{enumerate}
    We call the colours of a tree its \emph{edges}\index{edge} and the inputs
    of a node the \emph{input edges}\index{input edge} of that node.

    Let $\Tree$\index{$\Tree$} be the full subcategory of $\PolyEnd$ whose
    objects are trees. Note that it is the category of \emph{symmetric} or
    \emph{non-planar} trees (the automorphism group of a tree is in general
    non-trivial) and that its morphisms correspond to inclusions of non-planar
    subtrees. An \emph{elementary tree}\index{elementary polynomial tree} is a
    tree with at most one node. Let $\elTree$\index{$\elTree$} be the full
    subcategory of $\Tree$ spanned by elementary trees.
\end{definition}

\begin{definition}
    [$P$-tree]
    \label{def:p-tree}
    For $P \in \PolyEnd$, the category $\tree P$\index{$\tree P$} of
    \emph{$P$-trees}\index{$P$-tree} is the slice $\Tree / P$. The fundamental
    difference between $\Tree$ and $\tree P$ is that the latter is always rigid
    i.e. it has no non-trivial automorphisms \cite[proposition
    1.2.3]{Kock2011}. In particular, this implies that $\PolyEnd$ does not have
    a terminal object.
\end{definition}

\begin{notation}
    \label{not:p-tree}
    Every $P$-tree $T \in \tree P$ corresponds to a morphism from a tree (which
    we shall denote by $\underlyingtree{T}$\index{$\underlyingtree{T}$}) to
    $P$, so that $T : \underlyingtree{T} \longrightarrow P$. We point out that
    $\underlyingtree{T}_1$ is the set of nodes of $\underlyingtree{T}$, while
    $T_1 : \underlyingtree{T}_1 \longrightarrow P_1$ is a decoration of the
    nodes of $\underlyingtree{T}$ by nodes of $P$, and likewise for edges.
\end{notation}

\begin{definition}
    [Category of elements]
    \label{def:category-of-elements}
    For $P \in \PolyEnd$, its \emph{category of elements}\index{category of
    elements of a polynomial functor}\footnote{Not to be confused with the
    category of elements of a presheaf over some category.} $\elt
    P$\index{$\elt P$|see {category of elements of a polynomial functor}} is
    the slice $\elTree / P$. Explicitly, for $P$ as in
    \cref{eq:polynomial-functor}, the set of objects of $\elt P$ is $I + B$,
    and for each $b \in B$, there is a morphism $\tgt : t(b) \longrightarrow
    b$\index{$\tgt$}, and a morphism $\src_e : s_e(b) \longrightarrow
    b$\index{$\src_e$} for each $e \in E (b)$. Remark that there is no
    non-trivial composition of arrows in $\elt P$.
\end{definition}

\begin{proposition}
    [{\cite[proposition 2.1.3]{Kock2011}}]
    \label{prop:category-of-elements-slice}
    There is an equivalence of categories $\Psh{\elt P} \simeq \PolyEnd/P$.
\end{proposition}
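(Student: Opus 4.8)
The plan is to construct an equivalence $\Psh{\elt P} \simeq \PolyEnd/P$ by unwinding both sides into concrete combinatorial data and exhibiting mutually inverse functors. First I would recall from \cref{def:category-of-elements} that $\elt P$ has object set $I + B$, with, for each node $b \in B$, a morphism $\tgt : t(b) \to b$ and morphisms $\src_e : s_e(b) \to b$ for $e \in E(b)$, and no nontrivial composites. Hence a presheaf $X \in \Psh{\elt P}$ is exactly the data of: a set $X_i$ for each colour $i \in I$, a set $X_b$ for each node $b \in B$, and for each $b$ a function $X_{\tgt} : X_b \to X_{t(b)}$ together with functions $X_{\src_e} : X_b \to X_{s_e(b)}$ for every input $e \in E(b)$. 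On the other side, an object of $\PolyEnd/P$ is a polynomial functor $Q = (J \xleftarrow{s'} F \xrightarrow{p'} C \xrightarrow{t'} J)$ together with a morphism $Q \to P$, i.e. maps $f_0 : J \to I$, $f_1 : C \to B$, $f_2 : F \to E$ with the source/target squares commuting and the middle square $F \to C$, $F \to E$, $C \to B$ cartesian.

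Next I would define the functor $\Phi : \PolyEnd/P \to \Psh{\elt P}$. Given $(Q \to P)$ as above, the cartesianness of the middle square means $F \cong C \times_B E$, so $F$ and hence the entire polynomial $Q$ is determined by $f_0 : J \to I$ and $f_1 : C \to B$ alone: over a node $b$, the inputs of any $c \in C$ with $f_1(c) = b$ are canonically identified with $E(b)$. I set $\Phi(Q \to P)_i \eqdef f_0^{-1}(i)$ for $i \in I$ and $\Phi(Q \to P)_b \eqdef f_1^{-1}(b)$ for $b \in B$; the target map $t' : C \to J$ restricts, over $b$, to a function $f_1^{-1}(b) \to f_0^{-1}(t(b))$ giving $X_{\tgt}$, and the source map $s'$ composed with the identification of inputs gives, for each $e \in E(b)$, a function $f_1^{-1}(b) \to f_0^{-1}(s_e(b))$ giving $X_{\src_e}$. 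Conversely I define $\Psi : \Psh{\elt P} \to \PolyEnd/P$ by setting $J \eqdef \sum_{i \in I} X_i$, $C \eqdef \sum_{b \in B} X_b$, $F \eqdef \sum_{b \in B} \sum_{x \in X_b} E(b)$, with $p'$ the evident projection (whose fibres are the $E(b)$, hence finite, so $Q$ is finitary), $t'$ built from the $X_{\tgt}$, and $s'$ built from the $X_{\src_e}$; the maps $f_0, f_1, f_2$ to $P$ are the canonical projections forgetting the $X$-labels, and one checks directly that the middle square is cartesian by construction.

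Then I would check that $\Phi$ and $\Psi$ are functorial — a morphism of presheaves is a family of compatible maps $X_i \to X'_i$, $X_b \to X'_b$, and this corresponds under $\Psi$ to a morphism of polynomial functors over $P$ (which, by the same cartesianness observation, is precisely a pair of maps $J \to J'$, $C \to C'$ over $I$ and $B$ commuting with the structure maps) — and that the two composites $\Phi\Psi$ and $\Psi\Phi$ are naturally isomorphic to the respective identities. Both verifications are essentially a matter of chasing the canonical identifications $F \cong C \times_B E$ and the disjoint-union decompositions, and I would present them as a direct check rather than a long computation. I expect the only real subtlety — the ``main obstacle'' in the sense of the one place where care is needed — to be the bookkeeping around the cartesian square: one must verify both that $\Phi$ is well-defined on \emph{morphisms} (a morphism $Q \to Q'$ over $P$ automatically respects the input identifications because both middle squares are pullbacks over the same $E \to B$) and that $\Psi$ lands among genuine polynomial-functor morphisms, i.e. that the square it produces is cartesian and not merely commuting. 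Everything else is routine, and the finitarity hypothesis is preserved precisely because the fibres of $p'$ are the finite sets $E(b)$.
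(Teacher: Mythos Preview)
Your proof is correct and follows essentially the same strategy as the paper: both build the functor $\Psh{\elt P} \to \PolyEnd/P$ by the same explicit disjoint-union-and-pullback formula, and both invert it by reading off fibres over colours and nodes. The only cosmetic difference is that the paper packages the inverse direction as the nerve of the full inclusion $\iota : \elt P \hookrightarrow \PolyEnd/P$ (induced by $\elTree \hookrightarrow \PolyEnd$), which, once unwound at the objects $i \in I$ and $b \in B$, is precisely your fibre description $\Phi(Q\to P)_i = f_0^{-1}(i)$, $\Phi(Q\to P)_b = f_1^{-1}(b)$.
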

\begin{proof}
    For $X \in \Psh{\elt P}$, construct the following polynomial functor over $P$:
    \[
        \begin{tikzcd}
            \sum_{i \in I} X_i
                \ar[d] &
            E_X
                \pullbackcorner \ar[l] \ar[r] \ar[d] &
            \sum_{b \in B} X_b
                \ar[r] \ar[d] &
            \sum_{i \in I} X_i
                \ar[d] \\
            I &
            E
                \ar[l] \ar[r] &
            B
                \ar[r] & I ,
        \end{tikzcd}
    \]
    where $E_X \longrightarrow \sum_{i \in I}X_i$ is given by the maps
    $X_{\src_e} : X_b \longrightarrow X_{\src_e b}$, for $b \in B$ and $e \in E
    (b)$. In the other direction, note that the full inclusion $\elTree
    \longhookrightarrow \PolyEnd$ induces a full inclusion $\iota : \elt P
    \longhookrightarrow \PolyEnd / P$ whose nerve functor $\PolyEnd / P
    \longrightarrow \Psh{\elt P}$ maps $Q \in \PolyEnd / P$ to $\PolyEnd / P
    (\iota -, Q)$. The two constructions are easily seen to define the required
    equivalence of categories.
\end{proof}

\subsection{Addresses}
\label{sec:polynomial-tree:addresses}

\begin{definition}
    [Address]
    \label{def:address}
    Let $T \in \Tree$ be a polynomial tree and $\sigma$ be its walk-to-root
    function (\cref{def:polynomial-tree}). We define the
    \emph{address}\index{address} function $\addr$\index{$\addr$|see {address}}
    on edges inductively as follows:
    \begin{enumerate}
        \item if $r$ is the root edge, let $\addr r \eqdef []$,
        \item if $e \in T_0 - \{ r \}$ and if $\addr \sigma (e) = [x]$, define
        $\addr e \eqdef [x e]$.
    \end{enumerate}
    The address of a node $b \in T_1$ is defined as $\addr b \eqdef \addr t
    (b)$. Note that this function is injective since $t$ is. Let
    $T^\nodesymbol$\index{$T^\nodesymbol$|see {node address}} denote its image,
    the set of \emph{node addresses}\index{node address} of $T$, and let
    $T^\leafsymbol$\index{$T^\leafsymbol$|see {leaf address}} be the set of
    addresses of leaf edges\index{leaf address}, i.e. those not in the image of
    $t$.

    Assume now that $T : \underlyingtree{T} \longrightarrow P$ is a $P$-tree.
    If $b \in \underlyingtree{T}_1$ has address $\addr b = [p]$, write
    $\src_{[p]} T \eqdef T_1 (b)$. For convenience, we let $T^\nodesymbol
    \eqdef \underlyingtree{T}^\nodesymbol$, and $T^\leafsymbol \eqdef
    \underlyingtree{T}^\leafsymbol$.
\end{definition}

\begin{remark}
    The formalism of addresses is a useful bookkeeping syntax for the
    operations of grafting and substitution on trees. The syntax of addresses
    will extend to the category of opetopes and will allow us to give a precise
    description of the composition of morphisms in the category of opetopes
    (see \cref{def:o}) as well as certain constructions on opetopic
    sets.
\end{remark}

\begin{notation}
    \label{not:marked-tree}
    We denote by $\treewithleaf P$\index{$\treewithleaf P$} the set of
    $P$-trees with a marked leaf, i.e. endowed with the address of one of its
    leaves. Similarly, we denote by $\treewithnode P$\index{$\treewithnode P$}
    the set of $P$-trees with a marked node.
\end{notation}


\subsection{Grafting}

\begin{definition}
    [Elementary $P$-trees]
    \label{def:elementary-p-trees}
    Let $P$ be a polynomial endofunctor as in equation
    \eqref{eq:polynomial-functor}. For $i \in I$, define $\itree{i} \in \tr
    P$\index{$\itree{i}$} as having underlying tree
    \begin{equation}
        \label{eq:polynomial-functor:itree}
        \polynomialfunctor
            {\{i\}}{\emptyset}{\emptyset}{\{i \},}
            {}{}{}
    \end{equation}
    along with the obvious morphism to $P$, that which maps $i$ to $i \in I$.
    This corresponds to a tree with no nodes and a unique edge decorated by
    $i$. Define $\ytree{b} \in \tr P$\index{$\ytree{b}$}, the
    \emph{corolla}\index{corolla} at $b$, as having underlying tree
    \begin{equation}
        \label{eq:polynomial-functor:ytree}
        \polynomialfunctor
            {s(E(b)) + \{*\}}{E (b)}{\{b \}}{s(E(b)) + \{*\},}
            {s}{}{}
    \end{equation}
    where the right map sends $b$ to $*$, and where the morphism $\ytree{b}
    \longrightarrow P$ is the identity on $s (E (b)) \subseteq I$, maps $*$ to
    $t (b) \in I$, is the identity on $E (b) \subseteq E$, and maps $b$ to $b
    \in B$. This corresponds to a $P$-tree with a unique node, decorated by
    $b$. Observe that for $T \in \tr P$, giving a morphism $\itree{i}
    \longrightarrow T$ is equivalent to specifying the address $[p]$ of an edge
    of $T$ decorated by $i$. Likewise, morphisms of the form $\ytree{b}
    \longrightarrow T$ are in bijection with addresses of nodes of $T$
    decorated by $b$.
\end{definition}

\begin{definition}
    [Grafting]
    \label{def:grafting}
    For $S, T \in \tr P$, $[l] \in S^\leafsymbol$ such that the leaf of $S$ at
    $[l]$ and the root edge of $T$ are decorated by the same $i \in I$, define
    the \emph{grafting}\index{grafting} $S \graft_{[l]} T$\index{$S
    \graft_{[l]} T$|see {grafting}} of $S$ and $T$ on $[l]$ by the following
    pushout (in $\tree P$):
    \begin{equation}
        \label{eq:polynomial-functor:grafting}
        \pushoutdiagram
            {\itree{i}}{T}{S}{S \graft_{[l]} T .}
            {[]}{[l]}{}{}
    \end{equation}
    Note that if $S$ (resp. $T$) is a trivial tree, then $S \graft_{[l]} T = T$
    (resp. $S$). We assume, by convention, that the grafting operator $\circ$
    associates to the right.
\end{definition}

\begin{proposition}
    [{\cite[proposition 1.1.21]{Kock2011}}]
    \label{prop:polynomial-functor:trees-are-graftings}
    Every $P$-tree is either of the form $\itree{i}$, for some $i \in I$, or
    obtained by iterated graftings of corollas (i.e. $P$-trees of the form
    $\ytree{b}$ for $b \in B$).
\end{proposition}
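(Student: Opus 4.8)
The plan is to proceed by induction on the number of nodes of a $P$-tree $T$, using the combinatorial structure of polynomial trees from \cref{def:polynomial-tree} together with the grafting pushout of \cref{def:grafting}.

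First I would handle the base case: if $\underlyingtree{T}$ has no nodes, i.e.\ $T_1 = \emptyset$, then conditions (2)--(4) of \cref{def:polynomial-tree} force $T_0$ to be a single edge (the root), since $s$ is injective with singleton complement and the walk-to-root condition is vacuous; this edge is decorated by some $i \in I$, so $T \cong \itree{i}$. For the inductive step, suppose $\underlyingtree{T}$ has $n \geq 1$ nodes. The walk-to-root structure gives a canonical node $b_0$ sitting at the root: concretely, the root edge $r$ is not a leaf (else $T$ would have no nodes incident to $r$, contradicting finiteness of the walk-to-root together with $n \geq 1$), so $r = t(b_0)$ for a unique $b_0 \in \underlyingtree{T}_1$, with decoration $b = T_1(b_0) \in B$. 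I would then exhibit $T$ as a grafting $\ytree{b} \graft \dotsb$ onto the corolla $\ytree{b}$ at its various leaves: each input edge $e$ of $b_0$ is the root of a uniquely determined subtree $T_e$ of $T$ (the set of edges $x$ with $\sigma^k(x) = e$ for some $k$, never passing through $r$ first — one checks this is again a polynomial tree satisfying (1)--(4)), and each $T_e$ has strictly fewer than $n$ nodes.

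Concretely, I would iterate the binary grafting operator over the leaves of $\ytree{b}$ — which by \cref{def:elementary-p-trees} are indexed by $E(b)$ — writing
\[
    T \;\cong\; \bigl( \dotsb \bigl( \ytree{b} \graft_{[e_1]} T_{e_1} \bigr) \graft_{[e_2]} T_{e_2} \dotsb \bigr) \graft_{[e_k]} T_{e_k},
\]
where $E(b) = \{e_1, \dots, e_k\}$ and $T_{e_j}$ is the branch subtree at $e_j$. That this is well-defined and independent of the chosen ordering follows since the grafts happen at distinct leaves of the corolla, so the pushouts of \cref{eq:polynomial-functor:grafting} commute with one another. Applying the inductive hypothesis to each $T_{e_j}$ (each is either some $\itree{i}$ — in which case that graft is trivial by the remark in \cref{def:grafting} — or an iterated grafting of corollas) and substituting, we obtain $T$ itself as an iterated grafting of corollas, completing the induction.

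The main obstacle is the bookkeeping in the inductive step: one must verify carefully that the branch at an input edge $e$ of the root node really does inherit the structure of a polynomial tree (finiteness is clear, injectivity of $s$ and $t$ is inherited, but the "unique element in $T_0 - \im s$" condition requires identifying $e$ itself as the new root and checking nothing else has been orphaned), and that the pushout defining grafting reconstructs $T$ on the nose. This is essentially the content of showing that cutting at the root node and regrafting is an inverse pair of operations, which is routine given the walk-to-root formalism but is where all the actual verification lies; once it is in place, the induction and the reduction to corollas are immediate. I would lean on \cite[proposition 1.1.21]{Kock2011} for the precise form of these manipulations rather than redoing them from scratch.
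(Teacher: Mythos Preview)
Your proposal is correct and follows the standard induction-on-nodes argument; the paper itself does not supply a proof of this proposition but simply cites \cite[proposition 1.1.21]{Kock2011}, whose argument is essentially the one you outline.
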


\begin{notation}
    [Total grafting]
    \label{not:total-grafting}
    Take $T, U_1, \ldots, U_k \in \tr P$, where $T^\leafsymbol = \left\{ [l_1],
    \ldots, [l_k] \right\}$, and assume the grafting $T \graft_{[l_i]} U_i$ is
    defined for all $i$. Then the \emph{total grafting} will be denoted
    concisely by
    \begin{equation}
        \label{eq:big-grafting}
        T \biggraft_{[l_i]} U_i
        =
        ( \cdots (T \graft_{[l_1]} U_1) \graft_{[l_2]} U_2 \cdots) \graft_{[l_k]} U_k .
    \end{equation}\index{$T \biggraft_{[l_i]} U_i$}
    It is easy to see that the result does not depend on the order in which the
    graftings are performed.
\end{notation}

\begin{definition}
    [Substitution]
    \label{def:substitution}
    Let $[p] \in T^\nodesymbol$ and $b = \src_{[p]} T$. Then $T$ can be
    decomposed as
    \begin{equation}
        \label{eq:node-decomposition}
        T = A \graft_{[p]} \ytree{b} \biggraft_{[e_i]} B_i ,
    \end{equation}
    where $E (b) = \{e_1, \ldots, e_k\}$, and $A, B_1, \ldots, B_k \in \tree
    P$. For $U$ a $P$-tree with a bijection $\readdress : U^\leafsymbol
    \longrightarrow E (b)$ over $I$, we define the
    \emph{substitution}\index{substitution} $T \subst_{[p]} U$\index{$T
    \subst_{[p]} U$|see {substitution}} as
    \begin{equation}
        \label{eq:subst}
        T \subst_{[p]} U
        \eqdef
        A \graft_{[p]} U \biggraft_{\readdress^{-1} e_i} B_i .
    \end{equation}
    In other words, the node at address $[p]$ in $T$ has been replaced by $U$,
    and the map $\readdress$ provided ``rewiring instructions'' to connect the
    leaves of $U$ to the rest of $T$.
\end{definition}


\subsection{Polynomial monads}
\label{sec:polynomial:monads}

\begin{definition}
    [Polynomial monad]
    \label{def:polynomial-monad}
    A \emph{polynomial monad over $I$}\index{polynomial monad} is a monoid in
    $\PolyEnd(I)$. Note that a polynomial monad over $I$ is thus necessarily a
    cartesian monad on $\Set/I$.\footnote{We recall that a monad is
    \emph{cartesian}\index{cartesian monad} if its endofunctor preserves
    pullbacks and its unit and multiplication are cartesian natural
    transformations.} Let $\PolyMnd(I)$\index{$\PolyMnd(I)$} be the category of
    monoids in $\PolyEnd(I)$. That is, $\PolyMnd(I)$ is the category of
    polynomial monads over $I$ and morphisms of polynomial functors over $I$
    that are also monad morphisms.
\end{definition}

\begin{definition}
    [$(-)^\star$ construction]
    \label{def:star-construction}
    Given a polynomial endofunctor $P$ as in equation
    \eqref{eq:polynomial-functor}, we define a new polynomial endofunctor
    $P^\star$\index{$P^\star$} as
    \begin{equation}
        \label{eq:free-monad}
        \polynomialfunctor
            {I}{\treewithleaf P}{\tree P}{I}
            {s}{p}{t}
    \end{equation}
    where $s$ maps a $P$-tree with a marked leaf to the decoration of that
    leaf, $p$ forgets the marking, and $t$ maps a tree to the decoration of its
    root. Remark that for $T \in \tree P$ we have $p^{-1} T = T^\leafsymbol$.
\end{definition}

\begin{theorem}
    [{\cite[section 1.2.7]{Kock2011}, \cite[sections 2.7 to 2.9]{Kock2010}}]
    \label{th:polynomial-monads-star-algebras}
    The polynomial functor $P^\star$ has a canonical structure of a polynomial
    monad. Furthermore, the functor $(-)^\star$ is left adjoint to the
    forgetful functor $\PolyMnd (I) \longrightarrow \PolyEnd (I)$, and the
    adjunction is monadic.
\end{theorem}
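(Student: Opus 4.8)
The plan is to establish the three assertions of the theorem in turn: that $P^\star$ carries a canonical monad structure, that $(-)^\star$ is left adjoint to the forgetful functor $U \colon \PolyMnd(I) \longrightarrow \PolyEnd(I)$, and that this adjunction is monadic. First I would describe the monad structure on $P^\star$ explicitly in terms of trees. The unit $\eta \colon \mathrm{Id} \longrightarrow P^\star$ is the morphism of polynomial functors over $I$ that, on operations, sends $i \in I$ to the trivial tree $\itree{i} \in \tree P$; one checks this is a morphism of polynomial functors by noting that $\itree{i}$ has no leaves, so the pullback condition is vacuous. The multiplication $\mu \colon P^\star P^\star \longrightarrow P^\star$ sends a $P^\star$-tree — that is, a $P$-tree whose nodes are themselves decorated by $P$-trees, with matching leaf/root colours along edges — to its \emph{total substitution}, obtained by iterating the substitution operation of \cref{def:substitution} at every node. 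Here I would lean on \cref{prop:polynomial-functor:trees-are-graftings} to argue that every such tree of trees is built from corollas by grafting, so that $\mu$ is determined by its behaviour on graftings, and on \cref{not:total-grafting} for the order-independence that makes the definition unambiguous. The associativity and unit axioms for $\mu$ then reduce to the statement that substitution is associative and unital, which is a combinatorial fact about the tree calculus set up in \cref{sec:trees} and \cref{def:substitution}; I would verify these by a direct (if tedious) induction on trees, using addresses as bookkeeping.

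Next, for the adjunction, I would exhibit the universal property directly. Given a polynomial monad $Q$ over $I$ and a morphism $f \colon P \longrightarrow UQ$ of polynomial functors over $I$, I must produce a unique monad morphism $\bar f \colon P^\star \longrightarrow Q$ with $\bar f \circ \eta_P = f$ (identifying $P$ with corollas, i.e. one-node $P$-trees). The map $\bar f$ is defined on a $P$-tree $T$ by induction on its structure as a grafting of corollas: a corolla $\ytree{b}$ is sent to $f(b) \in Q_1$, a trivial tree $\itree{i}$ is sent to the unit of $Q$ at $i$, and a grafting is sent to the corresponding iterated multiplication in $Q$ of the images of the pieces. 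The fact that this is well-defined (independent of the chosen grafting decomposition) uses the associativity and unit laws of $Q$; that it is a morphism of polynomial functors over $I$ uses that $f$ is; and that it is a monad morphism and the unique one through $\eta_P$ follows by another induction. Naturality in $P$ and $Q$ is then routine.

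For monadicity I would invoke Beck's monadicity theorem applied to $U \colon \PolyMnd(I) \longrightarrow \PolyEnd(I)$. The functor $U$ is conservative essentially by definition — a morphism of polynomial monads over $I$ is an isomorphism iff its underlying morphism of polynomial functors is — and $\PolyMnd(I)$, being a category of monoids in the monoidal category $\PolyEnd(I)$ (by \cref{def:polynomial-monad}, with the monoidal structure recalled in \cref{rem:polynomial-functor-functor}), has coequalisers of $U$-split pairs, which $U$ preserves; this is the standard argument that categories of monoids (equivalently, of algebras for the induced monad) inherit and preserve reflexive coequalisers of the relevant kind because the monoidal product on $\PolyEnd(I)$, being given by composition of connected-limit-preserving endofunctors, is suitably compatible with these colimits. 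Combined with the left adjoint $(-)^\star$ constructed above, Beck's theorem yields monadicity.

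I expect the main obstacle to be the careful verification that the total-substitution multiplication $\mu$ is associative and unital, i.e. that the tree calculus of Sections \ref{sec:trees}–\ref{sec:polynomial:monads} genuinely forms an operad-like structure: the inductions are over trees decorated by trees decorated by trees, the coherence between grafting, substitution, and the readdressing bijections $\readdress$ of \cref{def:substitution} must be tracked precisely, and the rigidity of $\tree P$ (\cref{def:p-tree}) must be used to rule out spurious automorphisms that would otherwise obstruct well-definedness. Everything else — the universal property and the monadicity — is then comparatively formal, modulo the colimit-preservation input for Beck's theorem.
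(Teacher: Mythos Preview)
The paper does not supply its own proof of this theorem: it is stated with a citation to \cite[section~1.2.7]{Kock2011} and \cite[sections~2.7--2.9]{Kock2010} and no argument is given. Your sketch is broadly the standard one found in those references --- unit via trivial trees, multiplication via flattening (substitution) of trees-of-trees, universal property by induction on graftings, and Beck's theorem for monadicity --- so there is no meaningful divergence in approach to report.

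One concrete slip: in your description of the monad unit you assert that $\itree{i}$ has no leaves, making the pullback condition vacuous. This is false. By \cref{eq:polynomial-functor:itree} the tree $\itree{i}$ has no nodes and a single edge, and since a leaf is an edge not in the image of $t$, that edge is a leaf; thus $\itree{i}^\leafsymbol = \{[]\}$. The middle square in the morphism $\mathrm{Id} \longrightarrow P^\star$ is still a pullback, but because the fibre over $i$ in the identity polynomial $I \leftarrow I \to I \to I$ also has exactly one element, not because both are empty. This matters: if $\itree{i}$ really had no leaves the unit would fail to be a morphism of polynomial functors. Separately, you silently switch from the monad unit $\mathrm{Id} \to P^\star$ (sending $i \mapsto \itree{i}$) to the adjunction unit $P \to P^\star$ (sending $b \mapsto \ytree{b}$) between your first and second paragraphs; both are correct, but the notation $\eta_P$ is overloaded and it would help to distinguish them.
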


\begin{definition}
    [Readdressing function]
    \label{def:readdressing}
    We abuse notation slightly by letting $(-)^\star$ denote the associated
    monad on $\PolyEnd (I)$. Let $M$ be a polynomial monad as on the left
    below. Buy \cref{th:polynomial-monads-star-algebras}, $M$ is a
    $(-)^\star$-algebra, and we will write its structure map $M^\star
    \longrightarrow M$ as on the right:
    \begin{equation}
        \label{eq:polynomial-monad-structure-map}
        \polynomialfunctor{I}{E}{B}{I,}{s}{p}{t}
        \qqquad
        \begin{tikzcd}
            I
                \ar[d, equal] &
            \treewithleaf M
                \pullbackcorner
                \ar[l] \ar[d, "\readdress"] \ar[r] &
            \tree M
                \ar[r] \ar[d, "\tgt"] &
            I
                \ar[d, equal]
            \\
            I &
            E
                \ar[l] \ar[r] &
            B \ar[r] &
            I.
        \end{tikzcd}
    \end{equation}
    where $r T$ is the decoration of the root edge of a tree $T \in \tree M$.
    We call $\readdress_T : T^\leafsymbol \stackrel{\cong}{\longrightarrow} E
    (r T)$\index{$\readdress$|see {readdressing}} the
    \emph{readdressing}\index{readdressing} function of $T$, and $\tgt T \in B$
    is called the \emph{target} of $T$. If we think of an element $b \in B$ as
    the corolla $\ytree{b}$, then the target map $\tgt$ ``contracts'' a tree to
    a corolla, and since the middle square is a pullback, the number of leaves
    is preserved. The map $\readdress_T$ establishes a coherent correspondence
    between the set $T^\leafsymbol$ of leaf addresses of a tree $T$ and the
    elements of $E(T)$.
\end{definition}


\subsection{The Baez--Dolan construction}
\label{sec:polynomial:bd+}

\begin{definition}
    [Baez--Dolan $(-)^+$ construction]
    \label{def:baez-dolan-construction}
    Let $M$ be a polynomial monad as in \cref{eq:polynomial-functor}, and
    define its \emph{Baez--Dolan construction}\index{Baez--Dolan construction}
    $M^+$\index{$M^+$|see {Baez--Dolan construction}} to be
    \begin{equation}
        \label{eq:polynomial-functor:+}
        \polynomialfunctor{B}{\treewithnode M}{\tree M}{B}{\src}{p}{\tgt}
    \end{equation}
    where $\src$ maps an $M$-tree with a marked node to the label of that node,
    $p$ forgets the marking, and $\tgt$ is the target map. If $T \in \tree M$,
    remark that $p^{-1} T = T^\nodesymbol$ is the set of node addresses of $T$.
    If $[p] \in T^\nodesymbol$, then $\src [p] \eqdef \src_{[p]} T$.
\end{definition}

\begin{theorem} [{\cite[section 3.2]{Kock2010}}]
    \label{th:polynomial-functor:+:is-monad}
    The polynomial functor $M^+$ has a canonical structure of a polynomial
    monad.
\end{theorem}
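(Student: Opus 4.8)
The plan is to verify directly that $M^+$ — which is a polynomial endofunctor over $B$, hence preserves pullbacks and connected limits, and for which $M^+ \circ M^+$ is again polynomial over $B$ — carries a unit and a multiplication that are morphisms of polynomial functors over $B$ and satisfy the monoid axioms in $\PolyEnd(B)$; by \cref{def:polynomial-monad} this is exactly a polynomial monad over $B$. The unit $\operatorname{id}_B \to M^+$ sends a colour $b \in B$ to the corolla $\ytree b \in \tree M$. This is a morphism of polynomial functors over $B$: the corolla $\ytree b$ has exactly one node, of address $[]$, so $p^{-1}(\ytree b) = \ytree b^\nodesymbol$ is a singleton and the defining square is a pullback; moreover $\src_{[]} \ytree b = b$ and $\tgt \ytree b = b$, the latter because the target (readdressing-contraction) of a corolla is the corolla itself, which is the unit axiom of the monad $M$.

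The multiplication $M^+ \circ M^+ \to M^+$ is total substitution of trees into the nodes of a tree. Unwinding the composite, an operation of $M^+ \circ M^+$ over $c \in B$ is an $M$-tree $T$ with $\tgt T = c$ together with, for each $[p] \in T^\nodesymbol$, an $M$-tree $U_{[p]}$ with $\tgt U_{[p]} = \src_{[p]} T$, its inputs being $\coprod_{[p] \in T^\nodesymbol} U_{[p]}^\nodesymbol$; we let $\mu$ replace, for every $[p]$, the node of $T$ at $[p]$ by $U_{[p]}$. The only non-formal point is that \cref{def:substitution} needs rewiring data, i.e. a bijection between the leaves of $U_{[p]}$ and the input edges $E(\src_{[p]} T)$ of the node being replaced; this is supplied canonically by the readdressing function $\readdress_{U_{[p]}} \colon U_{[p]}^\leafsymbol \xrightarrow{\ \cong\ } E(\tgt U_{[p]}) = E(\src_{[p]} T)$ of \cref{def:readdressing}, which exists precisely because $M$ is a monad and is a bijection over $I$ as required. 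As in \cref{not:total-grafting} one checks that substitutions at distinct node addresses commute, so the total substitution is unambiguous. That $\mu$ is a morphism of polynomial functors over $B$ then reduces to the evident facts that substitution preserves the target, and that replacing a node by a tree replaces that node's address by the readdressed node addresses of that tree, so that $\mu(T, (U_{[p]}))^\nodesymbol$ is naturally $\coprod_{[p]} U_{[p]}^\nodesymbol$ with matching source colours.

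For the axioms, unitality of $\mu$ reduces to $\ytree b \subst_{[]} U = U$ whenever $\tgt U = b$, and $T \subst_{[p]} \ytree{\src_{[p]} T} = T$ for every $[p] \in T^\nodesymbol$, both immediate from the definitions of grafting and substitution together with $\readdress_{\ytree b} = \operatorname{id}$. Associativity of $\mu$ reduces to the associativity of iterated substitution — informally, $(T \subst_{[p]} U) \subst_{([p],[q])} V = T \subst_{[p]} (U \subst_{[q]} V)$ — compatibly with the readdressing functions, in the sense that the readdressing of a substituted tree $U \subst_{[q]} V$ must agree with the appropriate composite of those of $U$ and $V$. This coherence is where the associativity axiom of the monad $M$, and not merely its underlying polynomial endofunctor, is used.

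The main obstacle is precisely this associativity of $\mu$: everything else is address bookkeeping, but one must genuinely prove that iterated tree substitution is associative and that the ambient readdressing functions compose coherently along it. One may hope to organise this conceptually — e.g. by relating substitution to the multiplication of $M$ and to the free-monad monad $(-)^\star$ of \cref{th:polynomial-monads-star-algebras}, or by recognising $M^+$-algebras as $M$-operads and transporting monoidal structure — but at minimum a combinatorial lemma on associativity of substitution, with its readdressing compatibility, has to be established, and I expect that to be the crux of the argument.
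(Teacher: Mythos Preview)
The paper does not give its own proof of this statement: it is recorded as a theorem with a citation to \cite[section~3.2]{Kock2010} and left at that. Your proposal is therefore not competing with any argument in the paper; it is a sketch of the argument one finds in the cited reference.

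As a sketch it is correct. The unit is indeed $b \mapsto \ytree{b}$, the multiplication is total substitution with rewiring supplied by the readdressing bijections $\readdress_{U_{[p]}}$ coming from the $(-)^\star$-algebra structure on $M$, and the unit laws reduce to the identities you state. You also identify the genuine content correctly: associativity of $\mu$ is the only non-bookkeeping step, and it rests on (i) associativity of iterated substitution at the level of underlying trees and (ii) the compatibility of readdressing under substitution, which is exactly where the monad associativity of $M$ (equivalently, the $(-)^\star$-algebra axiom for $M$) is invoked. This is precisely how \cite{Kock2010} organises the proof, so your outline matches the intended argument; what remains to turn it into a full proof is to write down and check the readdressing coherence lemma you allude to in your final paragraph.
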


\begin{remark}
    The $(-)^+$ construction is an endofunctor on $\PolyMnd$ whose definition
    is motivated as follows. If we begin with a polynomial monad $M$, then the
    colours of $M^+$ are the operations of $M$. The operations of $M^+$, along
    with their output colour, are given by the monad multiplication of $M$:
    they are the \emph{relations} of $M$, i.e. the reductions of trees of $M$
    to operations of $M$. The monad multiplication on $M^+$ is given as
    follows: the reduction of a tree of $M^+$ to an operation of $M^+$ (which
    is a tree of $M$) is obtained by \emph{substituting} trees of $M$ into
    nodes of trees of $M$.
\end{remark}

Let $M$ be a finitary (i.e. the fibers of $p$ below are finite, or
equivalently, $p_*$, and therefore $M = t_!p_*s^*$, preserves filtered
colimits) polynomial monad whose underlying polynomial functor is
\[
    \polynomialfunctor I E B {I.} s p t
\]
The Baez--Dolan construction gives the polynomial monad $M^+$ whose
underlying polynomial functor is
\[
    \polynomialfunctor B {\treewithnode M} {\tree M} {B.} \src p {\tgt}
\]
Recall also from \cref{th:polynomial-monads-star-algebras} that the category
$\PolyMnd (I)$ is the category of $(-)^\star$-algebras. The following fact is
analogous to \cref{prop:category-of-elements-slice} and is at the heart of the
Baez--Dolan construction (indeed, it is even the original \emph{definition} of
the construction, see \cite[definition 15]{Baez1998}).

\begin{proposition}
	\label{prop:algebraic-realization:equivalence-P+-algebras}
    For $M$ a polynomial monad, there is an equivalence of categories $\Alg
    (M^+) \simeq \PolyMnd(I) / M$.
\end{proposition}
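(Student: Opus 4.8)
The plan is to mimic the proof of \cref{prop:category-of-elements-slice} (Kock's proposition 2.1.3), since the statement $\Alg(M^+) \simeq \PolyMnd(I)/M$ is its monadic analogue: just as $P$-trees with a marked element give the "category of elements" $\elt P$ whose presheaves are $\PolyEnd/P$, the monad $M^+$ plays the role of the category of elements for the $(-)^\star$-algebra $M$, and its algebras should be $(-)^\star$-algebras over $M$, i.e. $\PolyMnd(I)/M$. Concretely, I would unwind what an $M^+$-algebra is. Since $M^+$ is the polynomial monad $\polynomialinline{B}{\treewithnode M}{\tree M}{B}$, an $M^+$-algebra is a family $X = (X_b \mid b \in B) \in \Set/B$ together with, for each $M$-tree $T$ with target $\tgt T = b$, a structure map sending a $T^\nodesymbol$-indexed family of elements $(x_{[p]} \in X_{\src_{[p]}T})$ to an element of $X_b$, subject to unit and associativity axioms governed by substitution of trees into nodes (as spelled out in the remark following \cref{th:polynomial-functor:+:is-monad}).

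First I would construct the functor $\PolyMnd(I)/M \longrightarrow \Alg(M^+)$. Given a polynomial monad morphism $q : N \to M$ over $I$ with $N$ having nodes $B_N$, the map $q_1 : B_N \to B$ exhibits $B_N \in \Set/B$; the $M^+$-algebra structure on $B_N$ is built from the multiplication of $N$: an $M$-tree $T$ with nodes decorated (via $q$) by operations of $N$ can be viewed, using cartesianness of $q$ (the middle square of a polynomial-functor morphism is a pullback, \cref{def:polynomial-endofunctor-morphism}), as an $N$-tree, which the multiplication $\mu_N : N^\star \to N$ contracts to a single operation of $N$ lying over $\tgt T$. Cartesianness is exactly what guarantees that the fibre data matches up: a $T^\nodesymbol$-family of operations of $N$ compatible with the $M$-decorations of $T$ is the same as a lift of $T$ to $\tree N$. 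The monad axioms for this $M^+$-algebra then follow from the monad axioms for $N$ together with the description of the $M^+$-multiplication via substitution.

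Conversely, I would construct $\Alg(M^+) \longrightarrow \PolyMnd(I)/M$. Given an $M^+$-algebra $(X, \theta)$ with $X \in \Set/B$, I define a polynomial functor $N$ over $I$ with colours $I$, nodes $B_N \eqdef X$, output map $B_N \to I$ given by $X \to B \xrightarrow{t} I$, and set of inputs obtained by pulling back $E \to B$ along $X \to B$ (so that $q : N \to M$ is cartesian by construction, exactly as in the square displayed in the proof of \cref{prop:category-of-elements-slice}). The restriction of $\theta$ to corollas $\ytree{b}$ should give the unit $I \to N$, and the restriction to general trees, reassembled via \cref{prop:polynomial-functor:trees-are-graftings} (every $P$-tree is an iterated grafting of corollas), should give the multiplication $N^\star \to N$; the $M^+$-algebra axioms translate into the monad axioms for $N$ and into $q$ being a monad morphism. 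Then I would check the two constructions are mutually inverse up to natural isomorphism — this is the routine "easily seen" bookkeeping, just as in Kock's proof.

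The main obstacle I expect is bureaucratic rather than conceptual: carefully matching the combinatorics of "an $M^+$-operation is an $M$-tree $T$, acting on a $T^\nodesymbol$-family" with "the multiplication of a polynomial monad $N$ over $M$", and in particular verifying that the associativity axiom of an $M^+$-algebra — which is phrased in terms of substituting $M$-trees into nodes of $M$-trees, i.e. the multiplication of $M^+$ — corresponds precisely to the associativity of $\mu_N$. The key technical input that makes this work is the cartesianness (pullback) condition on morphisms in $\PolyEnd(I)$: it is what lets one pass freely between "$N$-trees" and "$M$-trees with nodes labelled by $N$-operations", and hence identifies $N^\star$-structure over $M$ with $M^+$-algebra structure. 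Once that dictionary is fixed, both functors and the verification that they are quasi-inverse are straightforward diagram chases, so I would present them concisely and refer to \cref{prop:category-of-elements-slice} for the analogous details.
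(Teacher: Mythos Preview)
Your proposal is correct and follows essentially the same route as the paper: both directions hinge on the identification $\tree N \cong M^+ B_N$ (equivalently, ``$N$-trees are $M$-trees with nodes labelled in $N$''), which is exactly what cartesianness of $N \to M$ buys, and the paper's proof is in fact terser than your outline. One small slip: the unit $I \to B_N$ of $N$ does not come from restricting $\theta$ to corollas $\ytree{b}$ (that restriction is forced to be the identity by the $M^+$-algebra unit axiom) but rather from applying $\theta$ to the node-less trees $\itree{i}$, whose target in $B$ is the $M$-unit at $i$.
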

\begin{proof}
    Given a $M^+$-algebra $M^+X \xrightarrow{x} X$ in $\Set/B$, define $\Phi X \in
    \PolyEnd(I) / M$ as
    \[
        \begin{tikzcd}
            I
                \ar[d, equal] &
            E_X \pullbackcorner
                \ar[l] \ar[d] \ar[r] &
            X
                \ar[r] \ar[d] &
            I
                \ar[d, equal]
            \\
            I &
            E
                \ar[l] \ar[r] &
            B \ar[r] &
            I.
        \end{tikzcd}
    \]
    There is an evident bijection $\tree \Phi X \cong M^+X$ in $\Set/I$, and
    the structure map $x$ extends by pullback along $E_X \longrightarrow X$ to a map $(\Phi
    X)^\star \longrightarrow \Phi X$ in $\PolyEnd(I)$. It is easy to verify that this
    determines a $(-)^\star$-algebra structure on $\Phi X$, and that the map
    $\Phi X \longrightarrow M$ in $\PolyEnd(I)$ is a morphism of $(-)^\star$-algebras.
    Conversely, given an $N \in \PolyMnd(I)/M$ whose underlying polynomial
    functor is
    \[
        \polynomialfunctor {I} {E'} {B'} {I,} {}{}{}
    \]
    then the bijection $\tree N \cong M^+ B'$ in $\Set/I$ and the
    $(-)^\star$-algebra map $N^\star \longrightarrow N$ provide a map $M^+ B' \xrightarrow{\Psi N}
    B'$ in $\Set/I$. It is easy to verify that $\Psi N$ is the structure map of
    a $M^+$-algebra and that the constructions $\Phi$ and $\Psi$ are functorial
    and mutually inverse.
\end{proof}

\begin{remark}
    The previous result provides an equivalence between $\PolyMnd(I)/M$ and the
    category of $M^+$-algebras. A ``coloured'' version of this result can be
    (informally) stated as follows: for $\Alg^{\mathrm{col}} (M^+)$ a suitable
    category of coloured $M^+$-algebras, there is an equivalence
    $\Alg^{\mathrm{col}}(M^+) \simeq \PolyMnd/M$, where $\PolyMnd$ is the
    category of all polynomial monads (for all $I$ in $\Set$).
\end{remark}

\section{Opetopes}
\label{sec:opetopes}

In this section, we use the formalism of polynomial functors and polynomial
monads of \cref{sec:polynomial-functors-and-monads} to define opetopes and
morphisms between them. This gives us a category $\bbOO$ of opetopes and a
category $\Psh\bbOO$ of opetopic sets. Our construction of opetopes is
precisely that of \cite{Kock2010}, and by \cite[theorem 3.16]{Kock2010}, also
that of \cite{Leinster2004,leinster2001structures}, and by \cite[corollary
2.6]{Cheng2004a}, also that of \cite{Cheng2003}. As we will see, the category
$\bbOO$ is rigid, i.e. it has no non-trivial automorphisms (it is in fact a
\emph{direct} category).

\subsection{Polynomial definition of opetopes}
\label{sec:opetopes:definition}

\begin{definition}
    [The $\optPolyFun^n$ monad]
    \label{def:zn}
    Let $\optPolyFun^0$ be the identity polynomial monad on $\Set$, as depicted
    on the left below, and let $\optPolyFun^n \eqdef
    (\optPolyFun^{n-1})^+$\index{$\optPolyFun^n$}. Write $\optPolyFun^n$ as on
    right:
    \begin{equation}
        \label{eq:zn}
        \polynomialfunctor
            {\{* \}}{\{* \}}{\{* \}}{\{* \},}
            {}{}{}
        \qqquad
        \polynomialfunctor
            {\bbOO_n}{E_{n+1}}{\bbOO_{n+1}}{\bbOO_n .}
            {\src}{p}{\tgt}
    \end{equation}
\end{definition}

\begin{definition}
    [Opetope]
    \label{def:opetope}
    An \emph{$n$-dimensional opetope} (or simply \emph{$n$-opetope}) $\omega$
    is by definition an element of $\bbOO_n$, and we write $\dim \omega = n$.
    An opetope $\omega \in \bbOO_n$ with $n \geq 2$ is called
    \emph{degenerate}\index{degenerate opetope} if its underlying tree has no
    nodes (thus consists of a unique edge); it is \emph{non
    degenerate}\index{non degenerate opetope} otherwise.

    Following \eqref{eq:polynomial-monad-structure-map}, for $\omega \in
    \bbOO_{n+2}$, the structure of polynomial monad $(\optPolyFun^n)^\star
    \longrightarrow \optPolyFun^n$ gives a bijection $\readdress_\omega :
    \omega^\leafsymbol \longrightarrow (\tgt \omega)^\nodesymbol$ between the
    leaves of $\omega$ and the nodes of $\tgt \omega$, preserving the
    decoration by $n$-opetopes.
\end{definition}

\begin{example}
    \begin{enumerate}
        \item The unique $0$-opetope is denoted $\optZero$\index{$\optZero$|see
        {point (opetope)}} and called the \emph{point}\index{point (opetope)}.
        \item The unique $1$-opetope is denoted $\optOne$\index{$\optOne$|see
        {arrow (opetope)}} and called the \emph{arrow}\index{arrow (opetope)}.
        \item If $n \geq 2$, then $\omega\in \bbOO_n$ is a
        $\optPolyFun^{n-2}$-tree, i.e. a tree whose nodes are labeled in
        $(n-1)$-opetopes, and edges are labeled in $(n-2)$-opetopes. In
        particular, $2$-opetopes are $\optPolyFun^0$-trees, i.e. linear trees,
        and thus in bijection with $\bbNN$. We will refer to them as
        \emph{opetopic integers}\index{opetopic integers}, and write
        $\optInt{n}$\index{$\optInt{n}$|see {opetopic integer}} for the
        $2$-opetope having exactly $n$ nodes.
    \end{enumerate}
\end{example}

\subsection{Higher addresses}

By definition, an opetope $\omega$ of dimension $n \geq 2$ is a
$\optPolyFun^{n-2}$-tree, and thus the formalism of tree addresses
(\cref{def:address}) can be applied to designate nodes of $\omega$, also called
its \emph{source faces}\index{source face} or simply
\emph{sources}\index{source|see {source face}}. In this section, we iterate
this formalism into the concept of \emph{higher dimensional
address}\index{higher dimensional address}\index{higher address|see {higher
dimensional address}}, which turns out to be more convenient. This material is
largely taken from \cite[section 2.2.3]{Curien2018} and \cite[section
4]{HoThanh2018}.

\begin{definition}
    [Higher address]
    \label{def:higher-address}
    Start by defining the set $\bbAA_n$\index{$\bbAA_n$|see {higher dimensional
    address}} of \emph{$n$-addresses} as follows:
    \[
        \bbAA_0 = \left\{ * \right\} , \qqquad
        \bbAA_{n+1} = \operatorname{lists} \bbAA_n ,
    \]
    where $\operatorname{lists} X$ is the set of finite lists (or words) on the
    alphabet $X$.

    Explicitly, the unique $0$-address is $*$ (also written $[]$ by
    convention), while an $(n+1)$-address is a sequence of $n$-addresses. Such
    sequences are enclosed by brackets. Note that the address $[]$, associated
    to the empty word, is in $\bbAA_n$ for all $n \geq 0$. However, the
    surrounding context will almost always make the notation unambiguous.

    Here are examples of higher addresses:
    \[
        [] \in \bbAA_1, \qqquad
        [*** \,*] \in \bbAA_1, \qqquad
        [[][*][]] \in \bbAA_2, \qqquad
        [[[[*]]]] \in \bbAA_4 .
    \]
\end{definition}

For $\omega \in \bbOO$ an opetope, nodes of $\omega$ can be specified uniquely
using higher addresses, as we now show. Recall that $E_{n-1}$ is the set of
arities of $\optPolyFun^{n-2}$ (\cref{eq:zn}). In $\optPolyFun^0$, set $E_1
(\optOne) = \{ * \}$, so that the unique node address of $\optOne$ is $* \in
\bbAA_0$. For $n \geq 2$, recall that an opetope $\omega \in
\bbOO_n$ is a $\optPolyFun^{n-2}$-tree $\omega : \underlyingtree{\omega}
\longrightarrow \optPolyFun^{n-2}$ (\cref{def:p-tree}), and write
$\underlyingtree{\omega}$ as
\[
    \polynomialfunctor{I_\omega}{E_\omega}{B_\omega}{I_\omega .}{}{}{}
\]
A node $b \in B_\omega$ has an address $\addr b \in \operatorname{lists}
E_\omega$, which by $\omega_2 : E_\omega \longrightarrow E_{n-1}$ is mapped to
an element of $\operatorname{lists} E_{n-1}$. By induction, elements of
$E_{n-1}$ are $(n-2)$-addresses, whence $\omega_2 (\addr b) \in
\operatorname{list} \bbAA_{n-2} = \bbAA_{n-1}$. For the induction step,
elements of $E_n (\omega)$ are nodes of $\underlyingtree{\omega}$, which we
identify by their aforementioned $(n-1)$-addresses. Consequently, for all $n
\geq 1$ and $\omega \in \bbOO_n$, elements of $E_n (\omega) =
\omega^\nodesymbol$ can be seen as the set of $(n-1)$-addresses of the nodes of
$\omega$, and similarly, $\omega^\leafsymbol$ can be seen as the set of
$(n-1)$-addresses of edges of $\omega$.


We now identify the nodes of $\omega \in \bbOO_{n+2}$ with their addresses. In
particular, for $[p] \in \omega^\nodesymbol$ a node address of $\omega$, we
make use of the notation $\src_{[p]} \omega$ of \cref{sec:trees} to refer to
the decoration of the node at address $[p]$, which is an $(n+1)$-opetope. Let
$[l] = [p[q]] \in \bbAA_{n-1}$ be an address such that $[p] \in
\omega^\nodesymbol$ and $[q] \in \left( \src_{[p]} \omega \right)^\nodesymbol$.
Then as a shorthand, we write
\begin{equation}
	\label{eq:preliminaries:edg}
    \edg_{[l]} \omega \eqdef \src_{[q]} \src_{[p]} \omega .
\end{equation}\index{$\edg_{[l]}$}

\begin{example}
	\label{ex:higher-addresses:3}
    Consider the $2$-opetope on the left, called $\optInt{3}$:
    \[
        \tikzinput{opetope-2-graphical}{3}
        \qqquad\longleadsto\qqquad
        \tikzinput[.8]{opetope-2-tree}{3,3.address}
    \]
    Its underlying pasting diagram consists of $3$ arrows $\optOne$ grafted
    linearly. Since the only node address of $\optOne$ is $* \in \bbAA_0$, the
    underlying tree of $\optInt{3}$ can be depicted as on the right. On the
    left of that three are the decorations: nodes are decorated with $\optOne
    \in \bbOO_1$, while the edges are decorated with $\optZero \in \bbOO_0$.
    For each node in the tree, the set of input edges of that node is in
    bijective correspondence with the node addresses of the decorating opetope,
    and that address is written on the right of each edges. In this low
    dimensional example, those addresses can only be $*$. Finally, on the right
    of each node of the tree is its $1$-address, which is just a sequence of
    $0$-addresses giving ``walking instructions'' to get from the root to that
    node.

    The $2$-opetope $\optInt{3}$ can then be seen as a corolla in some
    $3$-opetope as follows:
    \[
        \tikzinput{opetope-2-tree}{3.+,3.+.address}
    \]
    As previously mentioned, the set of input edges is in bijective
    correspondence with the set of node addresses of $\optInt{3}$.Here is now
    an example of a $3$-opetope, with its annotated underlying tree on the
    right (the $2$-opetopes $\optInt{1}$ and $\optInt{2}$ are analogous to
    $\optInt{3}$):
    \[
        \tikzinput{opetope-3-graphical}{ex1}
        \qqquad\longleadsto\qqquad
        \tikzinput{opetope-3-tree}{ex1,ex1.address}
    \]
\end{example}

\subsection{The category of opetopes}
\label{sec:opetopes:o}

In this subsection, we define the category $\bbOO$ of planar opetopes
introduced in \cite{HoThanh2018}, following the work of \cite{Cheng2003}.

\begin{proposition}
    [Opetopic identities, {\cite[theorem 4.1]{HoThanh2018}}]
    \label{th:opetopic-identities}
    Let $\omega \in \bbOO_n$ with $n \geq 2$.
    \begin{enumerate}
        \item (Inner edge) For $[p[q]] \in \omega^\nodesymbol$ (forcing $\omega$
        to be non degenerate), we have $\tgt \src_{[p[q]]} \omega = \src_{[q]}
        \src_{[p]} \omega$.
        \item (Globularity 1) If $\omega$ is non degenerate, we have $\tgt \src_{[]}
        \omega = \tgt \tgt \omega$.
        \item (Globularity 2) If $\omega$ is non degenerate, and $[p[q]] \in
        \omega^\leafsymbol$, we have $\src_{[q]} \src_{[p]} \omega =
        \src_{\readdress_\omega [p[q]]} \tgt \omega$.
        \item (Degeneracy) If $\omega$ is degenerate, we have $\src_{[]} \tgt
        \omega = \tgt \tgt \omega$.
    \end{enumerate}
\end{proposition}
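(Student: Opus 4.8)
The plan is to establish each of the four identities by unwinding the definitions of the monads $\optPolyFun^{n-2}$ and $(\optPolyFun^{n-2})^\star$, together with the pullback conditions built into the Baez--Dolan construction and the $(-)^\star$ construction. The key observation is that all four statements are really statements about the monad multiplication of $\optPolyFun^{n-2}$ applied to suitable subtrees of $\omega$, so the proof is an exercise in tracking how the maps $\src$, $\tgt$, and $\readdress$ interact across dimensions. I would set up notation once: write $\omega \in \bbOO_n$ as a $\optPolyFun^{n-2}$-tree $\omega : \underlyingtree{\omega} \longrightarrow \optPolyFun^{n-2}$, recall from \cref{def:zn} that $\optPolyFun^{n-1} = (\optPolyFun^{n-2})^+$ has colours $\bbOO_{n-1}$, nodes $\bbOO_n$, and inputs $E_n$, and recall from \cref{def:readdressing} that the structure map $(\optPolyFun^{n-2})^\star \longrightarrow \optPolyFun^{n-2}$ has a cartesian middle square expressing that $\readdress$ is a bijection $\omega^\leafsymbol \cong E(\tgt\omega)$ over $\bbOO_{n-2}$, compatibly with the target map.

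For \textbf{(Globularity 1)}, I would argue that $\tgt$ as the multiplication-induced map "contracts" the tree $\underlyingtree{\omega}$ to the corolla $\ytree{\tgt\omega}$, and that the root edge of this corolla is, by \cref{def:elementary-p-trees}, decorated by $t(\tgt\omega)$; on the other hand the root edge of $\underlyingtree{\omega}$ is decorated by the opetope at address $[]$, namely $\src_{[]}\omega$ (when $\omega$ is non-degenerate, the root node exists), and its own target is $\tgt\src_{[]}\omega$. Since the monad structure is over $\bbOO_{n-2}$, the root decoration is preserved under $\tgt$, yielding $\tgt\src_{[]}\omega = \tgt\tgt\omega$. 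For \textbf{(Degeneracy)}, the degenerate case is dual: when $\underlyingtree{\omega}$ has no nodes it is of the form $\itree{i}$ for a unique edge $i \in \bbOO_{n-2}$, the target corolla $\ytree{\tgt\omega}$ has this same edge as both its root and its unique leaf, and reading off the two decorations of $\ytree{\tgt\omega}$ gives $\src_{[]}\tgt\omega = \tgt\tgt\omega$, both equal to $i$. For \textbf{(Globularity 2)}, the idea is that the leaf of $\underlyingtree{\omega}$ at address $[p[q]]$ corresponds under $\readdress_\omega$ to an input of $\tgt\omega$, i.e. to a node address of $\tgt\omega$; unravelling \cref{eq:preliminaries:edg}, the decoration of that leaf is $\src_{[q]}\src_{[p]}\omega = \edg_{[p[q]]}\omega$, and the readdressing bijection being over $\bbOO_{n-2}$ forces this to equal $\src_{\readdress_\omega[p[q]]}\tgt\omega$.

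The main obstacle, and the step I would spend the most care on, is \textbf{(Inner edge)}: $\tgt\src_{[p[q]]}\omega = \src_{[q]}\src_{[p]}\omega$. Here $\src_{[p]}\omega \in \bbOO_{n-1}$ is itself a $\optPolyFun^{n-3}$-tree, and $[q]$ is a node address in it whose decoration $\src_{[q]}\src_{[p]}\omega$ is an $(n-2)$-opetope; on the other side $[p[q]] \in \omega^\nodesymbol$ designates a node of $\underlyingtree{\omega}$ whose decoration $\src_{[p[q]]}\omega$ is an $(n-1)$-opetope, and $\tgt$ of it is an $(n-2)$-opetope. The content here is precisely the compatibility, built into the Baez--Dolan construction as a pullback, between the higher-address identification of inputs of a node (via $E_n(\omega) = \omega^\nodesymbol$ and the iterated address formalism of the section on higher addresses) and the target map: an inner edge of the pasting diagram $\omega$ is shared between a node and its parent, and "being the edge between nodes at $[p]$ and $[p[q]]$" must be the same as "being the target of the node at $[p[q]]$" which must also be "being the input at $[q]$ of the node at $[p]$". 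I would make this precise by chasing the cartesian square of \cref{eq:polynomial-monad-structure-map} for $\optPolyFun^{n-3}$, applied to the subtree $\src_{[p]}\omega$, noting that the edge at address $[q]$ inside it is the image under $\readdress$ — or rather under the walk-to-root and target bookkeeping of \cref{def:address} — of the appropriate input, and that the node-address identification of $E_n$ is set up exactly so these coincide. This is the one place where one genuinely needs the inductive definition of higher addresses rather than just the one-step tree formalism, so I would isolate it as a lemma about how $\edg$ and $\tgt\src$ agree and prove it by induction on $n$.
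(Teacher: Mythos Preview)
The paper does not give a proof of this proposition; it is stated with a citation to \cite[theorem 4.1]{HoThanh2018} and then immediately used in \cref{def:o} to motivate the relations defining the category $\bbOO$. There is therefore no in-paper argument to compare against.

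Your outline is essentially correct and would succeed. One comment on emphasis: you treat (Inner edge) as the hardest case and propose to chase the cartesian square of \cref{eq:polynomial-monad-structure-map} for $\optPolyFun^{n-3}$ and argue by induction on $n$, but this is more than is needed. The identity $\tgt\src_{[p[q]]}\omega = \src_{[q]}\src_{[p]}\omega$ is simply the statement that the edge of $\underlyingtree{\omega}$ at address $[p[q]]$ has a well-defined decoration in $\bbOO_{n-2}$: that edge is at once the output edge of the node at $[p[q]]$ (decoration $t(\src_{[p[q]]}\omega) = \tgt\src_{[p[q]]}\omega$) and the input edge in slot $[q]$ of the node at $[p]$ (decoration $s_{[q]}(\src_{[p]}\omega) = \src_{[q]}\src_{[p]}\omega$), and these agree because $\omega : \underlyingtree{\omega} \to \optPolyFun^{n-2}$ is a morphism of polynomial functors. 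No monad structure is involved. By contrast, the genuinely monad-level identities are (Globularity 1), (Globularity 2), and (Degeneracy), and there your reasoning is on target: the first two come from the commutativity of the right and left squares, respectively, of the structure map in \cref{eq:polynomial-monad-structure-map} (both $\tgt$ and $\readdress$ are over $\bbOO_{n-2}$), and the last from the unit of $\optPolyFun^{n-2}$ applied to the unique edge of the degenerate tree.
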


\begin{definition}
    [{\cite[section 4.2]{HoThanh2018}}]
    \label{def:o}
    With these identities in mind, we define the category $\bbOO$ of opetopes
    by generators and relations as follows.
    \begin{enumerate}
        \item (Objects) We set $\ob \bbOO = \sum_{n \in \bbNN} \bbOO_n$.
        \item (Generating morphisms) Let $\omega \in \bbOO_n$ with $n \geq 1$.
        We introduce a generator $\tgt \omega \xrightarrow{\tgt}
        \omega$\index{$\tgt$|see {target embedding}}, called the \emph{target
        embedding}\index{target embedding}. If $[p] \in \omega^\nodesymbol$,
        then we introduce a generator $\src_{[p]} \omega
        \xrightarrow{\src_{[p]}} \omega$\index{$\src_{[p]}$|see {source
        embedding}}, called a \emph{source embedding}\index{source embedding}.
        A \emph{face embedding}\index{face embedding} is either a source or the
        target embedding.
        \item (Relations) We impose 4 relations described by the following
        commutative squares, that are well defined thanks to
        \cref{th:opetopic-identities}. Let $\omega \in \bbOO_n$ with
        $n \geq 2$
        \begin{enumerate}
            \item \condition{Inner}\index{inner@\condition{Inner}} for $[p[q]]
            \in \omega^\nodesymbol$ (forcing $\omega$ to be non degenerate),
            the following square must commute:
            \[
                \squarediagram
                    {\src_{[q]} \src_{[p]} \omega}{\src_{[p]} \omega}
                        {\src_{[p[q]]}\omega}{\omega}
                    {\src_{[q]}}{\tgt}{\src_{[p]}}{\src_{[p[q]]}}
            \]
            \item \condition{Glob1}\index{glob1@\condition{Glob1}} if $\omega$
            is non degenerate, the following square must commute:
            \[
                \squarediagram
                    {\tgt \tgt \omega}{\tgt \omega}{\src_{[]} \omega}{\omega .}
                    {\tgt}{\tgt}{\tgt}{\src_{[]}}
            \]
            \item \condition{Glob2}\index{glob2@\condition{Glob2}} if $\omega$
            is non degenerate, and for $[p [q]] \in \omega^\leafsymbol$, the
            following square must commute:
            \[
                \diagramsize{2}{4}
                \squarediagram
                    {\src_{\readdress_\omega [p [q]]} \tgt \omega}{\tgt \omega}
                        {\src_{[p]} \omega}{\omega .}
                    {\src_{\readdress_\omega [p[q]]}}{\src_{[q]}}{\tgt}
                        {\src_{[p]}}
                \diagramsize{2}{3}
            \]
            \item \condition{Degen}\index{degen@\condition{Degen}} if $\omega$
            is degenerate, the following square must commute:
            \[
                \squarediagram
                    {\tgt \tgt \omega}{\tgt \omega}{\tgt \omega}{\omega .}
                    {\tgt}{\src_{[]}}{\tgt}{\tgt}
            \]
        \end{enumerate}
    \end{enumerate}
    See \cite{HoThanh2018} for a graphical explanation of those relations.
\end{definition}

\begin{notation}
    \label{not:o}
    For $n \in \bbNN$, we let $\bbOO_{\leq n}$ be the full subcategory of $\bbOO$
    spanned by opetopes of dimension at most $n$. The subcategories $\bbOO_{< n}$,
    $\bbOO_{\geq n}$, $\bbOO_{> n}$, and $\bbOO_{= n}$ are defined similarly. Note
    that the latter is simply the set $\bbOO_n$.
\end{notation}

\subsection{Opetopic sets}
\label{sec:opetopes:opetopic-sets}

Recall from \cref{sec:preliminaries-category-theory} that $\PshO$ is the
category of opetopic sets, i.e. $\Set$-valued presheaves over $\bbOO$. For $X
\in \PshO$ and $\omega \in \bbOO$, we will refer to the elements of the set
$X_\omega$ as the \emph{cells}\index{cell} of $X$ of \emph{shape}\emph{shape}
$\omega$.

\begin{definition}
    \label{def:spine}
    \begin{enumerate}
        \item The representable presheaf at $\omega \in \bbOO_n$ is denoted
        $O[\omega]$. Its cells are morphisms of $\bbOO$ of the form $\sfF :
        \psi \longrightarrow \omega$, for $\sfF$ a sequence of face embeddings,
        which we write $\sfF \omega \in O [\omega]_\psi$ for short. For
        instance, the cell of maximal dimension is simply $\omega$ (as the
        corresponding sequence of face embeddings is empty), its $(n-1)$-cells
        are $\left \{ \src_{[p]} \omega \mid [p] \in \omega^\nodesymbol \right
        \} \cup \left \{ \tgt \omega \right \}$, and there is no cell of
        dimension $>n$.

        \item The \emph{boundary}\index{boundary} $\partial O
        [\omega]$\index{$\partial O [\omega]$|see {boundary}} of $\omega$ is
        the maximal subpresheaf of $O [\omega]$ not containing the cell
        $\omega$. We write $\sfB_\omega : \partial O [\omega]
        \longhookrightarrow O [\omega]$\index{$\sfB_\omega$|see {boundary
        inclusion}} for the \emph{boundary inclusion}\index{boundary
        inclusion}. The set of boundary inclusions is denoted by
        $\sfBB$\index{$\sfBB$|see {boundary inclusion}}

        \item The \emph{spine}\index{spine} $S [\omega]$\index{$S [\omega]$|see
        {spine}} is the maximal subpresheaf of $\partial O [\omega]$ not
        containing the cell $\tgt \omega$, and we write $\sfS_\omega : S
        [\omega] \longhookrightarrow O [\omega]$\index{$\sfS_\omega$|see {spine
        inclusion}} for the \emph{spine inclusion}\index{spine inclusion}. The
        set of spine inclusions is denoted by $\sfSS$\index{$\sfSS$|see {spine
        inclusion}}.
    \end{enumerate}
\end{definition}

\begin{lemma}
    \label{lemma:opetopes-technical:boundary-pushout}
    For $\omega \in \bbOO$, with $\dim \omega \geq 1$ the following square is a
    pushout and a pullback\footnote{Recall that in a topos, the pushout of a
    monomorphism along any arrow is a monomorphism, and the pushout square is a
    pullback square. This property is sometimes called ``adhesivity'', and is a
    consequence of van Kampen-ness, or descent, for pushouts of
    monomorphisms.}, where all arrows are canonical inclusions:
    \[
        \squarediagram
            {\partial O [\tgt \omega]}{S [\omega]}{O [\tgt \omega]}
                {\partial O [\omega] .}
            {}{}{}{}
    \]
\end{lemma}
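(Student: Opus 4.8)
The plan is to reduce the statement to a purely combinatorial comparison of cells, exploiting the fact that $\bbOO$ is a direct (hence rigid) category, so that all four presheaves appearing in the square are subpresheaves of $O[\omega]$ and subobject inclusions between presheaves of sets are detected levelwise. I would first describe each of the four presheaves as a set of cells of $O[\omega]$: a cell of $O[\omega]$ is a composite $\sfF\omega : \psi \longrightarrow \omega$ of face embeddings, and the four subpresheaves are cut out by which faces of $\omega$ are allowed to appear at the head of such a composite. Concretely, $\partial O[\omega]$ consists of those $\sfF\omega$ that factor through some proper face (i.e. $\sfF$ is nonempty); $S[\omega]$ consists of those factoring through a source embedding $\src_{[p]}$ followed by anything, i.e. through the spine; $O[\tgt\omega]$ consists of those factoring through $\tgt : \tgt\omega \longrightarrow \omega$; and $\partial O[\tgt\omega]$ is the intersection, namely those factoring through $\tgt$ via a proper face of $\tgt\omega$. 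The point is that every proper face of $\omega$ is, by the opetopic identities of \cref{th:opetopic-identities}, either a face of $\tgt\omega$ or a face of some $\src_{[p]}\omega$, so $\partial O[\omega] = O[\tgt\omega] \cup S[\omega]$ as subpresheaves of $O[\omega]$.

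Given that description, proving the square is a \emph{pushout} amounts to showing two things levelwise, for each $\psi \in \bbOO$: first, that $O[\tgt\omega]_\psi$ and $S[\omega]_\psi$ jointly cover $\partial O[\omega]_\psi$ (surjectivity of the comparison map), and second, that their intersection inside $\partial O[\omega]_\psi$ is exactly $\partial O[\tgt\omega]_\psi$ (so the pushout of sets, which is the union amalgamated over the intersection, lands isomorphically). The covering statement is the content of the ``face dichotomy'' just mentioned; the cell $\tgt\omega$ itself lies in $O[\tgt\omega]$ but not in $S[\omega]$, the cells $\src_{[p]}\omega$ lie in $S[\omega]$, and every lower cell is visible through one of the two. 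For the intersection, I would argue that a cell of $O[\tgt\omega]$ that is \emph{not} in $\partial O[\tgt\omega]$ is, by definition of the boundary, the top cell $\tgt\omega$ together with its image under the inclusion $O[\tgt\omega] \hookrightarrow O[\omega]$, and $\tgt\omega$ does not lie in $S[\omega]$ by the very definition of the spine as the maximal subpresheaf of $\partial O[\omega]$ omitting $\tgt\omega$; hence $O[\tgt\omega]_\psi \cap S[\omega]_\psi = \partial O[\tgt\omega]_\psi$. Since all maps are monomorphisms, the union of two subobjects amalgamated over their intersection is their pushout in $\PshO$, which gives the pushout claim.

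The pullback claim is then immediate from adhesivity: $\PshO$ is a presheaf topos, the bottom and right maps are monomorphisms, and in a topos the pushout of a monomorphism along a monomorphism is also a pullback (equivalently, the square exhibits the two subobjects as meeting in their intersection, which we have just identified). Alternatively one checks directly that the square of subobjects of $O[\omega]$ is a pullback because intersection of subobjects is computed levelwise and we have already pinned down the intersection. The main obstacle I anticipate is the combinatorial bookkeeping in the covering step: one must verify, using all four clauses of \cref{th:opetopic-identities} (inner edge, the two globularity conditions, and degeneracy — the last for the case where $\omega$ or $\tgt\omega$ is degenerate), that every face embedding $\psi \longrightarrow \omega$ of codimension $\geq 1$ genuinely factors through either $\sfS_\omega$ or $\tgt$, with the degenerate cases needing separate attention since then $\omega^\nodesymbol$ is empty and $S[\omega]$ degenerates to $O[\tgt\tgt\omega]$-type data. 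Once that dichotomy is nailed down, the rest is formal.
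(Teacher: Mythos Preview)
The paper states this lemma without proof. Your plan is sound in outline, but there is one genuine subtlety and one unnecessary complication.

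The subtlety: your framework treats all four presheaves as subobjects of $O[\omega]$, but when $\omega$ is degenerate the relation \condition{Degen} gives $\tgt \circ \tgt = \tgt \circ \src_{[]}$ with $\tgt \neq \src_{[]}$ as maps $\tgt\tgt\omega \to \tgt\omega$, so $O[\tgt\omega] \to O[\omega]$ is \emph{not} a monomorphism and the union/intersection-of-subobjects picture is unavailable (the lemma's phrase ``canonical inclusions'' is slightly loose here). The fix is to argue the pushout directly. By definition $S[\omega]$ is the maximal subpresheaf of $\partial O[\omega]$ omitting the cell $\tgt\omega$, and since $\bbOO$ is direct the codimension-one morphism $\tgt$ is indecomposable; hence $S[\omega] = \partial O[\omega] - \{\tgt\omega\}$ exactly. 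Thus $\partial O[\omega]$ is obtained from $S[\omega]$ by attaching the single top cell of $O[\tgt\omega]$ along its boundary $\partial O[\tgt\omega]$, and a short elementwise check (any two cells of $O[\tgt\omega]$ that become equal in $\partial O[\omega]$ already lie in $\partial O[\tgt\omega]$ and have the same image in $S[\omega]$) confirms the square is a pushout. The pullback then follows from adhesivity via the left vertical mono $\partial O[\tgt\omega] \hookrightarrow O[\tgt\omega]$, exactly as the footnote suggests.

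The unnecessary complication: the ``covering step'' you flag as the main obstacle is vacuous once you use $S[\omega] = \partial O[\omega] - \{\tgt\omega\}$; the opetopic identities of \cref{th:opetopic-identities} play no role in this lemma. Your description of $S[\omega]$ as ``cells factoring through a source embedding'' is not the definition (and, as you yourself observe, fails when $\omega$ is degenerate), and you do not need it.
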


\begin{lemma}
	\label{lemma:opetopes-technical:spine-pushout}
    Let $n \geq 1$, $\nu \in \bbOO_n$, $[l] \in \nu^\leafsymbol$, and $\psi \in
    \bbOO_{n-1}$ be such that $\edg_{[l]} \nu = \tgt \psi$, so that the grafting
    $\nu \graft_{[l]} \ytree{\psi}$ is well-defined. Then the following square
    is a pushout:
    \[
        \squarediagram
            {O [\edg_{[l]} \nu]}{O [\psi]}{S [\nu]}
                {S [ \nu \graft_{[l]} \ytree{\psi}].}
            {\tgt}{\edg_{[l]}}{\src_{[l]}}{}
    \]
\end{lemma}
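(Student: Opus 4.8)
The plan is to identify $S[\nu \graft_{[l]} \ytree{\psi}]$ explicitly as a presheaf and recognize the resulting square as a pushout of subpresheaves of $O[\nu \graft_{[l]} \ytree{\psi}]$. First I would recall what the cells of $S[\nu \graft_{[l]} \ytree{\psi}]$ are: the spine of an opetope $\omega$ is the maximal subpresheaf of $O[\omega]$ omitting both $\tgt\omega$ and (hence) $\omega$ itself, so its nondegenerate top-dimensional cells are exactly the source embeddings $\src_{[p]}\omega$ for $[p] \in \omega^\nodesymbol$, together with all of their faces. Now grafting $\ytree{\psi}$ onto $\nu$ at the leaf $[l]$ adds exactly one new node, whose address I will call $[l]$ in $\omega \eqdef \nu \graft_{[l]} \ytree{\psi}$, decorated by $\psi$; the other nodes of $\omega$ are in address-preserving bijection with those of $\nu$. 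So $\omega^\nodesymbol \cong \nu^\nodesymbol + \{[l]\}$, with $\src_{[l]}\omega = \psi$ and $\src_{[p]}\omega = \src_{[p]}\nu$ for the old nodes.

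Next I would argue that $S[\omega]$ is covered by the two subpresheaves $S[\nu]$ (image of $\src_{[l]}$ restricted appropriately — more precisely, the subpresheaf generated by the old source cells $\src_{[p]}\omega$, which is a copy of $S[\nu]$ sitting inside $S[\omega]$ via the evident map induced by $\src_{[l]}$ as drawn in the square) and $O[\psi]$ (the subpresheaf generated by the new source cell $\src_{[l]}\omega = \psi$, which carries all of $O[\psi]$ since $\src_{[l]} : \psi \longrightarrow \omega$ is a face embedding and $\psi \ne \tgt\omega$). Their union is all of $S[\omega]$ because every nondegenerate cell of $S[\omega]$ is a face of some $\src_{[p]}\omega$, and every such cell is either a face of an old source (hence in the $S[\nu]$-part) or a face of the new one $\psi$ (hence in the $O[\psi]$-part). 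I then need to compute the intersection $S[\nu] \cap O[\psi]$ inside $O[\omega]$: a cell lies in both exactly when it is simultaneously a face of some old source cell and a face of $\psi$. Using the opetopic identities (\cref{th:opetopic-identities}) — in particular the inner-edge identity $\tgt\src_{[l]}\omega = \edg_{[l]}\omega$ and the way leaves/edges of $\omega$ match edges of $\nu$ via the readdressing — this intersection is precisely the subpresheaf generated by $\tgt\psi = \edg_{[l]}\nu$, i.e. a copy of $O[\edg_{[l]}\nu]$, and it maps into $S[\nu]$ by the source embedding $\src_{[l]}$ of $\nu$ (the leaf of $\nu$ at $[l]$ becomes an internal edge of $\omega$) and into $O[\psi]$ by $\tgt$. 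That gives exactly the span in the statement.

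Finally, since all four maps are monomorphisms of presheaves, and we have exhibited $S[\omega]$ as the union of the two subobjects $S[\nu]$ and $O[\psi]$ along their intersection $O[\edg_{[l]}\nu]$, the square is a pushout: in a presheaf topos (indeed any topos) the join of two subobjects is the pushout over their meet. I expect the main obstacle to be the careful bookkeeping of addresses when transferring cells between $\nu$ and $\omega = \nu \graft_{[l]}\ytree{\psi}$ — specifically verifying that the leaf $[l]$ of $\nu$ and the edges of the new corolla $\ytree{\psi}$ are glued correctly, so that the identification $S[\nu] \cap O[\psi] \cong O[\edg_{[l]}\nu]$ really holds on the nose (no missing or spurious cells) and the two legs of the span are the claimed face embeddings $\src_{[l]}$ and $\tgt$. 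Once the combinatorics of which cells lie where is pinned down, the pushout conclusion is formal. One should also double-check the low-dimensional/degenerate edge cases (e.g. $n = 1$, or $\nu$ itself a corolla) separately, though these reduce to direct inspection.
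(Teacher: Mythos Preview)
Your approach is correct and is exactly the natural argument one would give: identify $S[\nu\graft_{[l]}\ytree{\psi}]$ as the union, inside $O[\nu\graft_{[l]}\ytree{\psi}]$, of the subpresheaf generated by the ``old'' source cells (a copy of $S[\nu]$) and the subpresheaf generated by the single new source cell (a copy of $O[\psi]$), compute their intersection using the \condition{Inner} relation, and conclude by the fact that in a presheaf topos the join of two subobjects is the pushout over their meet. The paper itself states this lemma without proof, so there is nothing to compare against; your outline is what the omitted argument would look like.

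One small correction of notation: you write that $O[\edg_{[l]}\nu]$ ``maps into $S[\nu]$ by the source embedding $\src_{[l]}$ of $\nu$'', but $[l]\in\nu^\leafsymbol$ is a leaf address, not a node address, so there is no morphism $\src_{[l]}$ with codomain $\nu$. Writing $[l]=[p[q]]$ with $[p]\in\nu^\nodesymbol$, the left vertical map in the square (labelled $\edg_{[l]}$ in the statement) is the composite $\src_{[p]}\circ\src_{[q]}:O[\edg_{[l]}\nu]\to O[\src_{[p]}\nu]\hookrightarrow S[\nu]$, cf.\ \cref{eq:preliminaries:edg}. This is exactly what makes the \condition{Inner} relation do the work of identifying $\tgt\psi$ with $\edg_{[l]}\nu$ inside $S[\omega]$. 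With that fix, the bookkeeping you flag as the main obstacle goes through: the only leaf of $\nu$ that ceases to be a leaf in $\omega$ is $[l]$, and the only face of $\psi$ that meets the old part of the spine is $\tgt\psi$ (its sources $\src_{[r]}\psi$ lie over the new leaves $[l[r]]$ of $\omega$ and, by \condition{Glob2}, factor through $\tgt\omega$, which is not in $S[\nu]$).
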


\begin{notation}
    Let $F : \bbOO \longrightarrow \hom \catCC$ be a function that maps
    opetopes to morphisms in some category $\catCC$, and $\sfMM$ the set of
    maps defined by $\sfMM \eqdef \left \{ F (\omega) \mid \omega \in \bbOO
    \right \}$. Then for $n \in \bbNN$, we define $\sfMM_{\geq n} \eqdef \left
    \{ F (\omega) \mid \omega \in \bbOO_{\geq n} \right \}$, and similarly for
    $\sfMM_{> n}$, $\sfMM_{\leq n}$, $\sfMM_{< n}$, and $\sfMM_{= n}$. For
    convenience, the latter is abbreviated $\sfMM_n$. If $m \leq n$, we also
    let $\sfMM_{m, n} = \sfMM_{\geq m} \cap \sfMM_{\leq n}$. By convention,
    $\sfMM_{\leq n} = \emptyset$ if $n < 0$. For example, $\sfSS_{\geq 2} =
    \left \{ \sfS_\omega \mid \omega \in \bbOO_{\geq 2} \right \}$, and
    $\sfSS_{n, n+1} = \sfSS_n \cup \sfSS_{n+1}$.
\end{notation}

\begin{definition}
    \label{def:a}
    Write $\sfOO \eqdef \left\{ \emptyset \longhookrightarrow O [\omega] \mid
    \omega \in \bbOO \right\}$\index{$\sfOO$} for the set of initial inclusions
    of the representables. Let
    \begin{equation}
        \label{eq:opetopic-sets:a}
        \sfAA_{k, n} \eqdef \sfOO_{< n - k} \cup \sfSS_{\geq n + 1} .
    \end{equation}\index{$\sfAA$}
\end{definition}

\begin{lemma}
    \label{lemma:opetopes-technical:h-lift}
    \begin{enumerate}
        \item Let $X \in \PshO$ such that $\sfSS_{n,n+1} \perp X$. Then
        $\sfBB_{n+1} \perp X$. In general, every morphism in $\sfBB_{\geq n+1}$
        is an $\sfSS_{\geq n}$-local isomorphism.
        \item Let $X \in \PshO$ such that $\left( \sfSS_{n, n+1} \cup
        \sfBB_{n+2} \right) \perp X$. Then $\sfSS_{n+2} \perp X$. In particular,
        if $(\sfSS_{n, n+1} \cup \sfBB_{\geq n+2}) \perp X$, then $\sfSS_{\geq
        n} \perp X$.
    \end{enumerate}
\end{lemma}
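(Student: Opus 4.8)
The plan is to reduce both parts to one structural fact together with \cref{lemma:opetopes-technical:boundary-pushout}. The fact is that the spine inclusion factors through the boundary inclusion, $\sfS_\omega = \sfB_\omega \circ j_\omega$, where $j_\omega \colon S[\omega] \hookrightarrow \partial O[\omega]$ is the canonical inclusion; and \cref{lemma:opetopes-technical:boundary-pushout} tells us precisely that $j_\omega$ is the pushout of the boundary inclusion $\sfB_{\tgt\omega}$ along the canonical map $\partial O[\tgt\omega] \hookrightarrow S[\omega]$. I will also use freely that, for a fixed $X \in \PshO$, the class of morphisms left orthogonal to $X$ contains all isomorphisms and is closed under composition and pushout. (The companion \cref{lemma:opetopes-technical:spine-pushout} does not enter this argument.)

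I would first establish the opening assertion of (1): if $\sfSS_{n, n+1} \perp X$ then $\sfB_\omega \perp X$ for every $\omega \in \bbOO_{n+1}$. Given a morphism $u \colon \partial O[\omega] \to X$, \cref{lemma:opetopes-technical:boundary-pushout} presents it as a compatible pair consisting of a spine part $f = u \circ j_\omega \colon S[\omega] \to X$ and a target part $g \colon O[\tgt\omega] \to X$, agreeing over $\partial O[\tgt\omega]$. Since $\sfS_\omega \in \sfSS_{n+1} \subseteq \sfSS_{n,n+1}$ is orthogonal to $X$, the spine part $f$ extends uniquely along $\sfS_\omega$ to some $\hat u \colon O[\omega] \to X$; the point is then to verify that $\hat u \circ \sfB_\omega = u$. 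By the universal property of the pushout it suffices to check this after restricting along $j_\omega$ and along the inclusion $O[\tgt\omega] \hookrightarrow \partial O[\omega]$. The first is immediate, as $\hat u \circ \sfB_\omega \circ j_\omega = \hat u \circ \sfS_\omega = f$. For the second, one compares $g$ with the restriction of $\hat u$ along $O[\tgt\omega] \hookrightarrow O[\omega]$: a short chase through the commuting pushout square shows that both of these morphisms $O[\tgt\omega] \to X$ restrict along $\sfS_{\tgt\omega} \colon S[\tgt\omega] \hookrightarrow O[\tgt\omega]$ to one and the same map, namely $f$ precomposed with the inclusion $S[\tgt\omega] \hookrightarrow \partial O[\tgt\omega] \hookrightarrow S[\omega]$, and since $\sfS_{\tgt\omega} \in \sfSS_n \subseteq \sfSS_{n,n+1}$ is orthogonal to $X$ they must coincide. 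Uniqueness of a lift of $u$ is then formal: any lift $v$ satisfies $v \circ \sfS_\omega = f$, hence $v = \hat u$. Note that it is essential that the hypothesis contains $\sfSS_n$ and not merely $\sfSS_{n+1}$; the level-$n$ spines are exactly what makes the target face rigid.

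The ``in general'' clause of (1) is then immediate: if $\dim \omega = m \geq n+1$ and $X$ is any presheaf with $\sfSS_{\geq n} \perp X$, then in particular $\sfSS_{m-1, m} \perp X$, so $\sfB_\omega \perp X$ by the case just treated; as this holds for all such $X$, each $\sfB_\omega$ with $\omega \in \bbOO_{\geq n+1}$ is an $\sfSS_{\geq n}$-local isomorphism. For the first assertion of (2), suppose $(\sfSS_{n,n+1} \cup \sfBB_{n+2}) \perp X$ and fix $\nu \in \bbOO_{n+2}$. Then $\sfB_\nu \perp X$ directly, since $\sfB_\nu \in \sfBB_{n+2}$. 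Applying part (1) to $X$ gives $\sfBB_{n+1} \perp X$, and since $\tgt\nu \in \bbOO_{n+1}$ this yields $\sfB_{\tgt\nu} \perp X$. By \cref{lemma:opetopes-technical:boundary-pushout}, $j_\nu$ is a pushout of $\sfB_{\tgt\nu}$, so $j_\nu \perp X$; composing, $\sfS_\nu = \sfB_\nu \circ j_\nu \perp X$. Hence $\sfSS_{n+2} \perp X$.

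The remaining ``in particular'' statement of (2) follows by induction on dimension. Assuming $(\sfSS_{n,n+1} \cup \sfBB_{\geq n+2}) \perp X$, the levels $m = n$ and $m = n+1$ are part of the hypothesis, and for $m \geq n+2$ the inductive hypothesis supplies $\sfSS_{m-2} \perp X$ and $\sfSS_{m-1} \perp X$ (both indices being $\geq n$) while $\sfBB_m \perp X$ holds since $m \geq n+2$; applying the first assertion of (2) with $n$ replaced by $m-2$ gives $\sfSS_m \perp X$, so $\sfSS_{\geq n} \perp X$. I expect the only genuinely delicate step to be the diagram chase in the proof of (1) verifying that the spine-extension $\hat u$ restricts back to the given boundary map $u$; everything else is formal manipulation of orthogonality and of the two cited pushout squares.
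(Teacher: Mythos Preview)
Your argument is correct. For part (1) it is essentially the paper's proof in different clothing: the paper shows $j_\omega \perp X$ and then invokes 3-for-2 on the factorisation $\sfS_\omega = \sfB_\omega \circ j_\omega$, whereas you construct the lift of a boundary map directly; but the substantive step---using $\sfS_{\tgt\omega}\perp X$ to force agreement on $O[\tgt\omega]$---is identical. For part (2) your route is genuinely shorter than the paper's. The paper reproves the extension to $\partial O[\omega]$ by hand (restricting to $S[\tgt\omega]$, extending to $O[\tgt\omega]$, and re-verifying compatibility on $\partial O[\tgt\omega]$), whereas you observe that $j_\nu$ is a pushout of $\sfB_{\tgt\nu}$ by \cref{lemma:opetopes-technical:boundary-pushout}, that $\sfB_{\tgt\nu}\perp X$ already follows from part (1), and that orthogonality to a fixed object is stable under pushout and composition. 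This avoids repeating the diagram chase and makes the logical dependence on part (1) explicit. The trade-off is only that you rely on the (easy) closure properties of $\{f : f\perp X\}$, which the paper's hands-on argument does not need to name.
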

\begin{proof}
    \begin{enumerate}
        \item Let $\omega \in \bbOO_{n+1}$. By 3-for-2, since the composite
        $\sfS_\omega : S [\omega] \longhookrightarrow \partial O [\omega] \longhookrightarrow O [\omega]$
        is in $\sfSS_{n,n+1}$, it suffices to show that $(S [\omega] \longhookrightarrow
        \partial O [\omega]) \perp X$. Let $f : S [\omega] \longrightarrow X$ be a
        morphism. The existence of a lift $\partial O [\omega] \longrightarrow X$ follows from
        the existence of a lift $O [\omega] \longrightarrow X$. Next, given two lifts $g,h : \partial
        O [\omega] \longrightarrow X$ of $f$, by
        \cref{lemma:opetopes-technical:boundary-pushout}, it suffices to show that
        they coincide on $O [\tgt\omega]$ to show that they are equal. But since
        they coincide on $S [\omega]$,
        they coincide on $S [\tgt\omega]$, and hence on $O [\tgt \omega]$ since
        $\sfS_{\tgt\omega} \perp X$.

        \item Let $\omega \in \bbOO_{n+2}$ and $f : S [\omega] \longrightarrow
        X$. Then the restriction $f|_{S [\tgt \omega]}$ of $f$ to $S [\tgt
        \omega]$ extends to a unique $f|_{S [\tgt \omega]}^{O[\tgt \omega]}$.
        We now show that the following square commutes: \diagramsize{2}{4}
        \begin{equation}
            \label{eq:lemma:opetopes-technical:h-lift}
            \squarediagram
                {\partial O [\tgt \omega]}{S [\omega]}{O [\tgt \omega]}{X .}
                {}{}{f}{f|_{S [\tgt \omega]}^{O[\tgt \omega]}}
            \diagramsize{2}{3}
        \end{equation}
        Tautologically, we have $f|_{S [\tgt \omega]} = f|_{S [\tgt
        \omega]}^{O[\tgt \omega]}|_{S [\tgt \omega]}$, and in particular,
        $f|_{S [\tgt \tgt \omega]} = f|_{S [\tgt \omega]}^{O[\tgt \omega]}|_{S
        [\tgt \tgt \omega]}$. Since $\sfSS_{n-2} \perp X$, we have $f|_{O [\tgt
        \tgt \omega]} = f|_{S [\tgt \omega]}^{O[\tgt \omega]}|_{O [\tgt \tgt
        \omega]}$. Therefore, square \eqref{eq:lemma:opetopes-technical:h-lift}
        commutes, and by \cref{lemma:opetopes-technical:boundary-pushout}, $f$
        extends to a unique $f|^{\partial O [\omega]} : \partial O [\omega]
        \longrightarrow O [\omega]$ such that $f|^{\partial O [\omega]}|_{O
        [\tgt \omega]} = f|_{S [\tgt \omega]}^{O[\tgt \omega]}$. Since $\sfBB_n
        \perp X$, $f|^{\partial O [\omega]}$ extends to a unique morphism $O
        [\omega] \longrightarrow X$.
        \qedhere
    \end{enumerate}
\end{proof}

\begin{lemma}
    \label{lemma:opetopes-technical:spines}
    Let $n \in \bbNN$, and $\omega \in \bbOO_{n+2}$. Then the inclusion
    $S[\tgt\omega] \longhookrightarrow S[\omega]$ is a relative $\sfSS_{n+1}$-cell complex,
    i.e. a transfinite composition of pushouts of maps in $\sfSS_{n+1}$.
\end{lemma}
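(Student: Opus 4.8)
The plan is to exhibit $S[\omega]$ as built from $S[\tgt\omega]$ by attaching, for each node $[p]$ of $\omega$, one copy of the representable $O[\src_{[p]}\omega]$ glued along its spine $\sfS_{\src_{[p]}\omega}\in\sfSS_{n+1}$ (note $\src_{[p]}\omega\in\bbOO_{n+1}$), processing the nodes of $\omega$ from the leaves down to the root. I would first dispatch the degenerate case: if $\omega$ has no nodes then $\omega^\leafsymbol$ has one element, so by the readdressing bijection of \cref{def:opetope} $\tgt\omega$ has a single node, and the \condition{Degen} identity of \cref{th:opetopic-identities} gives $\src_{[]}\tgt\omega=\tgt\tgt\omega$, so $\tgt\omega=\ytree{\tgt\tgt\omega}$ and $S[\tgt\omega]=O[\tgt\tgt\omega]$; since $\omega$ has no source embeddings, the relation \condition{Degen} of \cref{def:o} identifies in $O[\omega]$ the two $n$-dimensional faces of $\tgt\omega$, whence $S[\omega]=O[\tgt\tgt\omega]$ as well and the inclusion is the identity, a cell complex of length $0$.

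For $\omega$ non-degenerate, enumerate $\omega^\nodesymbol=\{[p_1],\dots,[p_N]\}$ so that every node comes after all nodes lying strictly above it (a topological sort of the tree of $\omega$; the root node is $[p_N]$). Put $\Sigma_0\eqdef S[\tgt\omega]$ and $\Sigma_k\eqdef\Sigma_{k-1}\cup\operatorname{im}\bigl(\src_{[p_k]}\colon O[\src_{[p_k]}\omega]\longhookrightarrow O[\omega]\bigr)$; each $\Sigma_k$ is a subpresheaf of $\partial O[\omega]$ avoiding the cells $\omega$ and $\tgt\omega$, hence of $S[\omega]$. Using \condition{Inner} and \condition{Glob2} one checks that $\Sigma_0\subseteq\bigcup_{[p]}\operatorname{im}(\src_{[p]})$ — a source $\src_{[m]}\tgt\omega$ equals $\src_{[q]}\src_{[p]}\omega$ for the leaf $[p[q]]$ with $\readdress_\omega[p[q]]=[m]$ — and, likewise, that the restriction of $\src_{[p_k]}$ to $S[\src_{[p_k]}\omega]$ lands inside $\Sigma_{k-1}$: the $n$-dimensional cells of $S[\src_{[p_k]}\omega]$ are the sources $\src_{[q]}\src_{[p_k]}\omega$, which \condition{Inner} rewrites as $\tgt\src_{[p_k[q]]}\omega\in\operatorname{im}(\src_{[p_i]})$ for some $i<k$ (by the chosen order) when $[p_k[q]]\in\omega^\nodesymbol$, and which \condition{Glob2} rewrites as a source of $\tgt\omega$, i.e.\ a cell of $\Sigma_0$, when $[p_k[q]]\in\omega^\leafsymbol$; the lower-dimensional cells follow by closure under faces. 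This produces commuting squares with top $\sfS_{\src_{[p_k]}\omega}$, left side $S[\src_{[p_k]}\omega]\to\Sigma_{k-1}$, and bottom/right the inclusions into $\Sigma_k$.

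The hard part is showing these squares are pushouts. Since $\PshO$ is a topos, by adhesivity (cf.\ the footnote to \cref{lemma:opetopes-technical:boundary-pushout}) this amounts to $\Sigma_{k-1}\cap\operatorname{im}(\src_{[p_k]})=\operatorname{im}(S[\src_{[p_k]}\omega])$; the inclusion ``$\supseteq$'' is the previous paragraph. For ``$\subseteq$'', \cref{lemma:opetopes-technical:boundary-pushout} applied to $\src_{[p_k]}\omega\in\bbOO_{n+1}$ says the only cells of $O[\src_{[p_k]}\omega]$ outside its spine are $\src_{[p_k]}\omega$ itself and its target $\tgt\src_{[p_k]}\omega$, so it suffices to show their $\src_{[p_k]}$-images are \emph{not} in $\Sigma_{k-1}$. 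The $(n{+}1)$-cell $\src_{[p_k]}\omega$ is excluded because the only $(n{+}1)$-cells of $\Sigma_{k-1}$ are the pairwise distinct $\src_{[p_1]}\omega,\dots,\src_{[p_{k-1}]}\omega$ ($S[\tgt\omega]$ being $n$-dimensional). For the $n$-cell $\tgt\src_{[p_k]}\omega$ — the output edge of node $[p_k]$ — I would test it against each $n$-cell of $\Sigma_{k-1}$ (the sources of $\tgt\omega$, and the sources and targets of $\src_{[p_i]}\omega$ for $i<k$): via \condition{Inner}, \condition{Glob1} and \condition{Glob2}, a coincidence with a face of $\src_{[p_i]}\omega$ would force $[p_k]=[p_i]$ or $[p_k]$ directly above $[p_i]$ (hence $k<i$, contradicting $i<k$), and a coincidence with a source of $\tgt\omega$ would force the node address $[p_k]$ to be a leaf address of $\omega$, which is impossible. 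I expect this ordering-and-bookkeeping step to be the main obstacle: it is the only place the leaves-to-root enumeration is used, and it rests on tracking exactly which composites of face embeddings are identified in $O[\omega]$ by the four relations of \cref{def:o}.

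Finally, \condition{Glob1} and \condition{Glob2} show that every $n$-dimensional face of $\tgt\omega$ lies under some source embedding, so that $S[\omega]$ — the largest subpresheaf of $\partial O[\omega]$ missing $\tgt\omega$ — equals $\bigcup_{[p]\in\omega^\nodesymbol}\operatorname{im}(\src_{[p]})$, whence $\Sigma_N=S[\omega]$. Composing the $N$ pushout squares then realizes $S[\tgt\omega]\longhookrightarrow S[\omega]$ as a finite (hence \emph{a fortiori} transfinite) composite of pushouts of the maps $\sfS_{\src_{[p_k]}\omega}\in\sfSS_{n+1}$, as required.
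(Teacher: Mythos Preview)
Your approach is essentially the paper's, carried out at a finer grain: both build $S[\omega]$ from $S[\tgt\omega]$ by iterated pushouts along the spine inclusions $\sfS_{\src_{[p]}\omega}\in\sfSS_{n+1}$, ordered by the tree structure of $\omega$. The paper attaches all admissible cells simultaneously at each stage (so the process terminates after height-of-$\omega$ many steps) and leaves the pushout verification implicit; you attach one node at a time in a leaves-to-root enumeration and spell out the adhesivity argument and the edge-by-edge check that $\tgt\src_{[p_k]}\omega\notin\Sigma_{k-1}$. Your version is thus a refinement of theirs and supplies detail the paper omits.

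One edge case slips through your argument: when $\src_{[p_k]}\omega=\itree{\phi}$ is degenerate, the spine $S[\itree{\phi}]\cong O[\phi]$ has \emph{no} $n$-cells, so ``lower-dimensional cells follow by closure under faces'' is vacuous and does not establish that the image of $O[\phi]$ under $\src_{[p_k]}$ lies in $\Sigma_{k-1}$. Since such a $[p_k]$ is a $0$-ary leaf of the tree $\omega$, it appears early in your enumeration and $\Sigma_{k-1}$ may be just $S[\tgt\omega]$; you then need to check directly that the $(n{-}1)$-cell $\src_{[p_k]}\tgt\tgt$ factors through $\tgt\colon\tgt\omega\to\omega$. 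This does hold --- chase \condition{Degen} for $\src_{[p_k]}\omega$, then \condition{Inner} and \condition{Glob1} to rewrite it in the form $\tgt\circ g$ --- but it is a separate verification, not a consequence of your $n$-cell analysis. (The paper's terser proof has the same implicit reliance on this fact.)
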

\begin{proof}
    We show that the morphism $S[\tgt\omega] \longhookrightarrow S[\omega]$ is a (finite)
    composite of pushouts of elements of $\sfSS_{n+1}$. Let $X_0 =
    S[\tgt\omega]$. At stage $k\geq 0$, let $J$ be the (necessarily finite) set
    of inclusions $j = (j_1, j_2)$ in $\PshO^{\rightarrow}$ as on the left
    \[
        \diagramarrows{}{c->}{}{}
        \squarediagram
            {S[\psi_j]}{X_k}{O[\psi_j]}{S[\omega] ,}
            {j_1}{\sfS_{\psi_j}}{}{j_2}
        \qqquad
        \diagramarrows{}{c->}{}{}
        \pushoutdiagram
            {\coprod_{j\in J} S[\psi_j]}
            {X_k}
            {\coprod_{j\in J} O[\psi_j]}
            {X_{k+1} .}
            {j_1}{\sfS_{\psi_j}}{}{j_2}
    \]
    where $\psi_j \in \bbOO_{n+1}$, such that $j_1$ does not factor through
    $X_l$ for any $l < k$. Define $X_{k+1}$ with the pushout on the right.
    There is a $k$, bounded by the height of the tree $\omega \in \tree
    \optPolyFun^n$, at which the sequence converges to $S[\tgt\omega] =
    X_0 \longhookrightarrow X_1 \longhookrightarrow \ldots \longhookrightarrow X_k = S [\omega]$.
\end{proof}

\begin{corollary}
    \label{lemma:comparison:tgt-spine-local}
    \begin{enumerate}
        \item Let $n \in \bbNN$, and $\omega \in \bbOO_{n+2}$. Then the target
        embedding $\tgt \omega \longrightarrow \omega$ of $\omega$ is an
        $\sfSS_{n+1, n+2}$-local isomorphism.
        \item Let $k, n \in \bbNN$, and $\omega \in \bbOO_{\geq n+2}$. Then the
        target embedding $\tgt \omega \longrightarrow \omega$ is an
        $\sfAA_{k, n}$-local isomorphism.
    \end{enumerate}
\end{corollary}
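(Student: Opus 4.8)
The plan is to deduce (1) from \cref{lemma:opetopes-technical:boundary-pushout,lemma:opetopes-technical:spines} by a 3-for-2 argument inside the class of $\sfSS_{n+1,n+2}$-local isomorphisms, and then to read off (2) as an immediate consequence. For (1), fix $\omega\in\bbOO_{n+2}$ and consider the square
\[
    \begin{tikzcd}
        S[\tgt\omega] \ar[r, hook] \ar[d, hook, "\sfS_{\tgt\omega}"'] & S[\omega] \ar[d, hook, "\sfS_\omega"] \\
        O[\tgt\omega] \ar[r, "\tgt"'] & O[\omega]
    \end{tikzcd}
\]
whose top edge is the inclusion of spines, whose side edges are spine inclusions, and whose bottom edge is the representable image of the target embedding. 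The point to check is that this square commutes, i.e. that the target embedding restricts to the inclusion $S[\tgt\omega]\hookrightarrow S[\omega]$: by \cref{lemma:opetopes-technical:boundary-pushout} the inclusion $S[\tgt\omega]\hookrightarrow\partial O[\tgt\omega]$ postcomposed with $\tgt$ factors through $S[\omega]$, and concretely this is the content of the inner-edge and globularity identities of \cref{th:opetopic-identities}, which say that $\tgt$ carries every source face of $\tgt\omega$ — via the readdressing bijection $\readdress_\omega$ — to a source face of $\omega$, hence into $S[\omega]$; both resulting composites $S[\tgt\omega]\to O[\omega]$ are then the evident subpresheaf inclusion.

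Next I would place each edge in, or build it from, the localising class $\sfSS_{n+1,n+2}$. The left edge $\sfS_{\tgt\omega}$ is in $\sfSS_{n+1}$ and the right edge $\sfS_\omega$ is in $\sfSS_{n+2}$, so both are $\sfSS_{n+1,n+2}$-local isomorphisms; the top edge $S[\tgt\omega]\hookrightarrow S[\omega]$ is a relative $\sfSS_{n+1}$-cell complex by \cref{lemma:opetopes-technical:spines}, and since the class $\sfWW_{\sfSS_{n+1,n+2}}$ of $\sfSS_{n+1,n+2}$-local isomorphisms contains $\sfSS_{n+1,n+2}$ and is closed under pushout and composition, the top edge is an $\sfSS_{n+1,n+2}$-local isomorphism as well. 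Therefore the top-then-right composite of the square is an $\sfSS_{n+1,n+2}$-local isomorphism; by commutativity it equals the left-then-bottom composite, and since the left edge $\sfS_{\tgt\omega}$ is also an $\sfSS_{n+1,n+2}$-local isomorphism, the 3-for-2 property of $\sfWW_{\sfSS_{n+1,n+2}}$ forces the bottom edge $\tgt\colon O[\tgt\omega]\to O[\omega]$ to be one too. Nothing here needs $\omega$ to be non-degenerate: \cref{lemma:opetopes-technical:boundary-pushout,lemma:opetopes-technical:spines} hold verbatim in that case, with the top edge then the identity.

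Finally, (2) follows formally: given $\omega\in\bbOO_{\geq n+2}$, write $\dim\omega=m+2$ with $m\geq n$, so by (1) the target embedding of $\omega$ is an $\sfSS_{m+1,m+2}$-local isomorphism; since $m+1\geq n+1$ we have $\sfSS_{m+1,m+2}\subseteq\sfSS_{\geq n+1}\subseteq\sfAA_{k,n}$, and enlarging the localising set can only enlarge the class of local isomorphisms (a larger set of maps has fewer local objects, hence more local isomorphisms), so the target embedding is an $\sfAA_{k,n}$-local isomorphism. The single genuinely non-formal step in the whole proof is the commutativity of the square in the first paragraph — verifying that the target embedding really does restrict to the spine inclusion $S[\tgt\omega]\hookrightarrow S[\omega]$ — which is exactly where \cref{lemma:opetopes-technical:boundary-pushout} (equivalently, the opetopic identities) carries the load; the remainder is routine bookkeeping with 3-for-2 and the closure properties of $\sfWW$.
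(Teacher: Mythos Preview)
Your proof is correct and follows essentially the same approach as the paper: the paper draws the same commutative square (with the spine inclusions horizontal rather than vertical), notes that the map $S[\tgt\omega]\hookrightarrow S[\omega]$ is an $\sfSS_{n+1}$-local isomorphism by \cref{lemma:opetopes-technical:spines} and that the two spine inclusions lie in $\sfSS_{n+1,n+2}$, and concludes by 3-for-2; part (2) is deduced identically from the containment $\sfSS_{m-1,m}\subseteq\sfAA_{k,n}$ for $m\geq n+2$. Your extra care in justifying commutativity of the square via the opetopic identities is fine, though the paper simply takes this for granted.
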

\begin{proof}
    \begin{enumerate}
        \item In the square below
        \[
            \squarediagram {S [\tgt \omega]} {O [\tgt \omega]} {S
              [\omega]} {O [\omega]} {\sfS_{\tgt \omega}} {r} {\tgt}
            {\sfS_\omega}
        \]
        the map $r$ is an $\sfSS_{n+1}$-local isomorphism by
        \cref{lemma:opetopes-technical:spines}, and the horizontal maps are in
        $\sfSS_{n+1, n+2}$. The result follows by 3-for-2.
        \item Let $\omega \in \bbOO_m$. Since $\sfSS_{m-1, m+1} \subset
        \sfAA_{k,n}$ by definition, this follows from the previous point.
        \qedhere
    \end{enumerate}
\end{proof}

\begin{corollary}
    \label{coroll:y-i:spine-local}
    Let $\psi \in \bbOO_n$.
    \begin{enumerate}
        \item $\tgt \tgt = \src_{[]} \tgt : \psi \longrightarrow \itree{\psi}$
        is in $\sfSS_{n+2}$.
        \item The morphisms $\src_{[]}, \tgt : \psi \longrightarrow
        \ytree{\psi}$ are $\sfSS_{n+1, n+2}$-local isomorphisms.
    \end{enumerate}
\end{corollary}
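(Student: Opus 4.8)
The plan is to reduce both statements to three inputs: the explicit shape of the spine of a corolla, the explicit shape of the spine of a degenerate opetope, and point (1) of \cref{lemma:comparison:tgt-spine-local} together with the 3-for-2 property of local isomorphisms recalled in \cref{sec:preliminaries-category-theory}.

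For (1), I will show that the spine inclusion $\sfS_{\itree{\psi}}$ is (isomorphic to) the map $\tgt\tgt = \src_{[]}\tgt : \psi \to \itree{\psi}$. The opetope $\itree{\psi}$ is the degenerate $(n+2)$-opetope over $\psi$ (the trivial $\optPolyFun^n$-tree, which has no nodes): its only face embedding is the target embedding $\tgt : \tgt\itree{\psi} \longhookrightarrow \itree{\psi}$, and $\tgt\itree{\psi} = \ytree{\psi}$ since $\itree{\psi}$ has a single leaf decorated by $\psi$ and satisfies \condition{Degen}. Hence $\partial O[\itree{\psi}]$ is the image of $O[\ytree{\psi}]$ under this embedding; by \condition{Degen} (via \cref{th:opetopic-identities}) the two faces $\tgt\ytree{\psi} = \src_{[]}\ytree{\psi} = \psi$ of the corolla are identified in $\itree{\psi}$, and — unwinding the definitions of $O[-]$ and $\partial O[-]$ — this is the only identification, so $\partial O[\itree{\psi}]$ is $O[\ytree{\psi}]$ with its source- and target-$\psi$ faces glued. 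Deleting the top cell $\tgt\itree{\psi}$ leaves exactly the sub-presheaf generated by the remaining $\psi$-cell $\tgt\tgt\itree{\psi}$, which is a copy of $O[\psi]$; this identifies $S[\itree{\psi}] \cong O[\psi]$ and $\sfS_{\itree{\psi}}$ with $\tgt\tgt = \src_{[]}\tgt : \psi \to \itree{\psi}$. Since $\itree{\psi} \in \bbOO_{n+2}$, this map is in $\sfSS_{n+2}$.

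For (2), the same analysis applied to the corolla $\ytree{\psi} \in \bbOO_{n+1}$, whose unique source is $\src_{[]}\ytree{\psi} = \psi$, gives $S[\ytree{\psi}] \cong O[\psi]$ with $\sfS_{\ytree{\psi}}$ equal to $\src_{[]} : \psi \to \ytree{\psi}$; in particular $\src_{[]} : \psi \to \ytree{\psi}$ lies in $\sfSS_{n+1} \subseteq \sfSS_{n+1, n+2}$, hence is an $\sfSS_{n+1, n+2}$-local isomorphism. For $\tgt : \psi \to \ytree{\psi}$, I apply point (1) of \cref{lemma:comparison:tgt-spine-local} to $\itree{\psi} \in \bbOO_{n+2}$: its target embedding $\tgt\itree{\psi} = \ytree{\psi} \to \itree{\psi}$ is an $\sfSS_{n+1, n+2}$-local isomorphism. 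By the case (1) just treated, the composite $\psi \xrightarrow{\tgt} \ytree{\psi} \xrightarrow{\tgt} \itree{\psi}$ equals $\sfS_{\itree{\psi}} \in \sfSS_{n+2}$, which is also an $\sfSS_{n+1, n+2}$-local isomorphism. The 3-for-2 property of $\sfSS_{n+1, n+2}$-local isomorphisms then forces $\tgt : \psi \to \ytree{\psi}$ to be one as well.

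The step I expect to be the main obstacle is the identification of the spines $S[\itree{\psi}]$ and $S[\ytree{\psi}]$ with representables — part (1) and the first sentence of part (2). This requires a careful but elementary bookkeeping of the cells of $O[\itree{\psi}]$ and $O[\ytree{\psi}]$ against the opetopic identities \condition{Glob1} and \condition{Degen}, together with the fact that face embeddings are monomorphisms in $\bbOO$, so that the images of $O[\psi]$ involved are genuinely copies of $O[\psi]$. Once these two identifications are in place, everything else is formal: part (1) is immediate, and part (2) follows from \cref{lemma:comparison:tgt-spine-local} and the closure of local isomorphisms under 3-for-2.
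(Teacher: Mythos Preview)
Your proof is correct and follows essentially the same route as the paper: identify $\sfS_{\itree{\psi}}$ with $\tgt\tgt : \psi \to \itree{\psi}$ and $\sfS_{\ytree{\psi}}$ with $\src_{[]} : \psi \to \ytree{\psi}$, then deduce the case of $\tgt : \psi \to \ytree{\psi}$ from the triangle $\psi \to \ytree{\psi} \to \itree{\psi}$ using \cref{lemma:comparison:tgt-spine-local} and 3-for-2. The paper's proof is terser on the spine identifications, but your more explicit bookkeeping of the cells of $\partial O[\itree{\psi}]$ via \condition{Degen} is a welcome elaboration rather than a different argument.
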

\begin{proof}
    \begin{enumerate}
    \item The embedding $\tgt \tgt = \src_{[]} \tgt : \psi \longrightarrow
    \itree{\psi}$ is precisely the spine inclusion $\sfS_{\itree{\phi}}$ of the
    degenerate $(n+2)$-opetope $\itree{\psi}$.
    \item The source embedding $\src_{[]} : \psi \longrightarrow \ytree{\psi}$
    is precisely the spine inclusion $\sfS_{\ytree{\psi}}$ of the
    $(n+1)$-opetope $\ytree{\psi}$. The target embedding $\tgt : \psi \longrightarrow
    \ytree{\psi}$ is the morphism $\tgt : \tgt\tgt\itree{\psi} \longrightarrow
    \tgt\itree{\psi}$ and is the vertical arrow in the diagram below.
    \[
        \triangleDLdiagram
            {\psi = S [\itree{\psi}]}{\ytree{\psi} = \tgt \itree{\psi}}
                {\itree{\psi}.}
            {\tgt}{\sfS_{\itree{\psi}}}{\tgt}
    \]
    The horizontal arrow is an $\sfSS_{n+1, n+2}$-local isomorphism by
    \cref{lemma:comparison:tgt-spine-local} and the diagonal arrow is in
    $\sfSS_{n+2}$ by point (1). The result follows by 3-for-2.
    \qedhere
    \end{enumerate}
\end{proof}

\subsection{Extensions}

\begin{reminder}
    Recall that a functor between small categories $u : \catAA \longrightarrow \catBB$
    induces a restriction $u^* : \Psh\catBB \longrightarrow \Psh\catAA$ that
    admits both adjoints $u_! \dashv u^* \dashv u_*$, given by pointwise left
    and right Kan extensions.
\end{reminder}

\begin{notation}
    \label{not:subcategories-of-o}
    Let $m \in \bbNN$ and $n \in \bbNN \cup \{ \infty \}$ be such that $m \leq
    n$, and let $\bbOO_{m, n}$ be the full subcategory of $\bbOO$ spanned by
    opetopes $\omega$ such that $m \leq \dim \omega \leq n$. For instance,
    $\bbOO_{m, \infty} = \bbOO_{\geq m}$.
\end{notation}

\begin{definition}
    [Truncation]
    \label{def:truncation}
    The inclusion $\iota^{\geq m} : \bbOO_{m, n} \longrightarrow \bbOO_{\geq
    m}$ induces a restriction functor $(-)_{m, n} : \Psh{\bbOO_{\geq m}}
    \longrightarrow \Psh{\bbOO_{m, n}}$\index{$(-)_{m, n}$|see {truncation}},
    called \emph{truncation}\index{truncation}, that have both a left adjoint
    $\iota^{\geq m}_!$ and a right adjoint $\iota^{\geq m}_*$. Explicitly, for
    $X \in \Psh{\bbOO_{m, n}}$, the presheaf $\iota^{\geq m}_! X$ is the
    ``extension by $0$'', i.e. $(\iota^{\geq m}_! X)_{m, n} = X$, and
    $(\iota^{\geq m}_! X)_\psi = \emptyset$ for all $\psi \in \bbOO_{> n}$. On
    the other hand, $\iota^{\geq m}_* X$ is the ``canonical extension'' of $X$
    intro a presheaf over $\bbOO_{\geq m}$: we have $(\iota^{\geq m}_* X)_{m,
    n} = X$, and $\sfBB_{> n} \perp \iota^{\geq m}_* X$, which uniquely
    determines $\iota^{\geq m}_* X$.

    Likewise, the inclusion $\iota^{\leq n} : \bbOO_{m, n} \longrightarrow
    \bbOO_{\leq n}$ induces a restriction functor $\Psh{\bbOO_{\leq n}}
    \longrightarrow \Psh{\bbOO_{m, n}}$, also denoted by $(-)_{m, n}$ and again
    called \emph{truncation}\index{truncation}, that have both a left adjoint
    $\iota^{\leq n}_!$ and a right adjoint $\iota^{\leq n}_*$. Explicitly, for
    $X \in \Psh{\bbOO_{m, n}}$, the presheaf $\iota^{\leq n}_! X$ is the
    ``canonical extension'' of $X$ intro a presheaf over $\bbOO_{\leq n}$:
    \[
        \iota^{\leq n}_! X = \colim_{O[\psi]_{m, n} \rightarrow X} O [\psi] .
    \]
    On the other hand, $\iota^{\leq n}_* X$ is the ``terminal extension'' of
    $X$ in that $(\iota^{\leq n}_* X)_{m, n} = X$, and $(\iota^{\leq n}_*
    X)_\psi$ is a singleton, for all $\psi \in \bbOO_{< n}$. Note that
    $\sfOO_{< m} \perp \iota^{\leq n}_* X$, and that it is uniquely determined
    by this property.

    For $n < \infty$, we write $(-)_{\leq n}$ for $(-)_{0, n} :
    \Psh{\bbOO_{\geq 0}} = \PshO \longrightarrow \Psh{\bbOO_{0, n}} =
    \Psh{\bbOO_{\leq n}}$, and let $(-)_{< n} = (-)_{\leq n-1}$ if $n \geq 0$.
    Similarly, we note $(-)_{m, n} : \Psh{\bbOO_{\leq \infty}} = \PshO
    \longrightarrow \Psh{\bbOO_{m, \infty}} = \Psh{\bbOO_{\geq m}}$ by
    $(-)_{\geq m}$, and let $(-)_{> m} = (-)_{\geq m+1}$.
\end{definition}

\begin{proposition}
    \label{prop:properties-of-iotas}
    \begin{enumerate}
        \item The functors $\iota^{\geq m}_!$, $\iota^{\geq m}_*$, $\iota^{\leq
        n}_!$, and $\iota^{\leq n}_*$ are fully faithful.
        \item A presheaf $X \in \Psh{\bbOO_{\geq m}}$ is in the essential image
        of $\iota^{\geq m}_!$ if and only if $X_{> n} = \emptyset$.
        \item A presheaf $X \in \Psh{\bbOO_{\geq m}}$ is in the essential image
        of $\iota^{\geq m}_*$ if and only if for all $\omega \in \bbOO_{> n}$ we
        have $(\sfB_\omega)_{\geq m} \perp X$.
        \item A presheaf $X \in \Psh{\bbOO_{\leq n}}$ is in the essential image
        of $\iota^{\leq n}_*$ if and only if for all $\omega \in \bbOO_{< m}$
        we have $(\sfO_\omega)_{\leq n} \perp X$, i.e. $X_\omega$ is a
        singleton.
    \end{enumerate}
\end{proposition}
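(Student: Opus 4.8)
The plan is to settle (1) by general category theory and to derive (2)--(4) by computing the relevant Kan extensions cell by cell and then checking, componentwise, that the appropriate unit or counit is invertible; the only geometric input needed is that $\bbOO$ is a direct category, so $\bbOO(\phi, \psi) = \emptyset$ whenever $\dim \phi > \dim \psi$. For (1), the functors $\iota^{\geq m}$ and $\iota^{\leq n}$ are inclusions of full subcategories, hence fully faithful, and for any fully faithful functor $u \colon \catAA \longrightarrow \catBB$ between small categories the unit $\mathrm{id} \longrightarrow u^* u_!$ and the counit $u^* u_* \longrightarrow \mathrm{id}$ are natural isomorphisms (a standard consequence of the co-Yoneda lemma: $u^*(u_! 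X)_a \cong X_a$, and dually for $u_*$). Applying this to $u = \iota^{\geq m}$ and $u = \iota^{\leq n}$ shows that $\iota^{\geq m}_!$, $\iota^{\geq m}_*$, $\iota^{\leq n}_!$ and $\iota^{\leq n}_*$ are fully faithful.

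For (2), the colimit formula for the left Kan extension gives $(\iota^{\geq m}_! X)_\psi = \colim_{(\phi,\, \psi \to \phi)} X_\phi$, the colimit ranging over the comma category of $\phi \in \bbOO_{m, n}$ equipped with a morphism $\psi \longrightarrow \phi$ of $\bbOO$; for $\psi \in \bbOO_{> n}$ this indexing category is empty (no such $\phi$ exists by directness), so $(\iota^{\geq m}_! X)_\psi = \emptyset$, which gives the ``only if'' direction. Conversely, if $X_{> n} = \emptyset$ then the counit $\iota^{\geq m}_!(X_{m, n}) \longrightarrow X$ is invertible on $\bbOO_{m, n}$ (by full faithfulness) and a map between empty presheaves on $\bbOO_{> n}$, hence an isomorphism. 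Part (4) is analogous: for $\omega \in \bbOO_{< m}$ the restricted representable $\bbOO(-, \omega)|_{\bbOO_{m, n}}$ is the initial presheaf (no $\phi \in \bbOO_{m, n}$ admits a morphism to $\omega$), so $(\iota^{\leq n}_* Y)_\omega \cong \Psh{\bbOO_{m, n}}(\emptyset, Y)$ is a singleton --- this is the ``only if'' direction --- and, conversely, if every $X_\omega$ with $\omega \in \bbOO_{< m}$ is a singleton then the unit $X \longrightarrow \iota^{\leq n}_*(X_{m, n})$ is invertible on $\bbOO_{m, n}$ and a map of singletons on $\bbOO_{< m}$, hence an isomorphism.

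Part (3) is where I expect the main obstacle. The right Kan extension admits an explicit description by induction on dimension: $(\iota^{\geq m}_* Y)_\omega = Y_\omega$ for $\dim \omega \leq n$, while for $\dim \omega > n$ one sets $(\iota^{\geq m}_* Y)_\omega = \Psh{\bbOO_{\geq m}}\bigl((\partial O [\omega])_{\geq m}, \iota^{\geq m}_* Y\bigr)$, a well-founded definition since $(\partial O [\omega])_{\geq m}$ only involves cells of dimension $< \dim \omega$. This construction forces $(\sfB_\omega)_{\geq m} \perp \iota^{\geq m}_* Y$ for every $\omega \in \bbOO_{> n}$, and one checks that it indeed computes the (unique) right adjoint to $(-)_{m, n}$; this gives the inclusion of the essential image of $\iota^{\geq m}_*$ into the stated class. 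For the reverse inclusion, given $X$ with $(\sfB_\omega)_{\geq m} \perp X$ for all $\omega \in \bbOO_{> n}$, I would show the unit $\eta \colon X \longrightarrow \iota^{\geq m}_*(X_{m, n})$ is an isomorphism by induction on dimension: it is the identity on $\bbOO_{m, n}$, and if $\eta$ is an isomorphism on $\bbOO_{m, d-1}$ for some $d > n$, then for $\omega \in \bbOO_d$ both $X_\omega$ and $(\iota^{\geq m}_*(X_{m, n}))_\omega$ are computed by applying $\Psh{\bbOO_{\geq m}}\bigl((\partial O [\omega])_{\geq m}, -\bigr)$ --- a functor depending only on the restriction to $\bbOO_{m, d-1}$ --- to presheaves that $\eta$ identifies in those dimensions, whence $\eta_\omega$ is a bijection. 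The delicate points are precisely that $(\partial O [\omega])_{\geq m}$ sees only cells of dimension $< \dim \omega$ (so the induction closes) and that the above explicit construction genuinely computes $\iota^{\geq m}_* Y$; once these are in hand, the rest is bookkeeping.
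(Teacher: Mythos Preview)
Your argument is correct and follows the same line as the paper: part (1) is the standard fact (this is precisely the content of the SGA4 reference the paper invokes) that fully faithful $u$ makes both $u_!$ and $u_*$ fully faithful, and parts (2)--(4) are the explicit verifications the paper waves through as ``straightforward''. Your treatment of (3) is more careful than anything the paper writes down; the inductive description you give is equivalent to the usual right-Kan-extension formula $(\iota^{\geq m}_* Y)_\omega \cong \Psh{\bbOO_{m,n}}\bigl((O[\omega])_{m,n}, Y\bigr)$, since for $\dim\omega > n$ one has $(O[\omega])_{m,n} = (\partial O[\omega])_{m,n}$ and the adjunction converts maps into $\iota^{\geq m}_* Y$ over $\bbOO_{\geq m}$ into maps into $Y$ over $\bbOO_{m,n}$.
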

\begin{proof}
    The first point follows from the fact that $\iota^{\geq m}$ and
    $\iota^{\leq n}$ are fully faithful, and \cite[exposé I, proposition
    5.6]{SGA4}. The rest is straightforward verifications.
\end{proof}

\begin{notation}
    To ease notations, we sometimes leave truncations implicit, e.g. point (3)
    of last proposition can be reworded as: a presheaf $X \in \Psh{\bbOO_{\geq
    m}}$ is in the essential image of $\iota^{\geq m}_*$ if and only if
    $\sfBB_{> n} \perp X$.
\end{notation}


\section{The opetopic nerve of opetopic algebras}
\label{sec:algebraic-realization}

\subsection{Opetopic algebras}

Let $k \leq n \in \bbNN$. Recall from \cref{not:subcategories-of-o} that
$\bbOO_{n-k,n} \longhookrightarrow \bbOO$ is the full subcategory of opetopes
of dimension at least $n-k$ and at most $n$. A \emph{$k$-coloured,
$n$-dimensional opetopic algebra}, or $(k,n)$-opetopic algebra, will be an
algebraic structure on a presheaf over $\bbOO_{n-k,n}$. Specifically, we
describe a monad on the category $\Psh{\bbOO_{n-k,n}}$, whose algebras are the
$(k,n)$-opetopic algebras. Such an algebra $X$ has ``operations'' (its cells of
dimension $n$) that can be ``composed'' in ways encoded by
$(n+1)$-opetopes\footnote{Recall that an $(n+1)$-opetope is precisely a pasting
scheme of $n$-opetopes.}. The operations of $X$ will be ``coloured'' by its
cells dimension $< n$, which determines which operations can be composed
together.

As we will see, the fact that the operations and relations of an
$(k,n)$-opetopic algebra are encoded by opetopes of dimension $>n$ results in
the category $\Oalg k n$ of $(k,n)$-opetopic algebras always having a canonical
full and faithful \emph{nerve functor} to the category $\PshO$ of opetopic sets
(\cref{th:colored-algebraic-localization}).

We now claim some examples. A classification of $(k,n)$-opetopic algebras is
given by \cref{prop:algebra-table}.

\begin{example}
    [Monoids and categories] Let $k=0$ and $n=1$. Then $\bbOO_{n-k,n} = \bbOO_1
    = \{* \}$, and $\Psh{\bbOO_{n-k,n}} = \Set$. The category of
    $(0,1)$-opetopic algebras is precisely the category of associative monoids.
    If $k=1$ instead, then $\bbOO_{n-k,n} = \bbOO_{0,1}$, and
    $\Psh{\bbOO_{n-k,n}} = \Graph$, the category of directed graphs. The
    category of $(1,1)$-opetopic algebras is precisely the category of small
    categories.
\end{example}

\begin{example}
    [Coloured and uncoloured planar operads] Let $k=0$ and $n=2$. Then
    $\bbOO_{n-k,n} = \bbOO_2 \cong \bbNN$, and $\Psh{\bbOO_2} \simeq
    \Set/\bbNN$. The category of $(0,2)$-opetopic algebras is precisely the
    category of planar, uncoloured $\Set$-operads. If $k = 1$ instead, then
    $\Psh{\bbOO_{1,2}}$ is the category of planar, coloured collections. The
    category of $(1,2)$-opetopic algebras is precisely the category of planar,
    coloured $\Set$-operads.
\end{example}

\begin{example}
    [Combinads] Let $k=0$ and $n=3$. Then $\bbOO_{n-k,n} = \bbOO_3$ is the set
    of planar finite trees, and $(0,3)$-opetopic algebras are exactly the
    \emph{combinads}\index{combinad} over the combinatorial pattern of
    non-symmetric trees, presented in \cite{Loday2012a}.
\end{example}

\subsection{Parametric right adjoint monads}
\label{sec:pra}

In preparation to the main results of this section, we survey elements of the
theory of parametric right adjoint (p.r.a.) monads, which will be essential to
the definition and description of $(k,n)$-opetopic algebras. A comprehensive
treatment of this theory can be found in \cite{Weber2007}.

\begin{definition}
    [Parametric right adjoint]
    \label{def:pra}
    Let $T : \catCC \longrightarrow \catDD$ be a functor, and let $\catCC$ have
    a terminal object $1$. Then $T$ factors as
    \[
        \catCC
        = \catCC / 1
        \stackrel{T_1}{\longrightarrow} \catDD / T1
        \longrightarrow \catDD .
    \]
    We say that $T$ is a \emph{parametric right adjoint}\index{parametric right
    adjoint} (abbreviated p.r.a.\index{p.r.a.|see {parametric right adjoint}})
    if $T_1$ has a left adjoint $E$.
\end{definition}

We immediately restrict ourselves to the case $\catCC = \catDD = \Psh\catAA$
for a small category $\catAA$. Then $T_1$ is the nerve of the restriction $E :
\catAA / T1 \longrightarrow \Psh\catAA$, and the usual formula for nerve
functors gives
\begin{equation}
    \label{eq:pra:cardinal-formula}
    T X_a = \sum_{x \in T 1_a} \Psh\catAA (E x, X)
\end{equation}
for $X \in \Psh\catAA$ and $a \in \catAA$. In fact, it is clear that the data
of the object $T1 \in \Psh\catAA$ and of the functor $E$ completely describe
(via \cref{eq:pra:cardinal-formula}) the functor $T$ up to unique isomorphism.

\begin{definition}
    [P.r.a monad]
    \label{def:pra-monad}
    A \emph{p.r.a. monad}\index{p.r.a. monad} is a monad whose endofunctor is a
    p.r.a. and whose unit and multiplication are cartesian natural
    transformations\footnote{P.r.a. monads on presheaf categories are a strict
    generalisation of polynomial monads, since an endofunctor $\Set/I \longrightarrow
    \Set/I$ is a polynomial functor iff it is a p.r.a., see \cite[example
    2.4]{Weber2007}.}.

    Assume now that $T : \Psh\catAA \longrightarrow \Psh\catAA$ is a p.r.a.
    monad. Define $\Theta_0$ to be the full subcategory of $\Psh\catAA$ spanned
    by the image of $E : \catAA / T1 \longrightarrow \Psh\catAA$. Objects of
    $\Theta_0$ are called \emph{$T$-cardinals}\index{$T$-cardinals|see
    {cardinal}}\index{cardinal}. By \cite[proposition 4.20]{Weber2007}, the
    Yoneda embedding $\catAA \longhookrightarrow \Psh\catAA$ factors as
    \[
        \catAA
        \stackrel{i}{\longhookrightarrow} \Theta_0
        \stackrel{i_0}{\longhookrightarrow} \Psh\catAA
    \]\index{$\Theta_0$}
    or in other words, representable presheaves are $T$-cardinals.
\end{definition}

Every p.r.a. monad on a presheaf category is an example of a \emph{monad with
arities}\index{monad with arities} \cite{Berger2012a}. The theory of monads
with arities provides a remarkable amount of information about the
free-forgetful adjunction $\Psh\catAA \adjunction \Alg(T)$ and about the
category of algebras $\Alg (T)$\index{$\Alg (T)$}. We summarise those results
that we will use below.

\begin{proposition}
    \label{prop:pra-density}
    The fully faithful functor $i_0 : \Theta_0 \longhookrightarrow \Psh\catAA$
    is dense, or equivalently, its associated nerve functor $N_0 : \Psh\catAA
    \longhookrightarrow \Psh{\Theta_0}$ is fully faithful.
\end{proposition}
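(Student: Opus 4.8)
The plan is to exhibit $\Theta_0$ as a dense subcategory of $\Psh\catAA$ by using the fact that, for a p.r.a. monad $T$, the $T$-cardinals are precisely the objects $E x$ for $x \in \catAA / T1$, and that these are constructed from representables in a way that makes them canonical colimits of representables. First I would recall that density of $i_0$ is equivalent to each $Y \in \Psh\catAA$ being the colimit of the canonical diagram $i_0 / Y \longrightarrow \Theta_0 \xrightarrow{i_0} \Psh\catAA$. Since $\catAA \xhookrightarrow{i} \Theta_0$ (\cref{def:pra-monad}) and since every presheaf is a colimit of representables, it suffices to show that each representable $O[a] = \sfY a$, viewed through $i$, already sees all of $\Psh\catAA$ correctly — but more efficiently, I would invoke the general principle that if $\catAA \hookrightarrow \catBB \hookrightarrow \Psh\catAA$ with $\catAA \hookrightarrow \Psh\catAA$ the (dense) Yoneda embedding factoring through $\catBB$, then $\catBB \hookrightarrow \Psh\catAA$ is automatically dense. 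This is a standard fact: density is inherited by any intermediate full subcategory containing a dense one, because the comma category $\catAA / Y$ is cofinal in $\catBB / Y$ (every object of $\catBB$ is itself a colimit of representables, so any map $b \to Y$ factors compatibly through representables mapping to $Y$).

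The key steps, in order, would be: (1) state the equivalence ``$i_0$ dense $\iff$ $N_0$ fully faithful'' from \cref{sec:preliminaries-category-theory:prsh-nerve-functors}, so it suffices to prove density; (2) recall from \cref{def:pra-monad} (citing \cite[proposition 4.20]{Weber2007}) that the Yoneda embedding factors as $\catAA \xhookrightarrow{i} \Theta_0 \xhookrightarrow{i_0} \Psh\catAA$, so in particular every representable is a $T$-cardinal; (3) invoke the lemma that a full subcategory of a presheaf category through which the Yoneda embedding factors is necessarily dense — equivalently, show directly that for each $Y \in \Psh\catAA$ the inclusion $\catAA / Y \hookrightarrow \Theta_0 / Y$ is final, so the colimit of $\Theta_0 / Y \to \Psh\catAA$ agrees with $\colim_{\catAA / Y} \sfY = Y$; (4) conclude.

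For step (3), the finality of $\catAA / Y \hookrightarrow \Theta_0 / Y$ is checked via Quillen's Theorem A-style criterion: given $(c, f : i_0 c \to Y) \in \Theta_0 / Y$, the category of objects of $\catAA / Y$ under it is the category of pairs $(a, g : \sfY a \to c)$ with $f \circ i(g) = $ (the structure map) — and this is nonempty and connected because $c$, being an object of $\Psh\catAA$, is the colimit of its own category of elements $\catAA / c$, so the maps $\sfY a \to c$ form a connected diagram refining any given $f$. The main obstacle I anticipate is being appropriately careful that $\Theta_0$ is \emph{full} in $\Psh\catAA$ (which is part of its definition) — fullness is what guarantees that a map $i_0 c \to Y$ in $\Psh\catAA$ is the same as a morphism in $\Theta_0 / Y$ after the factorization, so that no extra morphisms spoil the finality argument. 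Beyond that bookkeeping, the argument is essentially the general fact that density passes to larger full subcategories, and I would expect the proof to be short, citing \cite{Weber2007} and \cite{Adamek1994a} for the standard density lemma rather than reproving it.
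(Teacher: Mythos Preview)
Your overall strategy---use the factorisation $\catAA \xhookrightarrow{i} \Theta_0 \xhookrightarrow{i_0} \Psh\catAA$ together with the fact that any full subcategory containing a dense one is itself dense---is correct, and would give a valid proof if you simply cite that lemma. However, the direct justification you offer in step~(3) via finality does not work. The comma category you actually describe (pairs $(a, g : \sfY a \to c)$) is $F \downarrow (c,f)$, not $(c,f)\downarrow F$, and it identifies with the category of elements $\catAA/(i_0 c)$ of the presheaf $i_0 c$. A $T$-cardinal need not be a connected presheaf: already for the free-monoid monad on $\Set$ the cardinals are the finite sets $\{0,\ldots,n-1\}$, whose category of elements is discrete on $n$ points. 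So neither $\catAA/Y \hookrightarrow \Theta_0/Y$ nor its opposite is final in general, and the density-inheritance lemma must be proved another way (via the nerve and naturality, not via cofinality).

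The paper takes a shorter route on the nerve side of the equivalence rather than the density side. It observes that $N_0$ is naturally isomorphic to the right Kan extension $i_* : \Psh\catAA \to \Psh{\Theta_0}$: since $\Theta_0$ is full in $\Psh\catAA$ one has $i^*\sfY_{\Theta_0}\theta \cong i_0\theta$, whence $(i_* Y)(\theta)\cong \Psh\catAA(i_0\theta,Y)=N_0(Y)(\theta)$. Then the standard fact (SGA4, expos\'e~I, 5.6) that $i$ fully faithful implies $i_*$ fully faithful finishes the argument in one line. Your approach, once the cofinality claim is replaced by a correct proof or citation of the density-inheritance lemma, is a legitimate alternative; the paper's is more economical because the identification $N_0\cong i_*$ reduces everything to a single classical fact about Kan extensions.
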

\begin{proof}
    We denote the inclusion
    $\catAA \longhookrightarrow \Theta_0$ by $i$, and note that $N_0$ is isomorphic to $i_*
    : \Psh\catAA \longrightarrow \Psh{\Theta_0}$. Now, $i$ is fully faithful, and by
    \cite[exposé I, proposition 5.6]{SGA4}, this is equivalent to $i_*$ being
    fully faithful.
\end{proof}

\begin{corollary}
    \label{coroll:pra-lifting}
    Let $J_\catAA \eqdef \{ \epsilon_\theta : i_! i^* \theta \longrightarrow
    \theta \mid \theta \in \Theta_0 - \im i \}$, where $\epsilon_\theta$ is the
    counit at $\theta$. Then a presheaf $X \in \Psh{\Theta_0}$ is in the
    essential image of $N_0$ if and only if $J_\catAA \perp X$.
\end{corollary}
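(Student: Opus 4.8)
The plan is to reduce the statement to formal properties of the adjoint triple $i_! \dashv i^* \dashv i_*$ attached to the fully faithful inclusion $i : \catAA \longhookrightarrow \Theta_0$. As observed in the proof of \cref{prop:pra-density}, $N_0 \cong i_* : \Psh\catAA \longrightarrow \Psh{\Theta_0}$, so it suffices to characterise the essential image of $i_*$. Since $i$ is fully faithful, the counit $\varepsilon : i^* i_* \Longrightarrow \mathrm{id}_{\Psh\catAA}$ of $i^* \dashv i_*$ is invertible (\cite[exposé I, proposition 5.6]{SGA4}); writing $\eta$ for the unit of this same adjunction, it follows by naturality of $\eta$ together with the triangle identities that $X \in \Psh{\Theta_0}$ lies in the essential image of $i_*$ if and only if $\eta_X : X \longrightarrow i_* i^* X$ is an isomorphism. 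Thus the whole statement amounts to showing that $J_\catAA \perp X$ is equivalent to $\eta_X$ being invertible.

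First I would unwind the left-hand condition. As $\Psh{\Theta_0}$ has a terminal object, $J_\catAA \perp X$ says exactly that for every $\theta \in \Theta_0 - \im i$, precomposition with $\epsilon_\theta$ is a bijection $\Psh{\Theta_0}(\theta, X) \longrightarrow \Psh{\Theta_0}(i_! i^* \theta, X)$. Transporting the codomain through the adjunction isomorphisms $\Psh{\Theta_0}(i_! i^* \theta, X) \cong \Psh\catAA(i^* \theta, i^* X) \cong \Psh{\Theta_0}(\theta, i_* i^* X)$ and chasing the relevant triangle identities, one checks that the resulting composite bijection $\Psh{\Theta_0}(\theta, X) \to \Psh{\Theta_0}(\theta, i_* i^* X)$ is precisely postcomposition with $\eta_X$. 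Since the two adjunction isomorphisms are bijective, precomposition with $\epsilon_\theta$ is a bijection if and only if $\eta_X \circ (-)$ is a bijection on $\Psh{\Theta_0}(\theta, -)$. Hence $J_\catAA \perp X$ is equivalent to: $\eta_X \circ (-) : \Psh{\Theta_0}(\theta, X) \to \Psh{\Theta_0}(\theta, i_* i^* X)$ is a bijection for all $\theta \in \Theta_0 - \im i$.

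It remains to note that the same condition holds automatically for $\theta \in \im i$, and so may be imposed over all of $\Theta_0$. Indeed, a triangle identity exhibits $i^* \eta_X$ as a one-sided inverse of the invertible $\varepsilon_{i^* X}$, hence $i^* \eta_X$ is an isomorphism; therefore $(\eta_X)_{ia} = (i^* \eta_X)_a$ is a bijection for every $a \in \catAA$, which, by the Yoneda lemma ($\Psh{\Theta_0}(\theta, -) \cong (-)_\theta$, with $\theta$ identified with its representable as in the statement), is exactly the condition above for $\theta = ia$. Combining the last two paragraphs, $J_\catAA \perp X$ is equivalent to $\eta_X \circ (-)$ being a bijection on $\Psh{\Theta_0}(\theta, -)$ for every $\theta \in \Theta_0$, which by Yoneda means $\eta_X$ is a pointwise, hence genuine, isomorphism. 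Together with the first paragraph, this is the claim.

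The only non-formal point — and the step I expect to require the most care — is the identification in the middle paragraph: verifying that the composite of the two adjunction bijections is literally postcomposition with the unit $\eta_X$, i.e. correctly bookkeeping the triangle identities relating $\epsilon$ (the counit of $i_! \dashv i^*$) to $\eta$ (the unit of $i^* \dashv i_*$). Once that is pinned down, everything else is a direct application of the Yoneda lemma and of the invertibility of the counit of $i^* \dashv i_*$ for fully faithful $i$.
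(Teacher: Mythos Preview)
Your proof is correct and follows essentially the same approach as the paper: both reduce to the adjoint triple $i_! \dashv i^* \dashv i_*$, identify $N_0$ with $i_*$, use the adjunction isomorphisms to relate precomposition by $\epsilon_\theta$ with the unit $\eta_X$ of $i^* \dashv i_*$, and observe that $\epsilon_\theta$ is already invertible for $\theta \in \im i$. Your version is somewhat more explicit in naming $\eta_X$ and phrasing the equivalence as ``$J_\catAA \perp X$ iff $\eta_X$ is invertible'', while the paper runs two parallel chains of isomorphisms for the two directions, but the content is the same.
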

\begin{proof}
    By the formula $i_* Y = \Psh\catAA ( \Theta_0 (i-,-) , Y )$ we have the
    sequence of isomorphisms
    \begin{align*}
        (i_* Y)_\theta
        &= \Psh\catAA ( \Theta_0 (i-,\theta) , Y ) \\
        &\cong \Psh\catAA (i^* \theta , i^*i_* Y)
            & \text{since $i_*$ is fully faithful} \\
        &\cong \Psh{\Theta_0} (i_!i^* \theta , i_* Y)
            & \text{since } \iota_! \dashv i^* .
    \end{align*}
    It is easy to check that one direction of the previous isomorphism is
    pre-composition by $\epsilon_\theta$. Conversely, take $X \in
    \Psh{\Theta_0}$ such that $\epsilon_\theta \perp X$ for all $\theta \in
    \Theta_0$. Then
    \begin{align*}
        i^* i_* X_\theta &\cong \Psh{\Theta_0} (\theta, i^* i_* X) \\
        &\cong \Psh{\Theta_0} (i_! i^* \theta, X) \\
        &\cong \Psh{\Theta_0} (\theta, X)
            & \text{since } \epsilon_\theta \perp X \\
        &\cong X_\theta ,
    \end{align*}
    and thus $X \in \im i^*$. Thus a presheaf $X$ is isomorphic to one of the
    form $i_* Y$ if and only if we have $\epsilon_\theta \perp X$ for all
    $\theta \in \Theta_0$. But if $\theta \in \im i$, then the associated
    counit map is already an isomorphism, hence we can restrict to $J_\catAA$.
\end{proof}

\begin{notation}
    Let the ``identity-on-objects / fully faithful'' factorisation of the
    composite functor $F_T i_0 : \Theta_0 \longhookrightarrow \Psh\catAA \longrightarrow
    \Alg(T)$ be denoted
    \begin{equation}
        \label{eq:thetaT}
        \Theta_0
        \stackrel{t}{\longrightarrow} \Theta_T
        \stackrel{i_T}{\longhookrightarrow} \Alg (T)
    \end{equation}\index{$\Theta_T$}
\end{notation}

\begin{theorem}
    \label{th:pra-nerve-theorem}
    \begin{enumerate}
        \item The fully faithful functor $i_T : \Theta_T \longhookrightarrow
        \Alg(T)$ is dense, or equivalently, its associated nerve functor $N_T :
        \Alg(T) \longhookrightarrow \Psh{\Theta_T}$ is fully faithful.
        \item The following diagram is an exact adjoint square\footnote{There
        exists a natural isomorphism $N_0 U_T \cong t^* N_T$ whose mate $t_!
        N_0 \longrightarrow N_T F_T$ is invertible (satisfies the Beck-Chevalley
        condition).}.
        \[
            \begin{tikzcd}
                \Psh\catAA
                    \ar[d, hook, "N_0" left]
                    \ar[r, shift left = .4em, "F_T"] &
                \Alg(T)
                    \ar[d, hook, "N_T"]
                    \ar[l, shift left = .4em, "U_T", "\perp" above] \\
                \Psh{\Theta_0}
                    \ar[r, shift left = .4em, "t_!"] &
                \Psh{\Theta_T}
                    \ar[l, shift left = .4em, "t^*", "\perp" above]
            \end{tikzcd}
        \]
        In particular, both squares commute up to natural isomorphism.
        \item (Nerve theorem) Any $X \in \Psh{\Theta_T}$ is in the essential
        image of $N_T$ if and only if $t^*X$ is in the essential image of
        $N_0$.
    \end{enumerate}
\end{theorem}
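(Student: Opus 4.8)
The plan is to recognise \cref{th:pra-nerve-theorem} as an instance of the nerve theorem for monads with arities of \cite{Berger2012a} (see also \cite{Weber2007}), applied to the p.r.a. monad $T$ together with its canonical subcategory of arities $\Theta_0 \longhookrightarrow \Psh\catAA$. The one substantive point to establish is that $i_0 : \Theta_0 \longhookrightarrow \Psh\catAA$ \emph{provides arities} for $T$ in the sense of \cite{Berger2012a}. One half of this hypothesis, that $\Theta_0$ is dense in $\Psh\catAA$, is exactly \cref{prop:pra-density}. The other half is the compatibility of $T$ with $\Theta_0$, i.e. that $N_0 T : \Psh\catAA \longrightarrow \Psh{\Theta_0}$ sends the canonical colimit cocone presenting each $X \in \Psh\catAA$ as a colimit of its $\Theta_0$-nerve to a colimit cocone. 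This is where the parametric right adjoint structure enters: by \cref{eq:pra:cardinal-formula} we have $TX_a = \sum_{x \in T1_a} \Psh\catAA(Ex, X)$ with every $Ex$ a $T$-cardinal, so that $N_0 T$ is, up to the reindexing supplied by $E$, built out of coproducts of $\Theta_0$-representables, and the required preservation of the canonical $\Theta_0$-colimits is then formal; this is carried out in \cite{Weber2007}. Together with the fact, recorded after \cref{def:pra-monad}, that the Yoneda embedding factors as $\catAA \longhookrightarrow \Theta_0 \longhookrightarrow \Psh\catAA$, this exhibits $T$ as a monad with arities $\Theta_0$ (and $\Theta_0$, hence $\Theta_T$, is essentially small, so $\Psh{\Theta_T}$ makes sense).

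Granting that $T$ is a monad with arities $\Theta_0$, the three items follow at once from the Berger--Melli\`es--Weber nerve theorem applied to the associated theory $\Theta_T$, which is by construction the identity-on-objects/fully-faithful factorisation \cref{eq:thetaT} of $F_T i_0 : \Theta_0 \longrightarrow \Alg(T)$. Item (1) is the statement that $\Theta_T \longhookrightarrow \Alg(T)$ is dense, equivalently that $N_T$ is fully faithful. Item (2) is the exactness clause of the theorem: it supplies the canonical isomorphism $N_0 U_T \cong t^* N_T$ between the two nerve functors and asserts that its mate $t_! N_0 \longrightarrow N_T F_T$ satisfies the Beck--Chevalley condition, whence both squares of the diagram commute up to coherent natural isomorphism. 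Item (3) is the image-recognition clause: $X \in \Psh{\Theta_T}$ lies in the essential image of $N_T$ if and only if $t^* X$ lies in the essential image of $N_0$.

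The main obstacle is the ``provides arities'' verification; everything downstream is a black-box application of \cite{Berger2012a}. If a self-contained treatment is wanted, I would expand that step directly from \cref{eq:pra:cardinal-formula}: one checks that the comparison map $\colim_{(\Theta_0 / X)} (N_0 T i_0) \longrightarrow N_0 T X$ is an isomorphism, using that $N_0$ turns a coproduct of $\Theta_0$-representables into the corresponding coproduct of representable presheaves on $\Theta_0$, and that the coproduct and colimit involved are computed pointwise in $\Psh{\Theta_0}$. For making item (3) usable in practice one then also appeals to \cref{coroll:pra-lifting}, which describes the essential image of $N_0$ concretely via orthogonality to the set $J_\catAA$.
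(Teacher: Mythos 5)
Your proposal is correct and matches the paper's approach exactly: the paper's proof is a citation to Weber \cite[theorem 4.10]{Weber2007} and Berger--Melli\`es--Weber \cite[proposition 1.9]{Berger2012a}, and your write-up is a faithful unpacking of why those results apply (namely that $\Theta_0$ provides arities for the p.r.a.\ monad $T$). The only thing you do beyond the paper is spell out the ``provides arities'' verification, which the cited references handle.
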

\begin{proof}
    See \cite[theorem 4.10]{Weber2007}, and \cite[proposition 1.9]{Berger2012a}.
\end{proof}

\begin{corollary}
    \label{coroll:algebra-lifting}
    Let $J_T \eqdef t_! J_\catAA = \{ t_! \epsilon_\theta : t_! i_! i^* \theta
    \longrightarrow t_! \theta \mid \theta \in \Theta_0 - \im i \}$, where
    $\epsilon_\theta : i_! i^* \theta \longrightarrow \theta$ is the counit at
    $\theta$. Then $X \in \Psh{\Theta_T}$ is in the essential image of $N_T$ if
    and only if $J_T \perp X$.
\end{corollary}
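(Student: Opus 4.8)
The plan is to deduce this directly from the nerve theorem \cref{th:pra-nerve-theorem}(3) and the characterisation of the essential image of $N_0$ obtained in \cref{coroll:pra-lifting}, bridging the two with a routine transfer of orthogonality across the adjunction $t_! \dashv t^*$.

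First I would record the elementary transfer lemma: for a morphism $f : A \longrightarrow B$ in $\Psh{\Theta_0}$ and any $X \in \Psh{\Theta_T}$, one has $t_! f \perp X$ if and only if $f \perp t^* X$. Indeed, unwinding the definition of $\perp$ against an object (\cref{sec:preliminaries-category-theory:orthogonality}), and using that the codomain of $X \longrightarrow 1$ is terminal so the square commutes automatically, $t_! f \perp X$ says precisely that precomposition by $t_! f$ is a bijection $\Psh{\Theta_T}(t_! B, X) \xrightarrow{\ \cong\ } \Psh{\Theta_T}(t_! A, X)$; by the adjunction $t_! \dashv t^*$ this is the same map as precomposition by $f$, namely $\Psh{\Theta_0}(B, t^* X) \xrightarrow{\ \cong\ } \Psh{\Theta_0}(A, t^* X)$, whose bijectivity is exactly the assertion $f \perp t^* X$. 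Since $J_T = t_! J_\catAA = \{ t_! \epsilon_\theta \mid \theta \in \Theta_0 - \im i \}$, and since $\sfLL \perp X$ is by definition the conjunction of $l \perp X$ over $l \in \sfLL$, applying the lemma termwise gives
\[
    J_T \perp X \qquad\Longleftrightarrow\qquad J_\catAA \perp t^* X .
\]

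Then I would chain the two cited results. By \cref{coroll:pra-lifting}, $J_\catAA \perp t^* X$ holds if and only if $t^* X$ belongs to the essential image of $N_0$; and by the nerve theorem \cref{th:pra-nerve-theorem}(3), $t^* X$ belongs to the essential image of $N_0$ if and only if $X$ belongs to the essential image of $N_T$. Combining these two equivalences with the displayed one above yields exactly the asserted equivalence: $X \in \Psh{\Theta_T}$ is in the essential image of $N_T$ if and only if $J_T \perp X$. The argument is purely formal, so there is no genuine obstacle; the only point needing a little care is the transfer lemma, and there the subtlety is merely the unwinding of $l \perp Y$ for an \emph{object} $Y$ into bijectivity of $l^*$ on hom-sets into $Y$, after which the hom-set adjunction isomorphism does all the work.
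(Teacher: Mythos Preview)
Your proof is correct and follows exactly the same approach as the paper, which simply states that the result follows from \cref{th:pra-nerve-theorem}(3) and \cref{coroll:pra-lifting}. You have merely made explicit the routine adjunction transfer $t_! f \perp X \Leftrightarrow f \perp t^* X$ that the paper leaves implicit.
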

\begin{proof}
    This follows from \cref{th:pra-nerve-theorem}
    point (3), and from \cref{coroll:pra-lifting}.
\end{proof}

\subsection{Coloured \texorpdfstring{$\optPolyFun^n$}{zn}-algebras}
\label{sec:colored-zn-algebraic-realization}

In this section, we extend the polynomial monad $\optPolyFun^n$ over $\Set /
\bbOO_n = \Psh{\bbOO_n}$ to a p.r.a. monad
$\optPolyFun^n$\index{$\optPolyFun^n$} over $\Psh{\bbOO_{n-k, n}}$, for $k \leq
n \in \bbNN$.

This new setup will encompass more known examples (see
\cref{prop:algebra-table}). For instance, recall that the polynomial monad
$\optPolyFun^2$ on $\Set / \bbOO_2 \cong \Set / \bbNN$ is exactly the monad of
planar operads. The extension of $\optPolyFun^2$ will retrieve \emph{coloured}
planar operads as algebras. Similarly, the polynomial monad $\optPolyFun^1$ on
$\Set$ is the free-monoid monad, which we would like to vary to obtain
``coloured monoids'', i.e. small categories.

Let $k \leq n \in \bbNN$. Let us define a p.r.a. endofunctor $\optPolyFun^n$ on
the category $\Psh{\bbOO_{n-k, n}}$, that will restrict to the polynomial monad
$\optPolyFun^n : \Psh{\bbOO_n} \longrightarrow \Psh{\bbOO_n}$ in the case $k = 0$. Following \cref{sec:pra}, to define the p.r.a. endofunctor $\optPolyFun^n$ as
the composite
\[
    \Psh{\bbOO_{n-k, n}}
    \xrightarrow{\optPolyFun^n_1} \Psh{\bbOO_{n-k, n}}/\optPolyFun^n 1
    \simeq \Psh{\bbOO_{n-k, n} / \optPolyFun^n 1} \longrightarrow \Psh{\bbOO_{n-k, n}},
\]
up to unique isomorphism, it suffices to define its value $\optPolyFun^n 1$ on
the terminal presheaf, and to define a functor $E : \bbOO_{n-k, n} /
\optPolyFun^n 1 \longrightarrow \Psh{\bbOO_{n-k, n}}$.

\begin{definition}
    \label{def:colored-zn}
    Define $\optPolyFun^n 1$ as:
    \[
        (\optPolyFun^n1)_\psi
        \eqdef \{*\},
        \qqquad
        (\optPolyFun^n1)_\omega
        \eqdef \{\nu \in \bbOO_{n+1} \mid \tgt \nu = \omega \} .
    \]
    where $\psi \in \bbOO_{n-k, n-1}$ and $\omega \in \bbOO_n$. We define the
    functor $E : \bbOO_{n-k, n} / \optPolyFun^n 1 \longrightarrow
    \Psh{\bbOO_{n-k, n}}$ as follows. On objects, for $* \in
    (\optPolyFun^n1)_\psi$ and $\nu \in (\optPolyFun^n1)_\omega$, let
    \[
        E (*) \eqdef O [\psi],
        \qqquad
        E (\nu) \eqdef S [\nu] ,
    \]
    and on morphisms as the canonical inclusions. The functor $\optPolyFun^n_1
    : \Psh{\bbOO_{n-k, n}} \longrightarrow \Psh{\bbOO_{n-k, n} / \optPolyFun^n
    1}$ is defined as the right adjoint to the left Kan extension of $E$ along
    the Yoneda embedding. We now recover the endofunctor $\optPolyFun^n$
    explicitly using \cref{eq:pra:cardinal-formula}: for $\psi \in \bbOO_{n-k,
    n-1}$ we have $(\optPolyFun^n X)_\psi \cong X_\psi$, and for $\omega \in
    \bbOO_n$ we have
    \[
        (\optPolyFun^n X)_\omega
        \cong \sum_{\substack{\nu \in \bbOO_{n+1} \\ \tgt \nu = \omega}}
            \Psh{\bbOO_{n-k, n}} (S [\nu], X) .
    \]
\end{definition}

Note that $(\optPolyFun^n X)_\omega$ matches with the ``uncolored'' version of
$\optPolyFun_n$ of \cref{eq:zn}.

Recall from \cref{sec:pra} that a p.r.a. monad is a monad $M$ whose unit $\id
\longrightarrow M$ and multiplication $MM \longrightarrow M$ are cartesian, and
such that $M$ is a p.r.a. endofunctor. We now endow the p.r.a. endofunctor
$\optPolyFun^n$ with the structure of a p.r.a. monad over $\Psh{\bbOO_{n-k,
n}}$. We first specify the unit and multiplication $\eta_1 : 1 \longrightarrow
\optPolyFun^n 1$ and $\mu_1 : \optPolyFun^n \optPolyFun^n 1 \longrightarrow
\optPolyFun^n 1$ on the terminal object $1$, and extend them to cartesian
natural transformations (\cref{lemma:colored-zn:pra-monad-structure}). Next, we
check that the required monad identities hold for $1$
(\cref{lemma:colored-zn:pra-monad-structure:1}), which automatically gives us
the desired monad structure on $\optPolyFun^n$.

\begin{lemma}
    \label{lemma:zn-zn}
    The polynomial functor $\optPolyFun^n \optPolyFun^n : \Set / \bbOO_n
    \longrightarrow \Set / \bbOO_n$ is given by
    \[
        \polynomialfunctor
            {\bbOO_n}{E}{\bbOO_{n+2}^{(2)}}{\bbOO_n ,}
            {\edg}{}{\tgt \tgt}
    \]
    where
    \begin{enumerate}
        \item $\bbOO_{n+2}^{(2)}$ is the set of $(n+2)$ opetopes of height $2$,
        i.e. of the form
        \[
            \ytree{\nu} \biggraft_{[[p_i]]} \ytree{\nu_i} ,
        \]
        with $\nu, \nu_i \in \bbOO_{n+1}$ and $[p_i]$ ranging over a (possibly
        empty) subset of $\nu^\nodesymbol$,
        \item for $\xi \in \bbOO_{n+2}^{(2)}$, $E (\xi) = \xi^\leafsymbol$,
        \item for $\xi \in \bbOO_{n+2}^{(2)}$ and $[l] \in E (\xi) =
        \xi^\leafsymbol$, $\edg [l] = \edg_{[l]} \tgt$.
    \end{enumerate}
\end{lemma}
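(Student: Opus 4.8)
The plan is to compute the composite polynomial functor $\optPolyFun^n \optPolyFun^n$ directly from the monad structure, using the fact that $\optPolyFun^n = (\optPolyFun^{n-1})^+$ is by definition the Baez--Dolan construction applied to $\optPolyFun^{n-1}$, so that its underlying polynomial functor is
\[
    \polynomialfunctor{\bbOO_n}{\treewithnode \optPolyFun^{n-1}}{\bbOO_{n+1}}{\bbOO_n}{\src}{p}{\tgt},
\]
where an $(n+1)$-opetope $\nu \in \bbOO_{n+1}$ is a $\optPolyFun^{n-1}$-tree, its node addresses are $\nu^\nodesymbol$, and $\src_{[p]}\nu \in \bbOO_n$ for $[p] \in \nu^\nodesymbol$. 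First I would unwind the general formula for the composite of two polynomial functors: given $P$ with data $(I,E,B,I)$, the composite $PP$ has as its set of operations the set of ``$P$-decorated corollas of $P$'', i.e. pairs consisting of a node $b \in B$ together with, for each input $e \in E(b)$, a further node $b_e \in B$ with $t(b_e) = s(e)$; its set of colours is still $I$; its set of inputs over such a decorated corolla is $\sum_{e \in E(b)} E(b_e)$; the source map reads off the colour $s(e')$ for $e' \in E(b_e)$; and the target map is $b \mapsto t(b)$ (via the outer node). This is the content of the standard composition formula for polynomial endofunctors (see \cite{Kock2011, Gambino2013}), and the square is automatically a pullback.

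Next I would specialise this to $P = \optPolyFun^n$. An operation of $\optPolyFun^n$ over a colour $\omega \in \bbOO_n$ is an $(n+1)$-opetope $\nu$ with $\tgt\nu = \omega$; its inputs are $\nu^\nodesymbol$, and the colour of an input $[p]$ is $\src_{[p]}\nu \in \bbOO_n$. So an operation of $\optPolyFun^n\optPolyFun^n$ over $\omega$ is: an $(n+1)$-opetope $\nu$ with $\tgt\nu = \omega$, together with, for each $[p_i] \in \nu^\nodesymbol$, an $(n+1)$-opetope $\nu_i$ with $\tgt\nu_i = \src_{[p_i]}\nu$. I claim this datum is precisely the information needed to form the grafted $(n+2)$-opetope $\ytree{\nu} \biggraft_{[[p_i]]} \ytree{\nu_i}$: the corolla $\ytree{\nu} \in \tr\optPolyFun^n$ is a one-node $\optPolyFun^n$-tree whose node is decorated by $\nu$ and whose input edges are indexed by $\nu^\nodesymbol$, each edge $[p_i]$ being decorated by the $n$-opetope $\src_{[p_i]}\nu$; grafting a corolla $\ytree{\nu_i}$ onto the leaf at address $[[p_i]]$ is well-defined exactly when the root edge of $\ytree{\nu_i}$, namely $\tgt\nu_i$, matches that decoration. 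Thus the set of operations of $\optPolyFun^n\optPolyFun^n$ over $\omega$ is in bijection with the set of height-$\leq 2$ $(n+2)$-opetopes $\xi$ — here ``height $2$'' meaning exactly those of the displayed grafted form, where $[p_i]$ ranges over a possibly-empty subset of $\nu^\nodesymbol$ (empty giving just $\ytree{\nu}$, a height-$1$ opetope). I would then verify via \cref{def:baez-dolan-construction} and the inductive identification of $E_{n+2}(\xi)$ with node addresses (from \cref{sec:trees}) that the outer target of $\xi$ is $\tgt\tgt\xi = \tgt\nu = \omega$, which matches the claimed target map $\tgt\tgt$.

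For the remaining two points: the inputs of $\optPolyFun^n\optPolyFun^n$ over such a $\xi$ are $\sum_i \nu_i^\nodesymbol$; but by the combinatorics of grafting, the leaves of $\xi = \ytree{\nu}\biggraft\ytree{\nu_i}$ are precisely the disjoint union of the leaves of the grafted corollas $\ytree{\nu_i}$ (one leaf of $\ytree{\nu_i}$ for each node address of $\nu_i$) together with the un-grafted leaves of $\ytree{\nu}$ — and this is exactly $\xi^\leafsymbol$. I would make this leaf-counting precise using the pushout definition of grafting (\cref{def:grafting}) and the total-grafting notation (\cref{not:total-grafting}), noting that grafting a corolla onto a leaf replaces that one leaf by the leaves of the corolla. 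Finally, the source (colour) of an input: for a leaf $[l] \in \xi^\leafsymbol$ coming from $\ytree{\nu_i}$, the corresponding input of the inner operation $\nu_i$ has colour $\src_{[\cdots]}\nu_i$, which is an $n$-opetope; on the other hand $\edg_{[l]}\tgt\xi$ is, by \cref{eq:preliminaries:edg} and the inner-edge identity of \cref{th:opetopic-identities}, exactly the $n$-opetope decorating the edge of the underlying tree of $\tgt\xi$ at address $[l]$ — and a short diagram chase with the opetopic identities shows these agree. The main obstacle I anticipate is this last bookkeeping step: correctly matching up the higher-address indexing of leaves of $\xi$ (a composite of graftings, so indexed by nested addresses $[[p_i]\cdots]$) with the indexing of inputs of the composite polynomial functor, and confirming the source map is literally $\edg_{[l]} = \edg_{[l]}\tgt$ rather than merely isomorphic to it; this is where \cref{th:opetopic-identities} (inner edge and globularity) does the real work, and where one must be careful about degenerate corollas $\ytree{\nu_i}$ arising when some $\nu_i$ has no nodes.
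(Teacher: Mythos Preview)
The paper states this lemma without proof (it is not among the omitted proofs in the appendix either), treating it as a routine unwinding of the composition of polynomial endofunctors. Your approach is exactly the natural one: apply the standard composition formula for polynomial functors, then identify the resulting operations of $\optPolyFun^n\optPolyFun^n$ with height-$2$ $\optPolyFun^n$-trees via grafting of corollas, and read off the input sets and source/target maps.

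One clarification worth making explicit in your write-up: for the bijection between operations of $\optPolyFun^n\optPolyFun^n$ and the displayed $(n+2)$-opetopes to hold, the index set $\{[p_i]\}$ must be \emph{all} of $\nu^\nodesymbol$, not a proper subset. The phrase ``possibly empty subset'' in the statement should be read as allowing $\nu^\nodesymbol$ itself to be empty (when $\nu$ is a degenerate $(n+1)$-opetope), not as permitting some inputs of $\nu$ to go unfilled---the composite $\optPolyFun^n\optPolyFun^n$ requires an inner operation at every input of the outer one. You drift between the two readings in your proposal; fix this and the argument is clean. The address bookkeeping you flag as the main obstacle is real but mechanical: a leaf of $\xi$ at address $[[p_i][q]]$ corresponds to the input $[q]\in\nu_i^\nodesymbol$, whose colour is $\src_{[q]}\nu_i = \src_{[q]}\src_{[[p_i]]}\xi = \edg_{[[p_i][q]]}\xi$, and no opetopic identities beyond the definition of $\edg$ are needed.
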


We now define $\eta_1$ and $\mu_1$.
\begin{enumerate}
    \item The presheaf $\optPolyFun^n 1 \in \Psh{\bbOO_{n-k, n}}$ is pointed in
    the following way: the morphism $\eta_1 : 1 \longrightarrow \optPolyFun^n
    1$ is the identity in dimension $< n$, and maps the unique element of
    $1_\omega$ to $\ytree{\omega}$ for $\omega \in \bbOO_n$.

    \item First, note that for every non-degenerate $\nu \in \bbOO_{n+1}$, a
    morphism $x : S [\nu] \longrightarrow \optPolyFun^n 1$ in $\Psh{\bbOO_{n-k,
    n}}$ and an element of $\optPolyFun^n \optPolyFun^n 1$, this corresponds to
    an opetope $\barnu \in \bbOO_{n+2}$ height $2$ (see \cref{lemma:zn-zn})
    such that $\src_{[]} \barnu = \nu$.

    Next, the map of presheaves $\mu_1 : \optPolyFun^n \optPolyFun^n 1
    \longrightarrow \optPolyFun^n 1$ is defined as the identity function on
    opetopes of dimension $< n$, and as the function
    \begin{align*}
        \mu_{1, \omega} :
            \sum_{\substack{\nu \in \bbOO_{n+1} \\ \tgt \nu = \omega}}
            \Psh{\bbOO_{n-k, n}} (S [\nu], \optPolyFun^n 1)
        &\longrightarrow
        \sum_{\substack{\nu \in \bbOO_{n+1} \\ \tgt \nu = \omega}}
            \Psh{\bbOO_{n-k, n}} (S [\nu], 1) \\
        (\nu, x) &\longmapsto (\tgt \barnu, *)
            & \text{if $\nu$ non-degen.} \\
        (\nu, *) &\longmapsto (\nu, *)
            & \text{if $\nu$ degen.}
    \end{align*}
    on opetopes $\omega \in \bbOO_n$. This is well-defined thanks to
    \condition{Glob1} (\cref{def:o}).
\end{enumerate}

\begin{lemma}
    \label{lemma:colored-zn:pra-monad-structure}
    For $X \in \Psh{\bbOO_{n-k, n}}$, the unique morphism $! : X
    \longrightarrow 1$ induces pullback squares
    \[
        \pullbackdiagram
            {X}{\optPolyFun^n X}{1}{\optPolyFun^n 1 ,}
            {\eta_X}{!}{\optPolyFun^n !}{\eta_1}
        \qqquad
        \pullbackdiagram
            {\optPolyFun^n \optPolyFun^n X}{\optPolyFun^n X}
                {\optPolyFun^n \optPolyFun^n 1}{\optPolyFun^n 1}
            {\mu_X}{\optPolyFun^n \optPolyFun^n !}{\optPolyFun^n !}{\mu_1}
    \]
\end{lemma}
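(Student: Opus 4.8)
The plan is to check each of the two squares pointwise, at one opetope $\omega \in \bbOO_{n-k,n}$ at a time, by computing both sides using the cardinal formula \eqref{eq:pra:cardinal-formula} for $\optPolyFun^n$ and identifying the canonical comparison map into the pullback with $\eta_X$, resp.\ $\mu_X$. (By the pasting lemma for pullbacks, once the naturality squares at every map into $1$ are pullbacks so are all naturality squares, which is the sense in which this lemma extends $\eta_1,\mu_1$ to \emph{cartesian} natural transformations.) In dimensions $<n$ all three of $\optPolyFun^n$, $\eta$ and $\mu$ act as the identity, so both squares restrict there to the trivial pullback $X_\psi = 1_\psi \times_{1_\psi} X_\psi$ and there is nothing to show; so one fixes $\omega \in \bbOO_n$.

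For the unit square, $1_\omega$ is a singleton, so the pullback $1_\omega \times_{(\optPolyFun^n 1)_\omega}(\optPolyFun^n X)_\omega$ is the fibre of $\optPolyFun^n !$ over $\eta_{1,\omega}(*) = \ytree\omega$, namely $\{\,(\ytree\omega, f) \mid f \in \Psh{\bbOO_{n-k,n}}(S[\ytree\omega], X)\,\}$. By \cref{coroll:y-i:spine-local} the source embedding $\src_{[]} \colon \omega \to \ytree\omega$ \emph{is} the spine inclusion of $\ytree\omega$, so $S[\ytree\omega] \cong O[\omega]$ and, by the Yoneda lemma, this fibre is $X_\omega$; unwinding the definition of $\eta_X$ shows the comparison map $X_\omega \to 1_\omega \times_{(\optPolyFun^n 1)_\omega}(\optPolyFun^n X)_\omega$ is exactly $\eta_{X,\omega}$, hence a bijection.

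For the multiplication square I would unwind both sides at $\omega$. By \cref{lemma:zn-zn} the set $(\optPolyFun^n \optPolyFun^n 1)_\omega$ is the set of height-$2$ opetopes $\xi \in \bbOO_{n+2}^{(2)}$ with $\tgt\tgt\xi = \omega$, and under this identification $\mu_{1,\omega}$ sends $\xi$ to $\tgt\xi$ (on the non-degenerate part; it is the identity on the degenerate part). Hence the pullback $(\optPolyFun^n\optPolyFun^n 1)_\omega \times_{(\optPolyFun^n 1)_\omega}(\optPolyFun^n X)_\omega$ is the set of pairs $(\xi, f)$ with $\xi \in \bbOO_{n+2}^{(2)}$, $\tgt\tgt\xi = \omega$, and $f \in \Psh{\bbOO_{n-k,n}}(S[\tgt\xi], X)$. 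On the other hand, by the cardinal formula $(\optPolyFun^n\optPolyFun^n X)_\omega = \sum_{\tgt\nu = \omega}\Psh{\bbOO_{n-k,n}}(S[\nu], \optPolyFun^n X)$. Given such a pair $(\nu, g)$, post-composing $g$ with $\optPolyFun^n !$ and applying the correspondence recalled just before this lemma produces a height-$2$ opetope $\xi$ with $\src_{[]}\xi = \nu$ (so $\tgt\tgt\xi = \tgt\src_{[]}\xi = \tgt\nu = \omega$ by \condition{Glob1}); writing $S[\nu]$ as the iterated pushout of the $O[\src_{[p]}\nu]$ along their shared faces (\cref{prop:polynomial-functor:trees-are-graftings} and \cref{lemma:opetopes-technical:spine-pushout}), the remaining data of $g$ is a choice, for each node $[p]$ of $\nu$, of a map $S[\src_{[p]}\xi] \to X$, these being compatible along the inner edges of $\xi$ exactly by \condition{Inner} and \condition{Glob2}; but this is precisely a map $f \colon S[\tgt\xi] \to X$, since the top cells of $S[\tgt\xi]$ are the nodes of the $\src_{[p]}\xi$ glued by the same identities. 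The assignment $(\nu, g) \mapsto (\xi, f)$ is natural in $X$, is visibly inverse to the evident reconstruction of $g$ from $(\xi, f)$, and agrees with $\mu_{X,\omega}$, hence is the comparison map, so the square is a pullback. The degenerate cases ($\nu$ or $\src_{[]}\xi$ degenerate) reduce via \cref{coroll:y-i:spine-local} and \condition{Degen} to $S[\nu]$ collapsing to a lower representable, where the correspondence is immediate.

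The main obstacle will be the penultimate step of the multiplication part: making precise that the node-decorations carried by $g$ assemble into a \emph{single} morphism $S[\tgt\xi] \to X$. This requires reconciling the pushout presentation of $S[\nu]$ "one grafted corolla at a time" with that of $S[\tgt\xi]$ "one substituted node at a time", and checking that the matching conditions along inner edges are exactly the opetopic identities of \cref{th:opetopic-identities}; once that bookkeeping is in place, everything else — commutativity of both squares and the elementwise bijections — is routine.
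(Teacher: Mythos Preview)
Your approach is correct and matches the paper's own proof: both verify the two squares pointwise, dispose of dimensions $<n$ trivially, and at $\omega\in\bbOO_n$ compute the fibre/pullback using the cardinal formula together with $S[\ytree\omega]=O[\omega]$ for the unit and the height-$2$ description of $\optPolyFun^n\optPolyFun^n$ from \cref{lemma:zn-zn} for the multiplication. The paper's appendix proof is considerably terser---it simply writes the chain of set equalities $P_\omega=\{(\xi,x)\mid x:S[\tgt\xi]\to X,\ \xi\in\bbOO_{n+2}^{(2)}\}=(\optPolyFun^n\optPolyFun^n X)_\omega$ without spelling out the gluing you identify as the ``main obstacle''---so your version is really the same argument with that step unpacked.
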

\begin{proof}
    Straightforward verifications, see \cref{sec:omitted-proofs} for details.
\end{proof}

We have suggestively named the topmost arrows $\eta_X$ and $\mu_X$, since this
choice of pullback square for each $X$ gives cartesian natural transformations
$\eta : \id \longrightarrow \optPolyFun^n$ and $\mu : \optPolyFun^n
\optPolyFun^n \longrightarrow \optPolyFun^n$.

\begin{lemma}
    \label{lemma:colored-zn:pra-monad-structure:1}
    The following diagrams commute:
    \[
        \begin{tikzcd}
            \optPolyFun^n 1
                \ar[r, "\eta_{\optPolyFun^n 1}"]
                \ar[dr, equal] &
            \optPolyFun^n \optPolyFun^n 1
                \ar[d, "\mu_1"] &
            \optPolyFun^n 1
                \ar[l, "\optPolyFun^n \eta_1"']
                \ar[dl, equal] \\
            &
            \optPolyFun^n 1 , &
        \end{tikzcd}
        \qqquad
        \squarediagram
            {\optPolyFun^n \optPolyFun^n \optPolyFun^n 1}
                {\optPolyFun^n \optPolyFun^n 1}{\optPolyFun^n \optPolyFun^n 1}
                {\optPolyFun^n 1 .}
            {\optPolyFun^n \mu_1}{\mu_{\optPolyFun^n 1}}{\mu_1}{\mu_1}
    \]
\end{lemma}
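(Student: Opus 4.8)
The plan is to verify the two unit triangles and the associativity square pointwise over $\bbOO_{n-k,n}$, dimension by dimension. In every dimension strictly below $n$ the three presheaves $\optPolyFun^n 1$, $\optPolyFun^n\optPolyFun^n 1$ and $\optPolyFun^n\optPolyFun^n\optPolyFun^n 1$ are all terminal: by \cref{def:colored-zn} we have $(\optPolyFun^n X)_\psi \cong X_\psi$ whenever $\dim\psi < n$, so $\optPolyFun^n$ preserves the property of being a singleton in those dimensions, and the maps $\eta_1$, $\mu_1$ together with all of their whiskerings $\eta_{\optPolyFun^n 1}$, $\optPolyFun^n\eta_1$, $\mu_{\optPolyFun^n 1}$, $\optPolyFun^n\mu_1$ are the identity there. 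Hence all three diagrams commute trivially below dimension $n$, and it remains only to check them on $n$-cells.

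On $n$-cells the situation collapses to the polynomial monad $\optPolyFun^n = (\optPolyFun^{n-1})^+$ of \cref{def:zn}. First, for $\omega\in\bbOO_n$ the set $(\optPolyFun^n 1)_\omega = \{\nu\in\bbOO_{n+1}\mid\tgt\nu=\omega\}$ is exactly the fibre over $\omega$ of the underlying object of $\optPolyFun^n$ evaluated at the terminal object of $\Set/\bbOO_n$. Next, a map $S[\nu]\longrightarrow\optPolyFun^n 1$ out of the spine of a non-degenerate $\nu\in\bbOO_{n+1}$ amounts precisely to a family $(\nu_{[p]})_{[p]\in\nu^\nodesymbol}$ with $\nu_{[p]}\in\bbOO_{n+1}$ and $\tgt\nu_{[p]}=\src_{[p]}\nu$ (the lower-dimensional gluing data being forced, since $\optPolyFun^n 1$ is a singleton there), while for degenerate $\nu$ the spine is the representable $O[\psi]$ with $\dim\psi<n$ by point~(1) of \cref{coroll:y-i:spine-local}; combining this with \cref{lemma:zn-zn} one sees that $(\optPolyFun^n\optPolyFun^n 1)_\omega$ and $(\optPolyFun^n\optPolyFun^n\optPolyFun^n 1)_\omega$ agree with the corresponding iterates of the polynomial functor $\optPolyFun^n$ at the terminal object of $\Set/\bbOO_n$. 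Finally, $\eta_1$ sends $*_\omega$ to the corolla $\ytree\omega$, which is the unit operation of $\optPolyFun^n$ at the colour $\omega$, and $\mu_1$ sends $(\nu,x)$ to $\tgt\barnu$ — that is, to the iterated substitution of the corollas recorded by $x$ into the nodes of $\nu$, i.e. the multiplication of $(\optPolyFun^{n-1})^+$ as described in the remark following \cref{th:polynomial-functor:+:is-monad} — and sends the unique degenerate datum $(\nu,*)$ to $\nu$. Granting these identifications, the two unit triangles and the associativity square, restricted to $n$-cells, are literally the left unit, right unit, and associativity laws of the polynomial monad $\optPolyFun^n$ evaluated at the terminal object of $\Set/\bbOO_n$, and hence commute by \cref{th:polynomial-functor:+:is-monad}. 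Together with \cref{lemma:colored-zn:pra-monad-structure}, this then yields the desired p.r.a.\ monad structure on $\optPolyFun^n$ for arbitrary presheaves.

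The step I expect to be the main obstacle is establishing the dictionary used in the previous paragraph: that the $\barnu$-description of $\mu_1$ — which is well defined by \condition{Glob1} — coincides on the nose with the multiplication of the polynomial monad $(\optPolyFun^{n-1})^+$. This requires carefully unwinding \cref{def:substitution}, the readdressing structure of \cref{def:readdressing}, and the explicit description of the $(-)^+$-multiplication in \cite[section 3.2]{Kock2010}, and checking that $\tgt\bigl(\ytree\nu\biggraft_{[[p_i]]}\ytree{\nu_i}\bigr)$ equals the iterated substitution $(\cdots(\nu\subst_{[p_1]}\nu_1)\cdots)\subst_{[p_k]}\nu_k$. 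The degenerate case, where both composites return $\nu$, and the two whiskered units $\eta_{\optPolyFun^n 1}$ (a corolla grafted at the bottom) and $\optPolyFun^n\eta_1$ (corollas grafted at all the top nodes), which leave the tree unchanged after contraction, are then handled by the same unwinding; once the dictionary is in place there is nothing further to compute.
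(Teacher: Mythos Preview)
Your approach is correct and is essentially the same as the paper's: both observe that the diagrams commute trivially below dimension $n$ since $(\optPolyFun^n X)_{<n}=X_{<n}$, and in dimension $n$ both reduce to the monad laws of the uncoloured polynomial monad $\optPolyFun^n$ on $\Set/\bbOO_n$. The only difference is one of presentation: the paper carries out the dimension-$n$ check by writing the relevant elements explicitly as height-$2$ and height-$3$ $(n+2)$-opetopes (e.g.\ $\ytree{\ytree{\tgt\nu}}\graft_{[[]]}\ytree{\nu}$ for the left unit, and a height-$3$ tree $\xi$ for associativity) and computing directly, invoking the uncoloured monad's associativity only at the crucial step; you instead set up the dictionary once and then appeal to the monad axioms of $(\optPolyFun^{n-1})^+$ wholesale. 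Your ``main obstacle'' paragraph is exactly the content of the paper's explicit computations, so there is no gap.
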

\begin{proof}
    Straightforward computations. See \cref{sec:omitted-proofs} for details.
\end{proof}

\begin{proposition}
    The cartesial natural transformations $\mu$ and $\eta$ give $\optPolyFun^n$
    a structure of p.r.a. monad on $\Psh{\bbOO_{n-k, n}}$.
\end{proposition}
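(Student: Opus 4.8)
The plan is to exploit the fact that $\mu$ and $\eta$ are \emph{cartesian} natural transformations, so that the entire would-be monad structure is determined by its component at the terminal presheaf $1$, where the axioms have already been checked in \cref{lemma:colored-zn:pra-monad-structure:1}. First I would record the elementary principle that a cartesian natural transformation $\alpha : F \Rightarrow G$ between endofunctors of a category with pullbacks is determined by $\alpha_1$: the naturality square of $\alpha$ along the unique map $X \longrightarrow 1$ exhibits $FX$ as the pullback $GX \times_{G1} F1$ with $\alpha_X$ the projection to $GX$, so any two cartesian natural transformations $F \Rightarrow G$ with equal $1$-components are equal. Consequently, to prove the unit and associativity identities for $\optPolyFun^n$ it suffices to prove them \emph{after} evaluation at $1$, provided each composite of transformations occurring in these identities is again cartesian.

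Next I would verify that cartesianness. By \cref{def:colored-zn} the endofunctor $\optPolyFun^n$ is a p.r.a., hence (from the factorisation in \cref{def:pra} as a right adjoint followed by the forgetful functor of a slice, both of which preserve pullbacks) it preserves pullbacks. It follows that whiskering a cartesian natural transformation by $\optPolyFun^n$ on either side yields a cartesian natural transformation — for the right whiskering this is immediate, since the naturality squares are themselves naturality squares of the original transformation, and for the left whiskering one uses that $\optPolyFun^n$ preserves the pullback naturality squares — and that a composite of cartesian natural transformations is cartesian, by the pasting lemma for pullbacks. Since $\eta$ and $\mu$ are cartesian by \cref{lemma:colored-zn:pra-monad-structure} and the identity transformation is trivially cartesian, all the composites $\mu \circ \eta_{\optPolyFun^n}$, $\mu \circ \optPolyFun^n\eta$, $\mu \circ \mu_{\optPolyFun^n}$, $\mu \circ \optPolyFun^n\mu$, and $\id_{\optPolyFun^n}$ are cartesian natural transformations.

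It remains to conclude. Evaluating at $1$, the equalities $\mu_1 \circ \eta_{\optPolyFun^n 1} = \id_{\optPolyFun^n 1} = \mu_1 \circ \optPolyFun^n\eta_1$ and $\mu_1 \circ \mu_{\optPolyFun^n 1} = \mu_1 \circ \optPolyFun^n\mu_1$ are precisely \cref{lemma:colored-zn:pra-monad-structure:1}. By the determination principle of the first paragraph, the corresponding equalities of cartesian natural transformations hold over all of $\Psh{\bbOO_{n-k, n}}$, so $(\optPolyFun^n, \eta, \mu)$ is a monad; since its endofunctor is a p.r.a. and its unit and multiplication are cartesian, it is a p.r.a. monad in the sense of \cref{def:pra-monad}. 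The only mildly delicate point is the bookkeeping of which whiskerings are automatically cartesian and which invoke preservation of pullbacks by $\optPolyFun^n$; once the ``cartesian transformations are determined at $1$'' lemma is available, everything else is formal.
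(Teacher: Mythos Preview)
Your strategy is the one the paper has in mind, but the ``elementary principle'' you invoke --- that two cartesian natural transformations $F \Rightarrow G$ agreeing at the terminal object are equal --- is false in general. On $\Set$, take $F = G = (-)^2 = \Set(2,-)$: both the identity and the swap $\sigma_X : (a,b) \mapsto (b,a)$ are cartesian natural transformations $(-)^2 \Rightarrow (-)^2$, and since $1^2 = 1$ they agree at $1$, yet they differ as soon as $|X| \geq 2$. The flaw in your justification is the phrase ``with $\alpha_X$ the projection to $GX$'': the pullback property only determines the top-left vertex up to an isomorphism commuting with the two projections, so there can be several maps $FX \to GX$ which, together with $F!_X$, exhibit $FX$ as a pullback of the same cospan.

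What actually makes the reduction to $1$ valid for $\optPolyFun^n$ is the rigidity of opetopes. A cartesian natural transformation between p.r.a.\ endofunctors is determined by its $1$-component \emph{together with} a compatible family of isomorphisms between the arities; here the arities of $\optPolyFun^n$ are the representables $O[\psi]$ and the spines $S[\nu]$ (\cref{def:colored-zn}), and these have no nontrivial automorphisms because the category $\bbOO$ is rigid (cf.\ \cref{def:p-tree} and the opening remarks of \cref{sec:opetopes}). Once you add this observation your argument is complete: the composites appearing in the monad axioms are cartesian (as you correctly check), they agree at $1$ by \cref{lemma:colored-zn:pra-monad-structure:1}, and rigidity of the arities then forces equality everywhere. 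The paper's one-line proof leaves this implicit; your write-up should make the rigidity step explicit, since the counterexample shows it is not a formality.
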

\begin{proof}
    This is a direct consequence of
    \cref{lemma:colored-zn:pra-monad-structure:1,lemma:colored-zn:pra-monad-structure}.
\end{proof}

Clearly, when $k = 0$, we recover the usual polynomial monad on $\Set /
\bbOO_n$.

\begin{definition}
    [Opetopic algebra]
    \label{def:opetopic-algebra}
    Let $\Oalg k n$\index{$\Oalg k n$|see {opetopic algebra}}, the category of
    \emph{$k$-coloured $n$-dimensional opetopic algebras}\index{opetopic
    algebra}, be the Eilenberg--Moore category of $\optPolyFun^n$ considered as
    a monad on $\Psh{\bbOO_{n-k, n}}$.
\end{definition}

\begin{proposition}
    \label{prop:algebra-table}
    Up to equivalence, and for small values of $k$ and $n$, the category $\Oalg
    k n$ is given by the following table:
    \begin{center}
        \begin{tabular} {c|cccc}
        $k \backslash n$ & $0$ & $1$ & $2$ & $3$ \\
        \hline
        $0$ & $\Set$ & $\Mon$ & $\Op$ & $\Comb_{\bbPP \bbTT}$ \\
        $1$ & & $\Cat$ & $\Op_{\mathrm{col}}$ & $\Oalg 1 3$ \\
        $2$ & & & $\Oalg 2 2$ & $\Oalg 2 3$ \\
        $3$ & & & & $\Oalg 3 3$
        \end{tabular}
    \end{center}
    where $\Set$ is the category of sets, $\Mon$ of monoids, $\Cat$ of small
    categories, $\Op$ of non coloured planar operads, $\Op_\mathrm{col}$ of
    coloured planar operads, and $\Comb_{\bbPP \bbTT}$\index{$\Comb_{\bbPP
    \bbTT}$} of combinads\index{combinad} over the combinatorial pattern of
    planar trees \cite{Loday2012a}. The lower half of the table is left empty
    since $\Oalg k n = \Oalg n n$ for $k \geq n$.
\end{proposition}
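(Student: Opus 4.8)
The plan is to establish the table in three regimes: the stable range $k \geq n$, the uncoloured column $k = 0$, and the two remaining coloured entries. \emph{Stable range.} For $k \geq n$ one has $n - k \leq 0$, so by \cref{not:subcategories-of-o} the category $\bbOO_{n-k,n}$ is simply $\bbOO_{0,n} = \bbOO_{\leq n}$, independently of $k$; since the p.r.a. monad $\optPolyFun^n$ of \cref{def:colored-zn} is built only out of $n$ and of the presheaf category $\Psh{\bbOO_{n-k,n}}$, it follows immediately that $\Oalg k n = \Oalg n n$ for all $k \geq n$, which accounts for the empty lower triangle. The corner $\Oalg 0 0$ is also immediate: $\bbOO_0$ is a one-object category, so $\Psh{\bbOO_0} = \Set$, and $\optPolyFun^0$ is the identity monad, whose Eilenberg--Moore category is $\Set$.

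For the \emph{uncoloured column} $k = 0$, the monad $\optPolyFun^n$ on $\Psh{\bbOO_{n,n}} = \Psh{\bbOO_n} \cong \Set/\bbOO_n$ restricts to the polynomial monad $\optPolyFun^n$ of \cref{def:zn}, so $\Oalg 0 n = \Alg(\optPolyFun^n)$. For $n = 1$ and $n = 2$ I would invoke the identifications already recorded in \cref{sec:colored-zn-algebraic-realization}, namely that $\optPolyFun^1$ is the free-monoid monad on $\Set$ and $\optPolyFun^2$ is the monad for planar $\Set$-operads on $\Set/\bbNN$, giving $\Oalg 0 1 = \Mon$ and $\Oalg 0 2 = \Op$ (alternatively these follow from \cref{prop:algebraic-realization:equivalence-P+-algebras} applied to $\optPolyFun^1 = (\optPolyFun^0)^+$ and $\optPolyFun^2 = (\optPolyFun^1)^+$, via $\PolyMnd(\{*\})/\optPolyFun^0 \simeq \Mon$ and $\PolyMnd(\{*\})/\optPolyFun^1 \simeq \Op$). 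For $n = 3$ I would write $\optPolyFun^3 = (\optPolyFun^2)^+$ and apply \cref{prop:algebraic-realization:equivalence-P+-algebras} to obtain $\Oalg 0 3 \simeq \PolyMnd(\bbOO_2)/\optPolyFun^2 = \PolyMnd(\bbNN)/\optPolyFun^2$, and then check that this slice category coincides with Loday's category $\Comb_{\bbPP\bbTT}$ of combinads over the combinatorial pattern of planar trees by comparing the two definitions directly \cite{Loday2012a}.

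For the \emph{coloured entries} $\Oalg 1 1$ and $\Oalg 1 2$ I would compute $\optPolyFun^n$ explicitly from \cref{def:colored-zn} and the nerve formula \cref{eq:pra:cardinal-formula}. When $k = n = 1$ the base is $\Psh{\bbOO_{\leq 1}} = \Graph$; every $2$-opetope has target the unique $1$-opetope $\optOne$, so $(\optPolyFun^1 1)_{\optZero} = \{*\}$ with $E(*) = O[\optZero]$, while $(\optPolyFun^1 1)_{\optOne} \cong \bbNN$ with $E(\optInt{m}) = S[\optInt{m}]$ the linear graph of $m$ composable edges. Hence $(\optPolyFun^1 X)_{\optZero} = X_{\optZero}$ and $(\optPolyFun^1 X)_{\optOne} = \coprod_{m \in \bbNN} \Graph(S[\optInt{m}], X)$ is the set of directed paths in $X$, and I would check that the unit and multiplication supplied by \cref{lemma:colored-zn:pra-monad-structure,lemma:colored-zn:pra-monad-structure:1} are the length-one path and path concatenation, so that $\optPolyFun^1$ is the free-category monad and $\Oalg 1 1 = \Cat$. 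For $k = 1$, $n = 2$ the same computation one dimension up takes place over the category $\Psh{\bbOO_{1,2}}$ of planar coloured collections: $(\optPolyFun^2 1)_{\optInt{m}} = \{\nu \in \bbOO_3 \mid \tgt \nu = \optInt{m}\}$ is the set of planar rooted trees with $m$ leaves, $E(\nu) = S[\nu]$, a morphism $S[\nu] \to X$ is exactly a colour-compatible decoration of the nodes of $\nu$ by operations of $X$ of the matching arities, and the multiplication is substitution of trees, so that $\optPolyFun^2$ is the free coloured-planar-operad monad and $\Oalg 1 2 = \Op_{\mathrm{col}}$.

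The stabilisation and the uncoloured column are essentially bookkeeping once the quoted results are in hand, the only genuine external ingredient being the comparison of $\PolyMnd(\bbNN)/\optPolyFun^2$ with Loday's combinads. I expect the real work to lie in the last step: matching the abstract data $(\optPolyFun^n 1, E)$ of \cref{def:colored-zn} to the classical combinatorics of paths in a graph and of planar trees of operations of a collection, and in particular verifying on the nose that the monad structure produced by \cref{lemma:colored-zn:pra-monad-structure,lemma:colored-zn:pra-monad-structure:1} is concatenation of paths, respectively substitution of trees.
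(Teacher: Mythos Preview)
The paper does not actually supply a proof of this proposition: it is stated immediately after the three examples at the start of Section~4.1 (monoids/categories, planar operads coloured and uncoloured, and $\bbPP\bbTT$-combinads) and is evidently meant to be read as a summary of those examples together with the remark that $\optPolyFun^n$ on $\Psh{\bbOO_{n-k,n}}$ restricts to the polynomial monad $\optPolyFun^n$ on $\Set/\bbOO_n$ when $k=0$. Your proposal is therefore not so much a different route as a genuine filling-in of the argument the paper only gestures at.

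Your outline is correct in each regime. The stabilisation $\Oalg k n = \Oalg n n$ for $k\geq n$ is exactly the observation that $\bbOO_{n-k,n}=\bbOO_{\leq n}$ once $n-k\leq 0$, and the uncoloured column is just the identification of the polynomial monads $\optPolyFun^1$, $\optPolyFun^2$, $\optPolyFun^3$; your use of \cref{prop:algebraic-realization:equivalence-P+-algebras} to get $\Oalg 0 3\simeq\PolyMnd(\bbNN)/\optPolyFun^2$ is a nice touch that the paper does not make explicit. For the coloured entries your direct computation of $(\optPolyFun^n 1,E)$ from \cref{def:colored-zn} and the recognition of the resulting monads as the free-category and free-coloured-planar-operad monads is exactly right, and again more explicit than anything in the paper. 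The one place where real work remains, as you yourself flag, is the external comparison with Loday's definition of $\Comb_{\bbPP\bbTT}$; the paper simply cites \cite{Loday2012a} for this.
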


\subsection{\texorpdfstring{$\optPolyFun^n$}{Zn}-cardinals and opetopic shapes}
\label{sec:zn-cardinals-opetopic-shapes}

\begin{definition}
    [Opetopic shape]
    \label{def:opetopic-shape}
    Following \cref{sec:pra}, the category of $\optPolyFun^n$-cardinals
    (\cref{def:pra-monad}) is the full subcategory $i_0 : \Theta_0
    \longhookrightarrow \Psh{\bbOO_{n-k, n}}$ whose objects are the
    representables $\omega \in \bbOO_{n-k, n}$ and the spines $S[\nu]$, for
    $\nu \in \bbOO_{n+1}$. Analogous to \cref{eq:thetaT}, we denote the
    (identity-on-objects, fully faithful) factorisation of $\optPolyFun^n i_0 :
    \Theta_0 \longhookrightarrow \Psh{\bbOO_{n-k, n}} \longrightarrow \Oalg k
    n$ by
    \[
        \Theta_0 \stackrel{z}{\longrightarrow} \bbLambda_{k, n}
        \stackrel{u}{\longhookrightarrow} \Oalg k n .
    \]
    We call the category $\bbLambda_{k, n}$\index{$\bbLambda$|see {opetopic
    shape}} of free algebras on the $\optPolyFun^n$-cardinals the category of
    \emph{$(k,n)$-opetopic shapes}\index{opetopic shape}.
\end{definition}

For the rest of this section, we fix parameters $k \leq n \in \bbNN$ once and
for all, and suppress them in notation whenever unambiguous, e.g. $\bbLambda
\eqdef \bbLambda_{k, n}$, $\optPolyFun \eqdef \optPolyFun^n$, $\Alg \eqdef
\Oalg k n$.

\begin{definition}
    [Spine]
    \label{def:spine-lambda}
    For $\omega \in \bbOO_{n+1}$, let $\catSS_\omega \eqdef \bbOO_{n-k,n} /
    S[\omega]$\index{$\catSS_\omega$}. We denote the colimit, in $\PshLambda$,
    by
    \[
        S [h \omega]
        \eqdef
        \colim \left(
            \catSS_\omega
            \longrightarrow \bbOO_{n-k,n}
            \stackrel{h}{\longrightarrow}
            \bbLambda
            \longrightarrow \PshLambda
        \right) ,
    \]
    and call it the \emph{spine on $h \omega$}\index{spine}. Let $\sfS_{h
    \omega} : S[h \omega] \longhookrightarrow h \omega$ be the \emph{spine
    inclusion of $h \omega$}\index{spine inclusion|see {spine}}, and with a
    slight abuse of notations,
    \[
        \sfSS \eqdef \left\{
            \sfS_{h \omega} : S[ h \omega ] \longhookrightarrow h \omega
            \mid \omega \in \bbOO_{n+1}
        \right\} .
    \]
\end{definition}

The theory reviewed in \cref{sec:pra} has the following immediate consequences.

\begin{proposition}
    \label{prop:alg-shape-nerve}
    \begin{enumerate}
        \item The inclusion $u : \bbLambda \longhookrightarrow \Alg$ is dense, i.e., its
        associated nerve functor $N_u : \Alg \longrightarrow \PshLambda$ is fully
        faithful.
        \item Any $X \in \Psh \bbLambda$ is in the essential image of $N_u$ if
        and only if $\sfSS \perp X$.
        \item The reflective adjunction $u : \PshLambda \adjunction \Alg :
        N_u$ exhibits $\Alg \simeq \sfSS \inv \Psh \bbLambda$ as the
        localisation of $\PshLambda$ at the set of morphisms $\sfSS$.
    \end{enumerate}
\end{proposition}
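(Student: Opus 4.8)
The plan is to read off all three points from the p.r.a.\ machinery of \cref{sec:pra}, specialised to the p.r.a.\ monad $T = \optPolyFun^n$ on $\Psh{\bbOO_{n-k,n}}$, with $\catAA = \bbOO_{n-k,n}$, $\Theta_T = \bbLambda$, $i_T = u$ and $t = z$. Point (1) is then nothing but \cref{th:pra-nerve-theorem}(1): by \cref{def:opetopic-shape}, $\bbLambda$ is exactly the middle category of the identity-on-objects/fully-faithful factorisation of $\optPolyFun^n i_0$, so $u : \bbLambda \longhookrightarrow \Alg$ is dense and $N_u$ is fully faithful.

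For point (2), the plan is to invoke \cref{coroll:algebra-lifting}: a presheaf $X \in \PshLambda$ lies in the essential image of $N_u$ if and only if $J_T \perp X$, where $J_T = z_! J_{\bbOO_{n-k,n}}$ and $J_{\bbOO_{n-k,n}} = \{\epsilon_\theta : i_! i^* \theta \longrightarrow \theta \mid \theta \in \Theta_0 - \im i\}$. So everything reduces to identifying $J_T$ with $\sfSS$ (up to the right orthogonal class). By \cref{def:colored-zn} the objects of $\Theta_0$ are the representables $O[\psi]$ for $\psi \in \bbOO_{n-k,n-1}$ together with the spines $S[\nu]$ for $\nu \in \bbOO_{n+1}$; since $S[\ytree{\omega}] = O[\omega]$ for $\omega \in \bbOO_n$ (\cref{coroll:y-i:spine-local}(2)), all representables already lie in $\im i$, so $\Theta_0 - \im i$ is precisely the set of non-representable spines $S[\nu]$, $\nu \in \bbOO_{n+1}$. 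Fix such a $\theta = S[\nu]$. As $i_0$ is fully faithful, $i^*\theta$ is just $S[\nu]$ viewed as a presheaf over $\bbOO_{n-k,n}$, whose category of elements is $\catSS_\nu = \bbOO_{n-k,n}/S[\nu]$; hence $i_! i^* \theta = \colim_{\catSS_\nu}\bigl(\bbOO_{n-k,n} \hookrightarrow \Theta_0 \hookrightarrow \Psh{\Theta_0}\bigr)$, and applying the cocontinuous $z_!$ together with $h = z i$ yields $z_! i_! i^* \theta = \colim_{\catSS_\nu}\bigl(\bbOO_{n-k,n} \xrightarrow{h} \bbLambda \hookrightarrow \PshLambda\bigr) = S[h\nu]$, exactly the presheaf of \cref{def:spine-lambda}. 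Likewise $z_!\theta$ is the representable $h\nu$ and $z_!\epsilon_\theta$ is the canonical comparison $S[h\nu] \longrightarrow h\nu$, i.e.\ the spine inclusion $\sfS_{h\nu}$. Therefore $J_T$ consists of those $\sfS_{h\nu}$ with $S[\nu]$ non-representable, while the remaining members of $\sfSS$ (the $\sfS_{h\omega}$ with $S[\omega]$ representable, e.g.\ $\omega = \ytree\psi$) are isomorphisms; so $J_T \perp X$ and $\sfSS \perp X$ are equivalent, which is (2).

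Point (3) is then formal: $\sfSS$ is a \emph{set} (indexed by $\bbOO_{n+1}$) and $\bbLambda$ is small, so the generalities on localisations from \cref{sec:preliminaries-category-theory} apply to $\PshLambda$ and $\sfSS$, exhibiting $\{X \in \PshLambda \mid \sfSS \perp X\}$ as a reflective subcategory whose reflector is the localisation $a_\sfSS$ of $\PshLambda$ at $\sfSS$. By (2) this subcategory is the essential image of the fully faithful $N_u$, and a left adjoint of $N_u$ is unique up to isomorphism, so the reflective adjunction $u : \PshLambda \adjunction \Alg : N_u$ is — up to the equivalence $N_u$ — this localisation; that is, $\Alg \simeq \sfSS\inv \PshLambda$. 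The main obstacle, such as it is, lies in the bookkeeping for point (2): one must check that $z_!$ carries the counit $\epsilon_{S[\nu]}$ to the spine inclusion $\sfS_{h\nu}$ \emph{itself}, not merely to some map of the right shape. This is forced, since both are the canonical map out of the colimit $\colim_{\catSS_\nu}$ into $h\nu$, but it requires threading the adjunction $z_! \dashv z^*$ through the colimit presentations with some care.
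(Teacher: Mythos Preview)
Your proof is correct and follows exactly the approach the paper intends: the proposition is stated as an immediate consequence of the p.r.a.\ machinery of \cref{sec:pra}, and you have correctly specialised \cref{th:pra-nerve-theorem} and \cref{coroll:algebra-lifting} to $T = \optPolyFun^n$, identifying $J_T$ with $\sfSS$ (up to isomorphisms) via the colimit description of $S[h\nu]$ in \cref{def:spine-lambda}. The paper gives no further detail than this, so your writeup simply spells out what it leaves implicit.
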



\begin{example}
    The category $\bbLambda_{1, 1}$ is the category of simplices $\bbDelta$, and
    $\bbLambda_{2, 1}$ is the planar version of Moerdijk and Weiss's category of
    dendrices $\bbOmega$. Then \cref{prop:alg-shape-nerve} is the well-known
    fact that $\Cat$ and $\Op_{\mathrm{col}}$ have fully faithful nerve functors
    to $\Psh \bbDelta$ and to $\Psh \bbOmega$ respectively, exhibiting them as
    localisations of the respective presheaf categories at a set of \emph{spine}
    inclusions.\footnote{Sometimes called ``Grothendieck-Segal'' colimits.}
\end{example}

Our motivation stems from the following remarkable fact: there is a functor
$\dotH : \bbOO_{n-k,n+2} \longrightarrow \bbLambda$ (that we will define below), that is
neither full nor faithful (it is, however, \emph{surjective} on objects and on
morphisms), but is such that the composite functor $\Alg \longhookrightarrow \PshLambda
\xrightarrow{\dotH^*} \Psh{\bbOO_{n-k,n+2}}$ \emph{is} a fully faithful right adjoint,
and moreover exhibits $\Alg$ as the category of models of a projective sketch
on $\bbOO_{n-k,n+2}^\op$. In addition, the composite fully faithful functor
$\bbLambda \longhookrightarrow \Psh{\bbOO_{n-k,n+2}}$ is simply the nerve associated to
$\dotH$ (this says that $\dotH$ is dense), and allows us to view $\bbLambda$ as
a full subcategory of $(n-k,n+2)$-truncated opetopic sets, justifying the use
of the term ``opetopic shape''.

\begin{definition}
    \label{def:doth}
    The functor $\dotH: \bbOO_{n-k, n+2} \longrightarrow \bbLambda$ is defined on objects
    as follows:
    \begin{align*}
        \dotH : \bbOO_{n-k, n+2} &\longrightarrow \bbLambda \\
        \psi \in \bbOO_{\leq n} &\longmapsto \optPolyFun O [\psi] \\
        \omega \in \bbOO_{n+1} &\longmapsto \optPolyFun S [\omega] \\
        \xi \in \bbOO_{n+2} &\longmapsto \optPolyFun S [\tgt \xi] .
    \end{align*}\index{$\dotH$}

    On morphisms, $\dotH$ is the same as the free functor $\optPolyFun$ on
    $\bbOO_{n-k, n}$. Take $\omega \in \bbOO_{n+1}$ and $\xi \in \bbOO_{n+2}$.
    \begin{enumerate}
        \item Let $[p] \in \omega^\nodesymbol$, and $\dotH \left( \src_{[p]}
        \omega \xrightarrow{\src_{[p]}} \omega \right) = \optPolyFun \left( O
        [\src_{[p]} \omega] \xrightarrow{\src_{[p]}} S [\omega] \right)$.

        \item Let $\dotH \left( \tgt \omega \xrightarrow{\tgt} \omega \right) =
        \left(\optPolyFun O [\tgt \omega] \xrightarrow{\dotH \tgt} \optPolyFun
        S [\omega] \right)$ correspond to the cell $\id_{S [\omega]} \in
        \optPolyFun S [\omega]_{\tgt \omega}$ under the Yoneda embedding.

        \item Let $\dotH \left( \tgt \xi \xrightarrow{\tgt} \xi \right) =
        \left(\optPolyFun S [\tgt \xi] \xrightarrow{\dotH \tgt} \optPolyFun S
        [\tgt \xi] \right)$ be the identity map.

        \item Let $[p] \in \xi^\nodesymbol$. In order to define $\dotH
        \left(\src_{[p]} \xi \xrightarrow{\src_{[p]}} \xi \right) = \left( \optPolyFun
        S [\src_{[p]} \xi] \xrightarrow{\dotH \src_{[p]}} \optPolyFun S [\tgt \xi]
        \right)$, it is enough to provide a morphism $\dotH \src_{[p]} :
        S[\src_{[p]}\xi] \longrightarrow \optPolyFun S [\tgt \xi]$ in $\Psh{\bbOO_{n-k,
        n}}$, which we now construct.

        Using \cref{eq:node-decomposition}, $\xi$ decomposes as
        \[
            \xi = \zeta \graft_{[p]} \ytree{\src_{[p]} \xi}
                \biggraft_{[[q_i]]} \zeta_i ,
        \]
        where $[q_i]$ ranges over $(\src_{[p]} \xi)^\nodesymbol$. The leaves of
        $\zeta_i$ are therefore a subset of the leaves of $\xi$. Precisely, a
        leaf address $[r] \in \zeta_i^\leafsymbol$ corresponds to the leaf
        address $[p[q_i]r] \in \xi^\leafsymbol$, defining an inclusion $f_i : S
        [\tgt \zeta_i] \longhookrightarrow S [\tgt \xi]$ that maps the node
        $\readdress_{\zeta_i} [r] \in (\tgt \zeta_i)^\nodesymbol$ to
        $\readdress_\xi [p[q_i]r] \in (\tgt \xi)^\nodesymbol$.

        Note that by definition, each $f_i$ is an element of $\Psh{\bbOO_{n-k,
        n}} (S [\tgt \zeta_i], S [\tgt \xi]) \subseteq \optPolyFun S [\tgt
        \xi]_{\tgt \tgt \zeta_i}$, and since $\tgt \tgt \zeta_i = \tgt
        \src_{[]} \zeta_i = \src_{[q_i]} \src_{[p]} \xi$ (by \condition{Glob1}
        and \condition{Inner}), we have $f_i \in \optPolyFun S [\tgt
        \xi]_{\src_{[q_i]} \src_{[p]} \xi}$.

        Together, the $f_i$ assemble into the required morphism $\dotH
        \src_{[p]} : S [\src_{[p]} \xi] \longrightarrow \optPolyFun S [\tgt
        \xi]$, that maps the node $[q_i] \in (\src_{[p]} \xi)^\nodesymbol$ to
        $f_i$. So in conclusion, we have
        \begin{align*}
            \dotH \src_{[p]} :
                S [\src_{[p]} \xi]
                &\longrightarrow
                \optPolyFun S [\tgt \xi] \\
            (\dotH \src_{[p]}) ([q_i]) :
                S [\tgt \zeta_i]
                &\longrightarrow
                S [\tgt \xi] \\
            \readdress_{\zeta_i} [r]
                &\longmapsto
                \readdress_\xi [p [q_i] r] ,
        \end{align*}
        for $[q_i] \in (\src_{[p]} \xi)^\nodesymbol$ and $[r] \in
        \zeta_i^\leafsymbol$.
    \end{enumerate}
    This defines $\dotH$ on object and morphisms, and functoriality is
    straightforward.
\end{definition}

\begin{example}
    Consider the case $(k, n) = (1, 1)$, so that $\dotH : \bbOO_{0, 3}
    \longrightarrow \bbLambda_{1, 1} \cong \bbDelta$. In low dimensions, we
    have $\dotH_{1, 1} \optZero = [0]$, $\dotH_{1, 1} \optOne = [1]$, and
    $\dotH_{1, 1} \optInt{n} = [n]$, for $n \in \bbNN$. Consider now the
    following $3$-opetope $\xi$:
    \[
        \xi
        = \ytree{\optInt{3}} \graft_{[[*]]} \ytree{\optInt{2}} \graft_{[[**]]} \ytree{\optInt{1}}
        = \left( \tikzinput[.7]{opetope-3-graphical}{ex1} \right)
    \]
    Then $\dotH_{1, 1} \xi = \optPolyFun^1 S [\tgt \xi] = [4]$. This
    result should be inderstood as the poset of points of $\xi$ (represented as
    dots in the pasting diagram above) ordered by the topmost arrows of $\xi$.

    Take the face embedding $\src_{[]} : \optInt{3} \longrightarrow \xi$. Then
    $\dotH_{1, 1} \src_{[]}$ maps points $0$, $1$, $2$, $3$ of $\dotH_{1, 1}
    \optInt{3} = [3]$ to points $0$, $1$, $3$, $4$ of $\dotH_{1, 1} \xi$,
    respectively. In other words, it ``skips'' point $2$, which is exactly what
    the pasting diagram above depicts: the $[]$-source of $\xi$ does not touch
    point $2$. Likewise, the map $\dotH_{1, 1} \src_{[[**]]} : [1] = \dotH_{1,
    1} \optInt{1} \longrightarrow \dotH_{1, 1} \xi$ maps $0$, $1$ to $3$, $4$,
    respectively.

    Consider now the target embedding $\tgt : \optInt{4} \longrightarrow \xi$.
    Since the target face touches all the points of $\xi$ (this can be checked
    graphically, but more generally follows from \condition{Glob2}), $\dotH_{1,
    1} \tgt$ should be the identity map on $[4]$, which is precisely what the
    definition gives.
\end{example}

We spend the rest of this section proving various (rather technical) facts
about the functor $\dotH$, which will allow us to construct the opetopic nerve
functor (and to prove that it is fully faithful) in
\cref{subsection:opetopic-nerve-zn-algebras}.

\begin{definition}
    \label{def:diagrammatic-morphism}
    Let $\omega, \omega' \in \bbOO_{n+1}$. A morphism $f : \dotH \omega
    \longrightarrow \dotH \omega'$ in $\bbLambda$ is
    \emph{diagrammatic}\index{diagrammatic morphism} if there exists a $\xi \in
    \bbOO_{n+2}$ and a $[p] \in \xi^\nodesymbol$ such that $\src_{[p]} \xi =
    \omega$, $\tgt \xi = \omega'$, and $f = \dotH \left( \omega
    \xrightarrow{\src_{[p]}} \xi \right)$. This situation is summarised by the
    following diagram, called a \emph{diagram of $f$}\index{diagram}.
    \[
        \frac{
            \begin{tikzcd} [ampersand replacement = \&]
                \&
                \xi \\
                \omega \ar[ur, sloped, near end, "\src_{[p]}"] \&
                \omega' \ar[u, "\tgt"]
            \end{tikzcd}
        }{
            \begin{tikzcd} [ampersand replacement = \&]
                \dotH \omega \ar[r, "f"] \& \dotH \omega'
            \end{tikzcd}
        }
    \]
\end{definition}

\begin{example}
    Consider the case $(k, n) = (1, 1)$ again, and recall that $\bbLambda_{1,
    1} \cong \bbDelta$. Consider the map $f : [2] \longrightarrow [3]$ mapping
    $0$, $1$, and $2$ to $0$, $2$, and $3$, respectively (in other words, $f$
    is the \emph{1st simplicial coface map} \cite{Jardine2006}). Taking $\xi$
    as on the left, we obtain a diagram of $f$ on the right:
    \[
        \xi
        = \ytree{\optInt{2}} \graft_{[[*]]} \ytree{\optInt{2}}
        = \left( \tikzinput{opetope-3-graphical}{classic} \right) ,
        \qqquad
        \frac{
            \begin{tikzcd} [ampersand replacement = \&]
                \&
                \xi \\
                \optInt{2} \ar[ur, sloped, near end, "\src_{[]}"] \&
                \optInt{2} \ar[u, "\tgt"]
            \end{tikzcd}
        }{
            \begin{tikzcd} [ampersand replacement = \&]
                {} [2] \ar[r, "g"] \& {} [3]
            \end{tikzcd}
        }
    \]

    Consider now a non injective map $g : [2] \longrightarrow [1]$ mapping $0$,
    $1$, and $2$ to $0$, $1$, and $1$, respectively (in other words, $g$ is the
    \emph{1st simplicial codegeneracy map} \cite{Jardine2006}). Taking $\zeta$
    as on the left, we obtain a diagram of $g$ on the right:
    \[
        \zeta
        = \ytree{\optInt{2}} \graft_{[[*]]} \ytree{\optInt{0}}
        = \left( \tikzinput[.9]{opetope-3-graphical}{degen2} \right) ,
        \qqquad
        \frac{
            \begin{tikzcd} [ampersand replacement = \&]
                \&
                \zeta \\
                \optInt{2} \ar[ur, sloped, near end, "\src_{[[*]]}"] \&
                \optInt{1} \ar[u, "\tgt"]
            \end{tikzcd}
        }{
            \begin{tikzcd} [ampersand replacement = \&]
                {} [2] \ar[r, "g"] \& {} [1]
            \end{tikzcd}
        }
    \]

    On the one hand, \cref{lemma:diagramatic-composite-diagramatic} below
    states that the composite of diagrammatic morphisms remains diagrammatic,
    and on the other hand, those two examples seem to indicate that all
    simplicial cofaces and codegeneracies are diagrammatic. One might thus
    expect all morphisms of $\bbDelta$ to be in the image of $\dotH_{1, 1} :
    \bbOO_{0, 3} \longrightarrow \bbLambda_{1, 1} \cong \bbDelta$. This is
    true, and a more general statement is proved in \cref{coroll:h-surjective}.
\end{example}

\begin{lemma}
    \label{lemma:diagramatic-composite-diagramatic}
    If $f_1$ and $f_2$ are diagrammatic as on the left, the diagram on the right
    is well defined, and is a diagram of $f_2 f_1$.
    \[
        \frac{
            \begin{tikzcd} [ampersand replacement = \&]
                \& \xi_1 \& \xi_2 \\
                \omega_0 \ar[ur, sloped, near end, "\src_{[p_1]}"] \&
                \omega_1
                    \ar[u, "\tgt"]
                    \ar[ur, sloped, near end, "\src_{[p_2]}"] \&
                \omega_2 \ar[u, "\tgt"]
            \end{tikzcd}
        }{
            \begin{tikzcd} [ampersand replacement = \&]
                \dotH \omega_0 \ar[r, "f_1"] \&
                \dotH \omega_1 \ar[r, "f_2"] \&
                \dotH \omega_2 ,
            \end{tikzcd}
        }
        \qqquad
        \frac{
            \begin{tikzcd} [ampersand replacement = \&]
                \& \& \xi_2 \subst_{[p_2]} \xi_1 \\
                \omega_0 \ar[urr, sloped, near end, "\src_{[p_2 p_1]}"] \&
                \&
                \omega_2 \ar[u, "\tgt"]
            \end{tikzcd}
        }{
            \begin{tikzcd} [ampersand replacement = \&]
                \dotH \omega_0 \ar[rr, "f_2 f_1"] \& \& \dotH \omega_2
            \end{tikzcd}
        }
    \]
\end{lemma}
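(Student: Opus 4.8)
The plan is to verify the two claims in turn: that the right-hand diagram is well formed, and that $\dotH\!\left(\omega_0 \xrightarrow{\src_{[p_2 p_1]}} \xi_2 \subst_{[p_2]} \xi_1\right) = f_2 f_1$ in $\bbLambda$.

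\textbf{Well-definedness.} The substitution $\xi_2 \subst_{[p_2]} \xi_1$ of \cref{def:substitution} is \emph{canonically} defined here: the node $[p_2]$ of $\xi_2$ is decorated by $\src_{[p_2]}\xi_2 = \omega_1$, so its set of inputs is $E_{n+1}(\omega_1) = \omega_1^\nodesymbol$, while $\tgt\xi_1 = \omega_1$, so the polynomial monad structure on $\optPolyFun^n$ provides the readdressing bijection $\readdress_{\xi_1} : \xi_1^\leafsymbol \xrightarrow{\cong} \omega_1^\nodesymbol$ over $\bbOO_n$ (\cref{def:readdressing}), which is the required rewiring map. Moreover, substitution into a single node is a special case of the multiplication of the polynomial monad $\optPolyFun^{n+1} = (\optPolyFun^n)^+$ (\cref{th:polynomial-functor:+:is-monad}): $\xi_2 \subst_{[p_2]}\xi_1$ is the contraction of the height-$2$ $\optPolyFun^{n+1}$-tree whose root node is $\xi_2$ and which carries the corolla on $\xi_1$ grafted at the input $[p_2]$ of $\xi_2$. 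Since this multiplication is a morphism of polynomial functors over $\bbOO_{n+1}$ it preserves the output colour, i.e.\ the target, whence $\tgt\!\left(\xi_2\subst_{[p_2]}\xi_1\right) = \tgt\xi_2 = \omega_2$. Finally, by the address arithmetic of grafting (\cref{def:address}) the node of $\xi_1$ at address $[p_1]$ is carried by the substitution to the node at address $[p_2 p_1]$, with unchanged decoration $\src_{[p_2 p_1]}\!\left(\xi_2\subst_{[p_2]}\xi_1\right) = \src_{[p_1]}\xi_1 = \omega_0$. So $[p_2 p_1]$, together with $\omega_0$ and $\omega_2$, assembles into a valid diagram in the sense of \cref{def:diagrammatic-morphism}.

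\textbf{Reduction to a presheaf computation.} By the free--forgetful adjunction $\optPolyFun \dashv U$ between $\Psh{\bbOO_{n-k,n}}$ and $\Alg$, a morphism $\dotH\omega \longrightarrow \dotH\omega'$ in $\bbLambda$ between free algebras on spines is the same thing as a morphism $S[\omega] \longrightarrow \optPolyFun S[\omega']$ in $\Psh{\bbOO_{n-k,n}}$; write $\widehat{g}$ for the transpose of $g$. A routine use of the monad laws shows that under this correspondence $f_2 f_1$ transposes to $\mu_{S[\omega_2]} \circ \optPolyFun(\widehat{f_2}) \circ \widehat{f_1} : S[\omega_0] \longrightarrow \optPolyFun S[\omega_2]$, where $\mu$ is the multiplication of $\optPolyFun$, while $\dotH\src_{[p_2 p_1]}$ transposes to the map described in part~(4) of \cref{def:doth} for the opetope $\xi_2\subst_{[p_2]}\xi_1$ and its node $[p_2 p_1]$. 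As $\omega_0 \in \bbOO_{n+1}$, the canonical maps $O[\src_{[q]}\omega_0] \hookrightarrow S[\omega_0]$, for $[q]$ ranging over $\omega_0^\nodesymbol$, are jointly epic (every cell of $S[\omega_0]$ is a face of some $\src_{[q]}\omega_0$; the degenerate case $\omega_0^\nodesymbol = \emptyset$, where $S[\omega_0] = O[\tgt\tgt\omega_0]$, is handled directly). Hence it suffices to show the two transposes agree on each cell $[q] \in \omega_0^\nodesymbol$ of $S[\omega_0]$.

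\textbf{The computation, and the obstacle.} Fix $[q] \in \omega_0^\nodesymbol$. Decomposing $\xi_1$ at $[p_1]$ via \cref{eq:node-decomposition} exhibits a subtree $\zeta'$ above the input $[q]$ of the node $[p_1]$ (with $\tgt\tgt\zeta' = \src_{[q]}\omega_0$ by \condition{Glob1} and \condition{Inner}), and decomposing $\xi_2\subst_{[p_2]}\xi_1$ at $[p_2 p_1]$ exhibits a subtree $\zeta''$ above the input $[q]$ of $[p_2 p_1]$. On the $\dotH\src_{[p_2 p_1]}$-side, the value at $[q]$ is the spine inclusion $S[\tgt\zeta''] \hookrightarrow S[\omega_2]$ sending $\readdress_{\zeta''}[r] \mapsto \readdress_{\xi_2\subst_{[p_2]}\xi_1}[p_2 p_1 [q] r]$. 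On the $\mu_{S[\omega_2]}\circ\optPolyFun(\widehat{f_2})\circ\widehat{f_1}$-side, $\widehat{f_1}$ sends $[q]$ to the spine inclusion $g : S[\tgt\zeta'] \hookrightarrow S[\omega_1]$, $\readdress_{\zeta'}[r] \mapsto \readdress_{\xi_1}[p_1 [q] r]$, a cell of $\optPolyFun S[\omega_1]$ of shape $\src_{[q]}\omega_0$; then $\optPolyFun(\widehat{f_2})$ turns it into $\widehat{f_2}\circ g : S[\tgt\zeta'] \to \optPolyFun S[\omega_2]$, and $\mu$ contracts this cell of $\optPolyFun\optPolyFun S[\omega_2]$. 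The identity I would establish to conclude is that $\zeta''$ is obtained from $\zeta'$ by grafting, onto each leaf $[r]$ of $\zeta'$, the subtree of $\xi_2$ lying above the input $\readdress_{\xi_1}[p_1[q]r]$ of the node $[p_2]$; by the spine-pushout lemmas \cref{lemma:opetopes-technical:spine-pushout,lemma:zn-zn} this makes $S[\tgt\zeta''] \hookrightarrow S[\omega_2]$ precisely the cell that $\mu$ produces from $\widehat{f_2}\circ g$, and the two readdressing prescriptions match because $\readdress_{\xi_2\subst_{[p_2]}\xi_1}$, on the relevant leaves, factors as $\readdress_{\xi_1}$ followed by $\readdress_{\xi_2}$ --- which is exactly the coherence of the readdressing functions asserted in \cref{def:readdressing}, i.e.\ the associativity of $\optPolyFun^{n+1}$ read off on leaves. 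Agreement on the remaining, lower-dimensional cells of $S[\omega_0]$ is then automatic, since both sides are morphisms of presheaves and the chosen cells jointly cover $S[\omega_0]$. I expect this address- and readdressing-bookkeeping through the substitution to be the main obstacle: it is conceptually forced --- substitution is the multiplication of $\optPolyFun^{n+1}$ and $\dotH$ was built to be compatible with it --- but making the identifications of subtrees and of readdressing bijections precise is the bulk of the work. Should it become unwieldy, one can first treat the case where $\xi_2$ has height at most $2$, where $\dotH\src_{[p_2]}$ and $\mu$ are fully explicit by \cref{lemma:zn-zn}, and then bootstrap to arbitrary $\xi_2$ along the relative $\sfSS_{n+1}$-cell structure of $S[\tgt\xi_2]\hookrightarrow S[\xi_2]$ furnished by \cref{lemma:opetopes-technical:spines}.
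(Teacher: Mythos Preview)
Your proposal is correct and follows essentially the same route as the paper: verify $\tgt(\xi_2\subst_{[p_2]}\xi_1)=\omega_2$, reduce the equality $\dotH\src_{[p_2p_1]}=f_2f_1$ to a Kleisli-style comparison of maps $S[\omega_0]\to\optPolyFun S[\omega_2]$ evaluated on each $[q]\in\omega_0^\nodesymbol$, and conclude by chasing readdressing functions, the key identity being $\readdress_{\xi_2\subst_{[p_2]}\xi_1}[p_2p_1[q]l r]=\readdress_{\xi_2}[p_2\:\readdress_{\xi_1}[p_1[q]l]\:r]$, which is exactly the monad-associativity step the paper labels~$\heartsuit$. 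The paper simply carries out this bookkeeping explicitly rather than invoking your height-$2$ fallback, so you should not need that detour.
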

\begin{proof}
    It is a simple but lengthy matter of unfolding the definition of $\dotH$.
    See \cref{sec:omitted-proofs} for details.
\end{proof}

\begin{lemma}
    [Contraction associativity formula]
    \label{lemma:caf}
    Let $n \geq 2$, $\nu, \nu' \in \bbOO_n$, and $[l] \in \nu^\nodesymbol$ be
    such that $\edg_{[l]} \nu = \edg_{[]} \nu'$. In particular, the
    grafting $\nu \graft_{[l]} \nu'$ is well-defined, and by \condition{Glob1}
    and \condition{Glob2}, $\src_{\readdress_\nu [l]} \tgt \nu = \edg_{[l]} \nu
    = \edg_{[]} \nu' = \tgt \tgt \nu'$. We have
    \[
        \tgt (\nu \graft_{[l]} \nu')
        =
        (\tgt \nu) \subst_{\readdress_\nu [l]} (\tgt \nu') .
    \]
\end{lemma}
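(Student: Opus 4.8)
Write $M \eqdef \optPolyFun^{n-2}$, so that $\bbOO_n = \tree M$, the set $\bbOO_{n-1}$ is the set of operations of $M$, and (\cref{def:readdressing}) $\tgt \colon \bbOO_n \to \bbOO_{n-1}$ is the restriction to operations of the polynomial monad structure map $\gamma \colon M^\star \to M$. By \cref{th:polynomial-monads-star-algebras} the latter is the counit at $M$ of the adjunction $(-)^\star \dashv \bigl( \PolyMnd(\bbOO_{n-2}) \to \PolyEnd(\bbOO_{n-2}) \bigr)$, hence a morphism of polynomial monads; in particular $\gamma \circ \mu^\star = \mu \circ (\gamma * \gamma)$ as maps $M^\star M^\star \to M$, where $\mu^\star$ and $\mu$ denote the multiplications of $M^\star$ and of $M$, and $\gamma * \gamma = (M\gamma) \circ \gamma_{M^\star}$.

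The plan is to evaluate this identity on a single, carefully chosen two-level tree. Let $\Xi$ be the operation of $M^\star M^\star$ whose outer tree is $\nu$, with the inner tree $\nu'$ attached at the leaf $[l]$ and trivial trees $\itree{-}$ attached at every other leaf; this is well typed precisely because $\edg_{[l]} \nu = \edg_{[]} \nu'$. I would then carry out two elementary unfoldings. First, the multiplication of the free monad $M^\star$ grafts each decorating tree onto the corresponding leaf of the outer tree, and grafting a trivial tree is inert, so $\mu^\star(\Xi) = \nu \graft_{[l]} \nu'$; hence $\gamma(\mu^\star(\Xi)) = \tgt(\nu \graft_{[l]} \nu')$, the left-hand side of the formula. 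Second, $\gamma_{M^\star}$ contracts the outer tree $\nu$ to the corolla on $\tgt \nu$ and transports its leaves to the inputs of $\tgt \nu$ along the readdressing bijection $\readdress_\nu$, so that the inner tree $\nu'$ is carried by the input $\readdress_\nu [l]$; then $M\gamma$ replaces $\nu'$ by $\gamma(\nu') = \tgt \nu'$ and each trivial tree by the identity operation $\gamma(\itree{-})$ (using $\gamma \circ \eta^\star = \eta$). Thus $(\gamma * \gamma)(\Xi)$ is the operation of $MM$ given by the corolla on $\tgt \nu$ carrying $\tgt \nu'$ at node $\readdress_\nu [l]$ and identities elsewhere. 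Since the multiplication of $M = (\optPolyFun^{n-3})^+$ is substitution of trees into nodes (\cref{def:baez-dolan-construction}, and the remark following \cref{th:polynomial-functor:+:is-monad}) and substituting an identity is inert, $\mu\bigl( (\gamma * \gamma)(\Xi) \bigr) = (\tgt \nu) \subst_{\readdress_\nu [l]} (\tgt \nu')$. Equating the two sides gives the claimed identity.

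I expect the only genuine difficulty to be bookkeeping: one must keep careful track of which finite set indexes the inputs of each node before and after contraction — the leaves of $\nu$ versus the inputs of the corolla $\tgt \nu$, identified through $\readdress_\nu$ — and of the matching of edge decorations at each grafting and each substitution, which is exactly where \condition{Glob1}, \condition{Glob2}, and \condition{Inner} intervene, through the chain $\src_{\readdress_\nu [l]} \tgt \nu = \edg_{[l]} \nu = \edg_{[]} \nu' = \tgt \tgt \nu'$ already recorded in the statement. The degenerate cases are dispatched separately: if $\nu$ is degenerate then $\nu \graft_{[l]} \nu' = \nu'$ and $\tgt \nu$ is an identity operation, if $\nu'$ is degenerate then $\nu \graft_{[l]} \nu' = \nu$ and $\tgt \nu'$ is an identity operation, and in either case the asserted equation collapses to a unit law of $M$; when $n = 2$ the monad $M$ is the identity monad and there is nothing to prove. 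An alternative, elementary route avoiding the monads altogether would be an induction on the number of nodes of $\nu'$ using \cref{prop:polynomial-functor:trees-are-graftings}, with the base case supplied by the value of $\tgt$ on corollas — but this duplicates the same computation without the organising principle of associativity of contraction.
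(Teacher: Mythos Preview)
Your argument is correct and is precisely the unfolding of the paper's one-line proof, which reads in full: ``This is a direct consequence of the fact that $\optPolyFun^{n-2}$ is a polynomial monad, i.e.\ a $(-)^\star$-algebra.'' You have chosen to express the relevant axiom as the monad-morphism identity $\gamma \circ \mu^\star = \mu \circ (\gamma * \gamma)$ for the counit $\gamma : M^\star \to M$, rather than as the Eilenberg--Moore associativity square for the $(-)^\star$-algebra $M$; these are equivalent packagings of the same structure, and your choice makes the two sides of the formula---grafting then contracting, versus contracting then substituting---transparently visible when evaluated on the two-level element $\Xi$ you construct.
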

\begin{proof}
    This is a direct consequence of the the fact that $\optPolyFun^{n-2}$ is a
    polynomial monad, i.e. a $(-)^\star$-algebra.
\end{proof}

\begin{lemma}
    \label{lemma:elementary-embeddings-diagramatic}
    \begin{enumerate}
        \item Let $\omega \in \bbOO_{n+1}$, and $\psi = \tgt \omega$. Then the
        following is a diagram of $h \tgt : \dotH \psi \longrightarrow \dotH \omega$:
        \[
            \frac{
                \begin{tikzcd} [ampersand replacement = \&]
                    \& \xi \\
                    \ytree{\psi} \ar[ur, sloped, near end, "\src_{[]}"] \&
                    \omega \ar[u, "\tgt"]
                \end{tikzcd}
            }{
                \begin{tikzcd} [ampersand replacement = \&]
                    \dotH \psi \ar[r, "h \tgt"] \& \dotH \omega ,
                \end{tikzcd}
            }
            \qqquad
            \xi \eqdef \ytree{\ytree{\psi}} \graft_{[[]]} \ytree{\omega} .
        \]
        (Note that $\omega = \tgt \xi$ by \cref{lemma:caf})

        \item Let $\beta, \omega \in \bbOO_{n+1} = \tree \optPolyFun^{n-1}$,
        and $i : S [\beta] \longrightarrow S [\omega]$ a morphism of
        presheaves. Then $i$ corresponds to an inclusionb of
        $\optPolyFun^{n-1}$ trees $\beta \longhookrightarrow \omega$, mapping
        node at address $[q]$ to $[pq]$, where $[p] = i [] \in
        \omega^\nodesymbol$. Write $\omega = \barbeta \subst_{[p]} \beta$, for
        an adequate $\barbeta \in \bbOO_{n+1}$. Then following is a diagram of
        $\dotH i$:
        \[
            \frac{
            \begin{tikzcd} [ampersand replacement = \&]
                \& \xi \\
                \beta \ar[ur, sloped, near end, "\src_{[p]}"] \&
                \omega \ar[u, "\tgt"]
            \end{tikzcd}
            }{
            \begin{tikzcd} [ampersand replacement = \&]
                \dotH \beta \ar[r, "\dotH i"] \& \dotH \omega ,
            \end{tikzcd}
            }
            \qqquad
            \xi \eqdef \ytree{\barbeta} \graft_{[[p]]} \ytree{\beta} .
        \]
        (Note that $\omega = \tgt \xi$ by \cref{lemma:caf})
    \end{enumerate}
\end{lemma}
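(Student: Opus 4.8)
I would first check that the displayed $\xi$ is a bona fide $(n+2)$-opetope with the asserted faces, and then read off $\dotH$ of the indicated source embedding directly from \cref{def:doth}, point (4).

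\emph{Part (1).} I would first verify that $\xi\eqdef\ytree{\ytree\psi}\graft_{[[]]}\ytree\omega$ is well defined: the unique leaf of $\ytree{\ytree\psi}$, at address $[[]]$, is decorated by $\edg_{[[]]}\ytree{\ytree\psi}=\src_{[]}\src_{[]}\ytree{\ytree\psi}=\src_{[]}\ytree\psi=\psi$ (by \eqref{eq:preliminaries:edg}), which matches the decoration $\tgt\omega=\psi$ of the root edge of $\ytree\omega$, so $\xi\in\bbOO_{n+2}$. Its node at address $[]$ carries $\ytree\psi$, so $\src_{[]}\xi=\ytree\psi$, and by \cref{lemma:caf} $\tgt\xi=\ytree\psi\subst_{[]}\omega=\omega$, since substituting $\omega$ into the one node of $\ytree\psi$ returns $\omega$. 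Using $\dotH\ytree\psi=\optPolyFun S[\ytree\psi]=\optPolyFun O[\psi]=\dotH\psi$ (\cref{coroll:y-i:spine-local}), the node decomposition of $\xi$ at $[]$ has trivial part below and a single component $\ytree\omega$ above, attached at the unique node $[]\in(\ytree\psi)^\nodesymbol$; so \cref{def:doth}, point (4), exhibits $\dotH\bigl(\src_{[]}\colon\ytree\psi\to\xi\bigr)\colon\dotH\psi\to\optPolyFun S[\omega]$ as the morphism classified by the single cell $(\omega,f_1)\in(\optPolyFun S[\omega])_\psi$, where $f_1\colon S[\tgt\ytree\omega]=S[\omega]\to S[\tgt\xi]=S[\omega]$ sends, for each leaf $[r]$ of $\ytree\omega$, the node $\readdress_{\ytree\omega}[r]$ of $\omega$ to $\readdress_\xi$ of the corresponding leaf of $\xi$. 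By contrast, $h\tgt$ is, by \cref{def:doth}, point (2), the cell $(\omega,\id_{S[\omega]})\in(\optPolyFun S[\omega])_\psi$. Hence it suffices to prove $f_1=\id_{S[\omega]}$, i.e.\ that $\readdress_\xi$, restricted to the leaves of $\xi$ inherited from $\ytree\omega$, agrees with $\readdress_{\ytree\omega}$ under the identification $(\tgt\xi)^\nodesymbol=(\tgt\ytree\omega)^\nodesymbol=\omega^\nodesymbol$.

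\emph{Part (2).} The argument is parallel. From $\omega=\barbeta\subst_{[p]}\beta$ one reads off $[p]\in\barbeta^\nodesymbol$ and $\tgt\beta=\src_{[p]}\barbeta$; since the leaf of $\ytree{\barbeta}$ at address $[[p]]$ is decorated by $\edg_{[[p]]}\ytree{\barbeta}=\src_{[p]}\barbeta=\tgt\beta$, the grafting $\xi\eqdef\ytree{\barbeta}\graft_{[[p]]}\ytree\beta\in\bbOO_{n+2}$ is well defined. Its node at address $[[p]]$ carries $\beta$, so $\src_{[p]}\xi=\beta$ (this is the face embedding written $\src_{[p]}$ in the statement), and $\tgt\xi=\barbeta\subst_{[p]}\beta=\omega$ by \cref{lemma:caf}. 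Unwinding \cref{def:doth}, point (4): the node decomposition of $\xi$ at $[[p]]$ has $\ytree{\barbeta}$ below and only trivial components above, so $\dotH\src_{[p]}\colon S[\beta]\to\optPolyFun S[\omega]$ sends each node $[q_i]\in\beta^\nodesymbol$ to the map $O[\src_{[q_i]}\beta]\to S[\omega]$ classifying the $n$-cell $\readdress_\xi[[p][q_i]]\in S[\omega]_{\src_{[q_i]}\beta}$. On the other hand $\dotH i$ sends $[q_i]$ to the face embedding $\src_{[pq_i]}\colon O[\src_{[pq_i]}\omega]\hookrightarrow S[\omega]$, because $i$ is the subtree inclusion $\beta\hookrightarrow\omega$, $[q_i]\mapsto[pq_i]$, and $\src_{[q_i]}\beta=\src_{[pq_i]}\omega$. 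So it suffices to show $\readdress_\xi[[p][q_i]]=[pq_i]$ in $\omega^\nodesymbol=(\tgt\xi)^\nodesymbol$ for every $[q_i]\in\beta^\nodesymbol$.

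Everything except these two reduced identities about $\readdress_\xi$ is routine unwinding of the definitions of grafting, substitution and $\dotH$. The identities themselves express the compatibility of the readdressing function with grafting and substitution — equivalently, the coherence of the monad multiplication of $\optPolyFun^n$, in the same spirit as \cref{lemma:caf} — and I expect this to be the only real point. Since the trees $\xi$ occurring here have only two nodes, one can alternatively obtain them by unwinding the inductive definitions of $\addr$ and $\readdress$ (\cref{def:address,def:readdressing}) by hand; either way the full verification can be deferred to \cref{sec:omitted-proofs}.
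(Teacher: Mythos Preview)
Your approach is essentially the same as the paper's: unfold \cref{def:doth}(4) for the two-node opetope $\xi$ and reduce each part to a readdressing identity, namely $\readdress_\xi[[][q]]=[q]$ in part (1) and $\readdress_\xi[[p][q_i]]=[pq_i]$ in part (2). The only difference is that the paper does not defer these identities: for such two-node $\xi$ they are immediate from the definitions (e.g.\ $\readdress_{\ytree\omega}[[q]]=[q]=\readdress_\xi[[][q]]$ in one line), so no appeal to a general compatibility principle is needed.
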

\begin{proof}
    It is a simple matter of unfolding the definition of $\dotH$.
    See \cref{sec:omitted-proofs} for details.
\end{proof}

\begin{lemma}
    [Diagrammatic lemma]
    \label{lemma:diagramatic-lemma}
    Let $\omega, \omega' \in \bbOO_{n+1}$ with $\omega$ non degenerate, and
    $f : \dotH \omega \longrightarrow \dotH \omega'$. Then $f$ is diagrammatic.
\end{lemma}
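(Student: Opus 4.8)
The plan is to push $f$ through the free/forgetful adjunction, read it off along the tree structure of $\omega$, and then reconstruct a witnessing $(n+2)$-opetope by a node decomposition. Since $\dotH\omega = \optPolyFun S[\omega]$ is the free $\optPolyFun$-algebra on $S[\omega]$, the morphism $f$ corresponds to a morphism $g \colon S[\omega] \longrightarrow \optPolyFun S[\omega']$ in $\Psh{\bbOO_{n-k,n}}$. As $\omega$ is non degenerate, \cref{prop:polynomial-functor:trees-are-graftings} presents $\omega$ as an iterated grafting of corollas, so \cref{lemma:opetopes-technical:spine-pushout} (applied one dimension up) presents $S[\omega]$ as the iterated pushout of the representables $O[\src_{[p]}\omega]$, $[p] \in \omega^\nodesymbol$, glued along the representables of the inner edges. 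Thus $g$ amounts to a family $(g_{[p]})_{[p]\in\omega^\nodesymbol}$ with $g_{[p]} \in (\optPolyFun S[\omega'])_{\src_{[p]}\omega}$, coherent along the inner edges; and by the cardinal formula of \cref{def:colored-zn} each $g_{[p]}$ is a pair $(\barnu_{[p]}, x_{[p]})$ with $\barnu_{[p]} \in \bbOO_{n+1}$, $\tgt\barnu_{[p]} = \src_{[p]}\omega$, and $x_{[p]} \colon S[\barnu_{[p]}] \longrightarrow S[\omega']$. Via the readdressing bijections $\readdress_{\barnu_{[p]}} \colon \barnu_{[p]}^\leafsymbol \cong (\src_{[p]}\omega)^\nodesymbol$ of \cref{def:opetope}, the coherence of the $g_{[p]}$ translates into a coherence of the $x_{[p]}$ along the edges of $\omega$.

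Now I would assemble the witness. Each $x_{[p]}$, being a morphism of spines of $(n+1)$-opetopes, is an inclusion of a (substitution-shaped) sub-pasting-diagram into $\omega'$ as in \cref{lemma:elementary-embeddings-diagramatic}; their coherence along the connected, single-rooted tree $\omega$ glues them into one inclusion $\tilde\omega \longhookrightarrow \omega'$, where $\tilde\omega \in \bbOO_{n+1}$ is obtained from $\omega$ by substituting each $\barnu_{[p]}$ into its node $[p]$ (legitimate since $\tgt\barnu_{[p]} = \src_{[p]}\omega$, see \cref{def:substitution}). Writing the complement of this inclusion as $\omega' = \omega'' \subst_{[a]} \tilde\omega$, for an appropriate $\omega'' \in \bbOO_{n+1}$ and $[a] \in \omega''^\nodesymbol$, I would set
\[
    \xi \eqdef \ytree{\omega''} \graft_{[[a]]} \ytree{\omega} \biggraft_{[[q]]} \ytree{\barnu_{[q]}}, \qqquad [p_0] \eqdef [[a]],
\]
with $[q]$ ranging over $\omega^\nodesymbol$. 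Then $\src_{[p_0]}\xi = \omega$ by inspection, while iterating the contraction–associativity identity \cref{lemma:caf} — equivalently, using that $\tgt$ is the structure map of the $(-)^\star$-algebra $\optPolyFun^n$ and hence sends graftings of corollas to substitutions — gives $\tgt\xi = \omega'' \subst_{[a]} \tilde\omega = \omega'$. Finally, unwinding \cref{def:doth}(4) for this particular $\xi$ (whose node decomposition is exactly the displayed one, with $\zeta = \ytree{\omega''}$ and $\zeta_{[q]} = \ytree{\barnu_{[q]}}$) shows that $\dotH\src_{[p_0]}$ sends the node $[q] \in \omega^\nodesymbol$ to the inclusion $S[\tgt\zeta_{[q]}] = S[\barnu_{[q]}] \longhookrightarrow S[\tgt\xi] = S[\omega']$, which is $x_{[q]}$ by construction; hence $\dotH\src_{[p_0]}$ corresponds to $g$ under the adjunction, i.e.\ $f = \dotH\src_{[p_0]}$, and $f$ is diagrammatic.

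The main obstacle is the gluing step just used: one must verify that the coherent family $(x_{[p]})$ genuinely assembles into one inclusion of the substitution-shaped form $\tilde\omega \longhookrightarrow \omega'$ — so that the complement $\omega''$ and the address $[a]$ with $\omega' = \omega'' \subst_{[a]} \tilde\omega$ exist — and that the $\tgt$-of-a-grafting computations go through. Both rest squarely on the opetopic identities of \cref{th:opetopic-identities} (\condition{Glob1} and \condition{Glob2} matching roots and targets, \condition{Inner} matching shared edges), and both require separate bookkeeping in the degenerate cases, where some $\barnu_{[p]}$ or some $\src_{[p]}\omega$ is a degenerate opetope, the corresponding corolla has no leaves, and the relevant grafting collapses.

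A lighter alternative, which avoids constructing $\xi$ explicitly, is to induct on the number of nodes of $\omega$. For a one-node $\omega = \ytree{\nu}$ one has $\dotH\ytree{\nu} = \dotH\nu$ (by \cref{coroll:y-i:spine-local}, $S[\ytree\nu] \cong O[\nu]$), and $f$ factors as $\dotH\nu \xrightarrow{\ \dotH\tgt\ } \dotH\barnu_{[]} \xrightarrow{\ \dotH x_{[]}\ } \dotH\omega'$, the first factor being diagrammatic by \cref{lemma:elementary-embeddings-diagramatic}(1) (with $\omega = \barnu_{[]}$, $\psi = \nu$) and the second by \cref{lemma:elementary-embeddings-diagramatic}(2); a composite of diagrammatic morphisms is diagrammatic by \cref{lemma:diagramatic-composite-diagramatic}. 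In the inductive step one peels off the root corolla of $\omega$, treats the resulting sub-pastings by the induction hypothesis, and recombines the diagrams with \cref{lemma:diagramatic-composite-diagramatic}; this merely relocates the opetopic-identity bookkeeping from the construction of $\xi$ into the recombination step.
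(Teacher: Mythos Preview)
Your main (global) construction is correct, and in content it is the same as the paper's: both arguments ultimately produce a decomposition $\omega' = \baromega \subst_{[a]} \omega$ (your $\omega''$ is the paper's $\baromega$) and then invoke \cref{lemma:elementary-embeddings-diagramatic}(2) to read off the diagram for $f$. The difference is organisational: you carry out the decomposition all at once over every node $[q]\in\omega^\nodesymbol$, while the paper runs an induction on the tree structure of $\omega$, peeling off one \emph{leaf} corolla at a time ($\omega = \nu_1 \graft_{[l]} \ytree{\psi_2}$, not the root). Your ``lighter alternative'' is thus the one that actually matches the paper, and your one-node base case is exactly the paper's.

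The one inaccuracy is in the inductive step of your alternative: the paper does \emph{not} recombine using \cref{lemma:diagramatic-composite-diagramatic}. That lemma handles sequential composites $f_2 f_1$, whereas the restrictions $f_i$ of $f$ to $\nu_1$ and to $\ytree{\psi_2}$ sit in parallel over a grafting, not in series. What the paper does instead is use the diagrams for the $\barF_i$ (obtained by induction) to write $\beta_i = \barnu_i \subst_{[q_i]} \nu_i$, chase the shared edge $\edg_{[l]}\omega$ through both decompositions to pin down $[l'] = [q_1 l]$ and $[q_2] = []$, and then rearrange the pieces of $\omega' = \beta_1 \graft_{[l']} \beta_2$ into the form $\omega' = \baromega \subst_{[q_1]} \omega$, finishing with \cref{lemma:elementary-embeddings-diagramatic}(2). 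So your remark that the inductive route ``merely relocates the opetopic-identity bookkeeping'' is exactly right, but the recombination tool is the subtree/substitution witness of \cref{lemma:elementary-embeddings-diagramatic}(2), not the composition lemma \cref{lemma:diagramatic-composite-diagramatic}.
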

\begin{proof}
    [Proof (sketch, see \cref{sec:omitted-proofs} for details)] The idea is to
    proceed by induction on $\omega$. The case $\omega = \ytree{\psi}$ for some
    $\psi \in \bbOO_n$ is fairly simple. In the inductive case ($\omega = \nu
    \graft_{[l]} \ytree{\psi}$, for suitable $\nu$, $\psi$, and $[l]$), we
    essentially show that $f$ exhibits an inclusion of $\optPolyFun^n$-trees
    $\omega \longhookrightarrow \omega'$ by constructing a $(n+1)$-opetope
    $\baromega$ such that $\omega' = \baromega \subst_{[q]} \omega$. Thus by
    \cref{lemma:elementary-embeddings-diagramatic}, the following is a diagram
    of $\dotH f$:
    \[
        \frac{
        \begin{tikzcd} [ampersand replacement = \&]
            \& \xi \\
            \omega \ar[ur, sloped, near end, "\src_{[[q_1]]}"] \&
            \omega' \ar[u, "\tgt"]
        \end{tikzcd}
        }{
        \begin{tikzcd} [ampersand replacement = \&]
            \dotH \omega \ar[r, "f"] \& \dotH \omega' ,
        \end{tikzcd}
        }
        \qqquad
        \xi \eqdef \ytree{\baromega} \graft_{[[q_1]]} \ytree{\omega} .
    \]
\end{proof}

\subsection{The opetopic nerve functor}
\label{subsection:opetopic-nerve-zn-algebras}

This section is entirely devoted to constructing the \emph{opetopic nerve
functor} $N : \Alg \longhookrightarrow \Psh\bbOO$, which is a fully faithful
right adjoint and which exhibits $\Alg$ as the category of models of a
projective sketch on $\bbOO^\op$ (\cref{th:colored-algebraic-localization}).

Recall from \cref{coroll:algebra-lifting} that the reflective adjunction $\Psh
\bbLambda \adjunction \Alg$ exhibits $\Alg$ as the localisation of $\Psh
\bbLambda$ at the set $\sfSS$ of spine inclusions \cref{def:spine-lambda}.

In \cref{corollary:colored-algebraic-localization:equivalence}, we show an
equivalence $\sfSS_{n+1,n+2}^{-1} \Psh{\bbOO_{n-k,n+2}} \simeq \Alg$, from
which \cref{th:colored-algebraic-localization} will follow directly.

\begin{lemma}
    \label{lemma:doth-local-equivalences}
    The functor $\dotH_! : \Psh{\bbOO_{n-k, n+2}} \longrightarrow \PshLambda$ takes
    $\sfSS_{n+1} \subseteq \Psh{\bbOO_{n-k, n+2}}^\rightarrow$ to $\sfSS
    \subseteq \PshLambda$ and takes morphisms in $\sfSS_{n+2} \subseteq
    \Psh{\bbOO_{n-k, n+2}}^\rightarrow$ to $\sfSS$-local isomorphisms.
\end{lemma}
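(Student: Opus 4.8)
The plan is to establish the two assertions separately, in each case reducing to the combinatorics of opetopic spines and using repeatedly that $\dotH_!$, being a left adjoint, preserves all colimits and sends a representable $O[\alpha]$ ($\alpha \in \bbOO_{n-k,n+2}$) to the representable on $\dotH\alpha$ — this is simply how left Kan extension along $\dotH$ is computed. In particular, on $\bbOO_{n-k,n}$ the composite $\dotH_! \circ O[-]$ agrees, up to Yoneda, with the free functor $h$, by the definition of $\dotH$ on $\bbOO_{\leq n}$.

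For $\sfSS_{n+1}$: fix $\omega \in \bbOO_{n+1}$. Since $S[\omega]$ has no cells of dimension $>n$, its category of elements over $\bbOO_{n-k,n+2}$ is exactly $\catSS_\omega = \bbOO_{n-k,n}/S[\omega]$, so $S[\omega] = \colim_{\catSS_\omega} O[-]$; likewise $O[\omega] = \colim_{\bbOO_{n-k,n+2}/O[\omega]} O[-]$, and $\sfS_\omega$ is the comparison map induced by the inclusion $\catSS_\omega \hookrightarrow \bbOO_{n-k,n+2}/O[\omega]$. Applying $\dotH_!$ and the identification above, $\dotH_!(S[\omega])$ becomes $\colim \left( \catSS_\omega \to \bbOO_{n-k,n} \xrightarrow{h} \bbLambda \to \PshLambda \right)$, which is $S[h\omega]$ of \cref{def:spine-lambda}, while $\dotH_!(O[\omega])$ is the representable $h\omega$; tracking the induced map and comparing with \cref{def:doth}(1)--(2) identifies $\dotH_!(\sfS_\omega)$ with the spine inclusion $\sfS_{h\omega} \in \sfSS$. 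The degenerate case of $\omega$ is handled the same way, using the degeneracy relation of \cref{def:o}.

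For $\sfSS_{n+2}$: fix $\xi \in \bbOO_{n+2}$. By \cref{def:doth} we have $\dotH\xi = \dotH(\tgt\xi) = h(\tgt\xi)$, so $\dotH_!(O[\xi]) = h(\tgt\xi)$, and $\dotH(\tgt\xi \xrightarrow{\tgt} \xi)$ is the identity of $h(\tgt\xi)$, so $\dotH_!$ sends the target embedding $O[\tgt\xi] \to O[\xi]$ to the identity. The spine $S[\tgt\xi]$ of the $(n+1)$-opetope $\tgt\xi$ includes canonically into $S[\xi]$ via the target embedding $\tgt\xi \to \xi$, and by construction the composite $S[\tgt\xi] \hookrightarrow S[\xi] \xrightarrow{\sfS_\xi} O[\xi]$ coincides with $S[\tgt\xi] \hookrightarrow O[\tgt\xi] \xrightarrow{\tgt} O[\xi]$. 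Applying $\dotH_!$ and the $\sfSS_{n+1}$ step to $\tgt\xi \in \bbOO_{n+1}$, the right-hand composite becomes $\id \circ \sfS_{h(\tgt\xi)} = \sfS_{h(\tgt\xi)} \in \sfSS$, while the left-hand composite is $\dotH_!(\sfS_\xi) \circ \dotH_!\bigl(S[\tgt\xi] \hookrightarrow S[\xi]\bigr)$. By \cref{lemma:opetopes-technical:spines}, $S[\tgt\xi] \hookrightarrow S[\xi]$ is a relative $\sfSS_{n+1}$-cell complex; since $\dotH_!$ preserves the pushouts and transfinite composites involved and sends $\sfSS_{n+1}$ into $\sfSS$ by the $\sfSS_{n+1}$ step, $\dotH_!\bigl(S[\tgt\xi] \hookrightarrow S[\xi]\bigr)$ is a relative $\sfSS$-cell complex, hence an $\sfSS$-local isomorphism (recall that $\sfSS$-local isomorphisms contain $\sfSS$ and are closed under pushout and transfinite composition). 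By the $3$-for-$2$ property of $\sfSS$-local isomorphisms, $\dotH_!(\sfS_\xi)$ is an $\sfSS$-local isomorphism.

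The main obstacle I anticipate is the identification, in the $\sfSS_{n+1}$ step, that $\dotH_!$ carries the colimit cocone presenting $\sfS_\omega$ onto the one presenting $\sfS_{h\omega}$: this requires unwinding \cref{def:doth} on the source and target embeddings of an $(n+1)$-opetope and matching it cell by cell against the defining colimit of $S[h\omega]$. Once that is pinned down, everything else is formal, using only that $\dotH_!$ is a left adjoint, the computation of $\dotH$ on target embeddings into $(n+2)$-opetopes, and the saturation properties of $\sfSS$-local isomorphisms recalled in \cref{sec:preliminaries-category-theory}.
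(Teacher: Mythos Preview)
Your proof is correct and follows essentially the same approach as the paper: for $\sfSS_{n+1}$ you use the colimit description $S[\omega] = \colim_{\catSS_\omega} O[-]$ together with cocontinuity of $\dotH_!$ and the definition of $S[h\omega]$, and for $\sfSS_{n+2}$ you use the commuting square with sides $\sfS_{\tgt\xi}$, $\sfS_\xi$, $S[\tgt\xi]\hookrightarrow S[\xi]$, $\tgt$, invoking \cref{lemma:opetopes-technical:spines} on the top edge and the fact that $\dotH\tgt$ is the identity on the bottom, then conclude by 3-for-2. The only difference is that you spell out a few more details (the map-level identification in part one, and the degenerate case) than the paper does.
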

\begin{proof}
    \begin{enumerate}
    \item We show that $\dotH_! \sfSS_{n+1} = \sfSS$. Take $\omega \in
    \bbOO_{n+1}$. Then
    \[
        \dotH_! S [\omega] = \dotH_! \colim_{\psi \in \catSS_\omega} O
        [\psi] \cong \colim_{\psi \in \catSS_\omega} \dotH_! O [\psi] =
        \colim_{\psi \in \catSS_\omega} A [\dotH\psi] = S [\omega].
    \]

    \item For $\omega \in \bbOO_{n+2}$, the inclusion $S[\tgt\omega] \longrightarrow
    S[\omega]$ is a relative $\sfSS_{n+1}$-cell complex by
    \cref{lemma:opetopes-technical:spines}. Since $\dotH_!$ preserves colimits, and since
    $\dotH_! \sfSS_{n+1} = \sfSS$, we have that $\dotH_!(S[\tgt \omega]
    \longhookrightarrow S[\omega])$ is a relative $\sfSS$-cell complex, and thus an
    $\sfSS$-local isomorphism. In the square below
    \[
        \squarediagram {\dotH_! S[ \tgt \omega ]} {\dotH_!
          S[\omega]} {\dotH_! O[\tgt \omega]} {\dotH_! O[\omega]} {}
        {\dotH_! \sfS_{\tgt \omega}} {\dotH_! \sfS_\omega} {\dotH_! \tgt }
    \]
    we know that $\dotH_! \tgt = O[ \dotH \tgt ]$ is an isomorphism, and that
    $\dotH_! \sfS_{\tgt \omega} \in \sfSS$ by the previous point. We have just
    shown that the top arrow is an $\sfSS$-local isomorphism. By 3-for-2, we
    conclude that $\dotH_! \sfS_\omega$ is too.
    \qedhere
    \end{enumerate}
\end{proof}

We will prove two crucial lemmas that are key to the proof of
\cref{th:colored-algebraic-localization}. Before doing
so, let us make some preliminary remarks that will make the task easier.

\begin{lemma}
   \label{lemma:prelim-remarks}
    Let $\omega \in \bbOO_{n-k, n+2}$, and let $X \in \sfSS_{n+1, n+2}^{\perp}$.
    Then the following are all spans of isomorphisms.
    \begin{enumerate}
        \item $\bbLambda(\dotH\omega, \dotH\psi) \times X_{\psi}
        \xleftarrow{\id \times X_{\tgt \tgt}} \bbLambda(\dotH\omega, \dotH\psi)
        \times X_{\itree{\psi}} \xrightarrow{\bbLambda(\dotH \omega , \dotH \tgt \tgt)
        \times \id} \bbLambda(\dotH\omega, \dotH\itree{\psi}) \times
        X_{\itree{\psi}}$, where $\psi \in \bbOO_{n-1}$.
        \item $\bbLambda (\dotH\omega, \dotH\psi) \times X_\psi \xleftarrow{\id
        \times X_{\src_{[]}}} \bbLambda (\dotH\omega, \dotH\psi) \times
        X_{\ytree\psi} \xrightarrow{\bbLambda(\dotH\omega , \dotH \src_{[]}) \times
        \id} \bbLambda (\dotH\omega, \dotH\ytree{\psi}) \times X_{\ytree\psi}$,
        where $\psi \in \bbOO_n$.
        \item $ \bbLambda (\dotH\omega, \dotH\tgt\psi) \times X_{\tgt\psi}
        \xleftarrow{\id \times X_{\tgt}} \bbLambda (\dotH\omega, \dotH\tgt\psi)
        \times X_\psi \xrightarrow{\bbLambda (\dotH\omega, \dotH \tgt) \times \id}
        \bbLambda (\dotH\omega, \dotH\psi) \times X_\psi$, where $\psi \in
        \bbOO_{n+2}$
    \end{enumerate}
\end{lemma}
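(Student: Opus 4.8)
The plan is to handle the three spans by the same device. In each case the apex is a product $\bbLambda(\dotH\omega, \dotH\theta) \times X_{\theta'}$ in which there is a face embedding (or short composite of face embeddings) $f : \theta \to \theta'$ of $\bbOO$, the left leg is $\id \times X_f$ with $X_f$ the induced restriction, and the right leg is $\bbLambda(\dotH\omega, \dotH f) \times \id$ for the \emph{same} $f$. Explicitly, $f$ is $\tgt\tgt : \psi \to \itree{\psi}$ in part (1) (with $\psi \in \bbOO_{n-1}$, so this part is only non-vacuous when $k \geq 1$), it is $\src_{[]} : \psi \to \ytree{\psi}$ in part (2), and it is the target embedding $\tgt : \tgt\psi \to \psi$ in part (3). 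Since a map of the form $\id_A \times r$ is a bijection whenever $r$ is, and $g \times \id_B$ whenever $g$ is, it is enough to show, in each case, that (i) $X_f$ is a bijection and (ii) $\dotH f$ is an isomorphism in $\bbLambda$ (so that $\bbLambda(\dotH\omega, \dotH f)$ is a bijection).

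For (i) I would appeal to the hypothesis $X \in \sfSS_{n+1,n+2}^\perp$. In part (1), $\tgt\tgt : \psi \to \itree{\psi}$ is the spine inclusion $\sfS_{\itree{\psi}} \in \sfSS_{n+1}$ of the degenerate $(n+1)$-opetope $\itree{\psi}$ (this is \cref{coroll:y-i:spine-local}(1) with $n$ replaced by $n-1$); in part (2), $\src_{[]} : \psi \to \ytree{\psi}$ is the spine inclusion $\sfS_{\ytree{\psi}} \in \sfSS_{n+1}$ by \cref{coroll:y-i:spine-local}(2); and in part (3), $\tgt : \tgt\psi \to \psi$ is an $\sfSS_{n+1,n+2}$-local isomorphism by \cref{lemma:comparison:tgt-spine-local}(1). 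In all three cases $X$ is orthogonal to $f$, so by the Yoneda lemma $X_f$ is a bijection.

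For (ii) I would unwind \cref{def:doth}. Part (3) is immediate: $\dotH\tgt$ is the identity by clause (3) of that definition. In part (2), clause (1) gives $\dotH\src_{[]} = \optPolyFun\left(O[\psi] \xrightarrow{\src_{[]}} S[\ytree{\psi}]\right)$, and by \cref{coroll:y-i:spine-local}(2) the map $O[\psi] \to S[\ytree{\psi}]$ is an isomorphism (it is the corestriction of $\sfS_{\ytree{\psi}}$ onto its image, the spine of a corolla being representable); as $\optPolyFun$ preserves isomorphisms, $\dotH\src_{[]}$ is one too. Part (1) is the delicate case: I would use \condition{Degen} to factor $\tgt\tgt$ as $\psi \xrightarrow{\src_{[]}} \ytree{\psi} \xrightarrow{\tgt} \itree{\psi}$ with $\ytree{\psi} \in \bbOO_n$, compute $\dotH$ of the first arrow from the description of $\dotH$ on $\bbOO_{n-k,n}$ and $\dotH$ of the second from clause (2), and then invoke the triangle identity of the free--forgetful adjunction of the monad $\optPolyFun$ to see that the ``degenerate composition'' cell classified in clause (2) collapses, so that $\dotH\tgt\tgt$ is $\optPolyFun$ applied to an isomorphism $O[\psi] \xrightarrow{\cong} S[\itree{\psi}]$ — such an isomorphism exists because $S[\itree{\psi}]$, being the spine of a degenerate opetope, is again representable (again by \cref{coroll:y-i:spine-local}(1)). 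Hence $\dotH\tgt\tgt$ is an isomorphism in $\bbLambda$.

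The only real work is this last computation in part (1): carefully tracing how $\dotH$ acts on the composite of a source embedding into an $n$-corolla with the target embedding of the degenerate $(n+1)$-opetope above it, and spotting that the cell of \cref{def:doth}(2) is exactly what the triangle identity trivialises. Everything else — the orthogonality arguments of step (i) and the two easy cases of step (ii) — is immediate from the cited results, so this step is of the same ``simple but lengthy'' kind as the verifications deferred to \cref{sec:omitted-proofs} elsewhere in the paper.
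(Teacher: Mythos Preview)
Your proposal is correct and follows essentially the same approach as the paper: for the left legs you invoke \cref{coroll:y-i:spine-local} and \cref{lemma:comparison:tgt-spine-local} exactly as the paper does, and for the right legs both you and the paper appeal to the definition of $\dotH$. The paper's proof is maximally terse (it says only ``by definition of $\dotH$'' for all three right legs), while you supply the computation behind this --- in particular, for part~(1) you correctly identify that tracing $\dotH(\tgt\tgt)$ requires factoring through $\ytree{\psi}\in\bbOO_n$ and a short Kleisli-style argument; this is sound, and your remark that it is ``simple but lengthy'' matches the paper's practice of suppressing such verifications.
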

\begin{proof}
    \begin{enumerate}
        \item The first map is an isomorphism by \cref{coroll:y-i:spine-local}
        point (1) and the second map is one by definition of $\dotH$.
        \item The first map is an isomorphism by \cref{coroll:y-i:spine-local}
        point (2) and the second is one by definition of $\dotH$.
        \item The first map is an isomorphism by \cref{lemma:comparison:tgt-spine-local}
        point (1) and the second is one by definition of $\dotH$.
        \qedhere
    \end{enumerate}
\end{proof}

\begin{lemma}
    \label{lemma:prelim-remarks-bis}
    Let $\omega \in \bbOO_{n-k,n+2}$. If $\psi\in\bbOO_{n-k, n-2}$, then
    $\bbLambda(\dotH\omega,\dotH\psi) \cong \bbOO_{n-k, n+2}(\omega, \psi) $.
\end{lemma}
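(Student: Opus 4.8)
The plan is to reduce the computation to the free–forgetful adjunction of the monad $\optPolyFun$ together with the explicit description of $\optPolyFun O[\psi]$ from \cref{def:colored-zn}. First, since $u : \bbLambda \longhookrightarrow \Alg$ is fully faithful, I would replace $\bbLambda(\dotH\omega,\dotH\psi)$ by $\Alg(\dotH\omega,\dotH\psi)$. Next I would observe that every value of $\dotH$ is a free $\optPolyFun$-algebra on a $\optPolyFun$-cardinal: writing $F \dashv U$ for the free–forgetful adjunction $\Psh{\bbOO_{n-k,n}} \adjunction \Alg$, \cref{def:doth} gives $\dotH\omega = F(P_\omega)$ with $P_\omega = O[\omega]$ when $\dim\omega \leq n$, $P_\omega = S[\omega]$ when $\dim\omega = n+1$, and $P_\omega = S[\tgt\omega]$ (with $\tgt\omega \in \bbOO_{n+1}$) when $\dim\omega = n+2$; likewise $\dotH\psi = F(O[\psi])$ since $\dim\psi \leq n-2 \leq n$. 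Hence
\[
    \bbLambda(\dotH\omega,\dotH\psi)
    = \Alg\big(F P_\omega,\; F\,O[\psi]\big)
    \cong \Psh{\bbOO_{n-k,n}}\big(P_\omega,\; \optPolyFun O[\psi]\big).
\]

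The key step is then the computation of the presheaf $\optPolyFun O[\psi]$. By \cref{def:colored-zn} it agrees with $O[\psi]$ in dimensions $\leq n-1$, while $(\optPolyFun O[\psi])_\chi \cong \sum_{\nu \in \bbOO_{n+1},\, \tgt\nu = \chi} \Psh{\bbOO_{n-k,n}}(S[\nu], O[\psi])$ for $\chi \in \bbOO_n$. I would show every summand is empty. Since $\dim\psi \leq n-2$, the presheaf $O[\psi]$ has no cells of dimension $n-1$ or $n$, whereas $S[\nu]$ carries a cell of dimension $\geq n-1$ for every $\nu \in \bbOO_{n+1}$: if $\nu$ is non degenerate this is a source face $\src_{[p]}\nu \in \bbOO_n$, which lies in $S[\nu]$ since $[p] \in \nu^\nodesymbol$; and if $\nu = \itree\phi$ is degenerate then $S[\nu] \cong O[\phi]$ with $\phi \in \bbOO_{n-1}$ by \cref{coroll:y-i:spine-local} point (1). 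A natural transformation into $O[\psi]$ evaluated at the shape of such a cell would be a map from a nonempty set to the empty set, so none exists; thus $(\optPolyFun O[\psi])_\chi = \emptyset$ for every $\chi \in \bbOO_n$, and therefore the unit $\eta : O[\psi] \to \optPolyFun O[\psi]$ is an isomorphism.

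Finally I would evaluate $\Psh{\bbOO_{n-k,n}}(P_\omega, O[\psi])$. If $\dim\omega \leq n$, then $P_\omega = O[\omega]$ is representable and the hom-set is $O[\psi]_\omega = \bbOO(\omega,\psi) = \bbOO_{n-k,n+2}(\omega,\psi)$. If $\dim\omega \in \{n+1, n+2\}$, then $P_\omega$ is a spine $S[\nu]$ with $\nu \in \bbOO_{n+1}$, so the hom-set is empty by the argument above; and $\bbOO_{n-k,n+2}(\omega,\psi)$ is empty as well, since $\dim\omega > n-2 \geq \dim\psi$ while a morphism of $\bbOO$ is a composite of face embeddings and hence never decreases dimension. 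In every case this yields $\bbLambda(\dotH\omega,\dotH\psi) \cong \bbOO_{n-k,n+2}(\omega,\psi)$, and unwinding the displayed isomorphisms (all of which are natural) shows the identification is natural in $\omega$.

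I expect the main obstacle to be the emptiness claim for $\Psh{\bbOO_{n-k,n}}(S[\nu], O[\psi])$, and within it the degenerate case, where the spine collapses to a low-dimensional representable and the naive ``it has an $n$-cell'' argument fails; handling it cleanly is precisely the role of \cref{coroll:y-i:spine-local} point (1). (I would also note in passing that the statement is vacuous unless $k \geq 2$, in which case $\bbOO_{n-k,n}$ genuinely contains dimensions $n-2$, $n-1$ and $n$, so that all the dimension-counting above makes sense.)
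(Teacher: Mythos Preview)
Your argument is correct and fully spells out what the paper dismisses as ``Easy verification.'' The essential point, which you identify and prove carefully, is that for $\psi \in \bbOO_{n-k,n-2}$ the unit $O[\psi] \to \optPolyFun O[\psi]$ is an isomorphism (because no spine $S[\nu]$ with $\nu \in \bbOO_{n+1}$ can map into a presheaf concentrated in dimensions $\leq n-2$), after which the free--forgetful adjunction and Yoneda reduce the claim to a dimension count in $\bbOO$; this is almost certainly the computation the authors had in mind.
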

\begin{proof}
   Easy verification.
\end{proof}

The first of the two crucial propositions will provide us one half of an
equivalence between the category $\sfSS_{n+1,n+2} \inv \Psh{\bbOO_{n-k,n+2}}$
and $\Alg$.

\begin{proposition}
    \label{prop:colored-algebraic-localization:crucial-1}
    Let $X \in \Psh{\bbOO_{n-k,n+2}}$. If $\sfSS_{n+1, n+2} \perp X$, then the unit $X \longrightarrow \dotH^*\dotH_!X$ is
    an isomorphism.
\end{proposition}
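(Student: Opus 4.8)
Write $\bbOO' = \bbOO_{n-k, n+2}$ for brevity. For $\omega' \in \bbOO'$, the coend formula for the left Kan extension $\dotH_!$ gives
\[
    (\dotH^*\dotH_! X)_{\omega'} = (\dotH_! X)_{\dotH\omega'} = \int^{\omega \in \bbOO'} \bbLambda(\dotH\omega', \dotH\omega) \times X_\omega ,
\]
and under the density presentation $X_{\omega'} = \int^{\omega \in \bbOO'} \bbOO'(\omega', \omega) \times X_\omega$ the unit $\eta_{X,\omega'}$ is exactly the coend map induced by the action of $\dotH$ on hom-sets; explicitly it sends $x \in X_{\omega'}$ to the class of $(\id_{\dotH\omega'}, x)$. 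So the statement reduces to showing this coend map is bijective for every $\omega'$.

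The plan for surjectivity is first to shrink the coend using \cref{lemma:prelim-remarks}. Read inside the coend, its three items say that the summand indexed by a degenerate $(n+1)$-opetope $\itree\psi$ (resp.\ a corolla $\ytree\psi$, resp.\ an $(n+2)$-opetope $\xi$) is glued, along the morphism $\tgt\tgt$ (resp.\ $\src_{[]}$, resp.\ $\tgt$) of $\bbOO'$, to the summand indexed by $\psi \in \bbOO_{n-1}$ (resp.\ $\psi \in \bbOO_n$, resp.\ $\tgt\xi \in \bbOO_{n+1}$), via maps that are isomorphisms; here the hypothesis $\sfSS_{n+1, n+2} \perp X$ is precisely what makes the structure maps of $X$ along these morphisms invertible (they are $\sfSS_{n+1,n+2}$-local isomorphisms by \cref{coroll:y-i:spine-local} and \cref{lemma:comparison:tgt-spine-local}, and $X$, being $\sfSS_{n+1,n+2}$-local, is orthogonal to all of these). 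Consequently every class in the coend is represented by a pair $(f, x)$ in which the index $\omega$ has dimension $\leq n$ or is a non-degenerate, non-corolla $(n+1)$-opetope. For such a representative one then puts $f$ into the image of $\dotH$: when $\dim\omega' \leq n$ this is a matter of \cref{lemma:prelim-remarks-bis} together with surjectivity of $\dotH$ on morphisms, and when $\omega' \in \bbOO_{n+1}$ is non-degenerate it is the content of the Diagrammatic Lemma \cref{lemma:diagramatic-lemma}, which supplies $\xi \in \bbOO_{n+2}$ and $[p] \in \xi^\nodesymbol$ with $\src_{[p]}\xi = \omega'$, $\tgt\xi = \omega$ and $f = \dotH\src_{[p]}$. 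Feeding this back into the coend relations (along $\tgt \colon \tgt\xi \to \xi$, whose action on $X$ is invertible, and along $\src_{[p]} \colon \omega' \to \xi$) rewrites $(f, x)$ as $(\id_{\dotH\omega'}, x')$ with $x' \in X_{\omega'}$. The two remaining cases, $\omega'$ degenerate of dimension $n+1$ and $\omega'$ of dimension $n+2$, reduce to the preceding ones because $\dotH$ identifies $\itree\psi$ with $\psi$ and $\xi$ with $\tgt\xi$ while the corresponding structure maps of $X$ are bijections.

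Injectivity is where I expect the real work to lie. The approach is to exhibit a retraction $r_{\omega'} \colon (\dotH^*\dotH_! X)_{\omega'} \longrightarrow X_{\omega'}$ of $\eta_{X,\omega'}$, equivalently a cocone on the diagram $(\omega, x) \mapsto \bbLambda(\dotH\omega', \dotH\omega)$ with vertex $X_{\omega'}$ that restricts along $\dotH$ to the tautological cocone $g \mapsto X_g(x)$ on $(\omega, x) \mapsto \bbOO'(\omega', \omega)$. On a class $(f, x)$ one chooses a $\dotH$-preimage of $f$ — a morphism of $\bbOO'$, or the zig-zag $\omega' \xrightarrow{\src_{[p]}} \xi \xleftarrow{\tgt} \omega$ furnished by the Diagrammatic Lemma — and applies the corresponding restriction of $x$; the coherence of these zig-zags, i.e.\ that composites behave correctly, is governed by \cref{lemma:diagramatic-composite-diagramatic} and \cref{lemma:caf}. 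The crux, and the step most likely to require care, is well-definedness: since $\dotH$ is neither full nor faithful, one must show that any two parallel morphisms of $\bbOO'$ with the same image in $\bbLambda$ induce the same map on $X$, and this is exactly where the orthogonality $\sfSS_{n+1, n+2} \perp X$ is used — the discrepancy between two such morphisms is always detected by a spine inclusion, along which $X$ is rigid. Once $r_{\omega'}$ is seen to be well defined one has $r_{\omega'}\eta_{X,\omega'} = \id$ by construction, so $\eta_{X,\omega'}$ is injective; combined with the surjectivity established above, $\eta_{X,\omega'}$ is a bijection for every $\omega'$, and hence $\eta_X$ is an isomorphism.
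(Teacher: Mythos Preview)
Your approach is correct and matches the paper's in substance: the same coend formula, the same case reduction via \cref{lemma:prelim-remarks} and \cref{lemma:prelim-remarks-bis}, and the same appeal to the Diagrammatic Lemma for the key case $\omega' \in \bbOO_{n+1}$ non-degenerate. The main difference is packaging. The paper does not split into surjectivity and injectivity; instead it directly defines, for each $\omega'$, an explicit cowedge $\bbLambda(\dotH\omega',\dotH\psi)\times X_\psi \to X_{\omega'}$ (e.g.\ $(f,x)\mapsto \src_{[p]}(x)$ where $\xi,[p]$ come from the Diagrammatic Lemma) and asserts this is the two-sided inverse. That formulation sidesteps the well-definedness worry you flag: one checks the \emph{cowedge condition} (naturality in $\psi$) rather than independence of a chosen $\dotH$-preimage of $f$, which is cleaner since $\dotH$ is not faithful. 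In effect your ``surjectivity'' paragraph already builds the inverse, and your ``injectivity'' paragraph rediscovers it as a retraction; you could collapse the two. One small caution: invoking ``surjectivity of $\dotH$ on morphisms'' for the low-dimensional cases is a forward reference to \cref{coroll:h-surjective}; the paper handles $\omega'\in\bbOO_n$ instead by unfolding $\bbLambda(\dotH\omega',\dotH\psi)$ as $\sum_\nu \Psh{\bbOO_{n-k,n}}(S[\nu],S[\psi])$ and composing with $X_\tgt$, which stays self-contained.
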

\begin{proof}
    It suffices to show that for each $\omega \in \bbOO_{n-k, n+2}$, the map
    \[
        X_\omega \longrightarrow
        h^* h_! X_\omega =
        \int^{\psi \in \bbOO_{n-k, n+2}} \bbLambda (\dotH\omega,
        \dotH\psi) \times X_{\psi}
    \]
    is a bijection. We proceed to construct the required inverse via a cowedge
    $\bbLambda (\dotH \omega, \dotH -) \times X_{(-)} \longrightarrow X_\omega$, using a case
    analysis on $\omega \in \bbOO_{n-k,n+2}$.
    \begin{enumerate}
        \item Assume $\omega \in \bbOO_{n-k, n-1}$. We have $\bbLambda
        (\dotH\omega, h-) \cong \bbOO_{n-k, n+2}(\omega, -)$ and this is just
        the change-of-variable formula.
        \item Assume $\omega \in \bbOO_n$. By \cref{lemma:prelim-remarks}
        and \cref{lemma:prelim-remarks-bis}, it suffices to consider the case
        $\psi \in \bbOO_{n+1}$. We have the sequence of morphisms
        \begin{small} \begin{align*}
            &\qquad \bbLambda (\dotH\omega,\dotH\psi) \times X_\psi &&\cong
                \left(
                    \sum_{\substack{\nu \in \bbOO_{n+1} \\ \tgt \nu = \omega}}
                    \Psh{\bbOO_{n-k, n+2}}(S [\nu], S[\psi]) \right) \times
                \Psh{\bbOO_{n-k, n+2}}(S[\psi], X) \\
            &\longrightarrow
                \sum_{\substack{\nu \in \bbOO_{n+1} \\ \tgt \nu = \omega}}
                \Psh{\bbOO_{n-k, n+2}}(S[\nu], X) &&\cong
                \sum_{\substack{\nu \in \bbOO_{n+1} \\ \tgt \nu = \omega}}
                X_\nu \\
            &\stackrel{\tgt}{\longrightarrow} X_\omega .
        \end{align*} \end{small}
        It is straightforward to verify that this defines a cowedge whose
        induced map is the required inverse.

        \item Assume $\omega \in \bbOO_{n+1}$. If $\omega$ is degenerate, say
        $\omega = \itree{\phi}$ for some $\phi \in \bbOO_{n-1}$, then
        $\bbLambda (\dotH \omega, \dotH -) \cong \bbLambda(\dotH \phi, \dotH
        -)$ and we are in a case we have treated before. So let $\omega$ be
        non-degenerate. By \cref{lemma:prelim-remarks,lem:prelim-remarks-bis},
        we may suppose $\psi \in \bbOO_{n, n+1}$. Recall that for every $f \in
        \bbLambda(\dotH\omega, \dotH\psi) $, the diagrammatic
        \cref{lemma:diagramatic-lemma} computes a $\xi \in \bbOO_{n+2}$ and
        $[p] \in \xi^\nodesymbol$ such that $\src_{[p]} \xi = \omega$, $\tgt
        \xi = \psi$ and $\dotH \src_{[p]} = f$. By
        \cref{lemma:comparison:tgt-spine-local}, $X_\xi \cong X_\psi$, and thus
        this gives a function
        \begin{align*}
            \bbLambda(\dotH \omega, \dotH \psi) \times X_\psi
            &\longrightarrow X_\omega \\
            (f, x)
            &\longmapsto \src_{[p]} (x) .
        \end{align*}
        It is straightforward to verify that this assignment defines a cowedge,
        whose associated map is the required inverse.

        \item Assume $\omega \in \bbOO_{n+2}$. Then by definition of $\dotH$,
        $\bbLambda(\dotH \omega, \dotH -) \cong \bbLambda(\dotH \tgt \omega,
        \dotH -)$, and this is the case we have just treated.
        \qedhere
    \end{enumerate}
\end{proof}

\begin{corollary}
    \label{prop:colored-algebraic-localization:crucial-2}
    If $\sfSS_{n+1, n+2} \perp X$, then $\sfSS \perp \dotH_! X$.
\end{corollary}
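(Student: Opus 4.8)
The plan is to deduce the statement from \cref{prop:colored-algebraic-localization:crucial-1} together with \cref{lemma:doth-local-equivalences} by transposing orthogonality across the adjunction $\dotH_! \dashv \dotH^*$. The underlying observation is the standard one: for an adjunction $L \dashv R$, a morphism $j$ in the domain of $L$ and an object $Y$ in the codomain, one has $L j \perp Y$ if and only if $j \perp R Y$, since $l \perp Y$ is precisely the assertion that precomposition by $l$ induces a bijection on hom-sets into $Y$, and the adjunction isomorphism $\PshLambda(L-, Y) \cong \Psh{\bbOO_{n-k,n+2}}(-, RY)$ is natural, hence carries precomposition by $Lj$ to precomposition by $j$.

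With this in hand, the argument is short. First I would recall that by \cref{def:spine-lambda} the set $\sfSS \subseteq \PshLambda^{\rightarrow}$ is indexed exactly by $\omega \in \bbOO_{n+1}$, and by \cref{lemma:doth-local-equivalences} each such spine inclusion is of the form $\sfS_{h\omega} = \dotH_! \sfS_\omega$ with $\sfS_\omega \colon S[\omega] \longhookrightarrow O[\omega]$ in $\sfSS_{n+1} \subseteq \Psh{\bbOO_{n-k,n+2}}^{\rightarrow}$. Hence $\sfS_{h\omega} \perp \dotH_! X$ is equivalent, via the transpose above applied to $L = \dotH_!$, $R = \dotH^*$, to $\sfS_\omega \perp \dotH^* \dotH_! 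X$. Now I would invoke \cref{prop:colored-algebraic-localization:crucial-1}: since $\sfSS_{n+1,n+2} \perp X$, the unit $X \longrightarrow \dotH^* \dotH_! X$ is an isomorphism, so $\sfS_\omega \perp \dotH^* \dotH_! X$ is in turn equivalent to $\sfS_\omega \perp X$. But $\sfS_\omega \in \sfSS_{n+1} \subseteq \sfSS_{n+1,n+2}$, and $X$ is $\sfSS_{n+1,n+2}$-orthogonal by hypothesis, so $\sfS_\omega \perp X$ holds. Reading the chain of equivalences backwards gives $\sfS_{h\omega} \perp \dotH_! X$ for every $\omega \in \bbOO_{n+1}$, i.e.\ $\sfSS \perp \dotH_! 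X$.

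Since every ingredient is a previously established result, there is no genuine obstacle here; the only point meriting a line of care is the adjunction transpose for orthogonality against an \emph{object} (as opposed to a morphism), and the bookkeeping check that $\dotH_!$ sends the generating set $\sfSS_{n+1}$ onto all of $\sfSS$, which is precisely the content of the first half of \cref{lemma:doth-local-equivalences}. I would therefore present the proof as a two-sentence deduction: apply \cref{lemma:doth-local-equivalences} to write elements of $\sfSS$ as $\dotH_!$ of elements of $\sfSS_{n+1}$, transpose, and conclude with \cref{prop:colored-algebraic-localization:crucial-1} and the hypothesis.
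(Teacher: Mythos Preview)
Your proof is correct and follows essentially the same approach as the paper: both use \cref{lemma:doth-local-equivalences} to write $\sfSS = \dotH_! \sfSS_{n+1}$, transpose orthogonality across the adjunction $\dotH_! \dashv \dotH^*$, and invoke \cref{prop:colored-algebraic-localization:crucial-1} to identify $\dotH^* \dotH_! X$ with $X$. The paper spells out the argument as an explicit chain of hom-set isomorphisms, while you package the adjunction step as a general orthogonality-transpose lemma, but the content is identical.
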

\begin{proof}
    Recall from \cref{lemma:doth-local-equivalences} that $\sfSS = \dotH_! \sfSS_{n+1}$.
    Let $\omega \in \bbOO_{n+1}$. We have
    \begin{align*}
        \PshLambda (\dotH_! S [\omega], \dotH_! X)
        &\cong \Psh{\bbOO_{n-k, n+2}} (S [\omega], \dotH^* \dotH_! X)
            & \text{since } \dotH_! \dashv \dotH^* \\
        &\cong \Psh{\bbOO_{n-k, n+2}} (S [\omega], X)
            & \text{by \cref{prop:colored-algebraic-localization:crucial-1}} \\
        &\cong \Psh{\bbOO_{n-k, n+2}} (\omega, X)
            & \text{since } \sfSS_{n+1} \perp X \\
        &\cong \Psh{\bbOO_{n-k, n+2}} (\omega, \dotH^* \dotH_! X)
            & \text{by \cref{prop:colored-algebraic-localization:crucial-1}} \\
        &\cong \PshLambda (\dotH_! \omega, \dotH_! X)
            & \text{since } \dotH_! \dashv \dotH^* .
    \end{align*}
\end{proof}

This second crucial proposition will provide the other half of the equivalence
between $\Alg$ and the localisation $\sfSS_{n+1,n+2} \inv
\Psh{\bbOO_{n-k,n+2}}$.

\begin{proposition}
    \label{prop:colored-algebraic-localization:crucial-3}
    Let $Y \in \PshLambda$. If $\sfSS \perp Y$, then the counit
    $\dotH_!\dotH^*Y \longrightarrow Y$ is an isomorphism.
\end{proposition}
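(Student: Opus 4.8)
The plan is to derive the statement formally from \cref{prop:colored-algebraic-localization:crucial-1} via the triangle identities for the adjunction $\dotH_! \dashv \dotH^*$, after checking that $\dotH^* Y$ is $\sfSS_{n+1,n+2}$-local. Beyond \cref{prop:colored-algebraic-localization:crucial-1}, the only inputs needed are \cref{lemma:doth-local-equivalences} and the fact (recorded before \cref{def:doth}) that $\dotH$ is surjective on objects.

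First I would verify that $\sfSS_{n+1, n+2} \perp \dotH^* Y$. For $\nu \in \bbOO_{n+1}$, since the codomain of $\sfS_\nu$ is a representable, the relation $\sfS_\nu \perp \dotH^* Y$ is equivalent to the precomposition map $\Psh{\bbOO_{n-k,n+2}}(O[\nu], \dotH^* Y) \longrightarrow \Psh{\bbOO_{n-k,n+2}}(S[\nu], \dotH^* Y)$ being a bijection. Under the natural adjunction isomorphism $\Psh{\bbOO_{n-k,n+2}}(-, \dotH^* Y) \cong \PshLambda(\dotH_!(-), Y)$, this map is carried to precomposition with $\dotH_! \sfS_\nu$ on $\PshLambda(-, Y)$; by \cref{lemma:doth-local-equivalences} we have $\dotH_! \sfS_\nu \in \sfSS$, and since $\sfSS \perp Y$ this is a bijection, so $\sfSS_{n+1} \perp \dotH^* Y$. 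For $\sfS_\xi$ with $\xi \in \bbOO_{n+2}$ the argument is identical, except that \cref{lemma:doth-local-equivalences} now gives only that $\dotH_! \sfS_\xi$ is an $\sfSS$-local isomorphism; but $\sfSS \perp Y$ implies that every $\sfSS$-local isomorphism is orthogonal to $Y$, so $\sfSS_{n+2} \perp \dotH^* Y$ as well, whence $\sfSS_{n+1,n+2} \perp \dotH^* Y$.

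Next I would apply \cref{prop:colored-algebraic-localization:crucial-1} with $X = \dotH^* Y$, obtaining that the unit $\eta_{\dotH^* Y} : \dotH^* Y \longrightarrow \dotH^* \dotH_! \dotH^* Y$ is an isomorphism. The triangle identity $\dotH^*(\epsilon_Y) \circ \eta_{\dotH^* Y} = \id_{\dotH^* Y}$, with $\epsilon_Y : \dotH_! \dotH^* Y \longrightarrow Y$ the counit, then forces $\dotH^*(\epsilon_Y)$ to be an isomorphism (with inverse $\eta_{\dotH^* Y}$). Finally, $\dotH^*$ reflects isomorphisms: since $\dotH$ is surjective on objects, every object of $\bbLambda$ equals $\dotH\omega$ for some $\omega \in \bbOO_{n-k, n+2}$, and $(\epsilon_Y)_{\dotH\omega} = (\dotH^* \epsilon_Y)_\omega$ is a bijection for every such $\omega$; hence $\epsilon_Y$ is an isomorphism.

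The one step that carries real content is the verification that $\dotH^* Y$ is $\sfSS_{n+1,n+2}$-local; it amounts to \cref{lemma:doth-local-equivalences} together with naturality of the hom-set adjunction, and everything after it is a formal consequence of the triangle identities.
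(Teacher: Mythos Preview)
Your argument is correct and takes a genuinely different route from the paper. The paper proves the statement by a direct coend computation: for each $\lambda \in \bbLambda$ it exhibits a section of the counit component $(\dotH_!\dotH^*Y)_\lambda \to Y_\lambda$, and then shows this section is surjective via a case analysis on $\lambda$, invoking the diagrammatic \cref{lemma:diagramatic-lemma} in the main case. You instead reduce the statement formally to \cref{prop:colored-algebraic-localization:crucial-1}: first you check $\sfSS_{n+1,n+2}\perp \dotH^* Y$ via adjunction and \cref{lemma:doth-local-equivalences} (this is essentially the first half of the proof of \cref{prop:doth-restricts}), then apply \cref{prop:colored-algebraic-localization:crucial-1} to $X=\dotH^*Y$ and finish with a triangle identity, using that $\dotH$ is surjective on objects so that $\dotH^*$ reflects isomorphisms. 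Your approach is shorter and makes transparent that, once \cref{prop:colored-algebraic-localization:crucial-1} is in hand, the present proposition carries no additional analytic content; the paper's explicit coend argument, by contrast, re-traverses much of the same case analysis and gives a second, independent appeal to the diagrammatic lemma.
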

\begin{proof}
    It suffices to prove that for each $\lambda \in \bbLambda$, the map
    \begin{equation}
        \label{eq:coend-1}
        (\dotH_! Y)_{\lambda}
        = \int^{\psi \in \bbOO_{n-k, n+2}}
            \bbLambda (\lambda, \dotH\psi) \times Y_{\dotH\psi}
        \longrightarrow Y_\lambda
    \end{equation}
    is a bijection. Since the map\footnote{We use the notation $a \otimes b$ to
    refer to elements in a coend of sets of the form $\int^{c\in\catCC} A_c
    \times B_c$, with $A \in [\catCC^\op,\Set]$ and $B \in [\catCC,\Set]$, our
    motivation being the tensor product of modules.}
    \begin{align*}
      Y_\lambda &\longrightarrow \int^{\psi}
        \bbLambda (\lambda, \dotH\psi) \times Y_{\dotH \psi} \\
      y &\longmapsto \id \otimes y
    \end{align*}
    is clearly a section, it is enough to prove that it is is surjective. The
    map is well defined, as $\dotH$ is surjective on objects and it is easy to
    verify that it is independent of the choice of an antecedent $\dotH\nu =
    \lambda$. We need to show that for every $\psi \in \bbOO_{n-k, n+2}$, every
    $f \otimes y \in \bbLambda (\lambda, \dotH\psi) \times Y_{\dotH\psi} $ is
    equal, in the colimit \eqref{eq:coend-1}, to $\id \otimes y' \in \bbLambda
    (\lambda,\lambda) \times Y_{\lambda}$ for some $y' \in Y_{\lambda}$.
    \begin{enumerate}
        \item Assume $\lambda = \optPolyFun\phi$, with $\phi \in
        \bbOO_{n-k, n-1}$. Then $\bbLambda(\lambda, \dotH -) \cong
        \bbOO_{n-k, n+2} (\phi,-)$ and any pair $f \otimes y \in \bbOO_{n-k, n+2}
        (\phi,\psi) \times Y_\psi$ is related via a zig-zag relation
        \[
            \begin{tikzcd}
                f \otimes y
                    \ar[r, leftrightsquigarrow, "f" above] &
                \id_\phi \otimes (\dotH f) (y)
            \end{tikzcd}
        \]
        to an element of the required form.

        \item Assume $\lambda = \optPolyFun\omega$, with $\omega \in
        \bbOO_{n}$. By \cref{lemma:prelim-remarks} and
        \cref{lemma:prelim-remarks-bis}, we may consider only the case where
        $\psi\in \bbOO_{n+1}$. Note that $\bbLambda(\lambda, \dotH \psi) \cong
        \bbLambda(\dotH \ytree{\omega}, \dotH \psi)$. Then $f \otimes y \in
        \bbLambda(\dotH \ytree{\omega}, \dotH -)$ is related via a zig-zag
        relation
        \[
            \begin{tikzcd}
                f \otimes y
                    \ar[r, leftrightsquigarrow, "\src_{[p]}" above] &
                \id_{\dotH \ytree\omega} \otimes (\dotH \src_{[p]}) (y)
            \end{tikzcd}
        \]
        where we use \cref{lemma:diagramatic-lemma} to obtain $\src_{[p]} :
        \ytree \omega \longrightarrow \xi$ such that $\dotH \src_{[p]} = f$.
        \item The case $\lambda = \optPolyFun S[\omega ]$, with $\omega \in
        \bbOO_{n+1}$ identical to the previous one.
        \qedhere
    \end{enumerate}
\end{proof}

Let the localisation of $\Psh{\bbOO_{n-k, n+2}}$ at the set of spine inclusions
$\sfSS_{n+1, n+2}$ be denoted
\[
    v : \Psh{\bbOO_{n-k, n+2}}
    \adjunction
    \sfSS_{n+1, n+2} \inv \Psh{\bbOO_{n-k, n+2}} : N_v
\]
On the other hand, recall from \cref{prop:alg-shape-nerve} that we have a
localisation $u : \PshLambda \adjunction \Alg : N_u$. We are now
well-equipped to prove that $\Alg$ is equivalent to the localised category
$\sfSS_{n+1, n+2} \inv \Psh{\bbOO_{n-k, n+2}}$.

\begin{proposition}
    \label{prop:doth-restricts}
    The adjunction $\dotH_! : \Psh{\bbOO_{n-k, n+2}} \adjunction \PshLambda :
    \dotH^*$ restricts to an adjunction $\tildH_! \dashv \tildH^*$, as shown
    below.
    \[
        \begin{tikzcd}
            \sfSS_{n+1, n+2} \inv \Psh{\bbOO_{n-k, n+2}}
                \ar[d, hook, "v"']
                \ar[r, shift left = .4em, "\tildH_!"] &
            \Alg
                \ar[d, hook, "N_u"]
                \ar[l, shift left = .4em, "\tildH^*", "\perp" above] \\
            \Psh{\bbOO_{n-k, n+2}}
                \ar[r, shift left = .4em, "\dotH_!"] &
            \PshLambda .
                \ar[l, shift left = .4em, "\dotH^*", "\perp" above]
        \end{tikzcd}
    \]
\end{proposition}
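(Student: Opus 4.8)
The plan is to use the fact that both localisations appearing in the square are reflective and cut out by orthogonality. Recall that $N_v$ is fully faithful with essential image the full subcategory of $\Psh{\bbOO_{n-k, n+2}}$ on the presheaves $X$ with $\sfSS_{n+1, n+2} \perp X$, and (by \cref{prop:alg-shape-nerve} point (2)) that $N_u$ is fully faithful with essential image the full subcategory of $\PshLambda$ on the presheaves $Y$ with $\sfSS \perp Y$. It therefore suffices to show that the adjunction $\dotH_! \dashv \dotH^*$ restricts to these two full subcategories, i.e. that $\dotH_!$ sends $\sfSS_{n+1, n+2}$-local presheaves to $\sfSS$-local ones and that $\dotH^*$ sends $\sfSS$-local presheaves to $\sfSS_{n+1, n+2}$-local ones. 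Once this is known, the restricted functors form an adjunction automatically (a full subcategory is closed under isomorphism and inherits hom-sets, so the natural bijection $\PshLambda(\dotH_! X, Y) \cong \Psh{\bbOO_{n-k, n+2}}(X, \dotH^* Y)$ restricts), and transporting that restricted adjunction across the equivalences $N_v$ and $N_u$ onto their essential images produces $\tildH_! \dashv \tildH^*$.

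The first half is exactly \cref{prop:colored-algebraic-localization:crucial-2}. For the second half, fix an $\sfSS$-local $Y \in \PshLambda$ and a spine inclusion $s \in \sfSS_{n+1, n+2}$. Under the adjunction bijection $\Psh{\bbOO_{n-k, n+2}}(-, \dotH^* Y) \cong \PshLambda(\dotH_!(-), Y)$, the orthogonality $s \perp \dotH^* Y$ is equivalent to $\dotH_! s \perp Y$. By \cref{lemma:doth-local-equivalences}, $\dotH_! s$ belongs to $\sfSS$ when $s \in \sfSS_{n+1}$ and is an $\sfSS$-local isomorphism when $s \in \sfSS_{n+2}$; in either case $\dotH_! s \perp Y$, since an $\sfSS$-local object is --- by the very definition $\sfWW_\sfSS = {}^\perp\{Y' \in \PshLambda \mid \sfSS \perp Y'\}$ recalled in \cref{sec:preliminaries-category-theory} --- right orthogonal to every member of $\sfWW_\sfSS$, and $\sfSS \subseteq \sfWW_\sfSS$. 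Hence $\sfSS_{n+1, n+2} \perp \dotH^* Y$, as required.

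Performing the transport of the first paragraph gives the formulas $\tildH^* \eqdef v\, \dotH^* N_u$ and $\tildH_! \eqdef u\, \dotH_! N_v$, and the commutativity $N_v \tildH^* \cong \dotH^* N_u$ of the square of right adjoints is immediate: $\dotH^* N_u A$ is $\sfSS_{n+1, n+2}$-local by the previous paragraph, so the reflection unit $\dotH^* N_u A \longrightarrow N_v v\, \dotH^* N_u A$ is invertible. The matching commutativity $\tildH_! v \cong u\, \dotH_!$ of the square of left adjoints then follows formally: for $X \in \Psh{\bbOO_{n-k, n+2}}$ and $A \in \Alg$,
\begin{align*}
  \Alg(\tildH_! vX, A)
    &\cong (\sfSS_{n+1, n+2}\inv \Psh{\bbOO_{n-k, n+2}})(vX,\, \tildH^* A)
      & \text{since } \tildH_! \dashv \tildH^* \\
    &\cong \Psh{\bbOO_{n-k, n+2}}(X,\, N_v \tildH^* A)
      & \text{since } v \dashv N_v \\
    &\cong \Psh{\bbOO_{n-k, n+2}}(X,\, \dotH^* N_u A)
      & \text{by the above} \\
    &\cong \PshLambda(\dotH_! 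X,\, N_u A)
      & \text{since } \dotH_! \dashv \dotH^* \\
    &\cong \Alg(u\, \dotH_! X, A)
      & \text{since } u \dashv N_u ,
\end{align*}
naturally in $X$ and $A$, whence $\tildH_! v \cong u\, \dotH_!$ by the Yoneda lemma. Together these identities are precisely the restricted adjoint square of the statement.

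I do not expect any genuine obstacle here: all the combinatorial work has already been isolated in \cref{lemma:doth-local-equivalences} and \cref{prop:colored-algebraic-localization:crucial-2}, and what remains is a standard manipulation of reflective localisations. The only point that deserves a moment's attention is the observation, used in the second paragraph, that an $\sfSS$-local presheaf is automatically right orthogonal to the whole saturated class $\sfWW_\sfSS$ --- in particular to every $\sfSS$-local isomorphism and to every spine inclusion in $\sfSS$ --- but this is just the defining property of $\sfWW_\sfSS$ recalled in the preliminaries.
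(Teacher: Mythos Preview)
Your proof is correct and follows essentially the same approach as the paper: both directions rest on \cref{lemma:doth-local-equivalences} (for $\dotH^*$ preserving locality, via the adjunction) and \cref{prop:colored-algebraic-localization:crucial-2} (for $\dotH_!$). You are more explicit than the paper about the formal restriction-of-adjunction argument and the commutativity of the resulting square, which the paper leaves implicit, but there is no genuine difference in strategy.
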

\begin{proof}
    For all $Y \in \Alg \simeq \sfSS\inv \PshLambda$, by
    \cref{lemma:doth-local-equivalences}, we have that $h_!\sfSS_{n+1,n+2}
    \perp N_uY$, or equivalently, $\sfSS_{n+1,n+2} \perp h^*N_u Y$. Thus
    $h^*N_u$ factors through $\sfSS_{n+1, n+2} \inv \Psh{\bbOO_{n-k, n+2}}$.
    Next, by \cref{prop:colored-algebraic-localization:crucial-2}, $\dotH_!
    N_v$ factors through $\Alg$.
\end{proof}

\begin{corollary}
    \label{corollary:colored-algebraic-localization:equivalence}
    The adjunction $\tildH_!:\sfSS_{n+1, n+2}\inv\Psh{\bbOO_{n-k, n+2}}
    \adjunction \Alg:\tildH^*$ is an equivalence.
\end{corollary}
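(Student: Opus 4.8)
The plan is to check directly that the adjunction $\tildH_! \dashv \tildH^*$ of \cref{prop:doth-restricts} is an adjoint equivalence, by showing that both its unit and its counit are isomorphisms; each of the two will turn out to be a transported instance of one of the crucial propositions \cref{prop:colored-algebraic-localization:crucial-1} and \cref{prop:colored-algebraic-localization:crucial-3}.

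First I would set up the bookkeeping. Write $N_v : \sfSS_{n+1, n+2}\inv \Psh{\bbOO_{n-k, n+2}} \longhookrightarrow \Psh{\bbOO_{n-k, n+2}}$ and $N_u : \Alg \longhookrightarrow \PshLambda$ for the fully faithful inclusions of the two localisations, with reflections $v$ and $u$. The content of \cref{prop:doth-restricts} is that $\dotH^* N_u$ lands among the $\sfSS_{n+1, n+2}$-local objects and $\dotH_! N_v$ lands among the $\sfSS$-local objects, so one obtains natural isomorphisms $N_v \tildH^* \cong \dotH^* N_u$ and $N_u \tildH_! \cong \dotH_! N_v$. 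I would fix these compatibly with the triangle identities, so that under them the unit and counit of $\tildH_! \dashv \tildH^*$ are identified, after applying $N_v$ resp.\ $N_u$, with the unit of $\dotH_! \dashv \dotH^*$ at objects of the form $N_v X$ and its counit at objects of the form $N_u Y$. Since $N_v$ and $N_u$ are fully faithful they reflect isomorphisms, so it suffices to prove that these ambient unit and counit components are invertible.

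Next I would invoke the crucial propositions. For the unit: if $X \in \sfSS_{n+1, n+2}\inv \Psh{\bbOO_{n-k, n+2}}$, then $N_v X$ lies in the essential image of $N_v$, hence $\sfSS_{n+1, n+2} \perp N_v X$, and \cref{prop:colored-algebraic-localization:crucial-1} gives that the unit $N_v X \longrightarrow \dotH^* \dotH_! N_v X$ is an isomorphism; therefore the unit of $\tildH_! \dashv \tildH^*$ at $X$ is an isomorphism. For the counit: by \cref{prop:alg-shape-nerve} we have $\Alg \simeq \sfSS\inv \PshLambda$, so for $Y \in \Alg$ we have $\sfSS \perp N_u Y$, and \cref{prop:colored-algebraic-localization:crucial-3} gives that the counit $\dotH_! \dotH^* N_u Y \longrightarrow N_u Y$ is an isomorphism; therefore the counit of $\tildH_! \dashv \tildH^*$ at $Y$ is an isomorphism. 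Both unit and counit being invertible, $\tildH_! \dashv \tildH^*$ is an equivalence.

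The only genuinely delicate point is the identification in the second paragraph: one must be sure that the natural isomorphisms produced by \cref{prop:doth-restricts} can be chosen so that the restricted unit and counit really are the co-restrictions of the ambient ones, and not merely abstractly isomorphic to them. This is the standard lemma on restricting an adjunction to a pair of reflective subcategories through which the two adjoints factor; once it is recorded, the statement is a formal consequence of \cref{prop:colored-algebraic-localization:crucial-1,prop:colored-algebraic-localization:crucial-3}.
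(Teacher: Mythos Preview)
Your proof is correct and takes essentially the same approach as the paper: the paper's proof simply states that the result is a direct consequence of \cref{prop:colored-algebraic-localization:crucial-1,prop:colored-algebraic-localization:crucial-3}, and you have spelled out exactly how this implication works by showing that the unit and counit of the restricted adjunction are invertible. The bookkeeping you flag as delicate is indeed routine and is left implicit in the paper.
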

\begin{proof}
    This is a direct consequence of
    \cref{prop:colored-algebraic-localization:crucial-1,prop:colored-algebraic-localization:crucial-3}.
\end{proof}

\begin{notation}
    \label{not:algebraic-realization-adjunction}
    With \cref{corollary:colored-algebraic-localization:equivalence} in hand,
    let
    \[
        h : \PshO \adjunction \Alg : N .
    \]\index{$h$}\index{$N$}
    be the composite adjunction
    \[
        \PshO
        \adjunction \Psh{\bbOO_{n-k, n+2}}
        \stackrel{v}{\adjunction} \sfSS_{n+1, n+2}^{-1} \Psh{\bbOO_{n-k,n+2}}
        \stackrel{\tildH_!}{\adjunction} \Alg .
    \]
\end{notation}

\begin{theorem}
    [Nerve theorem for opetopic algebras]
    \label{th:colored-algebraic-localization}
    The reflective adjunction $h : \PshO \longrightarrow \Alg : N$ exhibits $\Alg$ as the
    Gabriel--Ulmer localisation (\cref{sec:prelim}) of $\PshO$ at the set of arrows $\sfAA$. That
    is $\Alg \simeq \sfAA^{-1} \PshO$.
\end{theorem}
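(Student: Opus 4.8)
The plan is to recognise the composite right adjoint $N$ of \cref{not:algebraic-realization-adjunction} as the inclusion of the full subcategory of $\PshO$ on the $\sfAA$-local opetopic sets, i.e.\ as the localisation $\sfAA^{-1}\PshO$ of \cref{sec:preliminaries-category-theory}. Write $\iota : \bbOO_{n-k,n+2}\longhookrightarrow\bbOO$, so that the first of the three composed adjunctions is $\iota^*\dashv\iota_*$ and $N = \iota_*\circ N_v\circ\tildH^*$. The three functors $\iota_*$, $N_v$, $\tildH^*$ are all fully faithful — the last because $\tildH$ induces the equivalence $\sfSS_{n+1,n+2}^{-1}\Psh{\bbOO_{n-k,n+2}}\simeq\Alg$ of \cref{corollary:colored-algebraic-localization:equivalence} — so $N$ is fully faithful and $h\dashv N$ is reflective. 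Once $N$ is shown to have essential image the $\sfAA$-local objects, the discussion of localisations in \cref{sec:preliminaries-category-theory} identifies $h$ with the reflection $a_\sfAA$; and as $\PshO$ is locally finitely presentable and $\sfAA = \sfOO_{<n-k}\cup\sfSS_{\geq n+1}$ is a set of maps between finite (hence $\aleph_0$-small) presheaves, $a_\sfAA$ is precisely the Gabriel--Ulmer localisation of $\PshO$ at $\sfAA$.

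So the content is the computation $\operatorname{essim} N = \{X\in\PshO \mid \sfAA\perp X\}$. An opetopic set $X$ lies in $\operatorname{essim} N$ exactly when $X$ lies in $\operatorname{essim}\iota_*$ and the truncation $\iota^*X$ lies in $\operatorname{essim} N_v$, the latter being the full subcategory of $\sfSS_{n+1,n+2}$-local presheaves on $\bbOO_{n-k,n+2}$ (this being how the localisation $\sfSS_{n+1,n+2}^{-1}\Psh{\bbOO_{n-k,n+2}}$ is realised, \cref{sec:preliminaries-category-theory}). Applying \cref{prop:properties-of-iotas} to the factorisation $\bbOO_{n-k,n+2}\longhookrightarrow\bbOO_{\geq n-k}\longhookrightarrow\bbOO$ of $\iota$, membership in $\operatorname{essim}\iota_*$ is equivalent to $\sfOO_{<n-k}\perp X$ together with $\sfBB_{>n+2}\perp X$. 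For such an $X$ the unit $X\to\iota_*\iota^*X$ is invertible, so that $\Psh\bbOO(O[\omega],X)\cong\Psh{\bbOO_{n-k,n+2}}(\iota^*O[\omega],\iota^*X)$, and similarly for the spines $S[\omega]$ with $\omega\in\bbOO_{n+1}\cup\bbOO_{n+2}$; hence ``$\iota^*X$ is $\sfSS_{n+1,n+2}$-local'' is equivalent to ``$\sfSS_{n+1,n+2}\perp X$''. Altogether $\operatorname{essim} N = \{X \mid \sfOO_{<n-k}\perp X,\ \sfBB_{>n+2}\perp X,\ \sfSS_{n+1,n+2}\perp X\}$.

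It remains to match this with $\{X \mid \sfOO_{<n-k}\perp X,\ \sfSS_{\geq n+1}\perp X\}$. The clause $\sfOO_{<n-k}\perp X$ is common to both, so I would fix such an $X$ and prove that $(\sfBB_{>n+2}\perp X \text{ and } \sfSS_{n+1,n+2}\perp X)$ is equivalent to $\sfSS_{\geq n+1}\perp X$. If $\sfSS_{\geq n+1}\perp X$, then $\sfSS_{n+1,n+2}\perp X$ trivially, and by \cref{lemma:opetopes-technical:h-lift}(1) every map of $\sfBB_{>n+2}=\sfBB_{\geq(n+2)+1}$ is an $\sfSS_{\geq n+2}$-local isomorphism, hence orthogonal to the $\sfSS_{\geq n+2}$-local object $X$. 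Conversely, $\sfBB_{>n+2}=\sfBB_{\geq n+3}$, so the hypotheses are exactly those of \cref{lemma:opetopes-technical:h-lift}(2) with parameter $n+1$ in place of $n$, which yields $\sfSS_{\geq n+1}\perp X$. This gives $\operatorname{essim} N = \{X\mid\sfAA\perp X\}$ and, with the first paragraph, the theorem.

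I expect the genuine friction to lie in the second paragraph: extracting the essential image of the two-sided truncation $\iota_*$ by assembling the two halves of \cref{prop:properties-of-iotas}, and verifying that restriction along $\iota$ does not disturb orthogonality to the spine inclusions of dimensions $n+1$ and $n+2$ for presheaves already in $\operatorname{essim}\iota_*$. By contrast, the three-way comparison of classes of local objects in the last paragraph is immediate — \cref{lemma:opetopes-technical:h-lift} was proved precisely to perform it.
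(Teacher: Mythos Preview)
Your proof is correct and follows the same approach as the paper's. The paper compresses your second paragraph into a single appeal to ``a general fact about composite localisations'' to identify the essential image of $N$ with the objects local with respect to $\sfOO_{<n-k}\cup\sfSS_{n+1,n+2}\cup\sfBB_{>n+2}$, and then, exactly as you do, invokes \cref{lemma:opetopes-technical:h-lift} to trade this class for $\sfAA = \sfOO_{<n-k}\cup\sfSS_{\geq n+1}$. Your explicit unpacking of the composite via \cref{prop:properties-of-iotas} and the adjunction argument for transferring $\sfSS_{n+1,n+2}$-locality across $\iota$ is precisely what that ``general fact'' amounts to here.
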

\begin{proof}
    By a general fact about composite localisations, the reflective adjunction
    $h : \PshO \longrightarrow \Alg : N$ exhibits $\Alg$ as the localisation of $\PshO$ at
    the set $\sfOO_{<n-k}\cup \sfSS_{n+1, n+2} \cup \sfBB_{>n+2}$. The result
    then follows from \cref{lemma:opetopes-technical:h-lift}.
\end{proof}

\begin{corollary}
    \label{coroll:h-surjective}
    We have $\bbLambda = h \bbOO_{n-k,n+2}$, i.e. $h = \dotH : \bbOO_{n-k,n+2}
    \longrightarrow \bbLambda$ is surjective on objects (by definition) and on
    morphisms.
\end{corollary}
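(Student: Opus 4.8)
Surjectivity on objects is immediate from \cref{def:doth}: every $\optPolyFun$-cardinal is either $O[\psi]=\dotH\psi$ for some $\psi\in\bbOO_{n-k,n}$ or $S[\nu]=\dotH\nu$ for some $\nu\in\bbOO_{n+1}$. For morphisms, the plan is to take an arbitrary $g\colon\dotH\omega\to\dotH\omega'$ in $\bbLambda$ and exhibit it as $\dotH$ of a morphism of $\bbOO_{n-k,n+2}$ (equivalently, since $\dotH$ is a functor and is surjective on objects, as $\dotH$ of a composite of face embeddings). The first step is to normalise the endpoints. By \cref{coroll:y-i:spine-local} the spine inclusions identify $S[\ytree\psi]\cong O[\psi]$ for $\psi\in\bbOO_n$ and $S[\itree\phi]\cong O[\phi]$ for $\phi\in\bbOO_{n-1}$, so on objects of $\bbLambda$ one has $\dotH\psi=\dotH\ytree\psi$, $\dotH\phi=\dotH\itree\phi$, and $\dotH\xi=\dotH\tgt\xi$ for $\xi\in\bbOO_{n+2}$; moreover \cref{lemma:prelim-remarks} records the corresponding natural isomorphisms of the functors $\bbLambda(\dotH\omega,\dotH-)$. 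Using these I may assume that $\omega$ and $\omega'$ each lie in $\bbOO_{n-k,n-1}$ or are non-degenerate $(n+1)$-opetopes.

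The central case is $\omega,\omega'\in\bbOO_{n+1}$ non-degenerate, and here \cref{lemma:diagramatic-lemma} does all the work: it produces $\xi\in\bbOO_{n+2}$ and $[p]\in\xi^\nodesymbol$ with $\src_{[p]}\xi=\omega$, $\tgt\xi=\omega'$, and $g=\dotH\bigl(\src_{[p]}\colon\omega\to\xi\bigr)$. As $\src_{[p]}\colon\omega\to\xi$ is a morphism of $\bbOO_{n-k,n+2}$ and $\dotH\xi=\dotH\tgt\xi=\dotH\omega'$, this exhibits $g$ in the image of $\dotH$ on the nose; in the sub-case $\omega=\ytree\psi$ one may instead appeal directly to \cref{lemma:elementary-embeddings-diagramatic}.

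It then remains to treat the cases where one endpoint has dimension $<n$. When $\omega'\in\bbOO_{n-k,n-2}$, \cref{lemma:prelim-remarks-bis} identifies $\bbLambda(\dotH\omega,\dotH\omega')$ with $\bbOO_{n-k,n+2}(\omega,\omega')$, under which $g$ is literally $\dotH$ of a composite of face embeddings $\omega\to\omega'$ (and this hom-set is empty whenever $\dim\omega>\dim\omega'$, since $\bbOO$ is direct). When $\omega$ or $\omega'$ lies in $\bbOO_{n-1}$ I would compute $\optPolyFun O[\phi]$ explicitly for $\phi\in\bbOO_{n-1}$ — it agrees with $O[\phi]$ below dimension $n$, assigns $\bbOO(\phi',\phi)$ to a corolla $\ytree{\phi'}$ in dimension $n$, and is empty on non-corolla $n$-opetopes — so that a morphism out of, or into, such an object picks out a compatible family of face-embedding composites, which are again tracked by $\dotH$; the remaining morphisms, which occur only when $\omega$ is a ``tree of corollas'', are handled using the degenerate opetopes $\itree{\phi'}$ and the identification $\dotH\itree{\phi'}=\dotH\phi'$ exactly as in the normalisation of the first paragraph. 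I expect the only genuine difficulty to be packaged inside \cref{lemma:diagramatic-lemma}, which is already available; everything else is a delicate but routine unwinding of the definition of $\dotH$ and of the $\optPolyFun$-action on representables and spines, the sole subtlety being to keep track of the several opetopes that $\dotH$ sends to a common object of $\bbLambda$ and always to choose the representative for which the preimage morphism is visible.
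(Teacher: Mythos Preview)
Your proposal is correct and follows essentially the same route as the paper's proof: both reduce to the diagrammatic lemma for the main case ($\omega,\omega'\in\bbOO_{n+1}$ with $\omega$ non-degenerate) after normalising the endpoints via \cref{coroll:y-i:spine-local} and \cref{lemma:comparison:tgt-spine-local}, and both dispatch the low-dimensional cases by observing that $(\optPolyFun^n X)_{<n}=X_{<n}$. The paper's case split is organised by the threshold $\dim\omega,\dim\omega'\lessgtr n-1$ and is slightly terser in the $(n-1)$-dimensional case (it simply records $\bbLambda(h\phi,h\omega')\cong h\bbOO(\phi,\omega')$), whereas you spell out the computation of $\optPolyFun O[\phi]$; but the content is the same.
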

\begin{proof}
    Let $\omega, \omega' \in \bbOO_{n-k, n+2}$.
    \begin{enumerate}
        \item If $\dim \omega, \dim \omega' < n-1$, then $h \omega = \omega$
        and $h \omega' = \omega'$ as presheaves over $\bbOO_{n-k, n+2}$,
        and thus
        \[
            \bbLambda (h \omega, h \omega')
            = h \Psh{\bbOO_{n-k, n+2}} (\omega, \omega')
            = h \bbOO (\omega, \omega') .
        \]

        \item If $\dim \omega < n-1$ and $\dim \omega' \geq n-1$, then since
        $h \omega = \omega$ we have
        \[
            \bbLambda (h \omega, h \omega')
            \cong \Psh{\bbOO_{n-k, n+2}} (\omega, h \omega')
            \cong h \Psh{\bbOO_{n-k, n+2}} (\omega, \omega')
            \cong h \bbOO (\omega, \omega') ,
        \]
        where the second isomorphism comes from the fact that for $X \in
        \Psh{\bbOO_{n-k, n}}$, $\optPolyFun^n X_{< n} = X_{<n}$.

        \item If $\dim \omega \geq n-1$ and $\dim \omega' < n-1$, then
        $\bbLambda (h \omega, h \omega') = \emptyset$.

        \item Lastly, assume $\dim \omega, \dim \omega' \geq n-1$. By
        \cref{coroll:y-i:spine-local}, if $\dim \omega = n-1$, then $h$ maps $\tgt
        \tgt : \omega \longrightarrow \itree{\omega}$ to an isomorphism, and if
        $\dim \omega = n$, then $h$ maps $\src_{[]} : \omega \longrightarrow
        \ytree{\omega}$ to an isomorphism. By
        \cref{lemma:comparison:tgt-spine-local}, if $\dim \omega = n+2$, then $h$
        maps $\tgt : \tgt \omega \longrightarrow \omega$ to an isomorphism as
        well. Hence, without loss of generality, we may assume that $\dim
        \omega = \dim \omega' = n+1$. If $\omega$ is non degenerate, then every
        morphism in $\bbLambda (h \omega, h \omega')$ is diagrammatic, thus in
        the image of $h$. Otherwise, if $\omega = \itree{\phi}$ for some $\phi
        \in \bbOO_{n-1}$, then
        \[
            \bbLambda (h \omega, h \omega')
            \cong \bbLambda (h \phi, h \omega')
            \cong h \bbOO (\phi, \omega') .
        \]
        where the first isomorphism comes from \cref{coroll:y-i:spine-local}.
        \qedhere
    \end{enumerate}
\end{proof}


\section{Algebraic trompe-l'\oe{}il}
\label{sec:trome-loeil}

As we saw in \cref{sec:algebraic-realization}, for each $k, n \geq 1$, we have
a notion of $k$-coloured opetopic $n$-algebra. For such an algebra $B \in \Oalg
k n$, operations are $n$-cells (so that their shapes are $n$-opetopes), and
colours are cells of dimension $n-k$ to $n-1$, thus the ``colour space'' is
stratified over $k$ dimensions. Notable examples include
\[
    \Cat \simeq \Oalg 1 1 ,
    \qqquad
    \Op_{\mathrm{col}} \simeq \Oalg 1 2 .
\]
But are all $\Oalg k n$ fundamentally different?

In this section, we answer this question negatively: in a sense that we make
precise, the most ``algebraically rich'' notion of opetopic algebra is given in
the case $(k, n) = (1, 3)$. Although opetopes can be arbitrarily complex, the
algebraic data can be expressed by much simpler $3$-opetopes, a.k.a. trees. We
call this phenomenon \emph{algebraic trompe-l'\oe{}il}\index{algebraic
trompe-l'\oe{}il}, a french expression that literally translates as
``fools-the-eye''. And indeed, the eye is fooled in two ways: by colour
(\cref{prop:algebraic-trompe-loeil:colour}) and shape
(\cref{prop:algebraic-trompe-loeil:shape}). In the former, we argue that the
colours of an algebra $B \in \Oalg k n$, expressing how operations may or may
not be composed, only need $1$ dimension, and thus that cells of dimension less
than $n-1$ do not bring new algebraic data, only geometrical one. For the
latter, recall from \cref{sec:opetopes:definition} that opetopes are trees of
opetopes. In particular, $3$-opetopes are just plain trees, and $\bbOO_3$
already contains all the possible underlying trees of all opetopes.
Consequently, operations of $B$, which are its $n$-cells, may be considered as
$3$-cells in a very similar $3$-algebra $B_\dagger$. Finally, we combine those
two results in \cref{th:algebraic-trompe-loeil}, which states that an algebra
$B \in \Oalg k n$ is exactly a presheaf $B \in \Psh{\bbOO_{n-k, n}}$ with a
$1$-coloured $3$-algebra structure on $B_{n-1, n,\dagger}$.

\subsection{Colour}

For $B \in \Oalg k n$, recall that the colours of $B$ are its cells of
dimension $n-k$ to $n-1$. They express which operations ($n$-cells) of $B$ may
or may not be composed. However, since that criterion only depends on
$(n-1)$-cells, constraints expressed by lower dimensional cells should be
redundant. In \cref{prop:algebraic-trompe-loeil:colour}, we show that this is
indeed the case, in that the algebra structure on $B$ is completely determined
by a $1$-coloured $n$-algebra structure on $B_{n-1, n}$.

\begin{lemma}
    \label{lemma:spine-detruncation}
    Let $k, n \geq 1$, and $\nu \in \bbOO_{n+1}$. Then
    \[
        S [\nu]_{n-k, n} \cong \iota_! (S [\nu]_{n-1, n}) ,
    \]
    where $\iota_!$ is the left adjoint to the truncation $\Psh{\bbOO_{n-k, n}}
    \longrightarrow \Psh{\bbOO_{n-1, n}}$.
\end{lemma}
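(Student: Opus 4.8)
The plan is to make the right-hand side concrete and then to show that the spine $S[\nu]$, regarded as a presheaf on $\bbOO_{n-k,n}$, is already assembled out of cells of dimensions $n-1$ and $n$ only. The inclusion $\iota\colon\bbOO_{n-1,n}\hookrightarrow\bbOO_{n-k,n}$ is of the same type as the inclusion $\iota^{\leq n}$ of \cref{def:truncation} --- it discards exactly the opetopes of dimension $<n-1$ --- so by the same formula its left adjoint is the ``canonical extension''
\[
    \iota_! X \;=\; \colim_{O[\psi]_{n-1,n}\to X} O[\psi],
\]
a colimit of representables $O[\psi]$ with $\psi\in\bbOO_{n-k,n}$ indexed by the category of elements of $X$. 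Two formal facts about $\iota_!$ will be used. First, since $\iota$ is fully faithful, for $\psi\in\bbOO_{n-1,n}$ the counit $\iota_!\iota^* O[\psi]\to O[\psi]$ is invertible. Second, both $\iota^*$ and $\iota_!$ preserve colimits (each is a left adjoint), hence so does $\iota_!\iota^*$; combined with naturality of the counit, this means that the counit $\iota_!\iota^* P\to P$ is an isomorphism whenever $P$ can be written as a colimit of representables $O[\psi]$ with $\dim\psi\in\{n-1,n\}$. Since $\iota^*\bigl(S[\nu]_{n-k,n}\bigr)=S[\nu]_{n-1,n}$, it therefore suffices to exhibit $S[\nu]_{n-k,n}$ as such a colimit.

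For this I would use the tree structure of $\nu$. As $\bbOO_{n+1}=\tree\optPolyFun^{n-1}$, \cref{prop:polynomial-functor:trees-are-graftings} leaves two cases. If $\nu=\itree{\phi}$ is degenerate, with $\phi\in\bbOO_{n-1}$, then by \cref{coroll:y-i:spine-local} the spine is the single representable $S[\nu]\cong O[\phi]$, already concentrated in dimension $n-1$. Otherwise $\nu$ is obtained by iterated grafting of corollas $\ytree{\src_{[p]}\nu}$; starting from the root corolla $\ytree{\src_{[]}\nu}$, whose spine is the $n$-dimensional representable $O[\src_{[]}\nu]$ (again by \cref{coroll:y-i:spine-local}), and adding the remaining corollas one at a time, each grafting step is --- by \cref{lemma:opetopes-technical:spine-pushout} applied with its ``$n$'' instantiated to our $n+1$ --- a pushout of the spine constructed so far along a representable $O[\edg]$ with $\edg\in\bbOO_{n-1}$ against a representable $O[\psi]$ with $\psi\in\bbOO_n$. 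Hence $S[\nu]$ is a finite iterated pushout of representables of dimensions $n-1$ and $n$; since the truncation $(-)_{n-k,n}$ preserves colimits, the same presentation computes $S[\nu]_{n-k,n}$. Applying the second formal fact above yields $\iota_!\bigl(S[\nu]_{n-1,n}\bigr)\cong\iota_!\iota^*\bigl(S[\nu]_{n-k,n}\bigr)\cong S[\nu]_{n-k,n}$, which is the lemma.

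The only real friction I anticipate is bookkeeping rather than any conceptual obstacle: checking that $\iota_!$ for this particular inclusion is genuinely the colimit-of-representables extension (so that the reformulation is legitimate), and getting the dimension shifts exactly right when invoking \cref{lemma:opetopes-technical:spine-pushout} and \cref{coroll:y-i:spine-local} --- in particular the two base cases, namely that the spine of a single corolla $\ytree{\psi}$ with $\psi\in\bbOO_n$ is $O[\psi]$, and that the spine of a degenerate $(n+1)$-opetope $\itree{\phi}$ with $\phi\in\bbOO_{n-1}$ is $O[\phi]$, so that the degenerate case is not inadvertently omitted.
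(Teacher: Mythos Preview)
Your argument is correct and is essentially a fully worked-out version of the paper's proof, which dispatches the lemma in a single sentence (``It follows from the fact that $S[\nu]$ is completely determined by the incidence relation of the $n$- and $(n-1)$-faces of $\nu$''). Your counit argument and the inductive decomposition via \cref{lemma:opetopes-technical:spine-pushout} make precise exactly what that sentence is asserting.
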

\begin{proof}
    It follows from the fact that $S [\nu]$ is completely determined by the
    incidence relation of the $n$- and $(n-1)$-faces of $\nu$.
\end{proof}

\begin{proposition}
    \label{prop:truncation-algebra}
    For $X \in \Psh{\bbOO_{n-k, n}}$ we have $\optPolyFun^n (X_{n-1, n}) \cong
    (\optPolyFun^n X)_{n-1, n}$. Consequently, the truncation functor
    $(-)_{n-1, n} : \Psh{\bbOO_{n-k, n}} \longrightarrow \Psh{\bbOO_{n-1, n}}$
    lifts as
    \begin{equation}
        \diagramsize{2}{5}
        \label{eq:algebraic-trompe-loeil:colour}
        \squarediagram
            {\Oalg k n}{\Oalg 1 n}{\Psh{\bbOO_{n-k, n}}}
                {\Psh{\bbOO_{n-1, n}} .}
            {(-)_{n-1, n}}{}{}{(-)_{n-1, n}}
        \diagramsize{2}{3}
    \end{equation}
\end{proposition}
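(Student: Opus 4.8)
The plan is to prove the isomorphism $\optPolyFun^n(X_{n-1,n}) \cong (\optPolyFun^n X)_{n-1,n}$ directly from the cardinal formula \eqref{eq:pra:cardinal-formula}, with \cref{lemma:spine-detruncation} as the only nontrivial input, and then to deduce the lift to Eilenberg--Moore categories by checking that this natural isomorphism is a morphism of monads. Throughout, write $\iota : \bbOO_{n-1,n} \hookrightarrow \bbOO_{n-k,n}$, so that $(-)_{n-1,n} = \iota^*$ and $\iota_! \dashv \iota^*$.

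For the isomorphism, I would compare both sides dimension by dimension using \cref{def:colored-zn}. For $\psi \in \bbOO_{n-1}$ both $(\optPolyFun^n X)_\psi$ and $\optPolyFun^n(X_{n-1,n})_\psi$ are canonically $X_\psi$, so there is nothing to do. For $\omega \in \bbOO_n$, the indexing set $\{\nu \in \bbOO_{n+1} \mid \tgt \nu = \omega\}$ appearing in the cardinal formula is the same for the parameters $(k,n)$ and $(1,n)$ (indeed $\optPolyFun^n 1$ truncates, structure maps included, to the corresponding object for $(1,n)$, since in dimensions $\geq n-1$ its description does not involve $k$), so it is enough to produce, for each such $\nu$, a bijection
\[
    \Psh{\bbOO_{n-k,n}}(S[\nu], X) \cong \Psh{\bbOO_{n-1,n}}(S[\nu]_{n-1,n}, X_{n-1,n})
\]
natural in $X$. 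This is immediate: by \cref{lemma:spine-detruncation}, $S[\nu] \cong \iota_!(S[\nu]_{n-1,n})$ in $\Psh{\bbOO_{n-k,n}}$, so the left-hand side is $\Psh{\bbOO_{n-k,n}}(\iota_! S[\nu]_{n-1,n}, X) \cong \Psh{\bbOO_{n-1,n}}(S[\nu]_{n-1,n}, \iota^* X)$ by the adjunction $\iota_! \dashv \iota^*$. Summing over $\nu$ and over dimensions gives the isomorphism; naturality in $X$ is clear, and compatibility with the presheaf structure maps (the source and target embeddings $\psi \to \omega$) is a routine unwinding of how the cardinal formula encodes the face action, using that $\iota_!$ sends the face maps $O[\psi]_{n-1,n} \to S[\nu]_{n-1,n}$ to the corresponding maps $O[\psi] \to S[\nu]$.

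For the ``consequently'' part, I would show that the natural isomorphism just constructed is an isomorphism of monads $\iota^* \optPolyFun^n \cong \optPolyFun^n \iota^*$, so that $\iota^* = (-)_{n-1,n}$ lifts formally to $\Oalg k n \longrightarrow \Oalg 1 n$ sending $(A, a)$ to $(A_{n-1,n}, a_{n-1,n})$ transported along the isomorphism. Concretely, by \cref{lemma:colored-zn:pra-monad-structure} the unit $\eta$ and multiplication $\mu$ of $\optPolyFun^n$ are obtained, pointwise in $X$, as pullbacks of $\eta_1 : 1 \to \optPolyFun^n 1$ and $\mu_1 : \optPolyFun^n \optPolyFun^n 1 \to \optPolyFun^n 1$; since $\iota^*$ is a right adjoint it preserves these pullbacks, and $(\eta_1)_{n-1,n}$, $(\mu_1)_{n-1,n}$ are visibly the unit and multiplication on $1$ of the $(1,n)$-version of $\optPolyFun^n$ (their defining formulae in dimensions $\geq n-1$ do not mention $k$). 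Hence the isomorphism is compatible with units and multiplications, and the lift follows. I expect the main — though still mild — obstacle to be the bookkeeping in the last two steps: one must use the identification $S[\nu] \cong \iota_!(S[\nu]_{n-1,n})$ of \cref{lemma:spine-detruncation} coherently across all face embeddings and in $\eta_1$, $\mu_1$, rather than making an ad hoc choice for each $\nu$, in order to be sure the objectwise bijections assemble into an isomorphism of presheaves and then of monads.
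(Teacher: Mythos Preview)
Your proof is correct and takes essentially the same approach as the paper: a dimension-by-dimension comparison using the cardinal formula, with \cref{lemma:spine-detruncation} and the adjunction $\iota_! \dashv \iota^*$ supplying the bijection at each $\nu \in \bbOO_{n+1}$. The paper's proof stops at the chain of isomorphisms and treats the lift to Eilenberg--Moore categories as immediate; your additional paragraph checking that the isomorphism is a morphism of monads (via truncation of $\eta_1$, $\mu_1$ and preservation of the defining pullbacks) is more thorough than what the paper writes, but is exactly the routine verification one would expect.
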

\begin{proof}
    First, $\optPolyFun^n (X_{n-1, n})_{n-1} = X_{n-1} = (\optPolyFun^n
    X)_{n-1}$. Then, for $\psi \in \bbOO_n$, we have
    \begin{align*}
        \optPolyFun^n (X_{n-1, n})_\psi
        &= \sum_{\substack{\nu \in \bbOO_{n+1} \\ \tgt \nu = \psi}}
            \Psh{\bbOO_{n-1, n}} (S [\nu], X_{n-1, n}) \\
        &\cong \sum_{\substack{\nu \in \bbOO_{n+1} \\ \tgt \nu = \psi}}
            \Psh{\bbOO_{n-1, n}} (\iota_! (S [\nu]), X) \\
        &\cong \sum_{\substack{\nu \in \bbOO_{n+1} \\ \tgt \nu = \psi}}
            \Psh{\bbOO_{n-1, n}} (S [\nu], X)
            & \text{by \cref{lemma:spine-detruncation}} \\
        &= (\optPolyFun^n X)_\psi .
    \end{align*}
\end{proof}

\begin{proposition}
    \label{prop:algebraic-trompe-loeil:colour}
    The square \eqref{eq:algebraic-trompe-loeil:colour} is a pullback. That is, a
    $\optPolyFun^n$-algebra structure on $X \in \Psh{\bbOO_{n-k, n}}$ is
    completely determined by a $\optPolyFun^n$-algebra structure on $X_{n-1,
    n}$.
\end{proposition}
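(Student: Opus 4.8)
The plan is to show that the canonical functor from $\Oalg k n$ to the pullback category $\Oalg 1 n \times_{\Psh{\bbOO_{n-1, n}}} \Psh{\bbOO_{n-k, n}}$ is an equivalence, by exhibiting an explicit pseudo-inverse. The conceptual core is the observation that a $\optPolyFun^n$-algebra structure $a \colon \optPolyFun^n X \longrightarrow X$ on $X \in \Psh{\bbOO_{n-k, n}}$ carries no free information below dimension $n$: for $\psi \in \bbOO_{n-k, n-1}$ one has $(\optPolyFun^n X)_\psi \cong X_\psi$ by \cref{def:colored-zn}, and by \cref{lemma:colored-zn:pra-monad-structure} the component $\eta_{X, \psi}$ is obtained as a pullback of $\eta_{1, \psi} = \id$, hence is an isomorphism, so the unit law forces $a_\psi = \eta_{X, \psi}^{-1}$. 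Consequently the only datum of $a$ that is not rigidly determined is its value on $n$-opetopes $\psi \in \bbOO_n$, a map $(\optPolyFun^n X)_\psi \longrightarrow X_\psi$.

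Next I would feed in \cref{prop:truncation-algebra}. Read off in dimension $n$, its isomorphism $(\optPolyFun^n X)_{n-1, n} \cong \optPolyFun^n(X_{n-1, n})$ says $(\optPolyFun^n X)_\psi \cong \optPolyFun^n(X_{n-1, n})_\psi$ while $X_\psi = (X_{n-1, n})_\psi$; so the free part of an algebra structure on $X$ is \emph{literally} the free part of an algebra structure on the truncation $X_{n-1, n}$. This yields the pseudo-inverse: to an object $(B, X, \beta)$ of the pullback category, with $B \in \Oalg 1 n$, $X \in \Psh{\bbOO_{n-k, n}}$ and $\beta \colon U B \xrightarrow{\sim} X_{n-1, n}$ an isomorphism ($U \colon \Oalg 1 n \to \Psh{\bbOO_{n-1, n}}$ the forgetful functor), I would assign the presheaf $X$ equipped with the structure map whose dimension-$n$ component is that of $B$, transported along $\beta$ and the isomorphism of \cref{prop:truncation-algebra}, and whose lower-dimensional components are the forced ones; on morphisms the assignment is the evident one. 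By construction, composing with the comparison functor returns $(B, X, \beta)$, while starting from $(X, a) \in \Oalg k n$, truncating, and re-extending recovers $a$, since $a$ is determined by its $n$-dimensional part.

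The step requiring care — which I expect to be the only real obstacle — is verifying that this assignment actually lands in $\Oalg k n$, i.e. that the reconstructed $a$ satisfies the monad axioms. In dimensions below $n$ every map appearing in the relevant unit and associativity squares for $(X, a)$ is one of the forced isomorphisms coming from $\eta$ (and $\mu_X$ is likewise forced there, being a pullback of $\mu_1 = \id$ by \cref{lemma:colored-zn:pra-monad-structure}), so those squares commute automatically; in dimensions $n-1$ and $n$ the squares for $(X, a)$ are carried, by the \emph{monad}-compatible isomorphism of \cref{prop:truncation-algebra} — this compatibility is exactly what the lift of $(-)_{n-1, n}$ to algebra categories recorded in that proposition provides — onto the corresponding squares for $(X_{n-1, n}, b)$, which commute because $B$ is an algebra. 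Tracking that these coherence isomorphisms intertwine the two families of structure maps is the slightly delicate point, but it is essentially already contained in \cref{prop:truncation-algebra}. Assembling these facts shows the comparison functor is an equivalence, hence that \eqref{eq:algebraic-trompe-loeil:colour} is a pullback.
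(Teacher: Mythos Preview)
Your proposal is correct and follows essentially the same approach as the paper: both observe that since $(\optPolyFun^n X)_{<n} = X_{<n}$ the algebra structure map is forced to be the identity below dimension $n$, so that algebra structures on $X$ and on $X_{n-1,n}$ are in bijection via \cref{prop:truncation-algebra}. The paper's proof is considerably terser---it simply asserts that an algebra structure on $X_{n-1,n}$ extends to one on $X$ and invokes faithfulness of truncation, without spelling out the monad-axiom verification you carefully provide.
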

\begin{proof}
    Let $X \in \Psh{\bbOO_{n-k, n}}$. By \cref{prop:truncation-algebra}, a
    $\optPolyFun^n$-algebra structure on $X$ restricts to one on $X_{n-1, n}$.
    Conversely, since $(\optPolyFun^n X)_{<n} = X_{<n}$, a
    $\optPolyFun^n$-algebra structure on $X_{n-1, n}$ extends to one on $X$.
    Since the truncation functor $(-)_{n-1, n} : \Oalg k n
    \longrightarrow \Oalg 1 n$ is faithful, it establishes a
    bijective correspondence between the algebra structures on $X$ and on
    $X_{n-1, n}$.
\end{proof}

\subsection{Shape}

We start by defining a functor $(-)_\dagger : \bbOO_{n-1, n} \longrightarrow
\bbOO_{2, 3}$, for $n \geq 1$, mapping an $n$-opetope $\omega$ to the unique
$3$-opetope $\omega_\dagger$ having the same underlying polynomial tree, i.e.
$\underlyingtree{\omega_\dagger} \cong \underlyingtree{\omega}$ (see notations
of \cref{sec:trees}).

\begin{definition}
    \label{def:dagger}
    \begin{enumerate}
        \item If $n = 1$, then $(-)_\dagger$ simply maps $\bbOO_{0, 1} =
        \left( \optZero
        \xrightarrow{\src_*, \tgt} \optOne \right)$ to the diagram $\left( \optInt{0}
        \xrightarrow{\src_{[]}, \tgt} \ytree{\optInt{0}} \right)$.

        \item Assume now that $n \geq 2$. Recall that a $3$-opetope is a
        $\optPolyFun^1$-tree, where $\optPolyFun^1$ is given by
        \[
            \polynomialfunctor
                {\{ \optOne \}}{E_2}{\bbOO_2}{\{ \optOne \} ,}
                {\src}{p}{\tgt}
        \]
        with $\bbOO_2 = \left\{ \optInt{n} \mid n \in \bbNN \right\}$ and $E_2
        (\optInt{n}) = \optInt{n}^\nodesymbol$. Let $f : \optPolyFun^{n-2}
        \longrightarrow \optPolyFun^1$ be given by
        \[
            \begin{tikzcd}
                \bbOO_{n-2}
                    \ar[d, "f_0" left] &
                E_{n-2}
                    \ar[l, "\src" above]
                    \ar[d, "f_2"]
                    \ar[r, "p"]
                    \pullbackcorner &
                \bbOO_{n-1}
                    \ar[d, "f_1"]
                    \ar[r, "\tgt"] &
                \bbOO_{n-2}
                    \ar[d, "f_0"] \\
                \{ \optOne \} &
                E_2
                    \ar[l, "\src" above]
                    \ar[r, "p"] &
                \bbOO_2
                    \ar[r, "\tgt"] &
                \{ \optOne \} ,
            \end{tikzcd}
        \]
        where $f_1 (\psi) = \optInt{m}$, for $m = \# \psi^\nodesymbol$ the
        number of source faces of $\psi$, and where $f_2$ is fiberwise
        increasing. This morphism of polynomial functors induces a functor $f_*
        : \bbOO_n = \tree \optPolyFun^{n-2} \longrightarrow \tree \optPolyFun^1
        = \bbOO_3$ mapping an $n$-opetope to its underlying tree, seen as a
        $3$-opetope. Explicitely,
        \[
            f_* \itree{\phi} = \itree{\optOne},
            \qqquad
            f_* \left( \ytree{\psi} \biggraft_{[[p_i]]} \nu_i \right) =
                \optInt{m} \biggraft_{[[*^i]]} f_* (\nu_i),
        \]
        where $\phi \in \bbOO_{n-2}$, $\psi \in \bbOO_{n-1}$, $\psi^\nodesymbol
        = \left\{ [p_0] \prec \cdots \prec [p_{m-1}] \right\}$, and $\nu_0,
        \ldots, \nu_{m-1} \in \bbOO_n$. For $\omega \in \bbOO_n$, since
        $\omega$ and $\omega_\dagger$ have the same underlying tree, they have
        the same number of source faces: $\hash \omega^\nodesymbol = \hash
        \omega_\dagger^\nodesymbol$, and we write $a_\omega$ for the unique
        increasing map $\omega^\nodesymbol \longrightarrow
        \omega_\dagger^\nodesymbol$ with respect to the lexicographical order
        $\prec$\footnote{Intuitively, $a_\omega$ maps a node of the underlying
        tree $\underlyingtree{\omega}$ of $\omega$ to that same node in
        $\underlyingtree{\omega_\dagger}$, but using addresses. Since the
        source faces of $\omega$ and $\omega_\dagger$ are not the same,
        $a_\omega$ is not strictly speaking an identity, but rather a
        conversion of a ``walking instruction in the tree $\omega$'' (which is
        what an address is) to one in $\omega_\dagger$. Explicitely, a node
        address $[[q_1] \cdots [q_k]] \in \omega^\nodesymbol$ (with $[q_{i+1}]
        \in \src_{[[q_1] \cdots [q_i]]} \omega$) is mapped to $[f_{2, \src_{[]}
        \omega} [q_1] \:\: \cdots \:\: f_{2, \src_{[[q_1] \cdots [q_{k-1}]]}
        \omega} [q_k]]$.}.

        \item Define now $(-)_\dagger : \bbOO_{n-1, n} \longrightarrow
        \bbOO_{2, 3}$\index{$(-)_\dagger$} as follows: for $\psi \in
        \bbOO_{n-1}$ and $\omega \in \bbOO_n$
        \begin{enumerate}
            \item $\psi_\dagger = f_1 (\psi)$ as above;
            \item $\omega_\dagger = f_* (\omega)$ as above;
            \item we have $(\tgt \omega)_\dagger = \tgt \omega_\dagger$, so let
            $\left( \tgt \omega \xrightarrow{\tgt} \omega \right)_\dagger =
            \left((\tgt \omega)_\dagger \xrightarrow{\tgt} \omega_\dagger
            \right)$;
            \item for $[p] \in \omega^\nodesymbol$, we have $(\src_{[p]}
            \omega)_\dagger = \src_{a_\omega [p]} \omega_\dagger$, so let
            $\left(\src_{[p]} \omega \xrightarrow{\src_{[p]}} \omega
            \right)_\dagger =
            \left((\src_{[p]} \omega)_\dagger \xrightarrow{\src_{a_\omega [p]}}
            \omega_\dagger \right)$.
        \end{enumerate}
    \end{enumerate}
\end{definition}

\begin{example}
    Consider the $4$-opetope $\omega$, represented graphically and in tree form
    below:
    \[
        \tikzinput[.8]{opetope-4-graphical}{1}
        \qqquad
        \tikzinput[.8]{opetope-4-tree}{1,1.annotations}
    \]
    where $\psi_1$ and $\psi_2$ are the $3$-opetopes on the top right and top
    left hand corner respectively. Then $\omega_\dagger$ is as follows:
    \[
        \tikzinput[.8]{opetope-3-graphical}{big-dagger}
        \qqquad
        \tikzinput[.8]{opetope-4-tree}{1,1.dagger}
    \]
    Although the graphical representations of $\omega$ and $\omega_\dagger$
    look nothing alike, note that their underlying undecorated trees are
    identical.
\end{example}

\begin{notation}
    \label{not:dagger}
    We abuse notations and let $(-)_\dagger : \Psh{\bbOO_{n-1, n}}
    \longrightarrow \Psh{\bbOO_{2, 3}}$ be the left Kan extension of
    $\bbOO_{n-1, n} \xrightarrow{(-)_\dagger} \bbOO_{2, 3} \longrightarrow
    \Psh{\bbOO_{2, 3}}$ along the Yoneda embedding. Explicitely, for $X \in
    \Psh{\bbOO_{n-1, n}}$, we have
    \[
        X_{\dagger, \optInt{m}} =
            \sum_{\substack{
                \psi \in \bbOO_{n-1} \\
                \psi_\dagger = \optInt{m}
            }} X_\psi ,
        \qqquad
        X_{\dagger, \gamma} =
            \sum_{\substack{
                \omega \in \bbOO_{n-1} \\
                \omega_\dagger = \gamma
            }} X_\omega ,
    \]
    with $m \in \bbNN$ and $\gamma \in \bbOO_3$.
\end{notation}

\begin{remark}
    Clearly, $(-)_\dagger$ is faithful, and if $n \leq 3$, then $(-)_\dagger$
    is also injective on object. Note that this is no longer the case if $n
    \geq 4$, as distinct $n$-opetopes may have the same underlying tree.
\end{remark}

\begin{notation}
    Let $\catCC$ be a small category, and $X \in \Psh{\catCC}$ be presheaf over
    $\catCC$. There is a canonical projection $\shape{(-)} : \catCC/X
    \longrightarrow \catCC$\index{$\shape{(-)}$|see {shape}}, mapping $x \in
    X_c$ to its \emph{shape}\index{shape} $c \in \catCC$. We may then see the
    category of elements $\catCC / X$ of $X$ as having objects elements of
    $\sum_{c \in \catCC} X_c$, and a morphism $f : x \longrightarrow y$ is a
    morphism $f : \shape{x} \longrightarrow \shape{y}$ in $\catCC$ such that
    $f(y) = x$. A morphism $g : X \longrightarrow Y$ of presheaves over
    $\catCC$ then amounts to a functor $g : \catCC/ X \longrightarrow \catCC/
    Y$ that preserves shapes.
\end{notation}

\begin{remark}
    Take $n \geq 1$. For $X \in \Psh{\bbOO_{n-1, n}}$, note that there is a
    canonical isomorphism $\Psh{\bbOO_{n-1, n}} / X \longrightarrow
    \Psh{\bbOO_{2, 3}} / X_\dagger$, which is the identity on objects, maps
    $\src_{[p]} : x \longrightarrow y$ to $\src_{a_\omega [p]} x
    \longrightarrow y$, where $\omega = \shape{y}$, and target embeddings to
    target embeddings.
\end{remark}

\begin{lemma}
    \label{lemma:technical-dagger}
    \begin{enumerate}
        \item For $\nu \in \bbOO_{n+1}$, there exists a unique $4$-opetope
        $\nu' \in \bbOO_4$ such that $S [\nu]_{n-1, n,\dagger} \cong S
        [\nu']$.
        \item Let $X \in \Psh{\bbOO_{n-1, n}}$, $\nu \in \bbOO_4$, and $f :
        S [\nu] \longrightarrow X_\dagger$. Then there exists a unique
        $\nu' \in \bbOO_{n+1}$ and $f' : S [\nu'] \longrightarrow X$ such
        that $S [\nu']_{n-1, n,\dagger} = S [\Lambda]$, and
        $f'_\dagger = f$.
    \end{enumerate}
\end{lemma}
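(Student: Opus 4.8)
The plan is to prove the two claims together, the first being the ``parameter-free'' shadow of the second. For (1), I would first produce $\nu'$ explicitly as the $4$-opetope obtained from the underlying tree $\underlyingtree{\nu}$ by relabelling each node, an $n$-opetope $\omega$, with $\omega_\dagger$, and each edge, an $(n-1)$-opetope $\psi$, with $\psi_\dagger$ (equivalently $\optInt{m}$ for $m = \# \psi^\nodesymbol$); that this is a well-defined $4$-opetope is the one-dimension-up instance of \cref{def:dagger}, the key compatibility being $(\src_{[q]} \omega)_\dagger = \src_{a_\omega [q]} \omega_\dagger$. The isomorphism $S[\nu]_{n-1, n, \dagger} \cong S[\nu']$ I would then obtain by comparing incidence data: by (the proof of) \cref{lemma:spine-detruncation} and its analogue for $4$-opetopes, both spines are completely determined by the incidence relation between their top two layers of faces, and since $(-)_\dagger$ acts on morphisms by $\src_{[p]} \longmapsto \src_{a_\omega [p]}$ and $\tgt \longmapsto \tgt$, the identities \condition{Inner}, \condition{Glob1}, \condition{Glob2} show that $(-)_\dagger$ carries the incidence data of $\nu$ bijectively to that of $\nu'$. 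Uniqueness of $\nu'$ is then immediate, since a $4$-opetope is reconstructed from its pasting diagram, so $\mu \longmapsto S[\mu]$ is injective (up to isomorphism) on $\bbOO_4$. The degenerate case $\nu = \itree{\phi}$ with $\phi \in \bbOO_{n-1}$ should be dispatched separately, using \cref{coroll:y-i:spine-local} to identify $S[\itree{\phi}] \cong O[\phi]$ and taking $\nu' = \itree{\phi_\dagger}$.

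For (2), I would unpack $f : S[\nu] \longrightarrow X_\dagger$ (now $\nu \in \bbOO_4$) through the presentation of $S[\nu]$ as the colimit of the representables $O[\src_{[p]} \nu]$, for $[p] \in \nu^\nodesymbol$, glued along the $2$-cells forced by the inner-edge and globularity identities. By the formula $X_{\dagger, \gamma} = \sum_{\omega_\dagger = \gamma} X_\omega$ of \cref{not:dagger}, the restriction of $f$ to the top cell at $[p]$ is an element of $X_{\dagger, \src_{[p]} \nu}$, hence amounts to an $n$-opetope $\omega_{[p]}$ with $(\omega_{[p]})_\dagger = \src_{[p]} \nu$ together with a cell $x_{[p]} \in X_{\omega_{[p]}}$; crucially, the coproduct structure makes the ``shape component'' $\omega_{[p]}$ well defined. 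The compatibility conditions that make $f$ a morphism out of $S[\nu]$ say exactly that the $(n-1)$-opetope faces of the $\omega_{[p]}$ agree along the tree of $\nu$ and that the corresponding faces of the $x_{[p]}$ agree in $X$, which are precisely the instances of \condition{Inner}/\condition{Glob1}/\condition{Glob2} needed to assemble the $\omega_{[p]}$ into a (unique) $(n+1)$-opetope $\nu'$ and the $x_{[p]}$ into a (unique) morphism $f' : S[\nu'] \longrightarrow X$. By construction $\nu'$ is sent to $\nu$ by the relabelling map of part (1), so $S[\nu']_{n-1, n, \dagger} = S[\nu]$, and $f'_\dagger = f$ follows by tracing the definition of $(-)_\dagger$; uniqueness is clear because $\nu'$ (via the shape components) and $f'$ (via the $x_{[p]}$) are read off directly from $f$. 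The degenerate case $\nu = \itree{\optInt{\ell}}$ is handled by hand: $f$ then names an $(n-1)$-opetope $\psi$ with $\# \psi^\nodesymbol = \ell$ and a cell of $X_\psi$, giving $\nu' = \itree{\psi}$.

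The step I expect to be the main obstacle is showing that the gluing conditions for a family $(\omega_{[p]}, x_{[p]})$ to define a morphism out of $S[\nu]$ coincide with the opetopic identities \condition{Inner}/\condition{Glob1}/\condition{Glob2} required to build $\nu'$ and $f'$. Conceptually this is a transport of structure along the non-full, non-faithful functor $(-)_\dagger$, but carrying it out requires matching, cell by cell and address by address, the way $(-)_\dagger$ renames source addresses --- through the maps $a_\omega$ of \cref{def:dagger} --- against the readdressing functions $\readdress$ that appear in the identities of \cref{th:opetopic-identities}. Everything else (the colimit presentation of spines, the coproduct description of $X_\dagger$, and the reconstruction of a $4$-opetope from its pasting diagram) is routine.
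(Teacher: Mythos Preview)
Your approach is correct and essentially matches the paper's: for (1) the paper builds $\nu'$ by induction on the grafting decomposition of $\nu$ (which is exactly your relabelling of $\underlyingtree{\nu}$) and obtains the spine isomorphism via \cref{lemma:opetopes-technical:spine-pushout}, while for (2) it reads off the $n$-opetopes and cells from the coproduct description of $X_\dagger$ and grafts them into $\nu'$ just as you describe. One small remark: only the \condition{Inner} relation is needed to assemble $\nu'$ and $f'$, since \condition{Glob1} and \condition{Glob2} involve $\tgt\nu$, which is absent from $S[\nu]$; your invocation of them is harmless but superfluous.
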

\begin{proof}
    \begin{enumerate}
        \item If $\nu = \itree{\phi}$ for $\phi \in \bbOO_{n-1}$, let $\nu' =
        \itree{\phi_\dagger}$. If $\nu = \ytree{\omega} \biggraft_{[[p_i]]}
        \nu_i$, let
        \[
            \nu' = \ytree{\omega_\dagger} \biggraft_{[a_\omega [p_i]]} \nu_i' ,
        \]
        where the $\nu_i'$ are given by induction. The graftings are well
        defined since
        \[
            \tgt \src_{[]} \nu_i'
            = \tgt (\src_{[]} \nu_i)_\dagger
            = (\tgt \src_{[]} \nu_i)_\dagger
            = (\src_{[p_i]} \omega)_\dagger
            = \src_{a_\omega [p_i]} \omega_\dagger .
        \]
        The isomorphism $S [\nu]_{n-1, n,\dagger} \cong
        S [\nu']$ can easily be shown by induction on the
        structure of $\nu$ and using
        \cref{lemma:opetopes-technical:spine-pushout}.

        \item For $\nu^\nodesymbol = \{ [p_1], \ldots, [p_m] \}$, $f$ maps
        $[p_i]$ to a cell $x_i \in X_{\dagger, 2} = X_{n-1}$, and let $\psi_i
        \in \bbOO_{n-1}$ be the shape of $x_i$ as a cell of $X$. If $[p_i] =
        [p_j[q]]$ for some $j$ and $[q]$, then $\src_{[q]} x_j = \tgt x_i$ in
        $X_\dagger$, so $\src_{a_{\psi_j}^{-1} [q]} x_j = \tgt x_i$ in $X$, and
        in particular, $\src_{a_{\psi_j}^{-1} [q]} \psi_j = \tgt \psi_i$.
        Consequently, the $\psi_i$s may be grafted together into a
        $(n+1)$-opetope $\nu'$ such that $\nu'_\dagger = \nu$, and
        $\src_{a_{\nu'}^{-1} [p_i]} = \psi_i$. Define $f' : S [\nu']
        \longrightarrow X$ mapping $\src_{a_{\nu'}^{-1} [p_i]} \nu'$ to $x_i$,
        and observe that $f'_\dagger = f$.
        \qedhere
    \end{enumerate}
\end{proof}

\begin{proposition}
    \label{prop:dagger-algebra}
    For $X \in \Psh{\bbOO_{n-1, n}}$ we have $\optPolyFun^3 (X_\dagger) \cong
    (\optPolyFun^n X)_\dagger$. Consequently, the functor $(-)_\dagger$ lifts as
    \begin{equation}
        \label{eq:algebraic-trompe-loeil:shape}
        \squarediagram
            {\Oalg 1 n}{\Oalg 1 3}{\Psh{\bbOO_{n-1, n}}}{\Psh{\bbOO_{2, 3}} .}
            {(-)_\dagger}{}{}{(-)_\dagger}
    \end{equation}
\end{proposition}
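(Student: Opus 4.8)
The plan is to imitate the proof of \cref{prop:truncation-algebra}: compare the two p.r.a. endofunctors cell by cell via the cardinality formula of \cref{def:colored-zn} — which describes both $\optPolyFun^n$ on $\Psh{\bbOO_{n-1, n}}$ (the case $k = 1$) and $\optPolyFun^3$ on $\Psh{\bbOO_{2, 3}}$ (the case $(k, n) = (1, 3)$) — with \cref{lemma:technical-dagger} supplying the combinatorial core, and then check that the resulting natural isomorphism is compatible with the monad structures, which yields the lift \eqref{eq:algebraic-trompe-loeil:shape}. On colours there is nothing to prove: for $\psi \in \bbOO_{n-1}$ one has $(\optPolyFun^n X)_\psi \cong X_\psi$, hence
\[
    \bigl( (\optPolyFun^n X)_\dagger \bigr)_{\optInt{m}}
    = \sum_{\psi_\dagger = \optInt{m}} (\optPolyFun^n X)_\psi
    \cong \sum_{\psi_\dagger = \optInt{m}} X_\psi
    = (X_\dagger)_{\optInt{m}}
    \cong \bigl( \optPolyFun^3 X_\dagger \bigr)_{\optInt{m}} ,
\]
the last step because $\optPolyFun^3$ is the identity below dimension $3$.

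The content is in dimension $3$. Fixing $\gamma \in \bbOO_3$, \cref{def:colored-zn} gives $(\optPolyFun^3 X_\dagger)_\gamma \cong \sum_{\mu \in \bbOO_4,\, \tgt \mu = \gamma} \Psh{\bbOO_{2, 3}}(S[\mu], X_\dagger)$ while $\bigl( (\optPolyFun^n X)_\dagger \bigr)_\gamma = \sum_{\omega_\dagger = \gamma}\, \sum_{\nu \in \bbOO_{n+1},\, \tgt \nu = \omega} \Psh{\bbOO_{n-1, n}}(S[\nu], X)$. By the second part of \cref{lemma:technical-dagger}, for each $\mu \in \bbOO_4$ the morphisms $S[\mu] \longrightarrow X_\dagger$ are in natural bijection with the pairs $(\nu, f)$ where $\nu \in \bbOO_{n+1}$, $S[\nu]_{n-1, n, \dagger} \cong S[\mu]$, and $f : S[\nu] \longrightarrow X$; and by the first part of \cref{lemma:technical-dagger} such a $\nu$ determines $\mu$, so — writing $\nu_\dagger \in \bbOO_4$ for the unique $4$-opetope with $S[\nu]_{n-1, n, \dagger} \cong S[\nu_\dagger]$, abusing the $\dagger$ notation one dimension up — reindexing the sum over $\mu$ as a sum over $\nu$ produces a bijection, natural in $X$ and in $\gamma$,
\[
    \sum_{\mu \in \bbOO_4,\, \tgt \mu = \gamma} \Psh{\bbOO_{2, 3}}(S[\mu], X_\dagger)
    \;\cong\;
    \sum_{\nu \in \bbOO_{n+1},\, \tgt(\nu_\dagger) = \gamma} \Psh{\bbOO_{n-1, n}}(S[\nu], X) .
\]
It then remains only to identify the index set $\{\, \nu : \tgt(\nu_\dagger) = \gamma \,\}$ with $\{\, \nu : (\tgt \nu)_\dagger = \gamma \,\}$, for then the right-hand side splits as $\sum_{\omega_\dagger = \gamma} \sum_{\tgt \nu = \omega}$, which is exactly $\bigl( (\optPolyFun^n X)_\dagger \bigr)_\gamma$, and the two presheaves coincide.

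So everything comes down to the identity $\tgt(\nu_\dagger) = (\tgt \nu)_\dagger$ for all $\nu \in \bbOO_{n+1}$, and I expect this to be the main obstacle: it is the one-dimension-up analogue of the compatibility $(\tgt \omega)_\dagger = \tgt \omega_\dagger$ recorded in \cref{def:dagger}, and it calls for the same sort of address bookkeeping as the proof of \cref{lemma:technical-dagger}. I would prove it by induction on the tree $\nu$, following the recursion defining $\nu_\dagger$: for $\nu = \itree{\phi}$ both sides equal $\ytree{\phi_\dagger}$; for a non-degenerate $\nu = \ytree{\omega} \biggraft_{[[p_i]]} \nu_i$ with $\omega = \src_{[]} \nu$, one has $\nu_\dagger = \ytree{\omega_\dagger} \biggraft_{[a_\omega[p_i]]} (\nu_i)_\dagger$, and iterating the contraction-associativity formula \cref{lemma:caf} expresses $\tgt(\nu_\dagger)$ as the $\tgt\bigl((\nu_i)_\dagger\bigr)$ substituted into $\omega_\dagger$, while applying $(-)_\dagger$ to the analogous description of $\tgt \nu$ as the $\tgt \nu_i$ substituted into $\omega$ expresses $(\tgt \nu)_\dagger$ as the $(\tgt \nu_i)_\dagger$ substituted into $\omega_\dagger$; the two then agree by the inductive hypothesis, once one knows that $(-)_\dagger$ is compatible with substitution, i.e. $(\alpha \subst_{[p]} \beta)_\dagger = \alpha_\dagger \subst_{a_\alpha[p]} \beta_\dagger$, which again holds because daggering preserves underlying trees and $a_\alpha$ re-addresses coherently. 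Finally the natural isomorphism $\optPolyFun^3 \circ (-)_\dagger \cong (-)_\dagger \circ \optPolyFun^n$ so obtained must be checked to respect the units and multiplications of the two p.r.a. monads — a routine verification from the explicit formulas for $\optPolyFun^n$, $\eta_1$ and $\mu_1$ — after which a $\optPolyFun^n$-algebra structure $\optPolyFun^n X \longrightarrow X$ transports along it, functorially in algebra morphisms, to a $\optPolyFun^3$-algebra structure $\optPolyFun^3 X_\dagger \longrightarrow X_\dagger$, which is the lift \eqref{eq:algebraic-trompe-loeil:shape}.
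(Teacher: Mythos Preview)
Your approach is essentially the paper's: handle colours trivially and invoke \cref{lemma:technical-dagger} for dimension~$3$. The paper is terser---it computes the total set of $3$-cells at once rather than fibre by fibre over $\gamma \in \bbOO_3$---so the identity $\tgt(\nu_\dagger) = (\tgt\nu)_\dagger$ that you single out and justify is left implicit there, and the monad-compatibility check you include is deferred in the paper to \cref{lemma:dagger-algebra-structure}.
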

\begin{proof}
    First, $\optPolyFun^3 (X_\dagger)_2 = X_{\dagger, 2} = X_{n-1} =
    (\optPolyFun^n X)_{n-1} = (\optPolyFun^n X)_{\dagger, 2}$. Then,
    \begin{align*}
        \optPolyFun^3 (X_\dagger)_3
        &= \sum_{\nu \in \bbOO_4} \Psh{\bbOO_{2, 3}} (S [\nu], X_\dagger) \\
        &\cong \sum_{\nu \in \bbOO_{n+1}} \Psh{\bbOO_{n-1, n}} (S [\nu], X) & \text{by \cref{lemma:technical-dagger}} \\
        &= (\optPolyFun^n X)_n = (\optPolyFun^n X)_{\dagger, 3} .
    \end{align*}
\end{proof}

\begin{lemma}
    \label{lemma:dagger-algebra-structure}
    Let $X \in \Psh{\bbOO_{n-1, n}}$ and $m : \optPolyFun^n X \longrightarrow X$.
    Then $m$ is an algebra structure on $X$ if and only if $m_\dagger :
    \optPolyFun^3 X_\dagger \longrightarrow X_\dagger$ is an algebra structure
    on $X_\dagger$.
\end{lemma}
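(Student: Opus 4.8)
The plan is to deduce the statement from \cref{prop:dagger-algebra}, which already supplies a natural isomorphism $\phi_X : \optPolyFun^3(X_\dagger) \xrightarrow{\cong} (\optPolyFun^n X)_\dagger$, together with the fact (noted in the remark following \cref{not:dagger}) that $(-)_\dagger : \Psh{\bbOO_{n-1,n}} \longrightarrow \Psh{\bbOO_{2,3}}$ is faithful. Under the abuse of notation of the statement, $m_\dagger$ denotes the composite $\optPolyFun^3(X_\dagger) \xrightarrow{\phi_X} (\optPolyFun^n X)_\dagger \xrightarrow{(m)_\dagger} X_\dagger$, i.e. the algebra structure transported along $\phi_X$. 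The crux is to show that the pair consisting of the functor $(-)_\dagger$ and the isomorphism $\phi$ is compatible with the monad structures, in the precise sense that $\phi_X \circ \eta_{X_\dagger} = (\eta_X)_\dagger$ as maps $X_\dagger \longrightarrow (\optPolyFun^n X)_\dagger$, and that $\phi_X \circ \mu_{X_\dagger} = (\mu_X)_\dagger \circ \phi_{\optPolyFun^n X} \circ \optPolyFun^3\phi_X$ (these are Street's coherence axioms for a morphism of monads along $(-)_\dagger$, with invertible structure map). Granting this, the biconditional is purely formal.

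First I would verify these two coherence axioms. For the unit: $\eta$ sends an $n$-cell to its corolla, and $(-)_\dagger$ sends $\ytree{\omega}$ to $\ytree{\omega_\dagger}$ — this is exactly the empty-grafting case of the inductive description of the isomorphism of \cref{lemma:technical-dagger}(1) — so the two maps $X_\dagger \rightrightarrows (\optPolyFun^n X)_\dagger$ agree. For the multiplication: by the explicit formula for $\mu_1$ recalled just before \cref{lemma:colored-zn:pra-monad-structure}, $\mu$ is given on the nondegenerate part by the ``contraction'' $\barnu \longmapsto \tgt \barnu$ (and by the identity on the degenerate part), so the axiom reduces to the facts that $(-)_\dagger$ commutes with $\tgt$ and with grafting/substitution of opetopes (\cref{def:dagger}), which is precisely what the contraction-associativity formula \cref{lemma:caf} and the inductive clauses of \cref{lemma:technical-dagger} encode, combined with naturality of $\phi$ on morphisms of $\bbOO_{n-1,n}$. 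This compatibility check is the only laborious part of the proof, and I expect it to be the main obstacle; it is, however, bookkeeping of exactly the same flavour as the proofs of \cref{prop:dagger-algebra} and \cref{lemma:technical-dagger}, with no new idea required. (The case $n=1$ is immediate from \cref{def:dagger}(1).)

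Once the two axioms are in hand, the equivalence follows formally. For the ``only if'' direction, apply $(-)_\dagger$ to the unit and associativity squares expressing that $m$ is a $\optPolyFun^n$-algebra structure and rewrite the images, via the two axioms and naturality of $\phi$, as the corresponding squares for $m_\dagger$. For the ``if'' direction, assume $m_\dagger$ is a $\optPolyFun^3$-algebra structure. From the unit axiom, $(m\circ\eta_X)_\dagger = (m)_\dagger\circ(\eta_X)_\dagger = (m)_\dagger\circ\phi_X\circ\eta_{X_\dagger} = m_\dagger\circ\eta_{X_\dagger} = \id_{X_\dagger} = (\id_X)_\dagger$, and similarly, using the multiplication axiom and naturality of $\phi$ to rewrite $(m)_\dagger\circ(\optPolyFun^n m)_\dagger$ and $(m)_\dagger\circ(\mu_X)_\dagger$, one obtains $(m\circ\optPolyFun^n m)_\dagger = (m\circ\mu_X)_\dagger$. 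Since $(-)_\dagger$ is faithful, it reflects equality of parallel morphisms, hence $m\circ\eta_X = \id_X$ and $m\circ\optPolyFun^n m = m\circ\mu_X$, so $m$ is a $\optPolyFun^n$-algebra structure. (In a phrase: a faithful morphism of monads with invertible structure map reflects algebra structures.)
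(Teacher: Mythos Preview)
Your proposal is correct and follows essentially the same approach as the paper: establish that $(-)_\dagger$ (together with the isomorphism of \cref{prop:dagger-algebra}) carries $\eta^n$ to $\eta^3$ and $\mu^n$ to $\mu^3$, then invoke faithfulness of $(-)_\dagger$ to transfer commutativity of the algebra squares in both directions. The paper's proof is simply terser, asserting the monad-morphism compatibility with a ``Clearly'' where you carefully unpack Street's coherence axioms and point to \cref{lemma:technical-dagger} and \cref{lemma:caf} for the bookkeeping.
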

\begin{proof}
    Clearly, $(-)_\dagger$ maps the multiplication $\mu^n : \optPolyFun^n
    \optPolyFun^n \longrightarrow \optPolyFun^n$ to $\mu^3$, and the unit
    $\eta^n : \id \longrightarrow \optPolyFun^n$ to $\eta^3$. Since
    $(-)_\dagger$ is faithful, the square on the left commutes if and only if
    the square on the right commutes
    \[
        \diagramsize{2}{4}
        \squarediagram
            {\optPolyFun^n \optPolyFun^n X}{\optPolyFun^n X}{\optPolyFun^n X}
                {X ,}
            {\optPolyFun^n m}{\mu^n}{m}{m}
        \qqquad
        \squarediagram
            {\optPolyFun^3 \optPolyFun^3 X_\dagger}{\optPolyFun^3 X_\dagger}
                {\optPolyFun^3 X_\dagger}{X_\dagger ,}
            {\optPolyFun^3 m_\dagger}{\mu^3}{m_\dagger}{m_\dagger}
        \diagramsize{2}{3}
    \]
    and likewise for the diagram involving $\eta^n$ and $\eta^3$.
\end{proof}

\begin{proposition}
    \label{prop:algebraic-trompe-loeil:shape}
    The square \eqref{eq:algebraic-trompe-loeil:shape} is a pullback. That is, a
    $\optPolyFun^n$-algebra structure on $X \in \Psh{\bbOO_{n-1, n}}$ is
    completely determined by a $\optPolyFun^3$-algebra structure on $X_\dagger$.
\end{proposition}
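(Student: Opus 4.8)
The plan is to run the argument of \cref{prop:algebraic-trompe-loeil:colour}, this time with $(-)_\dagger$, \cref{prop:dagger-algebra} and \cref{lemma:dagger-algebra-structure} playing the roles of the truncation, \cref{prop:truncation-algebra} and the faithfulness of $(-)_{n-1,n}$. Fix $X \in \Psh{\bbOO_{n-1,n}}$. By \cref{prop:dagger-algebra} there is a natural isomorphism $\optPolyFun^3 X_\dagger \cong (\optPolyFun^n X)_\dagger$, and by \cref{lemma:dagger-algebra-structure} a morphism $m : \optPolyFun^n X \longrightarrow X$ is a $\optPolyFun^n$-algebra structure if and only if $m_\dagger : \optPolyFun^3 X_\dagger \longrightarrow X_\dagger$ is a $\optPolyFun^3$-algebra structure. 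Since $(-)_\dagger$ is faithful, $m \mapsto m_\dagger$ is injective on algebra structures, so the statement reduces to showing it surjective: every $\optPolyFun^3$-algebra structure $g$ on $X_\dagger$ is of the form $m_\dagger$ (and then $m$ is unique, and is a $\optPolyFun^n$-algebra structure by the preceding sentence).

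So let $g : \optPolyFun^3 X_\dagger \longrightarrow X_\dagger$ be a $\optPolyFun^3$-algebra structure. Since $(\optPolyFun^3 X_\dagger)_{<3} = (X_\dagger)_{<3}$ and $g$ satisfies the unit law, $g$ is the identity on $2$-cells, hence preserves the shape decomposition $(X_\dagger)_{\optInt{m}} = \coprod_{\psi_\dagger = \optInt{m}} X_\psi$. The key step is to show that $g$ likewise preserves the shape decomposition on $3$-cells: under the identifications $(\optPolyFun^3 X_\dagger)_\gamma \cong \coprod_{\omega_\dagger = \gamma} (\optPolyFun^n X)_\omega$ and $(X_\dagger)_\gamma = \coprod_{\omega_\dagger = \gamma} X_\omega$ furnished by \cref{prop:dagger-algebra} (more precisely by \cref{lemma:technical-dagger}), the component $g_\gamma$ carries the $\omega$-summand into the $\omega$-summand. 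I would prove this by comparing boundaries: if $x$ lies in the $\omega_1$-summand and $g_\gamma(x)$ in the $\omega_2$-summand, then because $g$ commutes with every source and target face embedding out of $\gamma$ and is the identity on $2$-cells, each face of $g_\gamma(x)$ coincides, as a cell of $X$, with the corresponding face of $x$; hence, identifying $\underlyingtree{\omega_1}$ and $\underlyingtree{\omega_2}$ with $\underlyingtree{\gamma}$, the two decoration maps $\underlyingtree{\gamma} \longrightarrow \optPolyFun^{n-2}$ defining $\omega_1$ and $\omega_2$ agree on nodes, and therefore on edges by \cref{th:opetopic-identities} (which determines the edge-decorations of an opetope from its node-decorations). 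Thus $\omega_1 = \omega_2$.

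With the claim in hand, $g_\gamma$ restricted to each $\omega$-summand is $(-)_\dagger$ applied to a unique function $(\optPolyFun^n X)_\omega \longrightarrow X_\omega$; assembling these over $\omega \in \bbOO_n$, together with the identity on $(n-1)$-cells, yields a map $m : \optPolyFun^n X \longrightarrow X$ of presheaves over $\bbOO_{n-1,n}$ — compatibility with the face-restriction maps follows from that of $g$ via the canonical identification $\bbOO_{n-1,n}/X \cong \bbOO_{2,3}/X_\dagger$ of categories of elements and the faithfulness of $(-)_\dagger$ — and by construction $m_\dagger = g$. By the first paragraph $m$ is a $\optPolyFun^n$-algebra structure, so $(-)_\dagger$ induces a bijection between $\optPolyFun^n$-algebra structures on $X$ and $\optPolyFun^3$-algebra structures on $X_\dagger$, which is precisely the assertion that \eqref{eq:algebraic-trompe-loeil:shape} is a pullback. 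I expect the shape-preservation claim for $g_\gamma$ to be the only genuinely non-formal point; the remaining steps mirror the colour case.
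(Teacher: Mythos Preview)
Your proposal is correct and follows the same architecture as the paper's proof: use \cref{prop:dagger-algebra} for the forward direction, construct an inverse by hand, invoke \cref{lemma:dagger-algebra-structure} to see that the inverse is an algebra structure, and conclude by faithfulness of $(-)_\dagger$. The paper's converse construction is exactly your $m$: it sends $f : S[\nu] \to X$ to $m(f_\dagger)$, then asserts that this is a morphism of opetopic sets and applies \cref{lemma:dagger-algebra-structure}.

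The one place where you are more careful than the paper is precisely the point you flag as ``the only genuinely non-formal point'': the paper simply writes that $m(f_\dagger)$ lands in the right place (with a visible typo --- it writes $X_{\dagger,2} = X_{n-1}$ where the codomain should be the $X_\omega$-summand of $X_{\dagger,3}$), justified only by $\tgt\nu' = (\tgt\nu)_\dagger$, which establishes landing in the correct $3$-opetope component $\omega_\dagger$ but not in the correct $n$-opetope summand $\omega$. Your boundary argument (the source and target faces of $g_\gamma(x)$ coincide with those of $x$ because $g$ is a presheaf morphism and is the identity in dimension $2$, hence the node-decorations and therefore the $\optPolyFun^{n-2}$-tree $\omega$ are forced) actually fills this gap. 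One small remark: for a degenerate $\omega = \itree{\phi}$ there are no source faces and your ``agree on nodes, hence on edges'' sentence is vacuous; there the target face alone does the work, since $\tgt\tgt\itree{\phi} = \phi$ recovers the unique edge decoration.
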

\begin{proof}
    Let $X \in \Psh{\bbOO_{n-1, n}}$. By \cref{prop:dagger-algebra}, a
    $\optPolyFun^n$-algebra structure on $X$ induces a $\optPolyFun^3$-algebra
    structure on $X_\dagger$.

    Conversely, let $m : \optPolyFun^3 X_\dagger \longrightarrow X_\dagger$ be
    a $\optPolyFun^3$-algebra structure on $X_\dagger$, and define $m' :
    \optPolyFun^n X \longrightarrow X$ as the identity in dimension $n-1$, and
    mapping $f : S [\nu] \longrightarrow X$ to $m(f_\dagger) \in X_{\dagger, 2}
    = X_{n-1}$. Recall that $f_\dagger$ is a map of the form $S [\nu']
    \longrightarrow X_\dagger$, for some $\nu'$ such that $\tgt \nu' = (\tgt
    \nu)_\dagger$, and thus $m'$ is a map of opetopic sets. By
    \cref{lemma:dagger-algebra-structure}, it is a $\optPolyFun^n$-algebra
    structure on $X$.

    Since $(-)_\dagger$ is faithful, it establishes a bijective correspondence
    between the $\optPolyFun^n$-algebra structures on $X$ and the
    $\optPolyFun^3$-algebra structures on $X_\dagger$.
\end{proof}

\begin{theorem}
    [Algebraic trompe-l'\oe{}il]
    \label{th:algebraic-trompe-loeil}
    The following square is a pullback\index{algebraic trompe-l'\oe{}il}:
    \begin{equation}
        \diagramsize{2}{5}
        \squarediagram
            {\Oalg k n}{\Oalg 1 3}{\Psh{\bbOO_{n-k, n}}}{\Psh{\bbOO_{2, 3}} .}
            {(-)_{n-1, n,\dagger}}{}{}{(-)_{n-1, n,\dagger}}
            \diagramsize{2}{3}
    \end{equation}
    In otherwords, a $\optPolyFun^n$-algebra structure on $X \in
    \Psh{\bbOO_{n-k, n}}$ is completely determined by a $\optPolyFun^3$-algebra
    structure on $(X_{n-1, n})_\dagger$.
\end{theorem}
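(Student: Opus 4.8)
The plan is to deduce the theorem from the two pullback squares already in hand, by the pasting law for pullbacks. First I would observe that, by the definitions in \cref{not:dagger} together with the lifts to algebras supplied by \cref{prop:truncation-algebra} and \cref{prop:dagger-algebra}, the functor $(-)_{n-1, n, \dagger}$ is the composite
\[
    \Psh{\bbOO_{n-k, n}}
    \xrightarrow{(-)_{n-1, n}} \Psh{\bbOO_{n-1, n}}
    \xrightarrow{\;(-)_\dagger\;} \Psh{\bbOO_{2, 3}}
\]
at the level of presheaf categories, and that its lift to algebra categories is the composite $\Oalg k n \xrightarrow{(-)_{n-1, n}} \Oalg 1 n \xrightarrow{(-)_\dagger} \Oalg 1 3$. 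Hence the square in the statement is exactly the outer rectangle obtained by gluing the square \eqref{eq:algebraic-trompe-loeil:colour} to the square \eqref{eq:algebraic-trompe-loeil:shape} along their common edge, namely the forgetful functor $\Oalg 1 n \longrightarrow \Psh{\bbOO_{n-1, n}}$, with the two outermost vertical arrows being the forgetful functors of $\Oalg k n$ and of $\Oalg 1 3$.

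Now the left square \eqref{eq:algebraic-trompe-loeil:colour} is a pullback by \cref{prop:algebraic-trompe-loeil:colour}, and the right square \eqref{eq:algebraic-trompe-loeil:shape} is a pullback by \cref{prop:algebraic-trompe-loeil:shape}. By the pasting law for pullback squares (a horizontal composite of two pullback squares is again a pullback), the outer rectangle is a pullback, which is precisely the claim. Unwinding: a $\optPolyFun^n$-algebra structure on $X \in \Psh{\bbOO_{n-k, n}}$ is determined by a $\optPolyFun^n$-algebra structure on $X_{n-1, n}$ (\cref{prop:algebraic-trompe-loeil:colour}), which is in turn determined by a $\optPolyFun^3$-algebra structure on $(X_{n-1, n})_\dagger$ (\cref{prop:algebraic-trompe-loeil:shape}), giving the ``in other words'' reformulation.

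There is essentially no obstacle remaining, since all the substance lies in the two component propositions; the only thing requiring a line of verification is the identification of the horizontal composites and the outer verticals of the pasted rectangle with the functors named in the statement, which is immediate from the constructions. (One should also recall that these ambient categories of algebras have the relevant pullbacks — created by the forgetful functors from the cocomplete presheaf categories — but the pasting law itself is purely diagrammatic and needs nothing beyond the universal properties already available.)
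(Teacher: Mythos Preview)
Your proof is correct and follows exactly the paper's own approach: both invoke \cref{prop:algebraic-trompe-loeil:colour}, \cref{prop:algebraic-trompe-loeil:shape}, and the pasting lemma for pullbacks. The only difference is that you spell out explicitly how the outer rectangle decomposes as the paste of the two component squares, whereas the paper leaves this implicit.
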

\begin{proof}
    This is a direct consequence of \cref{prop:algebraic-trompe-loeil:colour},
    \cref{prop:algebraic-trompe-loeil:shape}, and the pasting lemma for
    pullbacks.
\end{proof}

\appendix

\section{Omitted proofs}
\label{sec:omitted-proofs}

\begin{proof}
    [Proof of \cref{lemma:colored-zn:pra-monad-structure}]
    \begin{enumerate}
        \item Let $P$ be the pullback of the cospan $1 \xrightarrow{\eta_1}
        \optPolyFun^n 1 \xleftarrow{\optPolyFun^n !} \optPolyFun^n X$. Since
        $(\optPolyFun_n X)_{<n} = X_{<n}$, we trivially have $P_{<n} = X_{<n}$.
        Next, for $\omega \in \bbOO_n$, we have
        \[
            P_\omega
            = \left\{ x \in \optPolyFun^n X \mid \optPolyFun^n ! (x) = \ytree{\omega} \right\} \\
            = \Psh{\bbOO_{n-k, n}} (S [\ytree{\omega}], X) \\
            = X_\omega
        \]
        as $S [\ytree{\omega}] = O [\omega]$.

        \item Let $P$ be the pullback of the cospan $\optPolyFun^n
        \optPolyFun^n 1 \xrightarrow{\mu_1} \optPolyFun^n 1
        \xleftarrow{\optPolyFun^n !} \optPolyFun^n X$. As before, since
        $(\optPolyFun_n X)_{<n} = X_{<n}$, we trivially have $P_{<n} = X_{<n} =
        (\optPolyFun^n \optPolyFun^n X)_{<n}$. Recall from \cref{lemma:zn-zn}
        that as a polynomial functor, $\optPolyFun^n \optPolyFun^n : \Set /
        \bbOO_n \longrightarrow \Set / \bbOO_n$ is given by
        \[
            \polynomialfunctor
                {\bbOO_n}
                {E}
                {\bbOO_{n+2}^{(2)}}
                {\bbOO_n ,}
                {\edg}
                {}
                {\tgt \tgt}
        \]
        where $\bbOO_{n+2}^{(2)}$ is the set of $(n+2)$-opetopes of height at
        most $2$. Then, for $\omega \in \bbOO_n$, we have:
        \[
            P_\omega
            = \left\{ (\xi, x) \mid x : \Lambda^{\tgt} [\nu] \rightarrow X, \xi \in \bbOO_{n+2}^{(2)}, \tgt \xi = \nu \right\} \\
            = \left\{ x : S [\tgt \xi] \mid \xi \in \bbOO_{n+2}^{(2)} \right\} \\
            = \optPolyFun^n \optPolyFun^n X_\omega .
        \]
    \end{enumerate}
\end{proof}

\begin{proof}
    [Proof of \cref{lemma:colored-zn:pra-monad-structure:1}]
    For $X \in \Psh{\bbOO_{n-k, n}}$, $(\optPolyFun^n X)_{<n} = X_{<n}$, thus
    all diagrams commute trivially in dimension $<n$.
    \begin{enumerate}
        \item Let $\omega \in \bbOO_n$ and $\nu \in \optPolyFun^n 1_\omega$,
        i.e. $\nu \in \bbOO_{n+1}$ such that $\tgt \nu = \omega$. Then
        \[
            \mu_1 \eta_{\optPolyFun^n 1} (\nu)
            = \mu_1 \left( \ytree{\ytree{\tgt \nu}} \graft_{[[]]} \ytree{\nu} \right)
            = \tgt \left( \ytree{\ytree{\tgt \nu}} \graft_{[[]]} \ytree{\nu} \right)
            = \nu .
        \]

        \item Let $\omega \in \bbOO_n$ and $\nu \in \optPolyFun^n 1_\omega$,
        i.e. $\nu \in \bbOO_{n+1}$ such that $\tgt \nu = \omega$. Then
        \[
            \mu_1 (\optPolyFun^n \eta_1) (\nu)
            = \mu_1 \left( \ytree{\nu} \biggraft_{[[p_i]]} \ytree{\ytree{\src_{[p_i]} \nu}} \right)
            = \tgt \left( \ytree{\nu} \biggraft_{[[p_i]]} \ytree{\ytree{\src_{[p_i]} \nu}} \right)
            = \nu ,
        \]
        where $[p_i]$ ranges over $\nu^\nodesymbol$.

        \item Akin to \cref{lemma:zn-zn}, one can show that elements of
        $\optPolyFun^n \optPolyFun^n \optPolyFun^n 1_\omega$ are
        $(n+2)$-opetopes $\xi$ of height $3$ such that $\tgt \tgt \xi =
        \omega$. Let $\xi$ be such an opetope, and write it as
        \[
            \xi = \ytree{\alpha} \biggraft_{[[p_i]]} \underbrace{\left(
                \ytree{\beta_i} \biggraft_{[[q_{i, j}]]} \ytree{\gamma_{i, j}}
            \right)}_{X_i}
            = \underbrace{\left(
                \ytree{\alpha} \biggraft_{[[p_i]]} \ytree{\beta_i}
            \right)}_{Y} \biggraft_{[[p_i][q_{i, j}]]} \ytree{\gamma_{i, j}}
        \]
        where $\alpha, \beta_i, \gamma_{i, j} \in \bbOO_n$, $[p_i]$ ranges over
        $\alpha^\nodesymbol$ and $[q_{i, j}]$ over $\beta_i^\nodesymbol$. Then
        \begin{align*}
            \mu_1 (\optPolyFun^n \mu_1) (\gamma)
            &= \tgt \left(
                \ytree{\alpha} \biggraft_{[[p_i]]} \ytree{\tgt X_i} \right) \\
            &= \tgt \left(
                \ytree{\tgt Y} \biggraft_{[\readdress_Y [[p_i][q_{i, j}]]]} \ytree{\gamma_{i, j}}
            \right) & \spadesuit \\
            &= \mu_1 \mu_{\optPolyFun^n 1} (\gamma) ,
        \end{align*}
        where $\spadesuit$ derives from the associativity axiom of the
        uncolored monad $\optPolyFun^n$.
    \end{enumerate}
\end{proof}

\begin{proof}
    [Proof of \cref{lemma:diagramatic-composite-diagramatic}]
    First, note that
    \begin{align*}
        \tgt (\xi_2 \subst_{[p_2]} \xi_1)
        &= \tgt \tgt (\ytree{\xi_2} \graft_{[[p_2]]} \ytree{\xi_1})
            & \text{by \cref{lemma:caf}} \\
        &= \tgt \src_{[]} (\ytree{\xi_2} \graft_{[[p_2]]} \ytree{\xi_1})
            & \condition{Glob2} \\
        &= \tgt \xi_2 = \omega_2 .
    \end{align*}
    Using \eqref{eq:node-decomposition}, we write
    \[
        \zeta
        = \xi_2 \subst_{[p_2]} \xi_1
        = \left( \alpha_2 \graft_{[p_2]} \alpha_1 \right) \graft_{[p_2 p_1]}
        \ytree{\omega_0} \biggraft_{[[q_i]]} \underbrace{\left(
            \beta_i \biggraft_{[l_{i, j}]} \gamma_{i, j}
        \right)}_{\delta_i}
    \]
    where $[q_i]$ ranges over $\omega_0^\nodesymbol$ and $[l_{i, j}]$ over
    $\beta_i^\leafsymbol$, such that
    \[
        \xi_1 =
        \alpha_1 \graft_{[p_1]} \ytree{\omega_0} \biggraft_{[[q_i]]} \beta_i,
        \qqquad
        \xi_2 =
        \alpha_2 \graft_{[p_2]} \ytree{\omega_1}
        \biggraft_{[\readdress_{\xi_1} [p_1[q_i]l_{i, j}]]} \gamma_{i, j}
    \]
    Since leaf addresses of $\xi_1$ are of the form $[p_1[q_i]l_{i, j}]$ for
    some $[q_i] \in \omega_0^\nodesymbol$ and $[l_{i, j}] \in
    \beta_i^\leafsymbol$, and since $\omega_1 = \tgt \xi_1$, the node addresses
    of $\omega_1$ are of the form $\readdress_{\xi_1} [p_1[q_i]l_{i, j}]$,
    which justifies the decomposition of $\xi_2$ above.

    Applying the definition of $\dotH$ we have, for $[q_i] \in
    \omega_0^\nodesymbol$, $[l_{i, j}] \in \beta_i^\leafsymbol$, and $[r] \in \gamma_{i, j}$,
    \begin{align*}
        \dotH \src_{[p_2 p_1]} :
            S [\omega_0]
            &\longrightarrow
            \optPolyFun S [\omega_2] \\
        (\dotH \src_{[p_2 p_1]}) [q_i] :
            S [\tgt \delta_i]
            &\longrightarrow
            S [\omega_2] \\
        \readdress_{\delta_i} [l_{i, j} r]
            &\longmapsto
            \readdress_{\zeta} [p_2 p_1 [q_i] l_{i, j} r] ;
            & \spadesuit \\
        \dotH \src_{[p_1]} :
            S [\omega_0]
            &\longrightarrow
            \optPolyFun S [\omega_1] \\
        (\dotH \src_{[p_1]}) [q_i] :
            S [\tgt \beta_i]
            &\longrightarrow
            S [\omega_1] \\
        \readdress_{\beta_i} [l_{i, j}]
            &\longmapsto
            \readdress_{\xi_1} [p_1 [q_i] l_{i, j}] ;
            & \diamondsuit \\
        \dotH \src_{[p_2]} :
            S [\omega_1]
            &\longrightarrow
            \optPolyFun S [\omega_2] \\
        (\dotH \src_{[p_2]}) (\readdress_{\xi_1} [p_1 [q_i] l_{i, j}]) :
            S [\tgt \gamma_{i, j}]
            &\longrightarrow
            S [\omega_2] \\
        \readdress_{\gamma_{i, j}} [r]
            &\longmapsto
            \readdress_{\xi_2} [p_2 \:\: \readdress_{\xi_1} [p_1 [q_i] l_{i, j}] \:\: r] .
            & \clubsuit
    \end{align*}
    Thus,
    \begin{align*}
        (\dotH \src_{[p_2 p_1]}) ([q_i])
            (\readdress_{\delta_i} [l_{i, j} r])
            &= \readdress_{\zeta} [p_2 p_1 [q_i] l_{i, j} r]
            & \text{by } \spadesuit \\
        &= \readdress_{\xi_2} [p_2 \:\: \readdress_{\xi_1} [p_1 [q_i] l_{i, j}] \:\: r]
            & \heartsuit \\
        &= (\dotH \src_{[p_2]})
            (\readdress_{\xi_1} [p_1 [q_i] l_{i, j}])
            (\readdress_{\gamma_{i, j}} [r])
            & \text{by } \clubsuit \\
        &= (\dotH \src_{[p_2]})
            \left(
                (\dotH \src_{[p_1]}) ([q_i]) (\readdress_{\beta_i} [l_{i, j}])
            \right)
            (\readdress_{\gamma_{i, j}} [r])
            & \text{by } \diamondsuit \\
        &= \left( \dotH \src_{[p_2]} \:\: \circ \:\: \dotH \src_{[p_1]}) \right)
            ([q_i]) (\readdress_{\delta_i} [l_{i, j} r]) ,
            & \scriptstyle{\maltese}
    \end{align*}
    where equality $\heartsuit$ comes from the monad structure on
    $\optPolyFun$, and $\scriptstyle{\maltese}$ from the definition of the
    composition in $\bbLambda$ when considered as the Kleisli category of
    $\optPolyFun$.
    \qedhere
\end{proof}

\begin{proof}
    [Proof of \cref{lemma:elementary-embeddings-diagramatic}]
    \begin{enumerate}
        \item Unfolding the definition of $\dotH \src_{[]}$ we get, for $[q]
        \omega^\nodesymbol$:
        \begin{align*}
            \dotH \src_{[]} :
                S [\ytree{\psi}]
                &\longrightarrow
                \optPolyFun S [\omega] \\
            (\dotH \src_{[]}) ([]) :
                S [\tgt \ytree{\omega}]
                &\longmapsto
                S [\omega] \\
            \readdress_{\ytree{\omega}} [[q]]
                &\longmapsto
                \readdress_\xi [[][q]] .
        \end{align*}
        Since $\readdress_{\ytree{\omega}} [[q]] = [q] = \readdress_\xi
        [[][q]]$, $\dotH \src_{[]}$ corresponds to the cell $\id_{S
        [\omega]} \in \optPolyFun S [\omega]_{\tgt \omega}$, thus is equal
        to $\dotH \tgt$ as claimed.

        \item First, graft trivial trees to $\xi$ so it's in the form of \cref{eq:node-decomposition}:
        \[
            \xi = \ytree{\barbeta} \graft_{[[p]]} \ytree{\beta}
                \biggraft_{[[q_i]]} \itree{\src_{[q_i]} \beta} ,
        \]
        where $[q_i]$ ranges over $\beta^\nodesymbol$. Unfolding the definition
        of $\dotH \src_{[p]}$ we get, for $[q] \beta^\nodesymbol$:
        \begin{align*}
            \dotH \src_{[p]} :
                S [\beta]
                &\longrightarrow
                \optPolyFun S [\omega] \\
            (\dotH \src_{[p]}) ([q]) :
                S [\tgt \itree{\src_{[q]} \beta}]
                &\longrightarrow
                S [\omega] \\
            [] = \readdress_{\itree{\src_{[q]} \beta}} []
                &\longmapsto
                \readdress_\xi [[p][q]] = [pq] ,
        \end{align*}
        thus $\dotH \src_{[p]} = \dotH i$.
        \qedhere
    \end{enumerate}
\end{proof}

\begin{proof}
    [Proof of \cref{lemma:diagramatic-lemma}]
    We proceed by induction on $\omega$.
    \begin{enumerate}
        \item Assume $\omega = \ytree{\psi}$ for some $\psi \in \bbOO_n$. Then
        \begin{align*}
        \bbLambda (\dotH \ytree{\psi}, \dotH \omega')
        &= \bbLambda (\optPolyFun S [ \ytree{\psi} ] , \optPolyFun S [ \omega' ] )  \\
        &\cong (\optPolyFun S [ \omega ])_\psi \ .
        \end{align*}
        Thus $f$ corresponds to a unique morphism $\tildF : S [ \nu ]
        \longrightarrow S [ \omega' ]$, for some $\nu \in \bbOO_{n+1}$
        such that $\tgt \nu = \psi$, and $f$ is the composite
        \[
            \dotH \ytree{\psi} = \dotH \psi \xrightarrow{\dotH \tgt} \dotH \nu
            \xrightarrow{\optPolyFun \tildF} \dotH \omega' .
        \]
        Those two arrows are diagrammatic by
        \cref{lemma:elementary-embeddings-diagramatic}, and thus $f$ is too by
        \cref{lemma:diagramatic-composite-diagramatic}.

        \item By induction, write $\omega = \nu_1 \graft_{[l]}
        \ytree{\psi_2}$ for some $\nu_1 \in \bbOO_{n+1}$, $[l] \in
        \nu_1^\leafsymbol$, and $\psi_2 \in \bbOO_n$. Write $\psi_1 = \tgt
        \nu_1$, and $\nu_2 = \ytree{\psi_2}$. Then $f$ restricts as $f_i$, $i =
        1, 2$, given by the composite $\dotH \nu_i \longrightarrow \dotH \omega
        \xrightarrow{f} \dotH \omega'$.

        Let $[l']$ be the edge address of $\omega'$ such that $\edg_{[l']}
        \omega' = f (\edg_{[l]} \omega)$. Then $\omega'$ decomposes as $\omega'
        = \beta_1 \graft_{[l']} \beta_2$, for some $\beta_1, \beta_2 \in
        \bbOO_{n+1}$ (in particular, $\beta_1$ and $\beta_2$ are sub
        $\optPolyFun^{n-1}$-trees of $\omega'$), and $f_1$ and $f_2$ factor as
        on the left
        \[
            \triangleURdiagram
                {\dotH \nu_i}
                {\dotH \beta_i}
                {\dotH \omega' ,}
                {\barF_i}
                {f_i}
                {u_i}
            \qqquad
            \frac{
                \begin{tikzcd} [ampersand replacement = \&]
                    \& \xi_i \\
                    \nu_i \ar[ur, sloped, near end, "\src_{[p_i]}"] \&
                    \beta_i \ar[u, "\tgt"]
                \end{tikzcd}
            }{
                \begin{tikzcd} [ampersand replacement = \&]
                    \dotH \nu_i \ar[r, "\barF_i"] \& \dotH \beta_i .
                \end{tikzcd}
            }
        \]
        where $u_i$ correspond to the subtree inclusion $\beta_i
        \longhookrightarrow \omega'$. By induction, $\barF_i$ is diagrammatic,
        say with the diagram on the right above, and thus $\beta_i$ can be
        written as $\beta_i = \barnu_i \subst_{[q_i]} \nu_i$, for some
        $\barnu_i \in \bbOO_{n+1}$ and $[q_i] \in \barnu_i^\nodesymbol$. In the
        case $i = 2$, note that $\beta_2 = \barnu_2 \subst_{[q_2]} \nu_2 =
        \barnu_2 \subst_{[q_2]} \ytree{\psi_2} = \barnu_2$.

        On the one hand we have
        \begin{align*}
            \edg_{[l']} \omega'
            &= f (\edg_{[l]} \omega)
                & \text{by def. of } [l'] \\
            &= f_1 (\edg_{[l]} \nu_1)
                & \text{since } \omega = \nu_1 \graft_{[l]} \ytree{\psi_2} \\
            &= u_1 \barF_1 (\edg_{[l]} \nu_1)
                & \text{since } f_1 = u_1 \barF_1 \\
            &= u_1 (\edg_{[q_1 l]} \beta_1)
                & \text{since } \beta_1 = \barnu_1 \subst_{[q_1]} \nu_1 \\
            &= \edg_{[q_1 l]} \omega ,
        \end{align*}
        showing $[l'] = [q_1 l]$, and thus that $\barnu_1$ is of the form
        \begin{equation}
            \label{eq:diagramatic-lemma:barnu1}
            \barnu_1 = \mu_1 \graft_{[q_1]} \ytree{\psi_1} \biggraft_{[[r_{1, j}]]} \mu_{1, j} ,
        \end{equation}
        where $[r_{1, j}]$ ranges over $\psi_1^\nodesymbol -
        \{\readdress_{\nu_1} [l] \}$, and $\mu_1, \mu_{1, j} \in \bbOO_{n+1}$.
        On the other hand,
        \begin{align*}
            \edg_{[l']} \omega'
            &= f (\edg_{[l]} \omega)
                & \text{by def. of } [l'] \\
            &= f_2 (\edg_{[]} \nu_2)
                & \text{since } \omega = \nu_1 \graft_{[l]} \ytree{\psi_2} \\
            &= u_2 \barF_2 (\edg_{[]} \nu_2)
                & \text{since } f_1 = u_2 \barF_2 \\
            &= u_2 (\edg_{[q_2]} \beta_2)
                & \text{since } \beta_2 = \barnu_2 \subst_{[q_2]} \nu_2 \\
            &= \edg_{[l']} \omega' ,
        \end{align*}
        showing $[q_2] = []$, and so $\src_{[]} \beta_2 = \src_{[]} \barnu_2 =
        \psi_2$, and we can write $\beta_2$ as
        \begin{equation}
            \label{eq:diagramatic-lemma:beta2}
            \beta_2 = \ytree{\psi_2} \biggraft_{[[r_{2, j}]]} \mu_{2, j} ,
        \end{equation}
        where $[r_{2, j}]$ ranges over $\psi_2^\nodesymbol$, and $\mu_{2, j}
        \in \bbOO_{n+1}$. Finally, we have
        \begin{align*}
            \omega'
            &= \beta_1 \graft_{[l']} \beta_2
            = (\barnu_1 \subst_{[q_1]} \nu_1) \graft_{[l']} \beta_2 \\
            &= \left(
                \mu_1 \graft_{[q_1]} \nu_1
                \biggraft_{\readdress^{-1}_{\nu_1} [r_{1, j}]} \mu_{1, j}
            \right) \graft_{[l']} \left(
                \ytree{\psi_2}
                \biggraft_{[[r_{2, j}]]} \mu_{2, j}
            \right)
                & \text{by \eqref{eq:diagramatic-lemma:barnu1} and \eqref{eq:diagramatic-lemma:beta2}} \\
            &= \left( \left(
                \mu_1 \graft_{[q_1]}
                \underbrace{\nu_1 \graft_{[l]} \ytree{\psi_2}}_{= \omega}
            \right)
                \biggraft_{[q_1] \cdot \readdress^{-1}_{\nu_1} [r_{1, j}]}
                    \mu_{1, j}
            \right)
                \biggraft_{[l'[r_{2, j}]]} \mu_{2, j}
                & \text{rearranging terms} \\
            &= \baromega \subst_{[q_1]} \omega ,
        \end{align*}
        for some $\baromega' \in \bbOO_{n+1}$.\footnote{Specifically,
        \[
            \omega'
            =
            \mu_1 \graft_{[q_1]} \left( \left(
                \ytree{\tgt \omega}
                \biggraft_{[\readdress_\omega \readdress^{-1}_{\nu_1} [r_{1, j}]]}
                    \mu_{1, j}
            \right)
                \biggraft_{[\readdress_\omega [l'[r_{2, j}]]]} \mu_{2, j}
            \right)
            =
            \left( \left(
                \mu_1 \graft_{[q_1]} \ytree{\tgt \omega}
            \right)
                \biggraft_{[q_1] \cdot [\readdress_\omega \readdress^{-1}_{\nu_1} [r_{1, j}]]}
                    \mu_{1, j}
            \right)
                \biggraft_{[q_1] \cdot [\readdress_\omega [l'[r_{2, j}]]]}
                    \mu_{2, j} .
        \]} Finally, by \cref{lemma:elementary-embeddings-diagramatic}, the following is a diagram of $\dotH f$:
        \[
            \frac{
            \begin{tikzcd} [ampersand replacement = \&]
                \& \xi \\
                \omega \ar[ur, sloped, near end, "\src_{[[q_1]]}"] \&
                \omega' \ar[u, "\tgt"]
            \end{tikzcd}
            }{
            \begin{tikzcd} [ampersand replacement = \&]
                \dotH \omega \ar[r, "f"] \& \dotH \omega' ,
            \end{tikzcd}
            }
            \qqquad
            \xi \eqdef \ytree{\baromega} \graft_{[[q_1]]} \ytree{\omega} .
        \]
        \qedhere
    \end{enumerate}
\end{proof}


\printindex

\pagebreak

\bibliographystyle{alpha}
\bibliography{bibliography}

\begin{thebibliography}{CHTM19}

\bibitem[AR94]{Adamek1994a}
{Ji\v{r}\'\i} Ad\'amek and {Ji\v{r}\'\i} Rosick\'y.
\newblock {\em Locally presentable and accessible categories}, volume 189 of
  {\em London Mathematical Society Lecture Note Series}.
\newblock Cambridge University Press, Cambridge, 1994.

\bibitem[BD98]{Baez1998}
John~C. Baez and James Dolan.
\newblock Higher-dimensional algebra. {III}. {$n$}-categories and the algebra
  of opetopes.
\newblock {\em Advances in Mathematics}, 135(2):145--206, 1998.

\bibitem[BMW12]{Berger2012a}
Clemens Berger, Paul-Andr{\'e} Mellies, and Mark Weber.
\newblock Monads with arities and their associated theories.
\newblock {\em Journal of Pure and Applied Algebra}, 216(8-9):2029--2048, 2012.

\bibitem[Che03]{Cheng2003}
Eugenia Cheng.
\newblock The category of opetopes and the category of opetopic sets.
\newblock {\em Theory and Applications of Categories}, 11:No. 16, 353--374,
  2003.

\bibitem[Che04a]{Cheng2004}
Eugenia Cheng{\noop{1}}.
\newblock Weak {$n$}-categories: opetopic and multitopic foundations.
\newblock {\em Journal of Pure and Applied Algebra}, 186(2):109--137, 2004.

\bibitem[Che04b]{Cheng2004a}
Eugenia Cheng\noop{2}.
\newblock Weak {$n$}-categories: comparing opetopic foundations.
\newblock {\em Journal of Pure and Applied Algebra}, 186(3):219--231, 2004.

\bibitem[CHTM19]{Curien2018}
Pierre-Louis Curien, C\'edric Ho~Thanh, and Samuel Mimram.
\newblock Syntactic approaches for opetopes.
\newblock {\tt arXiv:1903.05848 [math.CT]}, March 2019.

\bibitem[GK13]{Gambino2013}
Nicola Gambino and Joachim Kock.
\newblock Polynomial functors and polynomial monads.
\newblock {\em Mathematical Proceedings of the Cambridge Philosophical
  Society}, 154(1):153--192, 2013.

\bibitem[GU71]{Gabriel1971}
Peter Gabriel and Friedrich Ulmer.
\newblock {\em Lokal pr\"{a}sentierbare {K}ategorien}.
\newblock Lecture Notes in Mathematics, Vol. 221. Springer-Verlag, Berlin-New
  York, 1971.

\bibitem[HT18]{HoThanh2018}
C\'edric Ho~Thanh.
\newblock The equivalence between opetopic sets and many-to-one polygraphs.
\newblock {\tt arXiv:1806.08645 [math.CT]}, 2018.

\bibitem[Jar06]{Jardine2006}
J.~F. Jardine.
\newblock Categorical homotopy theory.
\newblock {\em Homology Homotopy Appl.}, 8(1):71--144, 2006.

\bibitem[KJBM10]{Kock2010}
Joachim Kock, Andr\'e Joyal, Michael Batanin, and Jean-Fran\c{c}ois Mascari.
\newblock Polynomial functors and opetopes.
\newblock {\em Advances in Mathematics}, 224(6):2690--2737, 2010.

\bibitem[Koc11]{Kock2011}
Joachim Kock.
\newblock Polynomial functors and trees.
\newblock {\em International Mathematics Research Notices}, 2011(3):609--673,
  January 2011.

\bibitem[Lei98]{leinster2001structures}
Tom Leinster.
\newblock Structures in higher-dimensional category theory.
\newblock 1998.

\bibitem[Lei04]{Leinster2004}
Tom Leinster.
\newblock {\em Higher Operads, Higher Categories}.
\newblock Cambridge University Press, 2004.

\bibitem[Lod12]{Loday2012a}
Jean-Louis Loday.
\newblock Algebras, operads, combinads.
\newblock March 2012.

\bibitem[SGA72]{SGA4}
{\em Th\'{e}orie des topos et cohomologie \'{e}tale des sch\'{e}mas. {T}ome 1:
  {T}h\'{e}orie des topos}.
\newblock Lecture Notes in Mathematics, Vol. 269. Springer-Verlag, Berlin-New
  York, 1972.
\newblock S\'{e}minaire de G\'{e}om\'{e}trie Alg\'{e}brique du Bois-Marie
  1963--1964 (SGA 4), Dirig\'{e} par M. Artin, A. Grothendieck, et J. L.
  Verdier. Avec la collaboration de N. Bourbaki, P. Deligne et B. Saint-Donat.

\bibitem[Web07]{Weber2007}
Mark Weber.
\newblock Familial 2-functors and parametric right adjoints.
\newblock {\em Theory and Applications of Categories}, 18:No. 22, 665--732,
  2007.

\end{thebibliography}

\end{document}